\documentclass[11pt,
a4paper,
BCOR=5mm,
DIV=25,
headinclude=true,
footinclude=false,
abstract=on]
{scrartcl}

\usepackage[utf8]{inputenc}
\usepackage[english]{babel}
\usepackage{amsmath}
\usepackage{lipsum}
\usepackage{amssymb}
\usepackage{amsthm}
\usepackage{bbm}
\usepackage{mathtools}
\usepackage{enumitem}
\usepackage{graphicx}
\usepackage{natbib}
\usepackage{subcaption}
\usepackage{xcolor}
\usepackage{hyperref}

\setlist{
partopsep=0pt,
topsep=0pt,
itemsep=0pt,
}

\setenumerate[1]{label={(\roman*)}}
\setenumerate[2]{label={(\alph*)}}

\theoremstyle{definition}
\newtheorem{definition}{Definition}[section]
\newtheorem{remark}[definition]{Remark}
\newtheorem{example}[definition]{Example}
\newtheorem{assumption}{Assumption}
\newtheorem*{assumption*}{Assumption}

\theoremstyle{plain}
\newtheorem{theorem}[definition]{Theorem}
\newtheorem{proposition}[definition]{Proposition}
\newtheorem{lemma}[definition]{Lemma}
\newtheorem*{lemma*}{Lemma}
\newtheorem{corollary}[definition]{Corollary}

\newcommand\norm[1]{\left\lVert#1\right\rVert}

\newcommand{\NN}{\mathbb{N}}
\newcommand{\ZZ}{\mathbb{Z}}

\newcommand{\RR}{\mathbb{R}}

\newcommand{\ind}{\mathbbm{1}}

\newcommand{\PP}{\mathbf{P}}
\newcommand{\EE}{\mathbf{E}}
\newcommand{\Var}{\mathrm{Var}}
\newcommand{\Cov}{\mathrm{Cov}}

\title{Central limit theory for serial tail dependence estimators in heavy-tailed long memory linear time series}

\author{Ioan Scheffel$^{a}$, Marco Oesting$^{a,b}$, Gilles Stupfler$^{c}$}
\date{$^{a}$ {\small Institute for Stochastics and Applications, University of Stuttgart, D-70563 Stuttgart, Germany} \\[1ex]
$^{b}$ {\small Stuttgart Center for Simulation Science (SC SimTech), University of Stuttgart, D-70569 Stuttgart, Germany} \\[1ex]
$^{c}$ {\small Univ Angers, CNRS, LAREMA, SFR MATHSTIC, F-49000 Angers, France}}

\begin{document}

\maketitle

\begin{abstract}
  We prove multiple central limit theorems for serial tail dependence estimators in heavy-tailed long memory linear time series. The main theoretical tools are two novel multivariate reduction principles for partial sums of heavy-tailed long memory linear time series, subordinated over sliding windows and above a threshold growing with sample size. This requires addressing several substantial difficulties, including handling a nonlinear, sample-size dependent, and multivariate subordination mechanism, the dependence between several overlapping linear processes, and the lack of higher-order moments of the marginal distribution. Despite these obstacles, our assumptions are mild and, in particular, the innovation process is allowed to have infinite variance. A key feature of our theory is that our second reduction principle holds uniformly in the threshold, allowing central limit theory for empirical extremograms with sample quantiles as thresholds.
  This question has received little attention in the literature on serial extremal dependence estimation even though the version of empirical extremograms with random thresholds is ubiquitous in practice. We compare our results in several respects with those that may be obtained under short-range dependence, thereby discovering markedly different convergence rates and limit laws in our long memory setting.
\end{abstract}
\noindent
\textbf{MSC 2020 subject classifications:} 60F05, 60F17, 60G70 \\
\noindent
\textbf{Keywords:} Central limit theory, Extremogram, Heavy tails, Peaks-over-Threshold model, Stable distribution, Uniform reduction principle

\section{Introduction}
\label{sec:intro} 

Traditionally, dependence in time series is characterized by the autocorrelation function (ACF).
Since the ACF emphasizes dependence at central levels, it does not accurately capture dependence at extreme levels.
Measuring and estimating tail dependence is therefore a separate task.
The analog of the ACF in extreme value analysis is the so-called extremogram, proposed by~\cite{davisExtremogramCorrelogramExtreme2009}.
Given a stationary time series $(X_t)_{t\in\mathbb{Z}}$ whose marginal distribution has upper endpoint $+\infty$, the extremogram is defined as the limiting quantity
\begin{align*}
	\gamma_{A,B}(h)
	\ := \ \lim_{u\to\infty}
	\frac{
		\PP\left[
			[X_{\ell}]_\ell \in u \cdot A
			\,,
			[X_{\ell+h}]_\ell \in u \cdot B
			\right]}
	{\PP[X_0>u]}
	\,,\qquad h \in \mathbb{Z}\,,  A,B\subset \RR^T\,, T\in\NN
	\,,
\end{align*}
provided that the limit exists, where throughout
$[X_{\ell}]_{\ell}:=[X_0,\ldots, X_{T-1}]$.
The simplest example is for $A=B=(1,\infty)$, when the extremogram coincides with the tail-dependence coefficient
\begin{align*}
	\chi(h)
	\ := \ \lim_{u\to\infty}
	\PP\left[
		X_h > u
		\mid
		X_0 > u
		\right]
	\,,\qquad h \in \mathbb{Z}\,.
\end{align*}
With this quantity, one can answer the question
whether an extreme value at time $0$ makes an extreme value at time $h$ more likely.
Even though the probability of rare events vanishes, that is, $\PP[X_0>u]\to 0$ as $u\to \infty$, the tail dependence coefficient $\chi(h)$ can be positive.
We then call the stationary time series asymptotically dependent;
see Chapter~9.5 in~\cite{beirlantStatisticsExtremesTheory2006}
for a detailed discussion of modeling asymptotic dependence using tail dependence coefficients.
In general, the extremogram is estimated 
from observations of $X_1,\ldots,X_{n}$ by the natural estimator
\begin{align*}
  &
	\widehat{\gamma}_{A,B}(h)
  \\&
	\ := \
	\dfrac{
    \frac{1}{n}
    \sum_{t=1-(0\land h)}^{n-T+1-(0\lor h)}
		\ind\left\{
		[X_{t+\ell}]_\ell \in u_n \cdot A
		\,,
		[X_{t+\ell+h}]_\ell \in u_n \cdot B
		\right\}
	}
	{
		\frac{1}{n}
		\sum_{t=1}^n
		\ind\{X_t>u_n\}
	}
	\,,\qquad h \in \mathbb{Z}\,,  A,B\subset \RR^T\,, T\in\NN
	\,,
\end{align*}
where the threshold sequence $(u_n)$ is chosen such that $u_n\to\infty$ and $n\PP[X_0>u_n]\to\infty$. We call this the empirical extremogram.
The asymptotic analysis of the empirical extremogram requires dedicated techniques that differ from the usual ACF analysis; see~\cite{brodav1991} for a discussion of asymptotic properties of the empirical ACF in standard time series settings.
The asymptotic behavior of the empirical extremogram has been studied in~\cite{davisExtremogramCorrelogramExtreme2009} for sufficiently strong $\alpha$-mixing regularly varying time series satisfying an anti-clustering condition;
see~\cite{basrakRegularlyVaryingMultivariate2009} for a reference on regularly varying time series.
The question of handling tail dependence estimation
when mixing conditions of this type do not hold
has remained much less explored.
This is particularly true for long memory settings.

There is evidence, for example, that in hydrological time series, the assumptions mentioned above (implying short memory) are violated.
In contrast, fractionally differenced ARIMA models have been used for hydrological time series \citep{montanariFractionallyDifferencedARIMA1997}, long memory in river flows has been linked to prolonged droughts and temporal clustering of extreme floods \citep{mudelseeLongMemoryRivers2007}, and flood peak distributions often exhibit heavy-tail behavior~\citep{merzUnderstandingHeavyTails2022}.
There is still debate over whether long memory is present in financial data; see for example~\cite{rossiLongMemoryTail2013} or~\cite{contLongrange2005}.

In this work, we study tail dependence estimators for long memory linear time series with possibly infinite variance.
More precisely, the main contribution of our article is to derive central limit theorems for tail dependence estimators in long memory heavy-tailed linear time series.
For this, the available theory does not provide the required tools, since it is essentially based on one-dimensional reduction principles~\citep{scheffelCentralLimitTheory2025} and does not cover the multivariate nature of dependence estimators.
The empirical process theory of~\cite{kulikTailEmpiricalProcess2011a} is developed exclusively for stochastic volatility models and is therefore not applicable for the current class of processes.
We close this gap by developing multivariate reduction principles for threshold-dependent indicator functions.
This is not a routine extension of~\cite{scheffelCentralLimitTheory2025}, because the multivariate setting creates dependence between several overlapping linear processes, and the bounds needed to handle this dependence have to be developed separately. Classical tools used to control empirical ACFs, as in~\cite{hosking_asymptotic_1996}, cannot be used either, because the extreme value context involves estimators whose summands depend on the sample size~$n$. Accounting for long-range dependence and a subordination mechanism depending on sample size in a nonlinear way makes the arguments substantially different from those of~\cite{betmicsch2026}, in which short-range dependence is considered and the subordination mechanism varies with $n$ through scaling constants.

We prove a general reduction principle that allows us to derive central limit theory
for a large class of estimators that contains in particular all extremogram estimators considered in~\cite{davisExtremogramCorrelogramExtreme2009}.
We then restrict the class of admissible sets to obtain a uniform version of this reduction principle
that
allows us to consider extremogram estimators with random thresholds, that is,
\begin{align*}
	\widetilde{\gamma}_{A,B}(h)
	\ := \
	\frac{1}{k}
  \sum_{t=1-(0\land h)}^{n-T+1-(0\lor h)}
	\ind\left\{
	[X_{t+\ell}]_\ell \in X_{n-k:n} \cdot A
	\,,
	[X_{t+\ell+h}]_\ell \in X_{n-k:n} \cdot B
	\right\}
	\,,\quad h \in \mathbb{Z}\,,  A,B\subset \RR^T\,, T\in\NN
	\,,
\end{align*}
where $X_{1:n}\leq X_{2:n} \leq \cdots \leq X_{n:n}$ are the order statistics of the sample $X_1,\ldots,X_n$. 
This feature of our approach is uncommon in theoretical investigations: for example, the estimation theory in~\cite{davisExtremogramCorrelogramExtreme2009} covers only deterministic thresholds. However, the version with random thresholds $\widetilde{\gamma}_{A,B}(h)$ of the empirical extremogram is used in practice~\cite[Section~3.4]{davisExtremogramCorrelogramExtreme2009}. As far as we are aware, only~\cite{dre2015} establishes the asymptotic behavior of the empirical extremogram with random thresholds in a weakly dependent setting and under several technical assumptions. See Corollary 3.6 therein, in which it appears that the weak limits of $\widehat{\gamma}_{A,B}(h)$ and $\widetilde{\gamma}_{A,B}(h)$ are identical when short-range dependence is present. In general, replacing deterministic with random thresholds in asymptotic theory is known to be difficult, especially in the presence of serial dependence, see~\cite{drekne2020}.

We also show in this work that, by contrast with the short-range dependence setting, the weak limits of $\widehat{\gamma}_{A,B}(h)$ and $\widetilde{\gamma}_{A,B}(h)$ are different for $A$ and $B$ in a wide class of sets relevant in applications. In particular, we show that the estimators
\[
	\frac{1}{k}
	\sum_{t=1}^{n-T+1}
	\ind\left\{
	X_t > X_{n-k:n}, X_{t+1} > X_{n-k:n}, \ldots, X_{t+T-1} > X_{n-k:n}
	\right\}
\]
and
\[
	\frac{1}{k}
	\sum_{t=1}^{n-T+1}
	\ind\left\{
	X_t > X_{n-k:n}, X_{t+T-1} > X_{n-k:n}
	\right\},
\]
of the tail-dependence coefficients
\[
	\lim_{u\to\infty}
	\PP\left[
		X_0 > u,\ldots,X_{T-1}>u \, | \, X_0>u \right]
\]
and
\[
	\lim_{u\to\infty}
	\PP\left[
		X_{T-1}>u \, | \, X_0>u \right] \ =\  \chi(T-1),
\]
respectively, have a degenerate asymptotic distribution which is a hitherto unknown phenomenon within the long memory setting.

This paper is organized as follows. In Section~\ref{sec:model} we introduce our model and main assumptions. In Section~\ref{sec:reduction_principle} we derive reduction principles for tail dependence estimators that we use to derive the central limit theory of Section~\ref{sec:clt}.
In Section~\ref{sec:rect} 
we show that our theory for random thresholds is applicable in the class of rectangular sets by providing simplified assumptions and showing their validity.
Then we 
investigate the cancellation in the asymptotic scale for rectangular sets. 
Since our theory is also valid in the univariate setting, we finally compare results for random thresholds with results that are readily derived from existing theory in the i.i.d. and weakly dependent setting.
\section{Model and main assumptions}
\label{sec:model}
\subsection{Notation}
\label{sec:model:notation}

Throughout this article, $(\Omega,\mathcal{A}, \PP)$ denotes a probability space rich enough to support all random variables under consideration. The symbols $\PP^{*}$ and $\EE^{*}$ denote outer probability and expectation, respectively~\cite[Section~1.2]{vandervaartWeakConvergenceEmpirical2023}. For $p\ge 1$, we write $L^p(\PP)$ for the space of random variables with finite $p$-th moment.
For a generic, continuous random variable $Z$, we let $F_Z$ (resp.~$f_Z$, $q_Z$) denote its cumulative distribution function (resp.~probability density function, quantile function).
We write $a\land b = \min\{a,b\}$ and $a\lor b = \max\{a,b\}$ for $a,b\in\mathbb{R}$, and use the symbol $\sim$ to denote asymptotic equivalence of sequences and functions.
We write $\lesssim$ to mean inequality up to a generic multiplicative constant $C>0$ that can change from place to place. If the dependence of this constant on the surrounding variables is relevant to the argument, we will make this explicit, by saying, for example, that $C=C_n$ depends on $n$, or that $C$ is independent of $n$.
Throughout we will write $\mathbf{x}_{}=[x_\ell]_\ell=[x_\ell]_{\ell\in\{0,\ldots,T-1\}}\in\RR^T$
where $T\ge 1$ is fixed. If the index set is different from $\{0,\ldots,T-1\}$, we will make this explicit.
For $p\ge 0$, the symbol $C^p(\RR^T)$ denotes the vector space of real-valued functions on $\RR^T$ with continuous partial derivatives up to order $p$. 
For a set $A\subset \RR^T$ and $s\in \RR\cup\{+\infty\}$ we write $s\cdot A = \{s\cdot x\mid x\in A\}$, and set by convention $\infty\cdot A = \emptyset$.
Finally, we write $\NN:=\{1,2,\ldots\}$ and $\NN_0:=\NN\cup \{0\}$. 
\subsection{Assumptions on the model}
\label{sec:model:assumptions}

Let $(\varepsilon_t)_{t\in \ZZ}$ be a sequence of independent and identically distributed (i.i.d.)~copies of a random variable $\varepsilon$. In this work, we focus on causal linear (or MA($\infty$)) time series $X_t=\sum_{j=0}^{\infty} a_j \varepsilon_{t-j}$, where $(a_j)_{j\in \mathbb{Z}}$ 
is a sequence of real-valued coefficients such that $a_0=1$ and $a_{-j}=0$ for all $j\ge 1$. Our regularity assumptions focus on the distribution of the innovation sequence, on the rate of decay of the coefficients $a_j$ which should be slow enough to yield a divergent series, yet fast enough to accommodate tail heaviness of the innovations, and on the tail of its marginal distribution. 
We consider the case of heavy-tailed time series whose innovations have a finite first moment, but possibly infinite second moment. 
\begin{assumption}[Regularity of the innovations]
	\label{asu:f}
  Suppose that $F_\varepsilon\in C^2(\RR)$, and that $\varepsilon$ has a symmetric distribution, and that there is a finite positive constant $L$ and $\nu \in (1,\infty)\setminus\{2\}$ such that
	$\lim_{x\to\infty} x^{\nu} \PP[\varepsilon > x] = \lim_{x\to\infty} x^{\nu} \PP[\varepsilon < -x] = L/2$, and suppose that the probability density function $f_\varepsilon \in C^1(\mathbb{R})$ satisfies:
	\begin{align*}
		f_{\varepsilon}(x)
		\lor
		|f'_{\varepsilon}(x)|
		 &
		\ \lesssim \
	\frac{1}{(1+|x|)^{\alpha}}
		\qquad\text{for}\ x\in\RR\,,
		\\
		|
		f'_\varepsilon(x)
		-
		f'_\varepsilon(y)
		|
		 &
		\ \lesssim \
		|x-y|
		\cdot
	\frac{1}{(1+|x|)^{\alpha}}
		\qquad\text{for}\ x,y\in\RR\,,\,|x-y|<1\,,
	\end{align*}
	where $\alpha := 2\land \nu$, and $\lesssim$ denotes inequality up to a multiplicative constant depending only on the distribution of $\varepsilon$.
\end{assumption}

\begin{assumption}[Long memory]
	\label{asu:coef}
	$(n^{1-d} a_n)$ converges to a finite, nonzero limit $c_a$
	for some $d\in (0,1-1/\alpha)$.
\end{assumption}
\begin{assumption}[Regular variation of the marginal density]
	\label{asu:rv_f}
	The density $f_{X_0}$ is regularly varying at $+\infty$ with index $-1-\nu$.
\end{assumption}
\begin{remark}[Assumptions~\ref{asu:f},~\ref{asu:coef} and~\ref{asu:rv_f}]
	These assumptions are already used in~\cite{scheffelCentralLimitTheory2025}.
 They are reasonably weak, given the difficulty of the heavy-tailed setting.
 The edge case $\nu=2$ is excluded to streamline the analysis.  
\end{remark}
\begin{remark}[Link between Assumption~\ref{asu:rv_f} and unimodality] \label{rem:unimodality}
	If the distribution of $\varepsilon$ is unimodal, meaning that its distribution function is convex on $(-\infty,0)$ and concave on $(0,\infty)$~\cite[Definition~1.1]{DharmadhikariJoagDev1988}, then
  by Theorem~1.1 in~\cite{DharmadhikariJoagDev1988} it holds that any 
  convergent moving average also has a unimodal distribution function. On the level of densities, this means monotonicity on $(-\infty,0)$ and $(0,\infty)$. Then, from the regular variation of the moving average~\cite[(15.3.5)]{kulikHeavyTailedTimeSeries2020} and the monotone density theorem~\cite[Proposition B.1.9.11]{dehaanExtremeValueTheory2006}, we get that the density of the moving average is also regularly varying with index $-1-\nu$. In particular, Assumption~\ref{asu:rv_f} is satisfied under Assumption~\ref{asu:f} and unimodality of the innovations.
\end{remark}

\subsection{Assumptions on the multivariate subordination mechanism}
\label{sec:model:subordination}

We want to derive central limit theory for partial sums of long memory linear time series subordinated over sliding windows, that is, for
\begin{align*}
  \sum_{t=1}^{n-T+1} (G_n([X_{t+\ell}]_{\ell}) - \EE[G_n([X_{\ell}]_\ell)])
\end{align*}
where $(G_n:\RR^T\to \RR)$ is a sequence of subordination mechanisms.
The following definition specifies the class of such mechanisms considered in this work.
\begin{definition}[Subordination mechanism]
	\label{asu:G_n}
	Let $(u_n)\subset[1,\infty)$ be a threshold sequence with $u_n \to\infty$ as $n\to\infty$, let
			$A\subset \RR^T$ be a Borel set, and let $s_1,s_2 \in (0,\infty]$ satisfy $s_1\le s_2$.
	We define
	\begin{align*}
		G_{n}(\mathbf{z},s_1,s_2, A)
		\ := \
		\ind\left\{\mathbf{z} \ \in\  u_n\cdot((s_1\cdot A )\setminus (s_2\cdot A))\right\}
		\,,
		\qquad
		\mathbf{z}\in \RR^T
		\,.
	\end{align*}
\end{definition}
	Putting $s_1=1$ and $s_2=\infty$ reflects extremogram-type tail-dependence estimators as in~\cite{davisExtremogramCorrelogramExtreme2009}, where 
   we require the set $A\subset \RR^T$ to be bounded away from 0.
   Apart from that,
	we are interested in the asymptotic behavior of the (rescaled) empirical extremogram 
	\[
		\sum_{t=1}^{n-T+1}
		\ind\{[X_{t+\ell}]_{\ell}\in X_{n-k:n} \cdot A\}.
	\]
	Write
	\[
		\sum_{t=1}^{n-T+1}
		\ind\{[X_{t+\ell}]_{\ell}\in X_{n-k:n} \cdot A\} = \sum_{t=1}^{n-T+1}
		\ind\{[X_{t+\ell}]_{\ell}\in u_n \cdot (X_{n-k:n}/u_n) \cdot A\},
	\]
  where the deterministic threshold $u_n$ is replaced by the sample quantile at level $1-k/n$. This suggests, 
under the assumption that $X_{n-k:n}/u_n\to_{\PP}1$
as $n\to\infty$, 
  to investigate the asymptotic behavior of
	\[
		\sum_{t=1}^{n-T+1}
		\ind\{[X_{t+\ell}]_{\ell}\in u_n \cdot s \cdot A\}
	\]
	uniformly in $s$ in a neighborhood of 1. 
    Our argument relies on a chaining procedure.
  We first analyze the pointwise asymptotic behavior of the relevant quantity and then show that
  for neighboring values $s$ and $s'$ around 1, the difference 
	\[
		\sum_{t=1}^{n-T+1}
		\ind\{[X_{t+\ell}]_{\ell}\in u_n \cdot s \cdot A\} 
        \ -\  \sum_{t=1}^{n-T+1}
		\ind\{[X_{t+\ell}]_{\ell}\in u_n \cdot s' \cdot A\}
	\]
  converges to $0$; see Section~\ref{sec:reduction_principle:chaining} for further details. 
    For sets $A$ satisfying $s_2 \cdot A \subset s_1\cdot A$ if $s_1<s_2$, this takes the form
    \begin{align*}
		\sum_{t=1}^{n-T+1}
        G_n([X_{t+\ell}]_\ell, s_1,s_2,A)
    \end{align*}
    which justifies Definition~\ref{asu:G_n}. The next definition fixes the properties of $A$ that are needed for this extended analysis.
    Apart from the nesting property, we also need
that
differences of the form $(s_1\cdot A)\setminus (s_2\cdot A)$ are controlled by
marginal threshold exceedances.
\begin{definition}[Class of sets for the uniform reduction principle]
	Define $\mathcal{C}_T$ to be the family of all Borel sets $\emptyset \neq A\subset \RR^T$ bounded away from 0 such that there exist 
  $\{\mathfrak{a}_j\}_{j\in \{1,\ldots,T\}}\subset (0,\infty)$
  such that for all $s_1,s_2\in (0,\infty)$ satisfying $s_1<s_2$, it holds $s_2\cdot A \subset s_1 \cdot A$ and 
	\begin{align*}
		(s_1\cdot A) \setminus (s_2\cdot A) \ \subset\
		\left( \bigcup_{j=1}^T \{|x_{j-1}|\in \mathfrak{a}_j \cdot [s_1,s_2] \} \right)
		\,.
	\end{align*}
  If $A\in \mathcal{C}_T$ with a particular $\mathfrak{a}=\{\mathfrak{a}_j\}$, then we write $A\in \mathcal{C}_T(\mathfrak{a})$.
\end{definition}
\begin{example}[Examples, counterexamples and properties of sets in $\mathcal{C}_T$]
  \label{ex:ct_sets}
   \
	\begin{enumerate}
          \item{[Cartesian product]}
          Let $n,m\in \NN$. 
          If $A\in \mathcal{C}_n(\mathfrak{a})$ and $B\in \mathcal{C}_m(\mathfrak{b})$, then $A\times B \in \mathcal{C}_{n+m}(\mathfrak{c})$ with $\mathfrak{c}_j = \mathfrak{a}_j$ for $j\in\{1,\ldots,n\}$ and $\mathfrak{c}_{j}=\mathfrak{b}_{j-n}$ for $j\in \{n+1,\ldots,n+m\}$.
    \item{[Unboundedness]}. 
    Since $A$ is bounded away from 0, there exists $\mathbf{x}\in A$ such that
    \begin{align*}
    \max_{j\in\{1,\ldots,T\}}|x_{j-1}|\ >\ \delta
      \qquad\text{for some}\ \delta>0\,.
    \end{align*}
    Then, for $R>\delta$,
    \begin{align*}
      \frac{R}{\delta}
      \cdot \mathbf{x}
      \ 
      \in 
      \ 
      \frac{R}{\delta}
      \cdot A
      \ 
      \subset
      \ 
      A
      \qquad\text{and}\qquad
    \max_{j\in\{1,\ldots,T\}}
      \left|
      \frac{R}{\delta}
      x_{j-1}
      \right|
      \ > \ R
      \,.
    \end{align*}
    Therefore, $A$ is unbounded. 
		\item{[Rectangular sets with positive active coordinates]}
		      Let $A=\bigtimes_{j=1}^{T}(\mathfrak{a}_j,\infty)$ with $\mathfrak{a}_j\in (0,\infty)\cup \{-\infty\}$, and $\emptyset\neq J_A =\{j\mid \mathfrak{a}_j > 0\}$.
		      It holds for $s_1,s_2>0$ satisfying $s_1<s_2$
		      \begin{align*}
			      s_2\cdot A
			      \   =\
			      \bigcap_{j\in J_A}\{x_{j-1} \in (s_2\cdot \mathfrak{a}_j,\infty)\}
			      \ \subset\
			      \bigcap_{j\in J_A}\{x_{j-1} \in (s_1\cdot \mathfrak{a}_j,\infty)\}
			      \ = \
			      s_1\cdot A
			      \,,
		      \end{align*}
		      as well as
		      \begin{align*}
			      (s_{1}\cdot A) \setminus (s_2 \cdot A)
			       &
			      \ = \
			      \left( \bigcap_{j\in J_A}\{x_{j-1} \in (s_1\cdot \mathfrak{a}_j,\infty)\} \right)
			      \
			      \cap
			      \
			      \left( \bigcup_{j\in J_A}\{x_{j-1} \in (-\infty,s_2\cdot \mathfrak{a}_j]\} \right)
			      \\ &
			        \ \subset\
			        \bigcup_{j\in J_A}\{|x_{j-1}| \in \mathfrak{a}_j\cdot[s_1,s_2]\}
		      \end{align*}
            so that $A\in \mathcal{C}_T$ 
            with arbitrary $\mathfrak{a}_j\in (0,\infty)$ for $j\notin J_A$.
        \item{[Complements of squares centered at the origin]}
        Let $\| \cdot \|$ be 
        the supremum norm on $\RR^T$, $R>0$, and $A=\{ \| x \| > R \}$. Observe that $s\cdot A=\{ \| x \| > sR \}$ for any $s>0$. Pick $s_1,s_2>0$ with $s_1 < s_2$. Then $s_2\cdot A \subseteq s_1\cdot A$ and $(s_1\cdot A) \setminus (s_2 \cdot A)=\{ \| x \| \in (s_1 R, s_2 R] \}$. 
       We conclude that 
               \begin{align*}
          (s_1\cdot A) \setminus (s_2 \cdot A) &\ = \ \left( \bigcap_{j=1}^T \{ |x_{j-1}| \in [0, s_2 R] \} \right)
			      \
			      \cap
			      \
			      \left( \bigcup_{j=1}^T \{ |x_{j-1}| \in (s_1 R,\infty) \} \right) \\
			    &\ \subset \
			        \bigcup_{j=1}^T \{ |x_{j-1}| \in R\cdot[s_1,s_2]\}
        \end{align*}
        so that $A\in \mathcal{C}_T$ 
       with $\mathfrak{a}_j=R\in (0,\infty)$ for $j\in \{ 1,\ldots,T\}$.
		\item{[Stability under set operations with scale invariant sets]}
          Let $B$ be a scale invariant set, that is, for $s>0$, $s\cdot B=B$, and let $A\in \mathcal{C}_T(\mathfrak{a})$.
          Note that $s\cdot B^c= (s\cdot B)^c = B^c$ for $s>0$, that is, $B^c$ is also scale invariant. Therefore,  
		      \begin{align*}
			      s_2\cdot (A\cap B)
            &
			      \ = \
			      (s_2\cdot A)\cap B
			      \ \subset\
			      (s_1\cdot A)\cap B
			      \ = \
			      s_1\cdot (A\cap B)
            \,,
            \\
 s_2\cdot (A\cup B)
            &
			      \ = \
			      (s_2\cdot A)\cup B
			      \ \subset\
			      (s_1\cdot A)\cup B
			      \ = \
			      s_1\cdot (A\cup B)
            \,,
		      \end{align*}
          and
\begin{align*}
			      \left(
			      s_1\cdot (A\cap B)
			      \right)
			      \setminus
			      \left(
			      s_2\cdot (A\cap B)
			      \right)
			       &
			      			      \ = \
			      \left(
			      (s_1\cdot A)
			      \setminus (s_2\cdot A)
			      \right)
			      \cap B
			        \ \subset\
			        (s_1\cdot A)
			        \setminus (s_2\cdot A)
              \,,
              \\
 \left(
			      s_1\cdot (A\cup B)
			      \right)
			      \setminus
			      \left(
			      s_2\cdot (A\cup B)
			      \right)
			       &
			      			      \ = \
			      \left(
			      (s_1\cdot A)
			      \setminus (s_2\cdot A)
			      \right)
			      \cap B^c
			        \ \subset\
			        (s_1\cdot A)
			        \setminus (s_2\cdot A)
\,.
		      \end{align*}
          If, in addition, $A\cap B \neq \emptyset$, then $A\cap B \in \mathcal{C}_T(\mathfrak{a})$, and if $A\cup B$ is bounded away from $0$, then $A\cup B \in \mathcal{C}_T(\mathfrak{a})$. Generally, $\mathcal{C}_T$ is not closed under these set operations. 
	\end{enumerate}
\end{example}
\section{Multivariate Reduction Principle}
\label{sec:reduction_principle}
\subsection{A general pointwise reduction principle}
\label{sec:reduction_principle:ptwise}

We give a heuristic overview of how the multivariate reduction principle works.
For $k\in \mathbb{Z}$, let $\mathcal{F}_k=\sigma(\varepsilon_{k},\varepsilon_{k-1},\ldots)$ denote the past $\sigma$-algebra generated by the sequence $(\varepsilon_{t})$ up to index $k$. For $k\ge 0$, let $X_{t,k}:=\sum_{j=0}^k a_j\varepsilon_{t-j}$ be the $k$-truncated time series, 
let $A\subset \RR^T$ be bounded away from 0, $s_1,s_2\in (0,\infty]$ 
satisfying $s_1<s_2$, and write 
\begin{align*}
  G_{k,n}(\mathbf{y})	
  &
  \ = \ 
  G_{k,n}(\mathbf{y},s_1,s_2,A)
	\ = \
  \EE[G_n([X_{\ell,k+\ell}]_\ell+\mathbf{y},s_1,s_2,A)]
  \\&
  \ = \ 
  \PP[[X_{\ell,k+\ell}]_\ell+\mathbf{y} \in u_n \cdot (s_1\cdot A)\setminus (s_2\cdot A)]
  \,,
  \end{align*}
  and,
  by Lemma~\ref{lem:multi_swap} in the supplementary material,
  \begin{align}
    \label{eq:need}
    \nabla G_{k,n}(\mathbf{z})
    \ = \ 
    \nabla_{\mathbf{y}} G_{k,n}(\mathbf{y},s_1, s_2,A)\Big|_{\mathbf{y}=\mathbf{z}}
	\ = \
	-
	\int_{\RR^T}
    \ind\{\mathbf{x}\in u_n \cdot (s_1\cdot A)\setminus(s_2\cdot A)\}
	\nabla
	f_{[X_{\ell,k+\ell}]_\ell}
	(\mathbf{x}-\mathbf{z})
	\,\mathrm{d}\mathbf{x}
	\,,
\end{align}
where we set $G_{\infty,n}(\mathbf{z}):=\EE[G_n([X_{\ell,\infty}]_\ell + \mathbf{z})]=\EE[G_n([X_{\ell}]_\ell + \mathbf{z})]$ for $k=\infty$ in the natural way.
 The reduction principle consists of showing that
\begin{align}
	\label{eq:heuristic}
	\begin{split}
		 &
		\sum_{t=1}^{n-T+1}
		\left(
		G_n([X_{t+\ell}]_\ell)
		\ - \
		\EE[G_n([X_\ell]_\ell)]
		\right)
		\\ &
		  \ = \
		  \sum_{t=1}^{n-T+1}
		  \sum_{k=0}^{\infty}
		\left(
		  \EE
		  \left[
				  G_n([X_{t+\ell}]_\ell)
				  \mid \mathcal{F}_{t-k}
				\right]
		\ - \
		  \EE
		  \left[
				  G_n([X_{t+\ell}]_\ell)
				  \mid \mathcal{F}_{t-(k+1)}
				\right]
		\right)
		\\ &
		  \ = \
		  \sum_{t=1}^{n-T+1}
		  \sum_{k=0}^{\infty}
		\left(
      G_{k-1,n}([X_{t+\ell}-X_{t+\ell,k+\ell-1}]_{\ell})
		\ - \
      G_{k,n}([X_{t+\ell}-X_{t+\ell,k+\ell}]_{\ell})
		\right)
		\\ &
		  \ \approx\
		  \sum_{t=1}^{n-T+1}
		  \sum_{k=0}^{\infty}
		\left(
      G_{k,n}([X_{t+\ell}-X_{t+\ell,k+\ell-1}]_{\ell})
		\ - \
      G_{k,n}([X_{t+\ell}-X_{t+\ell,k+\ell}]_{\ell})
		\right)
		\\ &
		  \ \approx\
		  \sum_{t=1}^{n-T+1}
		  \sum_{k=0}^{\infty}
		\varepsilon_{t-k}
    \cdot
		  [a_{k+\ell}]_\ell^{\top}
      \cdot
		\nabla
    G_{k,n}([X_{t+\ell} -X_{t+\ell,k+\ell} ]_\ell)
		\\ &
		  \ = \
		\sum_{t=1}^{n-T+1}
      [X_{t+\ell}]_\ell^{\top}
      \cdot
      \nabla
      G_{\infty,n}(\mathbf{0}_T)
		\\ &
		  \qquad + \
		  \sum_{t=1}^{n-T+1}
		  \sum_{k=0}^{\infty}
		\varepsilon_{t-k}
    \cdot
		  [a_{k+\ell}]_\ell^{\top}
		\left(
		  \nabla
    G_{k,n}([X_{t+\ell} -X_{t+\ell,k+\ell} ]_\ell)
		\ - \
		  \nabla
    G_{\infty,n}(\mathbf{0}_T)
		\right)
		  \,.\end{split}
\end{align}
The first term is the leading term that will determine the asymptotic profile, described in the next theorem. 
\begin{theorem}[Central limit theorem for multivariate partial sums]
	\label{thm:linear_clt_multi}
	Let Assumptions~\ref{asu:f} and~\ref{asu:coef} hold.
	Then
	\begin{align*}
		n^{-d-1/\alpha}
		\sum_{t=1}^{n-T+1}
		[X_{t+\ell}]_\ell
		\ \to_d \
		Z_\alpha \cdot \mathbf{1}_{T} 
		\qquad\text{as}\ n\to\infty\,,\qquad
	\end{align*}
	where
	\begin{align*}
		Z_\alpha
		 &
		\ \
		\text{has distribution}
		\ \
		\begin{cases}
			\mathcal{N}(0,\sigma^2)
			 & \ \ \text{if}\ \ \alpha = 2 \ \text{with $\EE[\varepsilon^2]<\infty$,}
			\\
			S\alpha S(\eta)
			 & \ \ \text{if}\ \
			\alpha\in(1,2)\,,
		\end{cases}
	\end{align*}
	with
	variance
	\begin{align*}
		\sigma^2
		 & :=
		c_a^2\EE[\varepsilon^2]
		\frac
		{
			\Gamma(1-2d)\Gamma(d)}
		{d(2d+1)\Gamma(1-d)} \ \mbox{ when } \EE[\varepsilon^2]<\infty,
		\intertext{or scale
		}
		\eta
		 & :=
		\frac{c_a}{d}
		\cdot
		\left(
		L
		\cdot
		\frac
		{\Gamma(2-\alpha)\cos(\pi\alpha/2)}
		{1-\alpha}
		\right)^{1/\alpha}
		\left(
		\int_{-\infty}^{1}
		\left(
		(1-v)^{d}_+
		-
		(-v)^d_+
		\right)^{\alpha}
		\,\mathrm{d}v
		\right)^{1/\alpha}
		\,.
	\end{align*}
	Here, $S\alpha S(\eta)$ denotes the symmetric $\alpha$ stable distribution with characteristic function $x\mapsto \exp(-\eta^\alpha|x|^{\alpha})$.
\end{theorem}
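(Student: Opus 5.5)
The plan is to reduce the statement to the univariate central limit theorem for partial sums of the linear process itself, with one coordinate, and then show that the $T$ coordinates differ only by a negligible amount. For each fixed $\ell\in\{0,\ldots,T-1\}$,
\[
\sum_{t=1}^{n-T+1}X_{t+\ell}\;=\;\sum_{t=1}^{n}X_t\;-\;\sum_{t=1}^{\ell}X_t\;-\;\sum_{t=n-T+\ell+2}^{n}X_t .
\]
This is the full partial sum minus at most $T-1$ boundary terms. Each $X_t$ is a fixed, tight random variable: it is well defined since $\sum_j|a_j|^{\alpha'}<\infty$ for some $\alpha'<\alpha$, because $d<1-1/\alpha$. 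Since $d+1/\alpha>0$, the normalized boundary terms $n^{-d-1/\alpha}\sum X_t$ (finitely many) tend to $0$ in probability. Hence
\[
n^{-d-1/\alpha}\sum_{t=1}^{n-T+1}[X_{t+\ell}]_\ell \;-\; n^{-d-1/\alpha}S_n\mathbf 1_T\;\to_{\PP}\;0,\qquad S_n:=\sum_{t=1}^nX_t .
\]
By Slutsky's lemma, together with the continuous mapping $x\mapsto x\mathbf 1_T$, it then suffices to prove $n^{-d-1/\alpha}S_n\to_d Z_\alpha$.

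For the univariate limit I will write $S_n=\sum_{j\in\ZZ}b_{n,j}\varepsilon_j$ with $b_{n,j}=\sum_{t=1\vee j}^{n}a_{t-j}$, a sum of independent terms. I will then use characteristic functions:
\[
\log\EE e^{ixn^{-d-1/\alpha}S_n}=\sum_j\log\varphi_\varepsilon\bigl(xn^{-d-1/\alpha}b_{n,j}\bigr).
\]
By Assumption~\ref{asu:coef}, $a_k\sim c_a k^{d-1}$. Substituting $j=\lfloor nv\rfloor$ gives
\[
n^{-d}b_{n,\lfloor nv\rfloor}\to \frac{c_a}{d}\bigl((1-v)_+^d-(-v)_+^d\bigr),
\]
and the sum over $j$ becomes a Riemann sum. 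The two cases are then handled separately.

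\emph{Case $\alpha=2$ with finite variance.} Here $\log\varphi_\varepsilon(u)\sim-\EE[\varepsilon^2]u^2/2$ uniformly in the relevant range, because $\max_j|b_{n,j}|n^{-d-1/2}\to0$. The limit variance is then $c_a^2\EE[\varepsilon^2]d^{-2}\int\bigl((1-v)^d_+-(-v)^d_+\bigr)^2dv$. This integral evaluates to the stated Gamma expression via the standard fractional Brownian motion constant. The same result can also be cited directly as Lindeberg CLT for linear processes, as in \cite{scheffelCentralLimitTheory2025}.

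\emph{Case $\alpha=\nu\in(1,2)$.} By Assumption~\ref{asu:f} the innovation lies in the domain of attraction of a symmetric stable law. A Tauberian argument gives $1-\varphi_\varepsilon(u)\sim L\,\Gamma(2-\alpha)\cos(\pi\alpha/2)/(1-\alpha)\,|u|^\alpha$ as $u\to0$. Plugging this into the Riemann sum yields the exponent $-\eta^\alpha|x|^\alpha$ with $\eta$ as stated.

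The main obstacle is justifying the passage from the sum to the integral in the stable case, uniformly over $j$. This requires the following. First, a domination bound $|1-\varphi_\varepsilon(u)|\lesssim |u|^\alpha$ for all $u$. Second, the bound $|b_{n,j}|\lesssim n^d\bigl((1-j/n)^d_+-(-j/n)^d_+\bigr)$ together with tail control for $j\to-\infty$. There, $b_{n,j}\approx c_a n|j|^{d-1}$, and summability of $|b_{n,j}|^\alpha$ requires $(d-1)\alpha<-1$, which is exactly $d<1-1/\alpha$. Third, the replacement of $\log\varphi$ by $\varphi-1$, using $\max_j|b_{n,j}|n^{-d-1/\alpha}\to0$. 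Dominated convergence then closes the argument. Alternatively, this univariate step may simply be quoted from the corresponding one-dimensional result in \cite{scheffelCentralLimitTheory2025}, under the same Assumptions~\ref{asu:f} and~\ref{asu:coef}.
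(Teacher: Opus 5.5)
Your proposal is correct and is essentially the paper's proof. The paper also reduces $\sum_{t=1}^{n-T+1}[X_{t+\ell}]_\ell$ to $\bigl(\sum_{t=1}^{n}X_t\bigr)\mathbf{1}_T$ plus finitely many boundary terms, via a telescoping identity made negligible by an $L^1$ bound using $\EE|X_0|<\infty$ instead of tightness, then applies Slutsky's theorem and simply cites the univariate limit theorem (Theorem~3.1 in \cite{scheffelCentralLimitTheory2025}), which is the citation you already offer in place of your characteristic-function sketch.

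If you keep that sketch, the domination bound $|b_{n,j}|\lesssim n^d\bigl((1-j/n)^d_+-(-j/n)^d_+\bigr)$ fails for $j$ near $n$: for example $b_{n,n}=a_0=1$, while the right-hand side vanishes. Replace it by $|b_{n,j}|\lesssim (n-j+1)^d_+-(-j)^d_+$ for $j\le n$, which still suffices for dominated convergence.
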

Having determined the asymptotic profile of the linear approximation, 
we make the reduction principle in~\eqref{eq:heuristic} rigorous.
\begin{theorem}
	\label{lem:reduction_principle_optimal}
	Let Assumptions~\ref{asu:f} and~\ref{asu:coef} hold,
	and let either of the following conditions hold:
	\begin{enumerate}
		\item
		      $A$ is bounded away from 0, $s_1\in (\underline{s},\infty)$ for some $\underline{s}>0$, and $s_2 = \infty$
		\item
		      $A\in \mathcal{C}_T$ and, for some $0<\underline{s}<1<\overline{s}<\infty$, $s_1,s_2\in (\underline{s},\overline{s}]$ satisfy $s_1<s_2$
	\end{enumerate}
	Then for all $\delta>0$ and for $n\in \NN$,
	\begin{align*}
		 &
		\EE
		\left[
			\left|
      \sum_{t=1}^{n-T+1}
			G_n([X_{t+\ell}]_\ell, s_1,s_2,A)
			\ - \
			\EE[G_n([X_{t+\ell}]_\ell,s_1,s_2,A)]
			\ - \
			\left(
			\nabla_{\mathbf{y}} G_{\infty,n}(\mathbf{y},s_1,s_2,A)\Big|_{\mathbf{y}=\mathbf{0}_T}
			\right)^{\top}
			\cdot
			[X_{t+\ell}]_{\ell}
			\right|^{r_0}
			\right]
		\\ &
		  \ \lesssim \
		  ((s_2-s_1)\land 1)
		  \cdot
		  n^{
				  r_0(\kappa_0 +\delta/2)
				  }
		\,,
	\end{align*}
	where
	\begin{equation*}
		\label{eq:gamma0r0}
		r_0 \ := \ \left\{ \begin{array}{l}
			\alpha(1-d)
			\ \text{ if } \
			\dfrac{1}{(1-d)(1-2d)}
			< \alpha\,, \\[10pt]
			\dfrac{1}{2}
			\left(
			\alpha + \dfrac{1}{1-d}
			\right)
			\ \text{ otherwise,}
		\end{array} \right.
	\end{equation*}
	and
	\begin{align}
		\label{eq:kappa_0}
		\kappa_0
		\ =\
		\left\{ \begin{array}{l}
			        \dfrac{1}{\alpha(1-d)}
			        \ \text{ if } \
			        \dfrac{1}{(1-d)(1-2d)}
			        < \alpha\,, \\[10pt]
			        \dfrac{2(1-d) + (1-\alpha(1-d)(1-2d))}{\alpha(1-d)+1} = d
			        +
			        (1-d)
			        \dfrac{3-\alpha(1-d)}{\alpha(1-d)+1}
			        \ \text{ otherwise.}
		        \end{array} \right.
	\end{align}
\end{theorem}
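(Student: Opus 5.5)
We follow the martingale decomposition in~\eqref{eq:heuristic} and split the error into three pieces. Write $R_n := \sum_t \big(G_n([X_{t+\ell}]_\ell) - \EE G_n - \nabla G_{\infty,n}(\mathbf{0}_T)^\top [X_{t+\ell}]_\ell\big)$. We use the projections $P_{t-k}(\cdot) := \EE[\cdot\mid\mathcal{F}_{t-k}] - \EE[\cdot\mid\mathcal{F}_{t-k-1}]$ and write $R_n = \sum_{j\in\ZZ} \sum_t P_j(\cdot)$. The summands in $j$ are martingale differences. Since $r_0\in(1,2]$, von Bahr--Esseen (or Burkholder) gives $\EE|R_n|^{r_0} \lesssim \sum_j \EE|\sum_t P_j(\ldots)|^{r_0}$. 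For fixed $j=t-k$, the $P_j$-term splits as
\[
\Big(G_{k-1,n}(\mathbf{Y}_{t,k-1}) - G_{k,n}(\mathbf{Y}_{t,k}) - \varepsilon_{t-k}[a_{k+\ell}]_\ell^\top \nabla G_{k,n}(\mathbf{Y}_{t,k})\Big) + \varepsilon_{t-k}[a_{k+\ell}]_\ell^\top\Big(\nabla G_{k,n}(\mathbf{Y}_{t,k}) - \nabla G_{\infty,n}(\mathbf{0}_T)\Big),
\]
where $\mathbf{Y}_{t,k}=[X_{t+\ell}-X_{t+\ell,k+\ell}]_\ell$. We center each piece under $\EE[\cdot\mid\mathcal{F}_{j-1}]$ when necessary. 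The replacement $G_{k-1,n}\to G_{k,n}$ is handled by one more convolution with $a_{k+\ell}\varepsilon$, at cost $O(a_k)$ times a derivative.

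\textbf{Step 1: derivative bounds for $G_{k,n}$.} The key input is a bound, uniform in $\mathbf{y}$ and in $k$ large, on $\nabla G_{k,n}$ and on its Hessian/Lipschitz modulus. The bound must scale like $(s_2-s_1)\wedge 1$ times a tail factor in $u_n$. We get it from~\eqref{eq:need}. Because $A$ is bounded away from $0$, the set $u_n(s_1A\setminus s_2A)$ lies outside a ball of radius $\gtrsim \underline{s}u_n$. Then $\int \ind\{\cdot\}|\nabla f_{[X_{\ell,k+\ell}]_\ell}|$ is controlled by the marginal tail bounds of Assumption~\ref{asu:f}. In case (i) we simply bound by a constant. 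In case (ii) we use the $\mathcal{C}_T$ inclusion: the difference set lies in $\bigcup_j\{|x_{j-1}|\in\mathfrak{a}_ju_n[s_1,s_2]\}$. Integrating out one coordinate against a bounded one-dimensional density, the derivative being that of the joint density with other coordinates marginalised, gives a factor $\lesssim (s_2-s_1)$. The joint density of $[X_{\ell,k+\ell}]_\ell$ is handled by conditioning on $\varepsilon_{0},\dots$ entering only one coordinate, using $a_0=1$. This yields a smooth component for each coordinate and avoids needing a nondegenerate joint density.

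\textbf{Step 2: Taylor remainders and moment counting.} For the first bracket we use a second-order Taylor bound in the $\varepsilon_{t-k}$ direction, truncated at $|a_k\varepsilon|\le1$. This gives $|\cdot|\lesssim ((s_2-s_1)\wedge1)\,(|a_k\varepsilon|^2\wedge|a_k\varepsilon|)$, whose $r_0$-th moment is $\lesssim |a_k|^{\alpha\wedge r_0}$ up to logs. For the second bracket we bound $|\nabla G_{k,n}(\mathbf{Y}_{t,k})-\nabla G_{\infty,n}(\mathbf{0})|\lesssim ((s_2-s_1)\wedge1)(|\mathbf{Y}_{t,k}|\wedge 1 + |a|_{k,\infty})$. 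Here $\mathbf{Y}_{t,k}$ is a tail sum of order $k^{d-1+1/\alpha}$ in $L^{\alpha-}$. Summing over $t$ for fixed $j$ gives coefficients $\sum_t |a_{t-j}|\cdot(t-j)^{d-1+1/\alpha}$. We insert this into the von Bahr--Esseen bound and optimise the truncation level of $|\mathbf{Y}_{t,k}|$; this split produces the two regimes of $r_0$ and $\kappa_0$. The case distinction $1/((1-d)(1-2d))<\alpha$ appears as the point where the $k$-sums switch from convergent to divergent. The $\delta$ absorbs slowly varying and logarithmic factors. The factor $(s_2-s_1)\wedge 1$ is carried linearly, using $|x|^{r_0}$ with the indicator-difference bounded by $1$. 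Equivalently, we interpolate $\min(1,(s_2-s_1))^{r_0}\le (s_2-s_1)\wedge1$.

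\textbf{Main obstacle.} The hardest part is Step 1 in case (ii), together with the resulting joint control across overlapping coordinates. We must show that the gradient and its increments carry the factor $(s_2-s_1)$ uniformly in $k$ and $\mathbf{y}$, despite dependence among the $T$ overlapping truncated processes. We would first try to reduce to univariate densities via the $\mathcal{C}_T$ union bound and the $a_0=1$ conditioning trick. Then we would invoke the one-dimensional estimates from~\cite{scheffelCentralLimitTheory2025} coordinatewise.
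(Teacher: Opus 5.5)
\textbf{Gap in Step~1, which propagates to Step~2.} Your architecture matches the paper's: projections $P_{t-k}$, von Bahr--Esseen, a Taylor remainder in the $\varepsilon_{t-k}$ direction plus a gradient-difference term, convolution sums, and $((s_2-s_1)\land1)^{r_0}\le(s_2-s_1)\land1$. The problem is that the analytic core of Step~1 is false as stated.

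You ask for bounds on $\nabla G_{k,n}$ and on its Lipschitz/Hessian modulus that carry $(s_2-s_1)\land1$ uniformly in $\mathbf{y}$. Such bounds do not exist. Already for $T=1$ and $A=(1,\infty)$ one has $\partial_yG_{k,n}(y)=f(u_ns_1-y)-f(u_ns_2-y)$ with $f=f_{X_{0,k}}$. Hence $\sup_y|\partial_yG_{k,n}(y)|=\sup_z|f(z)-f(z+u_n(s_2-s_1))|$, which for $s_2-s_1=1/u_n$ is a positive constant independent of $n$.

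For the same reason, integrating a bounded density over a window of width $\asymp u_n(s_2-s_1)$ gives $u_n(s_2-s_1)$, not $s_2-s_1$. The factor $u_n^{-\gamma}(s_2-s_1)$ comes only from the envelope $g_\gamma\lesssim u_n^{-1-\gamma}$ at distance $\asymp u_n$, and a shift destroys this. The valid estimates are therefore non-uniform, of the form $u_n^{-\gamma}((s_2-s_1)\land1)(1+|\mathrm{shift}|^{1+\gamma})$, obtained from $g_\gamma(z+y)\lesssim g_\gamma(z)(1\lor|y|)^{1+\gamma}$ (Lemmas~\ref{lem:5.1}, \ref{lem:comprehensive}, \ref{lem:ready}).

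In the multivariate case the window constrains a linear combination $\sum_{m\le j}a_{j-m}z_{m-1}$ of innovations. After expanding differences of products, some factors (the $f'_\varepsilon$ factor and the difference factor) are controlled only by $g_\gamma$, which has no moment of order $1+\gamma$. So conditioning on all coordinates but one and integrating out the rest, as in your $a_0=1$ trick, does not close without further ideas. This is why the paper proves the separation by the moment-free induction of Lemma~\ref{lem:gist_separation}.

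Consequently your two bracket bounds, $((s_2-s_1)\land1)(|a_k\varepsilon|^2\land|a_k\varepsilon|)$ and $((s_2-s_1)\land1)(|\mathbf{Y}_{t,k}|\land1+\dots)$, are not valid. They must carry random factors such as $(1+|\varepsilon_{-k}|^{1+\gamma})(1+|X_{j-1}-X_{j-1,k+j-1}|^{1+\gamma})$ and $\|\mathbf{Y}\|_1+\|\mathbf{Y}\|_1^{1+\gamma}$. Their $L^r$-norms are finite only when $(1+\gamma)r<\alpha$.

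\textbf{Step~2 does not produce the stated exponent.} Its one explicit estimate is wrong. Since $r_0<\alpha$ always, $|a_k|^{\alpha\land r_0}=|a_k|^{r_0}$, which is an $L^{r_0}$-norm of order $|a_k|\asymp k^{-(1-d)}$, i.e.\ no gain over first order. The convolution sum then gives $n^{1+dr_0}=n^{r_0(d+1/r_0)}$, which is strictly worse than $n^{r_0\kappa_0}$ in both regimes. (The truncated second-order remainder actually has $r_0$-th moment $\lesssim|a_k|^{\alpha}$, and this is where $(1+\gamma)r\le\alpha$ reappears.)

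What is needed is the estimate $\|P_{-k}\mathcal{U}_n\|_{L^r}\lesssim((s_2-s_1)\land1)k^{-(1-d)(1+\gamma)}$ for $k\ge1$ (Lemma~\ref{lem:boundsRj_3}), under $\gamma<\min\{d/(1-d),1-1/(r(1-d)),\alpha/r-1\}$ and $1/(1-d)<r<\alpha$, followed by Lemma~\ref{lem:analytic}. You also omit the projections onto the $T$ innovations inside the window, where $a_0,\dots,a_{T-1}$ are not small. Bounded crudely via Lemma~\ref{lem:2.2}, they contribute $n((s_2-s_1)\land1)$, which is dominated only if $\gamma\le d/(1-d)$.

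Minimising $\kappa(\gamma,r)=1+1/r-(1-d)(1+\gamma)$ over this region yields $(\gamma_0,r_0,\kappa_0)$:
\begin{itemize}
\item the constraint $(1+\gamma)r<\alpha$ is active in both regimes;
\item the threshold $\alpha=1/((1-d)(1-2d))$ is where $\gamma<1-1/(r(1-d))$ (coming from $\|X_0-X_{0,k}\|_{L^r}\lesssim k^{1/r-(1-d)}$) replaces $\gamma<d/(1-d)$ as the second active constraint;
\item $\delta$ pays for taking $\gamma=\gamma_0-\delta'$ inside the open region.
\end{itemize}
None of this optimisation is carried out in your proposal, so nothing there establishes the bound $n^{r_0(\kappa_0+\delta/2)}$.
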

As explained in Remark 3.4 in \cite{scheffelCentralLimitTheory2025},
$\kappa_0-d-1/\alpha<0$, so that there is a choice of $u_n\to\infty$ such that $
	u_n=o(n^{(d+1/(\nu \land 2)-\kappa_0-\delta)/(\nu+1)})
$ for $\delta>0$ small enough. 
Therefore,
\begin{align}
	\label{eq:cot2}
	\left(
	n^{d+1/\alpha}
	f_{X_0}(u_n)
	\right)^{-1}
	\cdot
	n^{\kappa_0 + \delta/2}
	\ = \
	\frac{n^{\kappa_0+\delta-d-1/\alpha}}{f_{X_0}(u_n)}
  \cdot
  n^{-\delta/2}
	\ \to \
	0\,.
\end{align}
By Theorem~\ref{lem:reduction_principle_optimal} applied with $s_1=s$ and $s_2=\infty$ and a set $A$ that is bounded away from $0$, this proves the following result.
\begin{corollary}
	\label{thm:main_simple}
	Let Assumptions~\ref{asu:f},~\ref{asu:coef}, and~\ref{asu:rv_f} hold, and assume that the threshold sequence $u_n\to\infty$ is such that
	$
		u_n=o(n^{(d+1/(\nu \land 2)-\kappa_0-\delta)/(\nu+1)})
	$
	for some $\delta>0$.
	Then, for all sets $A$ bounded away from $0$, and all $s\in (0,\infty)$ 
	\begin{align*}
    &
    \Bigg|
		\frac{
			n^{-d-1/\alpha}
		}{f_{X_0}(u_n)}
		\sum_{t=1}^{n-T+1}
    \Bigg(
		\ind\{[X_{t+\ell}]_{\ell}\in u_n \cdot s \cdot A\}
		\ - \
		\PP\left[[X_{t+\ell}]_{\ell}\in u_n \cdot s \cdot A\right]
    \\&\qquad
    \qquad
    \qquad
    \qquad
    \qquad
    \qquad
		\ - \
		\left(
		\nabla_{\mathbf{y}}G_{\infty,n}(\mathbf{y},s,\infty, A)\left.\right|_{\mathbf{y}=\mathbf{0}_T}
		\right)^{\top}
			[X_{t+\ell}]_{\ell}
    \Bigg)
    \Bigg|
	\end{align*}
	vanishes in probability
	as $n\to\infty$.
\end{corollary}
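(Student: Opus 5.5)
The corollary follows from Theorem~\ref{lem:reduction_principle_optimal} by Markov's inequality. The one point needing care is that $s$ ranges over all of $(0,\infty)$, whereas case (i) of the theorem requires $s_1>\underline{s}$ for some fixed $\underline{s}>0$.

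Fix $A$ bounded away from $0$ and $s\in(0,\infty)$, and set $\underline{s}:=s/2$. Then $s_1=s\in(\underline{s},\infty)$ and $s_2=\infty$ satisfy condition (i). Since $\infty\cdot A=\emptyset$ by convention, $G_n(\mathbf{z},s,\infty,A)=\ind\{\mathbf{z}\in u_n\cdot s\cdot A\}$. Hence the summands in the theorem coincide with those in the corollary: the centring term $\EE[G_n([X_{t+\ell}]_\ell,s,\infty,A)]$ equals $\PP[[X_{t+\ell}]_\ell\in u_n\cdot s\cdot A]$, and the gradient term is $\nabla_{\mathbf{y}}G_{\infty,n}(\mathbf{y},s,\infty,A)|_{\mathbf{y}=\mathbf{0}_T}$.

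Let $R_n$ denote the remainder sum, without the normalisation, so that the quantity in the corollary is $|R_n|\,n^{-d-1/\alpha}/f_{X_0}(u_n)$. The theorem gives $\EE|R_n|^{r_0}\lesssim n^{r_0(\kappa_0+\delta/2)}$, where the factor $(s_2-s_1)\wedge 1\le 1$ has been dropped. Here $\delta>0$ is the value from the threshold condition, which we may take small. For $\epsilon>0$, Markov's inequality of order $r_0$ yields
\begin{align*}
\PP\left[\frac{n^{-d-1/\alpha}}{f_{X_0}(u_n)}|R_n|>\epsilon\right]
\ \le\ \epsilon^{-r_0}\left(\frac{n^{\kappa_0+\delta/2}}{n^{d+1/\alpha}f_{X_0}(u_n)}\right)^{r_0}.
\end{align*}
The base is exactly the left-hand side of~\eqref{eq:cot2}, so it suffices to check that this base tends to $0$.

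To verify~\eqref{eq:cot2}, use Assumption~\ref{asu:rv_f}: $f_{X_0}(u)=u^{-1-\nu}\ell(u)$ with $\ell$ slowly varying. By Potter bounds, for any $\eta>0$ we have $f_{X_0}(u_n)\gtrsim u_n^{-1-\nu-\eta}$. The assumption $u_n=o(n^{(d+1/\alpha-\kappa_0-\delta)/(\nu+1)})$ gives $u_n^{1+\nu}=o(n^{d+1/\alpha-\kappa_0-\delta})$, noting that $\nu\wedge 2=\alpha$. Thus
\begin{align*}
n^{\kappa_0+\delta/2-d-1/\alpha}/f_{X_0}(u_n)\ \lesssim\ n^{-\delta/2}\,u_n^{\eta}\cdot o(1).
\end{align*}
Choose $\eta$ so small that $u_n^{\eta}\le n^{\delta/4}$ eventually; this is possible since $u_n$ grows at most polynomially. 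Then the ratio is $o(n^{-\delta/4})\to0$, and since $r_0>0$ the probability bound above tends to zero.

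The only step needing care is handling the slowly varying factor and matching the $\delta$'s. This is why the theorem is applied with the same $\delta$ as in the threshold condition, leaving the margin $n^{-\delta/2}$ to absorb Potter's bound. Everything else is the direct specialisation of Theorem~\ref{lem:reduction_principle_optimal}(i).
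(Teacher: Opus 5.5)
Your proposal is correct and follows the paper's argument: Theorem~\ref{lem:reduction_principle_optimal} under condition~(i) with $s_1=s$ and $s_2=\infty$, combined with Markov's inequality of order $r_0$ and the rate comparison~\eqref{eq:cot2}. Your choice $\underline{s}=s/2$ and your explicit Potter-bound check, showing that the margin $n^{-\delta/2}$ absorbs the slowly varying factor of $f_{X_0}(u_n)$, spell out steps the paper leaves implicit.
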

\subsection{Uniform reduction principle}
\label{sec:reduction_principle:chaining}

As explained in Section~\ref{sec:model:subordination}, we aim for a reduction principle that is uniform in $s$ and therefore allows $u_n$ to be replaced by a random threshold. Before we state the result, we give an overview of how it is derived by combining
Theorem~\ref{lem:reduction_principle_optimal} under Condition~(ii) therein, and a
chaining technique as in \cite[Proof of Theorem~2.1]{koulAsymptoticsEmpiricalProcesses2001}.
We use, for $k\in\NN_0$ and $j\in\{0,\ldots,2^k\}$, 
a suitable partition $(\pi_{j,k})$
satisfying
\begin{align*}
	\underline{s}
	\
	=
	\
	\pi_{2^k,k}
	\ < \
	\pi_{2^{k}-1,k}
	\ < \
	\cdots
	\ < \
	\pi_{1,k}
	\ < \
	\pi_{0,k}
	\ = \
	\overline{s}
	\,,
\end{align*}
and define, for $k\in\NN_0$ and $s\in (\underline{s},\overline{s}]$, the unique index $j^s_k \in\{0,\ldots,2^k-1\}$ 
satisfying
\begin{align*}
	\pi_{j^s_k+1,k}
	\ < \
	s
	\ \le \
	\pi_{j^s_k,k}
	\,.
\end{align*}
Choosing $K=K_n$, this gives a chain 
\begin{align*}
	\pi_{j^s_{K}+1,K}
	\ < \
	s
	\ \le \
	\pi_{j^s_{K},K}
	\ \le \
	\cdots
	\ \le \
	\pi_{j^s_{1},1}
	\ \le \
	\pi_{j^s_{0},0}
	\ = \
	\overline{s}
	\,,
\end{align*}
linking $\overline{s}$ to $s$, where we tighten the chain by choosing $K_n\to\infty$ at a suitable rate.
Write
\begin{align*}
  &
	\mathcal{S}_n(s_1,s_2,A)
	\ = \
	\sum_{t=1}^{n-T+1}
  \Bigg(
	G_n([X_{t+\ell}]_{\ell},s_1,s_2,A)
	\ - \
	\EE[
		G_n([X_{t+\ell}]_{\ell},s_1,s_2,A)
	]
  \\
  &
  \qquad
  \qquad
  \qquad
  \qquad
  \qquad
  \qquad
  \qquad
  \qquad
  - \ 
	  \left(
	  \nabla_{\mathbf{y}} G_{\infty,n}(\mathbf{y},s_{1},s_2,A)
	  \Big|_{\mathbf{y}=\mathbf{0}_T}
	\right)
	  ^{\top}
	\cdot [X_{t+\ell}]_{\ell}
  \Bigg)
  \,,
\end{align*}
and decompose
\begin{align*}
	\mathcal{S}_n(s,\infty,A)
	 &
	\ = \
	\mathcal{S}_n(s,\overline{s},A)
	\ + \
	\mathcal{S}_n(\overline{s},\infty,A)
  \\&
	\ = \
	\sum_{k=1}^{K}
	\mathcal{S}_n(\pi_{j^s_{k},k},\pi_{j^s_{k-1},k-1},A)
	\ + \
	\mathcal{S}_n(s,\pi_{j^s_{K},K},A)
	\ + \
	\mathcal{S}_n(\overline{s},\infty,A)
	\,,
\end{align*}
so that
\begin{align*}
	 &
	\sup_{s\in (\underline{s},\overline{s})}
	\left|
	\mathcal{S}_n(s,\infty,A)
	\right|
	\\ &
	  \ \le \
	  \sum_{k=1}^{K}
	\sup_{s\in (\underline{s},\overline{s})}
	\left|
	  \mathcal{S}_n(\pi_{j^s_{k},k},\pi_{j^s_{k-1},k-1})
	  \right|
	  \ + \
	  \sup_{s\in (\underline{s},\overline{s})}
	\left|
	  \mathcal{S}_n(s,\pi_{j^s_{K},K})
	  \right|
	  \ + \
	  |\mathcal{S}_n(\overline{s},\infty,A)|
	\\ &
	  \ \approx\
	  \sum_{k=1}^{K}
	\sum_{0\le i \le 2^{k}-1}
	\left|
	  \mathcal{S}_n(
	  \pi_{i+1,k},
	  \pi_{i,k},A
	  )
	  \right|
	  \ + \
	  |\mathcal{S}_n(\overline{s},\infty,A)|
	  \,.
\end{align*}
By Theorem~\ref{lem:reduction_principle_optimal},
\begin{align*}
  &
	\sum_{k=1}^{K}
	\sum_{0\le i \le 2^{k}-1}
	\EE
	\left[
		\left|
		\mathcal{S}_n(
		\pi_{i+1,k},
		\pi_{i,k},A
		)
		\right|^{r_0}
		\right]
	\ \lesssim \
	n^{r_0(\kappa_0 + \delta/2)}
	\sum_{k=1}^{K}
	\sum_{0\le i \le 2^{k}-1}
	\left(
	\pi_{i,k}
	\ - \
	\pi_{i+1,k}
	\right)
  \\&
	\ = \
	n^{r_0(\kappa_0 + \delta/2)}
	\cdot K_n
	\cdot (\overline{s}-\underline{s})
	\,,
\end{align*}
and
$
    \EE[|\mathcal{S}_n(\overline{s},\infty,A)|^{r_0}]\lesssim n^{r_0(\kappa_0+\delta/2)}
$.
Choosing $K_n\to\infty$ appropriately, it follows from \eqref{eq:cot2} that
\begin{align*}
  \left(
  \frac{
	n^{-d-1/\alpha}
  }{f_{X_0}(u_n)}
	\cdot
	n^{\kappa_0 + \delta/2}
  \right)^{r_0}
  \cdot
  K_n
	\cdot (\overline{s}-\underline{s})
	\ \to \ 0\,,
\end{align*}
so that
\begin{align*}
  \frac{
	n^{-d-1/\alpha}
  }{f_{X_0}(u_n)}
	\sup_{s\in (\underline{s},\overline{s})}
	\left|
	\mathcal{S}_n(s,\infty,A)
	\right|
	\ \to \ 0
	\,,
\end{align*}
in outer probability. This leads to the following uniform reduction principle.
\begin{theorem}
	\label{thm:main}
	Let Assumptions~\ref{asu:f},~\ref{asu:coef}, and~\ref{asu:rv_f} hold and assume that the threshold sequence $u_n\to\infty$ is such that
	$
		u_n=o(n^{(d+1/(\nu \land 2)-\kappa_0-\delta)/(\nu+1)})
	$
	for some $\delta>0$.
	Then, for all $A\in \mathcal{C}_T$, and all $\underline{s}\in (0,1)$ and $\overline{s}\in (1,\infty)$,
	\begin{align*}
    &
		\sup_{s\in (\underline{s},\overline{s})}
    \Bigg|
		\frac{
			n^{-d-1/\alpha}
		}{f_{X_0}(u_n)}
		\sum_{t=1}^{n-T+1}
		\Bigg(
		\ind\{[X_{t+\ell}]_{\ell}\in u_n \cdot s \cdot A\}
		\ - \
		\PP\left[[X_{t+\ell}]_{\ell}\in u_n \cdot s \cdot A\right]
    \\&
    \qquad
    \qquad - \
		\left(
		\nabla_{\mathbf{y}}G_{\infty,n}(\mathbf{y},s,\infty, A)\Big|_{\mathbf{y}=\mathbf{0}_T}
		\right)^{\top}
			[X_{t+\ell}]_{\ell}
		\Bigg)
    \Bigg|
	\end{align*}
	vanishes in outer probability
	as $n\to\infty$.
\end{theorem}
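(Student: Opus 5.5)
We follow the chaining scheme sketched before the statement and make the step marked ``$\approx$'' rigorous. Fix $A\in\mathcal{C}_T(\mathfrak{a})$, $\underline{s}<1<\overline{s}$, and write $\mathcal{S}_n(s_1,s_2,A)$ as above. The decomposition
\[
\mathcal{S}_n(s,\infty,A)=\mathcal{S}_n(\overline{s},\infty,A)+\mathcal{S}_n(s,\overline{s},A)
\]
holds because $s_2\cdot A\subset s_1\cdot A$ for $s_1<s_2$. Consequently $G_n(\cdot,s_1,s_3,A)=G_n(\cdot,s_1,s_2,A)+G_n(\cdot,s_2,s_3,A)$ for $s_1\le s_2\le s_3$, and the same additivity passes to expectations and, by linearity of the integral in \eqref{eq:need}, to the gradients $\nabla G_{\infty,n}$. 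The term $\mathcal{S}_n(\overline{s},\infty,A)$ does not depend on $s$. It is handled by Theorem~\ref{lem:reduction_principle_optimal}(i) together with Markov's inequality and \eqref{eq:cot2}. We take the dyadic partition $\pi_{j,k}=\overline{s}-j2^{-k}(\overline{s}-\underline{s})$. Then $\mathcal{S}_n(\pi_{j^s_K,K},\overline{s},A)$ telescopes into the chain links $\mathcal{S}_n(\pi_{j^s_k,k},\pi_{j^s_{k-1},k-1},A)$. Each link is either zero or equals $\mathcal{S}_n(\pi_{i+1,k},\pi_{i,k},A)$ for some $i$, so the supremum over $s$ of the chain part is bounded by $\sum_{k\le K}\sum_i|\mathcal{S}_n(\pi_{i+1,k},\pi_{i,k},A)|$. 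By Theorem~\ref{lem:reduction_principle_optimal}(ii) with $s_1,s_2\in(\underline{s},\overline{s}]$, its $r_0$-th moment is at most $K n^{r_0(\kappa_0+\delta/2)}(\overline{s}-\underline{s})$. This uses $r_0\ge 1$, so that we may bound $\EE|\sum\cdot|^{r_0}$. If the summation is not handled via the $r_0$-moment directly, we instead use Markov's inequality on each term with a union bound, which gives the same bound since only moments of single terms are needed. Since \eqref{eq:cot2} holds with a polynomial margin $n^{-\delta/2}$, choosing $K_n=\lceil\log_2 n\rceil$ (or any $K_n$ growing slower than $n^{r_0\delta/2}$) makes the normalized chain part vanish in outer probability.

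The main obstacle is the remainder $\mathcal{S}_n(s,\pi_{j^s_K,K},A)$, which is taken over an uncountable family of short intervals of length $h_K=2^{-K}(\overline{s}-\underline{s})$. We control it by monotonicity rather than by moments. For $s\in(\pi_{i+1,K},\pi_{i,K}]$ the indicator part satisfies
\[
0\le \sum_t G_n([X_{t+\ell}]_\ell,s,\pi_{i,K},A)\le \sum_t G_n([X_{t+\ell}]_\ell,\pi_{i+1,K},\pi_{i,K},A).
\]
Writing the right-hand side as $\mathcal{S}_n(\pi_{i+1,K},\pi_{i,K},A)$ plus its centering and linear terms, we obtain
\begin{align*}
|\mathcal{S}_n(s,\pi_{i,K},A)|\le |\mathcal{S}_n(\pi_{i+1,K},\pi_{i,K},A)| &+ 2n\,\PP[[X_\ell]_\ell\in u_n((\pi_{i+1,K}A)\setminus(\pi_{i,K}A))]\\
&+ 2\sup_{s'}\bigl|\nabla G_{\infty,n}(\mathbf{0}_T,s',\pi_{i,K},A)\bigr|\,\Bigl\|\sum_t[X_{t+\ell}]_\ell\Bigr\|.
\end{align*}
The first term is already covered by the chaining bound.

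For the second term we use the defining inclusion of $\mathcal{C}_T$. It gives the bound $n\sum_j\PP[|X_0|\in u_n\mathfrak{a}_j[\pi_{i+1,K},\pi_{i,K}]]\lesssim n\,u_n h_K f_{X_0}(u_n)$, using Assumption~\ref{asu:rv_f}, the symmetry of $X_0$, and uniform convergence of regularly varying functions on compacts. After normalization by $n^{d+1/\alpha}f_{X_0}(u_n)$ this is of order $n^{1-d-1/\alpha}u_n2^{-K}$, which vanishes if we take $K_n= C\log_2 n$ with $C$ large; this choice is compatible with the chaining step. For the third term, $\nabla G_{\infty,n}(\mathbf{0}_T,s',\pi_{i,K},A)$ is the integral of $\nabla f_{[X_\ell]_\ell}$ over $u_n((s'A)\setminus(\pi_{i,K}A))$, and this set is contained in the union of the slabs $\{|x_{j-1}|\in u_n\mathfrak{a}_j[\pi_{i+1,K},\pi_{i,K}]\}$. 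We bound it by $\lesssim u_nh_K\sup_{|x|\ge c u_n}|\text{marginal-type derivative bound}|$, which should be $\lesssim h_K f_{X_0}(u_n)$. This requires a crude integrability estimate on $\nabla f_{[X_\ell]_\ell}$ of the kind behind Lemma~\ref{lem:multi_swap}, with an extra polynomial factor in $u_n$ that is absorbed by taking $C$ larger. Finally $\|\sum_t[X_{t+\ell}]_\ell\|=O_\PP(n^{d+1/\alpha})$ by Theorem~\ref{thm:linear_clt_multi}, so the normalized third term is $O_\PP(2^{-K}\mathrm{poly}(u_n))\to0$. Collecting the three pieces gives the claim. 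Measurability of the supremum is not addressed, which is why the conclusion is stated in outer probability.
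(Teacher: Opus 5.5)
Your architecture is the paper's: the same dyadic partition, additivity of $\mathcal{S}_n$ in the interval, Corollary~\ref{thm:main_simple} for $\mathcal{S}_n(\overline{s},\infty,A)$, and monotonicity of $G_n$ to reduce $\mathcal{S}_n(s,\pi_{j^s_K,K},A)$ to the last link plus a deterministic remainder (this is Lemma~\ref{lem:final}(i)). However, the chaining estimate fails as you wrote it. You bound the supremum of the chain part by $\sum_{k\le K}\sum_i|\mathcal{S}_n(\pi_{i+1,k},\pi_{i,k},A)|$. You then assert that its $r_0$-th moment is at most $\sum_{k,i}\EE|\mathcal{S}_n(\pi_{i+1,k},\pi_{i,k},A)|^{r_0}\lesssim K n^{r_0(\kappa_0+\delta/2)}$, ``using $r_0\ge 1$''. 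This is backwards. The inequality $|\sum_m x_m|^{r}\le\sum_m|x_m|^{r}$ holds for $r\le 1$, but here $r_0>1$ in both cases of its definition (Assumption~\ref{asu:coef} gives $\alpha(1-d)>1$). The correct estimate via Minkowski is $\|\sum_{k,i}|\mathcal{S}_n(\pi_{i+1,k},\pi_{i,k},A)|\|_{L^{r_0}}\lesssim n^{\kappa_0+\delta/2}\sum_{k\le K}2^{k}2^{-k/r_0}\asymp n^{\kappa_0+\delta/2}\,2^{K(1-1/r_0)}$. Your remainder step forces $2^{K}$ to be a positive power of $n$ (you need $2^{-K}n^{1-d-1/\alpha}u_n\to 0$), so this extra factor is a power of $n$. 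The margin $n^{-\delta/2}$ in \eqref{eq:cot2} is arbitrarily small and cannot absorb it. Your fallback has the same defect. A union bound for $\{\sum_k\sum_i|\cdot|>\epsilon\}$ needs thresholds summing to $\epsilon$ over roughly $2^{K+1}$ terms, which reintroduces the powers of $2^{k}$.

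The fix uses the observation you already made: for a given $s$, only one link per level is active. Bound the supremum by $\sum_{k\le K}\max_i|\mathcal{S}_n(\pi_{i+1,k},\pi_{i,k},A)|$ and use $(\max_i|\cdot|)^{r_0}\le\sum_i|\cdot|^{r_0}$. Then Theorem~\ref{lem:reduction_principle_optimal}(ii) gives an outer $r_0$-th moment $\lesssim n^{r_0(\kappa_0+\delta/2)}$ per level. Split $\epsilon$ only among the $K+1$ levels. Markov's inequality then yields $\lesssim K^{r_0+1}\epsilon^{-r_0}n^{r_0(\kappa_0+\delta/2)}$, which is negligible after normalization for logarithmic $K$. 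This is Lemma~\ref{lem:final}(ii) and the final Markov step of the paper; the heuristic display before the theorem is loose in the same way as your version.

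With this repair the rest of your argument works, and your remainder treatment is a valid variant. The paper bounds both the slab probability and $\|\nabla G_{\infty,n}(\mathbf{0}_T,s_1,s_2,A)\|_\infty$ by $u_n^{-\gamma}((s_2-s_1)\land 1)$ via Lemma~\ref{lem:single}. This yields $B_n$ with $\EE|B_n|\lesssim 2^{-K}n$ and the choice $K_n=\lfloor\log_2(n^{2-d-1/\alpha}/f_{X_0}(u_n))\rfloor$. You instead use regular variation of $f_{X_0}$ and $\|\sum_t[X_{t+\ell}]_\ell\|=O_{\PP}(n^{d+1/\alpha})$, together with $K_n=C\log_2 n$. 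The gradient estimate you leave open is already available: Lemma~\ref{lem:single}(i) gives $\|\nabla G_{\infty,n}(\mathbf{0}_T,s',\pi_{i,K},A)\|_\infty\lesssim 2^{-K}$ uniformly in $s'$ and $i$, with no polynomial loss in $u_n$.
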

\begin{remark}[Additional assumptions about sets in Theorem~\ref{thm:main}]
We already see in Corollary~3.6 in~\cite{dre2015} that additional assumptions on the class of sets are needed to accomplish the transition from deterministic to random thresholds. We introduce these assumptions in a transparent way, showing in Example~\ref{ex:ct_sets} that the restricted class $\mathcal{C}_T$ is sufficiently large from a statistical standpoint. 
\end{remark}
In the next section, we derive central limit theory from the results developed so far.
\section{Central limit theory for extremogram-type tail-dependence estimators}
\label{sec:clt}
We now apply our reduction principles to establish central limit theorems.
We provide proofs of the results in Section~\ref{app:proofsmain} of the supplementary material.
\subsection{Central limit theory for empirical tail measures}
\label{sec:clt:tailmes} 

Let $B\subset\RR$ be a Borel set bounded away from $0$, whose boundary has Lebesgue measure 0. Then, by regular variation and symmetry of $\varepsilon$,
\begin{align*}
  \frac{
  \PP[\varepsilon\in u_n \cdot B]
  }{\PP[|\varepsilon|>u_n]}
  \ \to \ 
  \mu_{\varepsilon}[B]
  \ := \ 
  \int_{B}
  \frac{\nu}{2}
  |z|^{-1-\nu}
  \,\mathrm{d}z
  \,.
\end{align*}
Let $A\subset \RR^T$ be bounded away from 0 such that 
for all $j\ge -(T-1)$, 
\begin{align*}
 \mu_\varepsilon\left[
\left\{
x\in\RR
\mid
  x\cdot [a_j,\ldots,a_{j+T-1}]
  \in \partial A
\right\}
  \right]
  \ = \ 0\,,
\end{align*}
where $\partial A$ is the boundary of the set $A$, 
and define the tail measure at lag $T-1$ by 
\begin{align*}
	\mu_T(A)
	\ = \
	\frac{1}{\sum_{i=0}^{\infty}|a_i|^{\nu}}
  \sum_{j=-(T-1)}^{\infty}
  \mu_\varepsilon[
\left\{
x\in\RR
\mid
  x\cdot [a_j,\ldots,a_{j+T-1}]
  \in A
\right\}
  ]
	\,.
\end{align*}
Since $A$ is a continuity set of $\mu_T$ that is bounded away from 0, it holds for all 
$s>0$
by Proposition~5.2.8 in~\cite{mikoschExtremeValueTheory2024},
\begin{align*}
	\frac{
		\PP[[X_{\ell}]_{\ell}\in u_n\cdot s \cdot A]
	}{
		\PP[|X_0|>u_n]
	}
	\ \to\
	\mu_T(s\cdot A)
  \ = \ 
  s^{-\nu}\cdot \mu_T(A)
	\qquad\text{as}\ n\to\infty\,.
\end{align*}
This convergence holds locally uniformly in $s$, due to local uniformity of univariate regular variation.
For $s>0$ 
denote the empirical tail measure and its expectation as
\begin{align*}
	\widehat{\PP}_n(s,A)
	\ := \
  \frac{1}{n-T+1}
	\sum_{t=1}^{n-T+1}
	\frac{\ind\{[X_{t+\ell}]_\ell \in u_n \cdot s \cdot A\}}{\PP[|X_0|>u_n]}
	\qquad\text{and}\qquad
	\PP_n(s,A)
	\ := \
	\frac{\PP\left[[
					X_{\ell}]_\ell \in u_n \cdot s \cdot A\right]}{\PP[|X_0|>u_n]}
	\,.
\end{align*}
\subsubsection{Result for deterministic thresholds}
 Setting $s=1$, the reduction principle tells us that the asymptotic profile is determined by
  \begin{align*}
    \left(
		\nabla_{\mathbf{y}}G_{\infty,n}(\mathbf{y},1,\infty, A)\left.\right|_{\mathbf{y}=\mathbf{0}_T}
    \right)^{\top}
    \sum_{t=1}^{n-T+1}
    [X_{t+\ell}]_\ell
    \,,
  \end{align*}
  where we see from Theorem~\ref{thm:linear_clt_multi} that the correct rate of convergence for the multivariate partial sum is $n^{-d-1/\alpha}$.
  Since
$
			\nabla_{\mathbf{y}} G_{\infty,n}(\mathbf{y},1,\infty, A)\Big|_{\mathbf{y}=\mathbf{0}_T} 
$, defined in~\eqref{eq:need},
  is an integral of an integrable function over an asymptotically vanishing domain, it generally converges to $\mathbf{0}_T$ as $n\to\infty$.
 Therefore, we need an assumption on its rate of decay to identify the limit.
  We verify this assumption in a non-trivial setting in Theorem~\ref{thm:example}.
\begin{assumption}[Convergence of scaled gradient]
	\label{asu:nabla_G}
 	For the set $A$ there exists $H_A(1)\in \RR^T$ such that
  \begin{align*}
		\left|
		\frac{
			\nabla_{\mathbf{y}} G_{\infty,n}(\mathbf{y},1,\infty, A)\Big|_{\mathbf{y}=\mathbf{0}_T}
		}
		{f_{X_0}(u_n)}
		\ - \ H_A(1)
		\right|
		\ \to \ 0
		\qquad\text{as}\ n\to\infty\,.\qquad
	\end{align*}
\end{assumption}
\begin{remark}[Assumption~\ref{asu:nabla_G}]
  \label{rem:asu_4}
  Later we allow $s$ to vary around $1$ in  
  $
			\nabla_{\mathbf{y}} G_{\infty,n}(\mathbf{y},s,\infty, A)\Big|_{\mathbf{y}=\mathbf{0}_T}
  $ which leads to a similar assumption but with a vector-valued function $H_A:s\mapsto H_A(s)$ instead of a constant vector $H_A(1)$. 
\end{remark}
\begin{theorem}[Empirical tail measure with deterministic thresholds]
  \label{thm:conv_multi_simple}
	Let Assumptions~\ref{asu:f},~\ref{asu:coef}, and~\ref{asu:rv_f} hold, let $A_0,\ldots,A_h$, $h\in\NN_0$, be bounded away from 0, and let $u_n\to\infty$ with
	$
		u_n=o(n^{(d+1/(\nu \land 2)-\kappa_0-\delta)/(\nu+1)})
	$ for some $\delta>0$.
		If 
        $A_0,\ldots,A_h$ satisfy Assumption~\ref{asu:nabla_G} with $H_{A_0}(1),\ldots, H_{A_h}(1)$, then
		      \begin{align*}
			      n^{1-d-1/\alpha}
			      u_n
			      \begin{bmatrix}
				      \widehat{\PP}_n(1,A_i)
				      \ - \
				      \PP_n(1,A_i)
			      \end{bmatrix}
			      _{i=0,\ldots,h}
			      \ \to_d \
            \frac{
			      \nu
            }{2} Z_\alpha
			      [H_{A_i}(1)^{\top}\mathbf{1}_{T}]_{i=0,\ldots,h}
			      \qquad\text{as}\ n\to\infty\,. \qquad
		      \end{align*}
\end{theorem}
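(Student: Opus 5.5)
Fix $i\in\{0,\ldots,h\}$ and start from the identity
\begin{align*}
  n^{1-d-1/\alpha}u_n\bigl(\widehat{\PP}_n(1,A_i)-\PP_n(1,A_i)\bigr)
  \ = \
  \frac{n}{n-T+1}\cdot\frac{u_n f_{X_0}(u_n)}{\PP[|X_0|>u_n]}
  \cdot
  \frac{n^{-d-1/\alpha}}{f_{X_0}(u_n)}
  \sum_{t=1}^{n-T+1}\bigl(\ind\{[X_{t+\ell}]_\ell\in u_nA_i\}-\PP[[X_\ell]_\ell\in u_nA_i]\bigr).
\end{align*}
The plan is to treat the three factors separately. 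The first factor tends to $1$. For the second, I would use Assumption~\ref{asu:rv_f}: $f_{X_0}$ is regularly varying with index $-1-\nu$. Karamata's theorem then gives $u_nf_{X_0}(u_n)/\PP[X_0>u_n]\to\nu$. Symmetry of $\varepsilon$ makes $X_0$ symmetric, so $\PP[|X_0|>u_n]=2\PP[X_0>u_n]$, and the second factor tends to $\nu/2$.

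For the third factor I would apply Corollary~\ref{thm:main_simple} with $s=1$ and $A=A_i$. Its threshold condition is exactly the one assumed here, so the normalized centred sum equals
\begin{align*}
  \left(\frac{\nabla_{\mathbf{y}}G_{\infty,n}(\mathbf{y},1,\infty,A_i)|_{\mathbf{y}=\mathbf{0}_T}}{f_{X_0}(u_n)}\right)^{\top}
  n^{-d-1/\alpha}\sum_{t=1}^{n-T+1}[X_{t+\ell}]_\ell
\end{align*}
up to an $o_{\PP}(1)$ term. There are finitely many $i$, so the $o_{\PP}(1)$ remainders vanish jointly.

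To identify the limit, write the scaled gradient as $H_{A_i}(1)+o(1)$ by Assumption~\ref{asu:nabla_G}. By Theorem~\ref{thm:linear_clt_multi}, $n^{-d-1/\alpha}\sum_t[X_{t+\ell}]_\ell\to_d Z_\alpha\mathbf{1}_T$, and in particular this sequence is tight. Hence the product of the $o(1)$ error with the partial-sum vector is $o_{\PP}(1)$. The main-term vector
\[
  \Bigl[H_{A_i}(1)^\top n^{-d-1/\alpha}\textstyle\sum_t[X_{t+\ell}]_\ell\Bigr]_{i=0,\ldots,h}
\]
is a fixed linear map, the matrix with rows $H_{A_i}(1)^\top$, applied to a single $\RR^T$-valued partial sum. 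The continuous mapping theorem therefore gives convergence to $Z_\alpha[H_{A_i}(1)^\top\mathbf{1}_T]_i$. Finally, Slutsky's lemma combines this with the deterministic factor $\nu/2$ and the $o_{\PP}(1)$ remainders to yield the stated limit.

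The only delicate point is that joint convergence over $i$ needs no additional argument. All coordinates are driven by the same partial-sum vector, and the remainders are negligible in probability for each $i$, so finite-dimensional joint convergence is automatic. The remaining work is routine: checking the Karamata constant and using the symmetry of $X_0$, which yields the factor $\nu/2$ rather than $\nu$.
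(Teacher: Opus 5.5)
Your argument is correct and follows the paper's proof essentially step by step. You use Corollary~\ref{thm:main_simple} with $s=1$ to reduce to the linear term, combine Assumption~\ref{asu:nabla_G} with Theorem~\ref{thm:linear_clt_multi} via Slutsky's lemma, and obtain the constant $\nu/2$ from Karamata's theorem and the symmetry of $X_0$; the only cosmetic difference is that you get joint convergence from the continuous mapping theorem applied to the fixed linear map with rows $H_{A_i}(1)^\top$, whereas the paper uses the Cram\'er--Wold device, and both are immediate because every coordinate is driven by the same partial-sum vector.
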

\begin{remark}[Increased speed in the extreme value setting]
  \label{rem:increased_speed}
  Compared to standard central limit theory for long memory linear time series,
  the increased speed
  $n^{1-d-1/\alpha}u_n$ has already been observed in the univariate setting of~\cite{scheffelCentralLimitTheory2025}, see Remark~3.7 therein. 
\end{remark}
\subsubsection{Heuristic for the transition to sample quantiles as thresholds}
\label{sec:sub:heur}
A consequence of the uniform reduction principle stated in Theorem~\ref{thm:main} is that if $A_0,\ldots,A_h\in \mathcal{C}_T$ satisfy a uniform version of Assumption~\ref{asu:nabla_G} (see Assumption~\ref{asu:nabla_G_unif}) and 
		      if $(Y^{(i)}_n)$, for $i\in \{0,\ldots,h\}$, are random sequences that satisfy $Y^{(i)}_n/u_n\to_{\PP}1$ as $n\to\infty$, we have
		      \begin{align}
            \label{eq:strange_conv}
            \begin{split}
            &
			      n^{1-d-1/\alpha}
			      u_n
			      \begin{bmatrix}
				      \widehat{\PP}_n(Y^{(i)}_n/u_n,A_i)
				      \ - \
				      \PP_n(s,A_i)
				      \Big|_{s=Y_n^{(i)}/u_n}
			      \end{bmatrix}
			      _{i=0,\ldots,h}
                  \\&
			      \ \to_d \
            \frac{\nu}{2} Z_\alpha
			      [H_{A_i}(1)^{\top}\mathbf{1}_{T}]_{i=0,\ldots,h}
			      \qquad\text{as}\ n\to\infty\,; \qquad
            \end{split}
		      \end{align}
          see Theorem~\ref{thm:conv_multi_inter} in the supplementary material for a rigorous statement.
          From Lemma~C.1 and Lemma~D.1 in~\cite{scheffelCentralLimitTheory2025}
we know that $Y_n^{(i)} = Y_n = X_{n-k:n}$ is a valid choice when $u_n$ is the marginal quantile $q_{X_0}(1-k/n)$, under suitable assumptions on the growth of $k=k_n \to\infty$.
To obtain a version of this convergence with centering
$
\PP_n(1,A_i)
$ instead of $\PP_n(s,A_i)\Big|_{s=X_{n-k:n}/u_n}$, that is, with deterministic rather than random centering, we 
use the following Taylor approximation:
\begin{align}
	\label{eq:prev_decomp_2}
	\begin{split}
		 &
		\widehat{\PP}_n(X_{n-k:n}/u_n,A_i)
		\ - \
		\PP_n(1,A_i)
		\\ &
		  \ = \
		  \widehat{\PP}_n(X_{n-k:n}/u_n,A_i)
		  \ - \
		  \PP_n(s,A_i)\Big|_{s=X_{n-k:n}/u_n}
		\\ &
		  \qquad + \
		  \PP_n(s,A_i)\Big|_{s=X_{n-k:n}/u_n}
		\ - \
		  \PP_n(1,A_i)
		\\ &
		  \ \approx \
		  \widehat{\PP}_n(X_{n-k:n}/u_n,A_i)
		  \ - \
		  \PP_n(s,A_i)\Big|_{s=X_{n-k:n}/u_n}
		\\ &
		  \qquad + \
		  \frac{\partial}{\partial s}
		\PP_n(s,A_i)
		\Big|_{s=
				  1
				  }
		\cdot
		  \left(
		  \frac{X_{n-k:n}}{u_n}
		  \ - \ 1
		  \right)
		\,.
	\end{split}
\end{align}
Following Lemma~C.2 in~\cite{scheffelCentralLimitTheory2025}, we can analyze the joint convergence of these terms for $i\in\{0,\ldots,h\}$ as follows:
	let $s_0,\ldots,s_h, s^*\in \RR$ and write 
	\begin{align*}
		E_n(s_0,\ldots,s_h)
		\ := \
		\bigcap_{i=0}^h
		\left\{
    n^{1-d-1/\alpha}
    u_n
		\left(
		\widehat{\PP}_n(X_{n-k:n}/u_n,A_i)
		\ - \
		\PP_n(s,A_i)\Big|_{s=X_{n-k:n}/u_n}
		\right)
		\ \le \ s_i
		\right\}
    \,.
	\end{align*}
  Then one can show that 
	\begin{align}
    \label{eq:eq_eq}
    \begin{split}
		 &
		\PP
		\left[
			E_n(s_0,\ldots,s_h)
			\,,
			\quad
    n^{1-d-1/\alpha}
    u_n
			\left(
			\frac{X_{n-k:n}}{u_n}
			\ - \ 1
			\right)
			\ \le \ s^*
			\right]
		\\ &
		  \ = \
		  \PP
		  \left[
			  E_n(s_0,\ldots,s_h)
			  \,,
			  \quad
			  \frac{
    n^{1-d-1/\alpha}
    u_n
        }{\nu/2}
			\left(
			  \widehat{\PP}_n(1, B)
			  \ - \
			  \PP_n(1,B)
			\right)
			  \ \le \ s^*\cdot (1+o(1)) + o_{\PP}(1)
			  \right]
    \end{split}
	\end{align}
  for a suitable set $B$. This
allows us to derive the joint limit using a suitable version of the convergence in \eqref{eq:strange_conv} with $Y_n^{(i)}=X_{n-k:n}$ for $i\in \{0,\ldots,h\}$ and $Y_n^{(h+1)}=u_n$.
\subsubsection{Assumptions}
\label{sec:clt:tailmes:assumptions} 
Having established the joint convergence, the approximation in 
	\eqref{eq:prev_decomp_2} motivates us to study also 
  $
		  \frac{\partial}{\partial s}
		\PP_n(s,A_i)
		\Big|_{s=
				  1
				  }
  $ which 
  comes from
the derivative of $G_{\infty,n}(\mathbf{y},s,\infty,A)$ with respect to the scaling variable $s$.
\begin{assumption}
	\label{asu:f_more}
	$(x\mapsto x\cdot f'_{\varepsilon}(x))\in L^{1}(\RR)$.
\end{assumption}
\begin{remark}[Assumption~\ref{asu:f_more}]
  \label{rem:bremen}
  This holds if $f'_{\varepsilon}$ is regularly varying, since then the index of regular variation is $-2-\nu$ by Assumption~\ref{asu:f}. This is a very weak assumption, which in turn guarantees, by Lemma~\ref{lem:multi_swap}(ii) in the supplementary material,
  the existence and continuity of $\frac{\partial}{\partial s}\PP_n(s,A)$, together with the identity 
\begin{align}
  \label{eq:ps_main_text}
	\frac{\partial}{\partial s}
	\PP_n(s,A)
	\ =\
	\frac{1}{s}
	\left(
	T
	\frac{
			\PP[[X_{\ell}]_\ell\in u_n\cdot s\cdot A]
		}{\PP[|X_0|>u_n]}
	\ + \
	\int_{\RR^T}
	\ind\{\mathbf{x}\in u_n \cdot s\cdot A\}
	\cdot
	\frac{
			\mathbf{x}^{\top}
			\cdot
			\nabla f_{[X_{\ell}]_\ell}(\mathbf{x})
		}{\PP[|X_0|>u_n]}
	\,\mathrm{d}\mathbf{x}
	\right)
  \,.
\end{align}
  For rectangular sets as in Example~\ref{ex:ct_sets}(iii),
  existence and continuity of the derivative are readily obtained by partial integration without the need of Assumption~\ref{asu:f_more}. This, however, is not true for sets having a more complicated geometry like, for example, intersections with scale invariant sets that are covered in Example~\ref{ex:ct_sets}(v).
\end{remark}
While Assumption~\ref{asu:f_more} guarantees the existence of 
$\frac{\partial}{\partial s} \PP_n(s,A)$ with identity~\eqref{eq:ps_main_text}, 
we also need assumptions that control its convergence as $n\to\infty$.
  The following assumption is on the convergence of the second term in \eqref{eq:ps_main_text}, while
  the convergence of the first term in~\eqref{eq:ps_main_text} follows immediately from multivariate regular variation.
\begin{assumption}[Convergence of the second term in~\eqref{eq:ps_main_text}]
  \label{asu:cond_limit}
  The set $A$ is a continuity set of $\mu_T$ and there exists a compact neighborhood $1\in N_A\subset (0,\infty)$ and a function $p_A\colon N_A \to \RR$ such that 
	\begin{align*}
		\int_{\RR^T}
		\ind\{\mathbf{x}\in u_n \cdot s \cdot A\}
		\cdot
		\frac{
			\mathbf{x}^{\top}
			\nabla f_{[X_\ell]_\ell}(\mathbf{x})
		}{\PP[|X_0|>u_n]}
		\,\mathrm{d}\mathbf{x}
		\ \to \
		s^{-\nu}
		\left(
    \frac{\nu}{2}
		\cdot
		p_{A}(s)
		\ - \
		T
		\mu_T(A)
		\right)
	\end{align*}
  uniformly in $s\in N_A$ as $n\to\infty$. 
\end{assumption}
\begin{remark}[Convergence in~\eqref{eq:ps_main_text}]
\label{rem:stuttgart}
This assumption guarantees the convergence
	\begin{align*}
		 &
		\frac{\partial}{\partial s}
		\PP_n(s,A)
		  \ \to \
		s^{-1-\nu}
		\left(
		  T\cdot \mu_T(A)
		  \ + \
      \frac{\nu}{2}
		  \cdot
		  p_A(s)
		  \ - \
		  T\cdot \mu_T(A)
		  \right)
		  \ = \
		  \frac{\nu}{2}
      \cdot
		  s^{-1-\nu}\cdot p_A(s)
	\end{align*}
	uniformly in a neighborhood of $1$ as $n\to\infty$.
  The factor $\nu/2$ appears in the joint limit derived from \eqref{eq:eq_eq}, so Assumption~\ref{asu:cond_limit} is tailored to give a simple formula.
  Note that, by Lemma~\ref{lem:multi_swap}(ii) in the supplementary material and local uniformity of regular variation, $p_A$ is continuous on $N_A$. 
\end{remark}
As highlighted in Remark~\ref{rem:asu_4} and before~\eqref{eq:strange_conv}, we now state a uniform version of Assumption~\ref{asu:nabla_G}.
\begin{assumption}[Uniform version of Assumption~\ref{asu:nabla_G}]
	\label{asu:nabla_G_unif}
	For the set $A$
	there exists a compact neighborhood
	$1\in N_A\subset (0,\infty)$ 
	and a function $H_A\colon N_A\to \RR^T$ such that
	\begin{align*}
		\sup_{s\in N_A}
		\left|
		\frac{
			\nabla_{\mathbf{y}} G_{\infty,n}(\mathbf{y},s,\infty, A)\Big|_{\mathbf{y}=\mathbf{0}_T}
		}
		{f_{X_0}(s\cdot u_n)}
		\ - \ H_A(s)
		\right|
		\ \to \ 0
		\qquad\text{as}\ n\to\infty\,.\qquad
	\end{align*}
\end{assumption}
\begin{remark}[Continuity of the limit in Assumption~\ref{asu:nabla_G_unif}]
  The continuity of $(\mathbf{y},s)\mapsto \nabla_{\mathbf{y}}G_{\infty,n}(\mathbf{y},s,\infty,A)$  obtained from Lemma~\ref{lem:multi_swap}(ii) in the supplementary material, together with the uniform convergence in $N_A$ guarantees that $H_A$ is continuous on $N_A$.
\end{remark}
\begin{remark}[Simplification and validity of Assumptions~\ref{asu:cond_limit} and~\ref{asu:nabla_G_unif} for rectangular sets]
  \label{rem:sim}
In Proposition~\ref{prop:imply} we show
for rectangular sets that Assumptions~\ref{asu:cond_limit} and~\ref{asu:nabla_G_unif} are implied by a simpler assumption, which is then verified in Theorem~\ref{thm:example} in a non-trivial setting. 
\end{remark}
\subsubsection{Results for sample quantiles as thresholds}
The next theorem is the analog of Theorem~\ref{thm:conv_multi_simple} with sample quantiles as thresholds. 
There we observe a speed increase by the factor $u_n$
compared with the classical long memory rate $n^{-d-1/\alpha}$; see Remark~\ref{rem:increased_speed}. 
In the next result, we have the same increased speed, where we replace the factor $u_n$ by $q_{X_0}(1-k/n)$, with $k=\lfloor n\cdot \PP[X_0>u_n]\rfloor$; note that by regular variation of $X_0$, $u_n/q_{X_0}
		(
		1 - k/n
		) \to 1$ as $n\to\infty$. 
\begin{theorem}[Empirical tail measure with sample quantiles as thresholds]
	\label{thm:det_centering}
  Let Assumptions~\ref{asu:f},~\ref{asu:coef}, and~\ref{asu:rv_f} hold, let $A_0,\ldots,A_h$, $h\in\NN_0$, 
  be continuity sets of $\mu_T$ bounded away from 0, and let $u_n\to\infty$ with
	$
		u_n=o(n^{(d+1/(\nu \land 2)-\kappa_0-\delta)/(\nu+1)})
	$ 
  for some $\delta>0$.
  If Assumption~\ref{asu:f_more} holds,
  and
    if 
    $A_0,\ldots,A_h$ are in $\mathcal{C}_T$ and satisfy Assumption~\ref{asu:cond_limit} and~\ref{asu:nabla_G_unif} with functions 
    $p_{A_0},\ldots,p_{A_h}$, 
    and 
    $H_{A_0},\ldots,H_{A_h}$, respectively,
    then, for
          $k=\lfloor n\cdot \PP[X_0>u_n]\rfloor$,
	\begin{align*}
		 &
		n^{1-d-1/\alpha}
		q_{X_0}
		\left(
		1 - \frac{k}{n}
		\right)
		\begin{bmatrix}
			\widehat{\PP}_n(X_{n-k:n}/u_n,A_i)
			\ - \
			\PP_n(1,A_i)
		\end{bmatrix}
		_{i=0,\ldots,h}
		\\ &
		  \ \to_d \
      \frac{
		  \nu
      }{2}
		  Z_{\alpha}
		  \left[
			  \left(
			  H_{A_i}(1)
			  \right)^{\top}
			\mathbf{1}_T
			  \ +\
			  p_{A_i}(1)
			  \right]_{i=0,\ldots, h}
		\qquad\text{as}\ n\to\infty\,. \qquad
	\end{align*}
\end{theorem}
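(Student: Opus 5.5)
We follow the decomposition~\eqref{eq:prev_decomp_2} and treat its two pieces separately before combining them jointly. Write $r_n := n^{1-d-1/\alpha}u_n$. Since $u_n/q_{X_0}(1-k/n)\to 1$, Slutsky's lemma lets us use $r_n$ as the scaling throughout.

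\textbf{Step 1: consistency of the random threshold.} By Lemma~C.1 and Lemma~D.1 in~\cite{scheffelCentralLimitTheory2025}, with $k=\lfloor n\PP[X_0>u_n]\rfloor$ we have $X_{n-k:n}/u_n\to_{\PP}1$. More precisely, $r_n(X_{n-k:n}/u_n-1)=O_{\PP}(1)$. This rate comes from inverting the univariate tail empirical process through the event identity $\{X_{n-k:n}\le u_n s\}=\{\sum_t\ind\{X_t>u_ns\}\le k\}$.

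\textbf{Step 2: the random-centering term.} Fix a compact neighbourhood $[\underline{s},\overline{s}]\subset\bigcap_i N_{A_i}$ of $1$. We apply Theorem~\ref{thm:main} to each $A_i\in\mathcal{C}_T$, replacing the random linear term by $\nabla_{\mathbf{y}}G_{\infty,n}(\mathbf{y},s,\infty,A_i)|_{\mathbf{y}=\mathbf{0}}^{\top}\sum_t[X_{t+\ell}]_\ell$ uniformly in $s$. Assumption~\ref{asu:nabla_G_unif}, continuity of $H_{A_i}$, and local uniform regular variation of $f_{X_0}$ (Assumption~\ref{asu:rv_f}) give $f_{X_0}(su_n)/f_{X_0}(u_n)\to s^{-1-\nu}$ uniformly. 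Together with $f_{X_0}(u_n)u_n/\PP[|X_0|>u_n]\to\nu/2$, which follows from regular variation and symmetry, evaluating at $s=X_{n-k:n}/u_n\to_{\PP}1$ and invoking Theorem~\ref{thm:linear_clt_multi} yields
\[
r_n\bigl[\widehat{\PP}_n(X_{n-k:n}/u_n,A_i)-\PP_n(s,A_i)|_{s=X_{n-k:n}/u_n}\bigr]
=\tfrac{\nu}{2}H_{A_i}(1)^{\top}\,n^{-d-1/\alpha}\sum_t[X_{t+\ell}]_\ell+o_{\PP}(1).
\]
The expansion is jointly in $i$, driven by the same partial sum. This is the rigorous form of~\eqref{eq:strange_conv}.

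\textbf{Step 3: the deterministic Taylor term.} By Assumption~\ref{asu:f_more}, $s\mapsto\PP_n(s,A_i)$ is $C^1$, and by Remark~\ref{rem:stuttgart} (Assumption~\ref{asu:cond_limit}) its derivative converges to $\frac{\nu}{2}s^{-1-\nu}p_{A_i}(s)$ uniformly near $1$. The mean value theorem and Step~1 then give
\[
r_n\bigl[\PP_n(X_{n-k:n}/u_n,A_i)-\PP_n(1,A_i)\bigr]=\tfrac{\nu}{2}p_{A_i}(1)\,r_n(X_{n-k:n}/u_n-1)+o_{\PP}(1).
\]

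\textbf{Step 4: joint limit.} It remains to show that $r_n(X_{n-k:n}/u_n-1)$ is asymptotically the same multiple of $n^{-d-1/\alpha}\sum_t X_t$, namely $Z_\alpha$. We use~\eqref{eq:eq_eq} with $B=(1,\infty)$ and $T=1$. The event $\{r_n(X_{n-k:n}/u_n-1)\le s^*\}$ equals an event for $\widehat{\PP}_n(1+s^*/r_n,B)$. Re-centring via the univariate version of Step~2 at the deterministic level $s=1+s^*/r_n$, using $\partial_s\PP_n(s,B)|_{s=1}\to-\nu/2$ and the fact that $k/n$ matches $\PP[X_0>u_n]$ up to $O(1/n)$ (negligible after scaling, since $r_n/(n\PP[|X_0|>u_n])\to 0$ by the growth condition on $u_n$), we obtain
\[
r_n(X_{n-k:n}/u_n-1)=\frac{r_n}{\nu/2}\bigl(\widehat{\PP}_n(1,B)-\PP_n(1,B)\bigr)+o_{\PP}(1)=\frac{\nu/2\cdot H_B(1)}{\nu/2}Z_\alpha+o_{\PP}(1).
\]
For the half-line, $H_B(1)=1$, since the gradient equals $f_{X_0}(u_n)$. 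The $\frac{\nu}{2}$ normalisation in Assumption~\ref{asu:cond_limit} is arranged precisely so that the constants combine to $p_{A_i}(1)$.

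Summing Steps~2 and~3 then gives the common limit $\frac{\nu}{2}Z_\alpha[H_{A_i}(1)^{\top}\mathbf{1}_T+p_{A_i}(1)]_i$, since all coordinates are functionals of the single partial sum.

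\textbf{Main obstacle.} The delicate step is Step~2, which plugs a random $s$ into Theorem~\ref{thm:main}. It needs uniformity in $s$ both of the remainder and of the gradient normalisation. The uniform reduction principle is available only for $A\in\mathcal{C}_T$, which is why that hypothesis appears. A further concern is making sure that the $O(1/n)$ discretisation error of $k$ and the $o(1)$ errors in the derivative limit remain negligible at the fast rate $r_n$.
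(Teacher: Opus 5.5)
Your proposal is correct and follows essentially the paper's route. Both arguments use the decomposition~\eqref{eq:prev_decomp_2} into a random-centering term, handled by the uniform reduction principle as in Theorem~\ref{thm:conv_multi_inter}, plus a Taylor term whose derivative tends to $\frac{\nu}{2}p_{A_i}(1)$, and both tie $r_n(X_{n-k:n}/u_n-1)$ to the tail empirical measure of a half-line set $B$ through the order-statistic inversion. The one difference is in how the pieces are combined. You write every coordinate as a deterministic constant times the single partial sum $n^{-d-1/\alpha}\sum_t X_t$ plus $o_{\PP}(1)$, and treat the level shift $u_n(1+s^*/r_n)$ with the uniform reduction principle. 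The paper instead argues at the level of joint distribution functions, applying Theorem~\ref{thm:conv_multi_inter} with the extra set $B=(1,\infty)\times\RR^{T-1}$ and citing Lemma~D.1 of \cite{scheffelCentralLimitTheory2025} for the shift.
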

\begin{remark}[Different limiting behavior for random and deterministic thresholds]
  \label{rem:markedly}
  It may happen that
  $H_A(1)^{\top}\mathbf{1}_T = -p_A(1)$ while $H_{A}(1)^{\top}\mathbf{1}_T\neq 0$. Then, in the setting of deterministic thresholds (Theorem~\ref{thm:conv_multi_simple}), we get a non-degenerate limit, while sample quantiles as thresholds (Theorem~\ref{thm:det_centering}) give a degenerate limit at the same rate.
  In Section~\ref{sec:rect:cancellation}, we show that this happens for some rectangular sets that are relevant in practice.
\end{remark}
\subsection{Central limit theory for the empirical extremogram}
\label{sec:clt:extremogram} 

The final result is for extremogram estimators similar to those of Section~3.3 in~\cite{davisExtremogramCorrelogramExtreme2009}.
\begin{theorem}[Central limit theorems for empirical extremogram with deterministic and random thresholds]
	\label{thm:final}
  Let Assumptions~\ref{asu:f},~\ref{asu:coef}, and~\ref{asu:rv_f} hold, let $A_0,\ldots,A_h$, $h\in\NN_0$, be continuity sets of $\mu_T$ bounded away from 0, with $\mu_T(A_0)>0$, and let $u_n\to\infty$ with
	$
		u_n=o(n^{(d+1/(\nu \land 2)-\kappa_0-\delta)/(\nu+1)})
	$ for some $\delta>0$.
	\begin{enumerate}
		\item
    If $A_0,\ldots,A_h$ satisfy Assumption~\ref{asu:nabla_G} with $H_{A_0}(1),\ldots,H_{A_h}(1)$,
		      \begin{align*}
			       &
			      n^{1-d-1/\alpha}
			      u_n
			      \cdot
			      \left[
				      \frac{\widehat{\PP}_n(1,A_i)}{\widehat{\PP}_n(1,A_0)}
				      \ - \
				      \frac{\PP_n(1,A_i)}{\PP_n(1,A_0)}
				      \right]_{i=1,\ldots,h}
			      \\ &
			        \ \to_d \
              \frac{
			        \nu
              }{2}
			        Z_{\alpha}
			        \left[
				        \frac{
					        (\mu_T(A_0)\cdot H_{A_i}(1)- \mu_T(A_i)\cdot H_{A_0}(1))^{\top} \mathbf{1}_{T}
				      }{
					        \left(
					        \mu_T(A_0)
					      \right)^2
					        }
				      \right]
			      _{i=1,\ldots,h}
            \qquad\text{as}\ n\to\infty\,.
		      \end{align*}
		\item
    If Assumption~\ref{asu:f_more} holds, and
    if $A_0,\ldots,A_h$ are in $\mathcal{C}_T$ and satisfy Assumption~\ref{asu:cond_limit} and~\ref{asu:nabla_G_unif} with functions 
    $p_{A_0},\ldots,p_{A_h}$
    and 
    $H_{A_0},\ldots,H_{A_h}$, respectively, then, for
          $k=\lfloor n\cdot \PP[X_0>u_n]\rfloor$,
		      \begin{align*}
			       &
			      n^{1-d-1/\alpha}
			      q_{X_0}
			      \left(
			      1 - \frac{k}{n}
			      \right)
			      \left[
				      \frac{\widehat{\PP}_n(X_{n-k:n}/u_n,A_i)}{\widehat{\PP}_n(X_{n-k:n}/u_n,A_0)}
				      \ - \
				      \frac{\PP_n(1,A_i)}{\PP_n(1,A_0)}
				      \right]_{i=1,\ldots,h}
			      \\ &
			        \ \to_d\
			        \frac{\nu/2
				        }{
				        \left(
				        \mu_T(A_0)
				        \right)^2
				        }
			      Z_{\alpha}
			        \left[
				        \mu_T(A_0)
				      \left(
				        H_{A_i}(1)^{\top} \mathbf{1}_{T}
				      \ + \ p_{A_i}(1)
				        \right)
				        \right.
			      \\
			       &
				      \left.\qquad
				      \qquad
				      \qquad
				      \qquad
				      \qquad
				      - \
				      \mu_T(A_i)
				      \left(
				      H_{A_0}(1)^{\top} \mathbf{1}_{T} \ +\
				      p_{A_0}(1)
				      \right)
				      \right]
			      _{i=1,\ldots,h}
            \qquad\text{as}\ n\to\infty\,.
		      \end{align*}
	\end{enumerate}
\end{theorem}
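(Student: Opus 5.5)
Both parts follow from the joint convergence results already established (Theorems~\ref{thm:conv_multi_simple} and~\ref{thm:det_centering}) by a delta-method argument for the ratio map $g(x_0,x_i)=x_i/x_0$. The one ingredient that changes between (i) and (ii) is the joint limit of the numerators and denominators. The rest is the same algebraic linearization of the quotient around the deterministic centering $(\PP_n(1,A_0),\PP_n(1,A_i))$.

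For (i), set $r_n:=n^{1-d-1/\alpha}u_n$ and write
\begin{align*}
\frac{\widehat{\PP}_n(1,A_i)}{\widehat{\PP}_n(1,A_0)}-\frac{\PP_n(1,A_i)}{\PP_n(1,A_0)}
\ =\ \frac{\PP_n(1,A_0)\bigl(\widehat{\PP}_n(1,A_i)-\PP_n(1,A_i)\bigr)-\PP_n(1,A_i)\bigl(\widehat{\PP}_n(1,A_0)-\PP_n(1,A_0)\bigr)}{\widehat{\PP}_n(1,A_0)\,\PP_n(1,A_0)}\,.
\end{align*}
Theorem~\ref{thm:conv_multi_simple}, applied jointly to $A_0,\ldots,A_h$, gives that $r_n[\widehat{\PP}_n(1,A_i)-\PP_n(1,A_i)]_{i}$ converges to $\frac{\nu}{2}Z_\alpha[H_{A_i}(1)^\top\mathbf{1}_T]_i$. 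This is a single random variable times a deterministic vector, and it is the joint convergence that makes the limit collapse to one $Z_\alpha$.

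Two facts are needed to replace the random denominator by its limit.
\begin{enumerate}
\item Since $r_n\to\infty$ (indeed $d+1/\alpha<1$ because $d<1-1/\alpha$), we get $\widehat{\PP}_n(1,A_0)-\PP_n(1,A_0)=o_{\PP}(1)$.
\item By regular variation, $\PP_n(1,A_j)\to\mu_T(A_j)$, as recalled before the theorem, using that the $A_j$ are continuity sets.
\end{enumerate}
Because $\mu_T(A_0)>0$, the denominator therefore converges in probability to $\mu_T(A_0)^2$. The continuous mapping theorem together with Slutsky's lemma then yields the stated limit in (i).

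For (ii), I would proceed in exactly the same way, now centering $\widehat{\PP}_n(X_{n-k:n}/u_n,A_i)$ at $\PP_n(1,A_i)$. The rate $q_{X_0}(1-k/n)$ replaces $u_n$; the ratio of the two tends to $1$, so they are interchangeable in the rate. Theorem~\ref{thm:det_centering} provides the joint convergence of the vector
\[
q_{X_0}(1-k/n)\,n^{1-d-1/\alpha}\bigl[\widehat{\PP}_n(X_{n-k:n}/u_n,A_i)-\PP_n(1,A_i)\bigr]_{i=0,\ldots,h}
\]
to $\frac{\nu}{2}Z_\alpha[H_{A_i}(1)^\top\mathbf{1}_T+p_{A_i}(1)]_i$. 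As in (i), this gives consistency $\widehat{\PP}_n(X_{n-k:n}/u_n,A_0)\to_{\PP}\mu_T(A_0)>0$. The same algebraic identity, Slutsky's lemma and continuous mapping then produce the claimed limit. The factor $\mu_T(A_0)$ multiplies the $A_i$-term and $\mu_T(A_i)$ multiplies the $A_0$-term, all divided by $\mu_T(A_0)^2$.

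The only step that needs some care is the handling of the random denominator. One has to make sure the consistency statement holds for the random-threshold version in (ii), and that the event $\{\widehat{\PP}_n(\cdot,A_0)=0\}$, on which the ratio is undefined, has vanishing probability. Both follow from the consistency established above, since the limit $\mu_T(A_0)$ is strictly positive. Everything else reduces to the previously proved joint limit theorems, so I expect no substantive obstacle beyond bookkeeping.
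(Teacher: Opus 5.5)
Your proposal is correct and matches the paper's proof. The paper also starts from the quotient identity (written as the matrix $\begin{bmatrix}\PP_n(1,A_0)\cdot\mathbf{I}_h & [-\PP_n(1,A_i)]_{i=1,\ldots,h}\end{bmatrix}$ applied to the vector of centered empirical tail measures), derives consistency of $\widehat{\PP}_n(\cdot,A_0)$ from Theorems~\ref{thm:conv_multi_simple} and~\ref{thm:det_centering} together with $\PP_n(1,A_0)\to\mu_T(A_0)>0$, and concludes by Slutsky's lemma. One minor remark: Theorem~\ref{thm:det_centering} is already stated with the rate $n^{1-d-1/\alpha}q_{X_0}(1-k/n)$, so in (ii) you do not need to interchange $q_{X_0}(1-k/n)$ and $u_n$.
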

\begin{remark}[Comparison with existing literature]
  Comparing the proofs of Corollary~3.4 in~\cite{davisExtremogramCorrelogramExtreme2009} and Theorem~\ref{thm:final} in our work, we see that both
  feature the term $\mathbf{F}S_n$, where
\begin{align*}
  \mathbf{F}
  \ = \ 
		\begin{bmatrix}
			\mu_T(A_0)\cdot \mathbf{I}_{h} & [-\mu_T(A_i)]_{i=1,\ldots,h}
		\end{bmatrix}
\end{align*}
(compare with $\mathbf{F}$ in Equation~(3.18) in~\cite{davisExtremogramCorrelogramExtreme2009}),
and
\begin{align*}
  S_n
  \ = \
  n^{1-d-1/\alpha}
  u_n
\begin{bmatrix}
			\left[
				\widehat{\PP}_n(1,A_i)
				\ - \
				\PP_n(1,A_i)
				\right]_{i=1,\ldots,h}
			\\
			\widehat{\PP}_n(1,A_0)
			\ - \
			\PP_n(1,A_0)
		\end{bmatrix} 
\end{align*}
(compare with $S_n$ in Equation~(3.14) in~\cite{davisExtremogramCorrelogramExtreme2009}).

  While Theorem~3.2 in~\cite{davisExtremogramCorrelogramExtreme2009} gives a non-degenerate multivariate Gaussian limit for their version of $S_n$, Theorem~\ref{thm:conv_multi_simple} in our work gives, for our version of $S_n$, a stable random vector with linearly dependent components. Therefore, in our long memory setting, the randomness in the limit comes from one random variable $Z_{\alpha}$, rather than from a genuinely multivariate random vector. Since the theory of~\cite{davisExtremogramCorrelogramExtreme2009} is restricted to deterministic thresholds, we now compare Theorem~\ref{thm:final}(ii) (long memory---random thresholds) with Corollary~3.6 in~\cite{dre2015}(short memory---random thresholds). In the short memory case, limits for random and deterministic thresholds are generally the same, while the limits in (i) and (ii) of Theorem~\ref{thm:final} in our work can markedly differ; see Remark~\ref{rem:markedly}.
\end{remark}
\section{Theory for rectangular sets under random thresholds}
\label{sec:rect}

Throughout this section we consider
  $A=\bigtimes_{j=1}^T (\mathfrak{a}_j, \infty)$ with $\{\mathfrak{a}_j\}\subset (0,\infty)\cup\{-\infty\}$ and $\emptyset \neq J_A=\{j\mid \mathfrak{a}_j > 0\}\subset \{1,\ldots,T\}$. By Example~\ref{ex:ct_sets}(iii), $A\in \mathcal{C}_T$.
Proofs of the results of this section are provided in Section~\ref{app:proofsrect} of the supplementary material.
\subsection{Simplified assumptions}
\label{sec:rect:assumptions}
We show that our theory for random thresholds is applicable
for rectangular sets $A$ by providing simplified assumptions and showing their validity; see Remark~\ref{rem:sim}.
\begin{assumption}
  \label{asu:simplified}
  For the set $A$ there exist $(p_{j,A})_{j\in J_A}\subset [0,1]$, such that
	\begin{align}
    \label{eq:final_cond}
		\PP
		\left[
		\left.
		[X_{\ell}]_{\ell}
		\in u\cdot A
		\
		\right|
		\
		X_{j-1} = u\cdot \mathfrak{a}_j
		\right]
    \ \to \ 
    p_{j,A}
		\qquad\text{as}\ u\to\infty\,.\qquad
	\end{align}
\end{assumption}
\begin{remark}[Comparison with multivariate regular variation I]
\label{rem:classical_I}
Replacing the condition
\begin{align*}
		X_{j-1} \ =\  u\cdot \mathfrak{a}_j
        \qquad\text{by}\qquad
		X_{j-1} \ >\  u\cdot \mathfrak{a}_j
    \,,
\end{align*}
  classical multivariate regular variation yields
  \begin{align*}
    &
	\PP
		\left[
		\left.
		[X_{\ell}]_{\ell}
		\in u\cdot A
		\
		\right|
		\
		X_{j-1} > u\cdot \mathfrak{a}_j
		\right]
    \ = \ 
	\PP
		\left[
    \forall i\in J_A,
    \
    X_{i-1}> 
    \left(
    u\cdot
    \mathfrak{a}_j
    \right)
    \frac{
    \mathfrak{a}_i
    }{
    \mathfrak{a}_j
    }
    \ 
    \Bigg|\ 
		X_{j-1} > u\cdot \mathfrak{a}_j
				\right]
	    \\&
    \ \to\ 
    2
    \cdot
    \mu_T
    \left[
    \bigtimes_{i=1}^{T}
    \left(
    \frac{\mathfrak{a}_i}{\mathfrak{a}_j}
    , \infty
    \right)
    \right]
    \qquad\text{as}\ u\to\infty
    \,.
      \end{align*}
\end{remark}
The next result shows that Assumption~\ref{asu:simplified} implies Assumptions~\ref{asu:cond_limit} and~\ref{asu:nabla_G_unif}, and is therefore sufficient for the application of Theorem~\ref{thm:final}.
\begin{proposition}[Simplification of Assumptions~\ref{asu:cond_limit} and~\ref{asu:nabla_G_unif}]
  \label{prop:imply}
  Let Assumptions~\ref{asu:f},~\ref{asu:coef},~\ref{asu:rv_f}, and~\ref{asu:f_more} hold and let $A$ be a continuity set of $\mu_T$. Then 
  Assumption~\ref{asu:simplified} implies
  Assumptions~\ref{asu:cond_limit} and~\ref{asu:nabla_G_unif},
  with
	\begin{align*}
		p_A(s)
		\ = \
		-
		\sum_{j\in J_A}
		\mathfrak{a}_j^{-\nu}
		p_{j,A}
    \qquad\text{and}\qquad
		H_A(s)
		\ = \
		\sum_{j\in J_A}
		\mathfrak{a}_j^{-(\nu+1)}
		\cdot
		p_{j,A}\cdot \mathbf{e}_j
		\,.
	\end{align*}

\end{proposition}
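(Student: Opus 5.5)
The plan is to compute both quantities explicitly for the rectangular set $A=\bigtimes_{j=1}^T(\mathfrak{a}_j,\infty)$ by integrating out one coordinate at a time, and then to pass to the limit using Assumption~\ref{asu:simplified} together with regular variation of $f_{X_0}$ (Assumption~\ref{asu:rv_f}).

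\textbf{Gradient.} Start from \eqref{eq:need} with $k=\infty$ and $\mathbf{z}=\mathbf{0}_T$, $s_2=\infty$:
\[
\partial_{y_{j-1}}G_{\infty,n}(\mathbf{y},s,\infty,A)\big|_{\mathbf{y}=\mathbf{0}_T}
=-\int \ind\{\mathbf{x}\in u_n s A\}\,\partial_{j-1}f_{[X_\ell]_\ell}(\mathbf{x})\,\mathrm{d}\mathbf{x}.
\]
For $j\notin J_A$ the $x_{j-1}$-integral runs over all of $\RR$, so the derivative integrates to zero (boundary terms vanish by integrability of the density). For $j\in J_A$, integrate in $x_{j-1}$ over $(u_n s\mathfrak{a}_j,\infty)$. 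This yields the boundary term
\[
\int \ind\{\ldots\}\, f_{[X_\ell]_\ell}(\mathbf{x})\big|_{x_{j-1}=u_ns\mathfrak{a}_j}\,\mathrm{d}\mathbf{x}_{-(j-1)}
= f_{X_0}(u_ns\mathfrak{a}_j)\,\PP\big[[X_\ell]_\ell\in u_nsA \mid X_{j-1}=u_ns\mathfrak{a}_j\big].
\]
Here we use stationarity, $f_{X_{j-1}}=f_{X_0}$, and the existence of a continuous version of the conditional density. The latter follows because $X_{j-1}$ contains $\varepsilon_{j-1}$ with $C^2$ density; formally one disintegrates with respect to that innovation, using Lemma~\ref{lem:multi_swap}.

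Divide by $f_{X_0}(su_n)$. Uniform convergence of regular variation on compacts gives $f_{X_0}(u_ns\mathfrak{a}_j)/f_{X_0}(u_ns)\to\mathfrak{a}_j^{-\nu-1}$ uniformly in $s\in N_A$. The conditional probability tends to $p_{j,A}$ by Assumption~\ref{asu:simplified} with $u=u_ns$. Since $u_ns\to\infty$ uniformly over the compact $N_A$, this convergence is also uniform in $s$. Hence $H_A(s)=\sum_{j\in J_A}\mathfrak{a}_j^{-(\nu+1)}p_{j,A}\mathbf{e}_j$.

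\textbf{Scale derivative.} For Assumption~\ref{asu:cond_limit}, write $\mathbf{x}^\top\nabla f=\sum_j x_{j-1}\partial_{j-1}f$ and integrate each term by parts in $x_{j-1}$.
\begin{itemize}
\item For $j\notin J_A$ the term gives $-\PP[[X_\ell]_\ell\in u_nsA]$. One needs $x f\to 0$ at $\pm\infty$; this follows from Assumption~\ref{asu:f_more} and the tail bounds, perhaps via the same Lemma~\ref{lem:multi_swap}(ii).
\item For $j\in J_A$ the term gives $-\PP[\cdot]-u_ns\mathfrak{a}_j f_{X_0}(u_ns\mathfrak{a}_j)\,\PP[\cdot\mid X_{j-1}=u_ns\mathfrak{a}_j]$.
\end{itemize}
Summing over $j$ and dividing by $\PP[|X_0|>u_n]$, the first parts contribute $-T\,\PP_n(s,A)\to -T s^{-\nu}\mu_T(A)$, uniformly in $s$ by the local uniformity noted in Section~\ref{sec:clt:tailmes}.

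For the boundary parts, use symmetry ($X_0$ is symmetric since $\varepsilon$ is) together with Karamata/monotone-density reasoning from Assumption~\ref{asu:rv_f}. This gives $u f_{X_0}(u)/\PP[|X_0|>u]\to \nu/2$, so
\[
\frac{u_ns\mathfrak{a}_jf_{X_0}(u_ns\mathfrak{a}_j)}{\PP[|X_0|>u_n]}\to \frac{\nu}{2}(s\mathfrak{a}_j)^{-\nu}.
\]
Combined with Assumption~\ref{asu:simplified}, the total limit is $s^{-\nu}\bigl(\tfrac{\nu}{2}p_A(s)-T\mu_T(A)\bigr)$ with $p_A(s)=-\sum_{j\in J_A}\mathfrak{a}_j^{-\nu}p_{j,A}$, as claimed.

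\textbf{Main obstacle.} The hardest step is justifying the partial integration and the identification of the boundary term as $f_{X_0}$ times a conditional probability that is regular enough to evaluate pointwise at $x_{j-1}=u_ns\mathfrak{a}_j$. Concretely, this requires:
\begin{itemize}
\item continuity of $x_{j-1}\mapsto\int_{\{\mathbf{x}_{-(j-1)}\in\ldots\}} f_{[X_\ell]_\ell}\,\mathrm{d}\mathbf{x}_{-(j-1)}$;
\item decay of $x_{j-1}f$ at infinity.
\end{itemize}
I would first try to obtain both by conditioning on all innovations except $\varepsilon_{t}$ entering $X_{j-1}$ with coefficient $a_0=1$. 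This reduces the problem to the smooth one-dimensional density $f_\varepsilon$, whose bounds from Assumptions~\ref{asu:f} and~\ref{asu:f_more} permit dominated convergence.
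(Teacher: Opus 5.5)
Your proposal is correct and takes essentially the same route as the paper: coordinatewise integration by parts, identification of the boundary terms at $x_{j-1}=u_n s\mathfrak{a}_j$ as $f_{X_0}(u_ns\mathfrak{a}_j)$ times the conditional probability in Assumption~\ref{asu:simplified}, and then local uniformity of regular variation together with Karamata's theorem (the paper gets the gradient by differentiating $\PP[\min_{j\in J_A}(X_{j-1}+y_{j-1}-u_ns\mathfrak{a}_j)\ge 0]$ directly, which is equivalent to your integration by parts of \eqref{eq:need}). For the decay at infinity that you flag as the main obstacle, the paper avoids pointwise bounds on the joint density: it first restricts to $|x_{i-1}|\le M$ (dominated convergence via Lemma~\ref{lem:multi_density}(iv)), and after integrating out the other coordinates the boundary terms are at most $M f_{X_0}(\pm M)$, which vanishes by Assumption~\ref{asu:rv_f}.
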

The next result verifies Assumption~\ref{asu:simplified} in a non-trivial setting.
\begin{theorem}[Validity of Assumption~\ref{asu:simplified}]
	\label{thm:example}
		Let $T=2$, $A=(1,\infty)\times(\mathfrak{a}_1,\infty)$, with $\mathfrak{a}_1\in [1,\infty)$.
  Assume that $a_0=1$, 
  and that $(a_i)_{i\in\NN}$ is positive and strictly decreasing. 
    If Assumptions~\ref{asu:f} and~\ref{asu:coef} hold, and the distribution of $\varepsilon$ is unimodal, then Assumption~\ref{asu:simplified}
  holds with
	\begin{align*}
      p_{1,A}
      \ = \ 
      0
      \qquad\text{and}\qquad
      p_{2,A}
    \ = \ 
			\dfrac{\sum_{i=1}^{\infty}a_i^{\nu}}{\sum_{i=0}^{\infty}a_i^{\nu}}
		\ =\
			1
			\ - \
			\dfrac{1}{\sum_{i=0}^{\infty}a_i^{\nu}}
		\,,
	\end{align*}
  and Theorem~\ref{thm:final} applies.
\end{theorem}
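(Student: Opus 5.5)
We must show that, for $T=2$ and $A=(1,\infty)\times(\mathfrak{a}_1,\infty)$ with $\mathfrak{a}_1\ge 1$, the conditional probabilities $\PP[X_0>u, X_1>u\mathfrak{a}_1 \mid X_{j-1}=u\mathfrak{a}_j]$ converge to the stated limits. Here $\mathfrak{a}$ for the first coordinate equals $1$, so $J_A=\{1,2\}$. The identification ``Theorem~\ref{thm:final} applies'' then follows from Proposition~\ref{prop:imply}, once we note two facts. First, Assumption~\ref{asu:rv_f} holds by unimodality, see Remark~\ref{rem:unimodality}. Second, Assumption~\ref{asu:f_more} holds because unimodality gives a monotone density on each half-line, so $x f'_\varepsilon(x)$ is integrable by integrating by parts against $f_\varepsilon$ using $x f_\varepsilon(x)\to 0$. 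Continuity of $A$ for $\mu_T$ is immediate since the spectral ray directions $[a_j,a_{j+1}]$ hit $\partial A$ only on a $\mu_\varepsilon$-null set of $x$.

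The core is a single-big-jump analysis of the conditional law. Write $X_1=\varepsilon_1+ a_1\varepsilon_0+\sum_{i\ge1}(a_{i+1}-a_1a_i)\ldots$; more conveniently, $X_0=\sum_{i\ge0}a_i\varepsilon_{-i}$ and $X_1=\varepsilon_1+\sum_{i\ge0}a_{i+1}\varepsilon_{-i}$. Conditioning on $X_{j-1}=v$ with $v\to\infty$, I would use the regular variation of the density (Assumption~\ref{asu:rv_f}, derived from unimodality) together with a density version of the single-big-jump principle. Precisely, for a fixed index $i$, the conditional probability that the large value is produced by innovation $\varepsilon_{-i}$ (respectively $\varepsilon_1$) converges to $a_i^{\nu}/\sum_k a_k^\nu$ (resp.\ the corresponding weight). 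Given that jump, the remaining sum is $O_\PP(1)$ and $\varepsilon_{-i}\approx v/a_i$. I would establish this by writing $f_{X_{j-1}}(v)=\int f_{\varepsilon}((v-y)/a_i)a_i^{-1}\,\mathrm{d}F_{R_i}(y)$ for the remainder $R_i$. Uniformly controlling the contributions with no single dominant jump (via the bounds on $f_\varepsilon$ in Assumption~\ref{asu:f} and dominated convergence over $i$ using $\sum a_i^\nu<\infty$, valid since $\nu(1-d)>1$) is the step I expect to be the main obstacle. The infinite sum of jump locations requires a tail bound uniform in $i$; I would try a Potter-type bound on $f_\varepsilon$ combined with a truncation of the remainder at level $\epsilon v$.

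Given the jump decomposition, the limits are computed case by case. For $j=1$, condition on $X_0=u$ with the jump at $\varepsilon_{-i}\approx u/a_i$, $i\ge0$. Then $X_1\approx a_{i+1}u/a_i<u\le u\mathfrak{a}_1$ because the coefficients are strictly decreasing and $a_0=1>a_1$; the event fails asymptotically, so $p_{1,A}=0$. For $j=2$, condition on $X_1=u\mathfrak{a}_1$. A jump at $\varepsilon_1$, which has weight $1/\sum_k a_k^\nu$, gives $X_0=O_\PP(1)$ and the event fails. A jump at $\varepsilon_{-i}$ has weight $a_{i+1}^\nu/\sum_k a_k^\nu$ and gives $X_0\approx a_i u\mathfrak{a}_1/a_{i+1}>u\mathfrak{a}_1\ge u$, strictly so with ratio bounded away from $1$, so the event holds. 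Summing gives $\sum_{i\ge1}a_i^\nu/\sum_{i\ge0}a_i^\nu$. Negative jumps contribute nothing by positivity of the coefficients, while symmetry only fixes the constant $L/2$, which cancels. Strictness of the inequalities is essential, so that the $O_\PP(1)$ remainder does not affect the limit.
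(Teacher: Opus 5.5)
Your limit computations are right: a jump of size about $v/c$ in an innovation with coefficient $c$ carries density weight $c^{-1}f_\varepsilon(v/c)\sim c^{\nu}f_\varepsilon(v)$, the jump in $\varepsilon_1$ destroys the event for $j=2$, and the ratios $a_{i+1}/a_i<1$ destroy it for $j=1$. Your check of the side conditions for Theorem~\ref{thm:final} is also fine. Deriving Assumption~\ref{asu:f_more} from unimodality, by integrating $x f_\varepsilon'(x)$ by parts on each half-line, even fills in a point the paper leaves implicit.

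The gap is that the single-big-jump principle for the \emph{joint} density of $(X_0,X_1)$ given $X_{j-1}=v$ is asserted, not proved, and that is the whole content of the theorem. The identity $f_{X_{j-1}}(v)=\int f_\varepsilon((v-y)/c_i)\,c_i^{-1}\,\mathrm{d}F_{R_i}(y)$, with $c_i$ the coefficient of that innovation in $X_{j-1}$, holds for every $i$. It is the \emph{full} density, not the share of a jump at $i$. After truncating $R_i$ at $\epsilon v$, the part $|R_i|>\epsilon v$ is of the same order as $f_\varepsilon(v)$, because it contains all the other jump locations. So you still need three things: a partition into dominant-jump configurations, a proof that the leftover is $o(f_\varepsilon(v))$, and a justification for summing over infinitely many locations. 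On the last point, your strict separations hold only for each fixed $i$, since $a_i/a_{i+1}\to 1$. You flag this as the main obstacle and only say what you would try. Moreover, the bounds in Assumption~\ref{asu:f} are off from the true order $f_\varepsilon(v)\asymp v^{-1-\nu}$ by at least a factor $v$. The argument therefore has to run through regular variation of $f_\varepsilon$ and density asymptotics of the moving averages.

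The paper supplies this step via \eqref{eq:KKS}, namely $f_{\sum_i c_i\varepsilon_{-i}}(u)\sim\sum_i c_i^{\nu}f_\varepsilon(u)$ for nonnegative $\nu$-summable coefficients, obtained from unimodality, the monotone density theorem and Karamata's theorem. It combines this with a reduction to finitely many innovations. First, the claim is proved for $(X_{0,k},X_{1,k+1})$ by induction on $k$, disintegrating over the newly added innovation and splitting its range into five domains:
\begin{enumerate}
\item the new innovation is small, handled by the induction hypothesis;
\item the new innovation carries the jump, which produces the new weight $a_{k+2}^{\nu}\mathfrak{a}_1^{-1-\nu}$;
\item the remaining mixed cases, bounded using \eqref{eq:KKS}.
\end{enumerate}
Second, the truncation is removed by disintegrating over $(X_0-X_{0,k},X_1-X_{1,k+1})$, at a cost of a multiple of $\PP[|X_0-X_{0,k}|>1]+\PP[|X_1-X_{1,k+1}|>1]+\sum_{i>k}a_i^{\nu}$, uniformly in $u$.

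If you want to keep your global decomposition, a normalization argument also works. For $\epsilon<1/2$, and over innovations with $c_m>0$, the events $E_m=\{|R_m|\le\epsilon X_{j-1},\ \max_{k\neq m}|c_k\varepsilon_k|\le\epsilon X_{j-1}\}$ are pairwise disjoint on $\{X_{j-1}>0\}$. Conditioning on all innovations except $\varepsilon_m$ gives an exact density on $E_m$. For fixed $m$, its ratio to $f_{X_{j-1}}(v)$ tends to $c_m^{\nu}/\sum_k c_k^{\nu}$ times the limiting indicator of the event, by regular variation and monotonicity of $f_\varepsilon$ together with \eqref{eq:KKS} for $X_{j-1}$. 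These limits sum to $1$, while the corresponding densities sum to at most $f_{X_{j-1}}(v)$. Fatou's lemma then makes the no-dominant-jump part vanish, and Pratt's version of dominated convergence justifies passing the limit through the sum over $m$. As written, though, the central step of your proposal remains a heuristic.
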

\begin{remark}[Comparison with multivariate regular variation II]
We continue the comparison of Remark~\ref{rem:classical_I} under the assumptions of Theorem~\ref{thm:example}.
  Write $\mathfrak{a}_0 = 1$ and let $\sigma:\{0,1\}\to \{0,1\}$ be a permutation. Then 
  \begin{align*}
    &
    \PP[X_{\sigma(1)}>\mathfrak{a}_{\sigma(1)}\cdot u \mid X_{\sigma(0)}> \mathfrak{a}_{\sigma(0)}\cdot u]
    \ = \ 
    \frac{\PP[X_0 > u\,, X_1 > \mathfrak{a}_1 \cdot u]}
    {\PP[X_0 > \mathfrak{a}_{\sigma(0)}\cdot u]}
    \\&
    \ = \ 
    2
    \frac{
\PP[X_0 > u]
    }{
\PP[X_0 > \mathfrak{a}_{\sigma(0)}\cdot u]
    }
    \frac{\PP[X_0 > u\,, X_1 > \mathfrak{a}_1 \cdot u]}
    {
    \PP[|X_0|>u]
    }
    \\&
    \ \to \ 
    2 \cdot \mathfrak{a}_{\sigma(0)}^{\nu}
    \cdot
    \mu_2[(1,\infty)\times (\mathfrak{a}_1,\infty)]
    \,,
  \end{align*}
  where the tail measure is given by
  \begin{align*}
    &
    \mu_2[(1,\infty)\times (\mathfrak{a}_1,\infty)]
    \ = \
    \frac{1}{\sum_{i=0}^{\infty}a_i^{\nu}}
    \sum_{j=-1}^{\infty}
    \int_{\RR}
    \frac{\nu}{2}
    |z|^{-1-\nu}
    \ind\{z \cdot a_j > 1, z\cdot a_{j+1}
    > 
    \mathfrak{a}_1 
    \}
    \,\mathrm{d}z
    \ = \ 
    \frac{\mathfrak{a}_1^{-\nu}}{2}
    \frac{
    \sum_{i=1}^{\infty}a_i^{\nu}
    }{
    \sum_{i=0}^{\infty}a_i^{\nu}
    }
    \,,
  \end{align*}
  so that
\begin{align*}
  \lim_{u\to\infty}
  \PP[X_1 > \mathfrak{a}_1 \cdot u \mid X_0 > u]
  \ = \ 
  \mathfrak{a}_1^{-\nu}
  \frac{
  \sum_{i=1}^{\infty} a_i^{\nu}
  }{\sum_{i=0}^{\infty}a_i^{\nu}}
  \quad\text{and}\quad
  \lim_{u\to\infty}
  \PP[X_0 > u \mid X_1 > \mathfrak{a}_1\cdot u]
  \ = \ 
  \frac{
  \sum_{i=1}^{\infty} a_i^{\nu}
  }{\sum_{i=0}^{\infty}a_i^{\nu}}
  \ = \ p_{2,A}
  \,.
\end{align*}
\end{remark}

\subsection{Cancellation in the asymptotic scale}
\label{sec:rect:cancellation}
In Remark~\ref{rem:markedly} we point out that the asymptotic scale in Theorem~\ref{thm:det_centering} may vanish. We now study this for rectangular sets.
 In the setting of Proposition~\ref{prop:imply}, we have
	\begin{align*}
		H_{A}(1)^{\top}
		\mathbf{1}_T
		\ + \
		p_A(1)
		\ = \
		\sum_{j\in J_A}
		\mathfrak{a}_j^{-\nu}
		\left(
    \frac{1}{\mathfrak{a}_j}
		\ - \ 1
		\right)
		p_{j,A}
		\,.
	\end{align*}
  This is the asymptotic scale in Theorem~\ref{thm:det_centering} multiplied by $2/\nu$.
    In the setting of Theorem~\ref{thm:example}, this equals
		\begin{align*}
			\mathfrak{a}_1^{-\nu}
			\left(
			\frac{1}{\mathfrak{a}_1}
			\ - \ 1
			\right)
			\left(
			1
			-
			\frac{1}{\sum_{i=0}^{\infty}a_i^{\nu}}
			\right)
      \,.
		\end{align*}
    Note that, if $\mathfrak{a}_j = 1$ for all $j\in J_A$, this vanishes and therefore the limit in Theorem~\ref{thm:det_centering} is degenerate. 
For $T=1$, this is 
expected,
since
\begin{align*}
  \widehat{\PP}_n(X_{n-k:n}/u_n,A)
\ = \ 
\frac{1}{n}
\frac{1}{\PP[|X_0|>u_n]}
\sum_{t=1}^{n} 
\ind\{X_{t}> X_{n-k:n}\}
\ = \ 
\frac{k}{n}
\frac{1}{\PP[|X_0|>u_n]}
\end{align*}
is deterministic.
For $T>1$, this is remarkable and genuinely new, because
\begin{align*}
  \widehat{\PP}_n(X_{n-k:n}/u_n,A)
\ = \ 
\frac{1}{n-T+1}
\frac{1}{\PP[|X_0|>u_n]}
\sum_{t=1}^{n-T+1} \ind\left\{\min_{j\in J_A}X_{t+j-1}> X_{n-k:n}\right\}
\end{align*}
is generally random. 
Under the speed of Theorem~\ref{thm:conv_multi_simple}, this randomness, however, is not sufficient
to yield
a non-degenerate limit in the setting of Theorem~\ref{thm:det_centering}.
It remains an open question what the correct rate of convergence is in this setting.

\subsection{Univariate theory for random thresholds}
\label{sec:rect:1d}

Since our theory is also valid in the univariate case $T=1$, we now compare our results with those in the i.i.d.~and weakly dependent setting which are readily derived from existing theory.
For $\mathfrak{a}_1 = \mathfrak{a}\neq 1$, it follows from
Theorem~\ref{thm:det_centering} that
	\begin{align*}
    n^{1-d-1/\alpha}
    q_{X_0}
    \left(
    1
    -
    \frac{k}{n}
    \right)
		\left(
		\frac{1}{n}
		\sum_{t=1}^{n}
		\left(
			\frac{\ind\{X_t>\mathfrak{a}\cdot X_{n-k:n}\}}{\PP[X_0>u_n]}
			\ - \
			\frac{\PP[X_0>\mathfrak{a}\cdot u_n]}{\PP[X_0 > u_n]}
			\right)
		\right)
		\ \to_d \
        \nu
		\mathfrak{a}^{-\nu}
		\left|
		\frac{1}{\mathfrak{a}}
		- 1
		\right|
		Z_\alpha,
	\end{align*}
  as $n\to\infty$.
    The next theorem provides a comparable result in the i.i.d. setting.
\begin{theorem}[Univariate i.i.d.~setting]
	\label{thm:comp:iid}
	Let $X_1,\ldots, X_n$ be i.i.d. copies of a random variable $X$ having a continuous distribution. Assume that there are $\nu>0$, $\rho<0$, and a function $\mathfrak{A}$ that is either positive or negative, such that, for all $x>0$,
	\begin{align*}
		\frac{
			\dfrac{\PP[X>x\cdot u]}{\PP[X>u]}
			\ - \ x^{-\nu}
		}{\mathfrak{A}(1/\PP[X>u])}
		\ \to \
		x^{-\nu}
		\frac{x^{\rho\nu}- 1}{\rho/\nu}
		\qquad\text{as}\ u\to\infty\,.
	\end{align*}
	Let 
	$u_n\to \infty$ satisfy $n\PP[X>u_n]\to \infty$ and $\mathfrak{A}(1/\PP[X>u_n])=O(1/\sqrt{n\PP[X>u_n]})$, and write $k=\lfloor n\cdot \PP[X>u_n]\rfloor$. Then, for any $\mathfrak{a}>0$,
	\begin{align*}
		\sqrt{k}
		\left(
		\frac{1}{n}
		\sum_{i=1}^{n}
		\left(
			\frac{\ind\{X_i>\mathfrak{a}\cdot X_{n-k:n}\}}{\PP[X>u_n]}
			\ - \
			\frac{\PP[X>\mathfrak{a}\cdot u_n]}{\PP[X > u_n]}
			\right)
		\right)
		\ \to_d \
    \left(
		\mathfrak{a}^{-\nu}
		\left|
		\frac{1}{\mathfrak{a}^{\nu}}
		- 1
		\right|
    \right)^{1/2}
		Z\,,
	\end{align*}
	as $n\to\infty$, where $Z$ is a standard normal random variable.
\end{theorem}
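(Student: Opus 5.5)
We prove Theorem~\ref{thm:comp:iid} via the tail empirical process with random threshold, using the classical i.i.d.\ theory. Write $p_n:=\PP[X>u_n]$, so $k\sim np_n\to\infty$, and consider the tail empirical process
\begin{align*}
	W_n(x)
	\ := \
	\sqrt{k}\left(
	\frac{1}{k}\sum_{i=1}^n \ind\{X_i>x\cdot u_n\}
	\ - \
	\frac{\PP[X>x\cdot u_n]}{p_n}
	\right)
	\,,\qquad x>0\,.
\end{align*}
Since the $X_i$ are i.i.d., the standard tail empirical process result (e.g.\ Einmahl's theorem, or de~Haan--Ferreira, Theorem~5.1.2) gives $W_n\to W(x^{-\nu})$ in $\ell^\infty([\underline{x},\overline{x}])$ for any $0<\underline{x}<\overline{x}<\infty$, where $W$ is a standard Brownian motion. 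Replacing $1/k$ by $1/(np_n)$ costs only $O(1/k)$ relative error, which is negligible after multiplication by $\sqrt{k}$.

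\textbf{Step 1: the random threshold.} Because $\sum_i\ind\{X_i>X_{n-k:n}\}=k$ exactly (continuity of the distribution), evaluating $W_n$ at $\widehat{x}_n:=X_{n-k:n}/u_n$ gives
\begin{align*}
	W_n(\widehat{x}_n)
	\ = \
	\sqrt{k}\left(1 \ - \ \frac{\PP[X>\widehat{x}_n u_n]}{p_n}\right)+o_{\PP}(1)
	\,.
\end{align*}
By uniform convergence, $W_n(\widehat x_n)=W(1)+o_{\PP}(1)$ once $\widehat x_n\to_{\PP}1$. Combining this with the second-order condition, under which the bias is $O(\sqrt{k}\,\mathfrak{A})=O(1)$ and, by the stated rate assumption, contributes a bounded, asymptotically vanishing-in-form term locally uniformly in $x$, we obtain
\[
\sqrt{k}\,(\widehat{x}_n^{-\nu}-1)=W(1)+o_{\PP}(1)+\text{bias}.
\]

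\textbf{Step 2: the target statistic.} The statistic in question equals $\frac{k}{np_n}\cdot\frac1{\sqrt{k}}\bigl[W_n(\mathfrak{a}\widehat{x}_n)+\sqrt{k}\,(\PP[X>\mathfrak{a}\widehat{x}_nu_n]-\PP[X>\mathfrak{a}u_n])/p_n\bigr]$, scaled by $\sqrt{k}$. Using uniform convergence of $W_n$ and the second-order regular variation again,
\[
\sqrt{k}\,(\PP[X>\mathfrak{a}\widehat x_nu_n]-\PP[X>\mathfrak{a}u_n])/p_n=\mathfrak{a}^{-\nu}\sqrt{k}\,(\widehat x_n^{-\nu}-1)+o_{\PP}(1)+(\text{bias difference}).
\]
The limit is therefore $W(\mathfrak{a}^{-\nu})-\mathfrak{a}^{-\nu}W(1)$. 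Its variance is
\[
\mathfrak{a}^{-\nu}+\mathfrak{a}^{-2\nu}-2\mathfrak{a}^{-\nu}\bigl(\mathfrak{a}^{-\nu}\wedge1\bigr)=\mathfrak{a}^{-\nu}\bigl|1-\mathfrak{a}^{-\nu}\bigr|,
\]
which matches the claimed scale $\bigl(\mathfrak{a}^{-\nu}|\mathfrak{a}^{-\nu}-1|\bigr)^{1/2}$. This is checked directly in the two cases $\mathfrak{a}\ge1$ and $\mathfrak{a}<1$.

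\textbf{Main obstacle: the bias terms.} The delicate step is controlling the bias terms, since $\sqrt{k}\,\mathfrak{A}(1/p_n)$ is only assumed bounded, not vanishing. The plan is to show that the second-order bias in the centering at $\mathfrak{a}\widehat{x}_n$ and the bias in $\widehat{x}_n$ cancel to first order. Using the limit function $x^{-\nu}(x^{\rho\nu}-1)/(\rho/\nu)$ together with uniform (Potter-type) second-order bounds, I would show that the combined bias equals
\[
\sqrt{k}\,\mathfrak{A}\cdot\Bigl[h(\mathfrak{a}\widehat x_n)-\mathfrak{a}^{-\nu}h(\widehat x_n)-\bigl(h(\mathfrak a)-\mathfrak a^{-\nu}h(1)\bigr)\Bigr]+o_{\PP}(1),
\]
where $h$ denotes the second-order limit. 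Since $\widehat x_n\to_{\PP}1$ and $h$ is continuous, the bracket is $o_{\PP}(1)$, so the whole bias vanishes. The final step is Slutsky's lemma.
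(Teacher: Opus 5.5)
Your argument is correct apart from a sign slip noted at the end, but it follows a different route from the paper.

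\textbf{The paper's route.} The paper first reduces to $u_n=q_X(1-k/n)$ via the uniform inequality of de~Haan--Ferreira (Theorem~2.3.9). It then uses the duality between $\widehat F_n$ and the empirical quantile function to rewrite the event as $\{\widehat q_n(\alpha_n(t))\le \mathfrak{a}\,\widehat q_n(1-k/n)\}$. Finally it applies the Brownian approximation of the tail quantile process (Theorem~2.4.8 there), extended to $s\in(0,s_0]$ because $\beta_n(t)\to\mathfrak{a}^{-\nu}>1$ when $\mathfrak{a}<1$. In that route the second-order terms $\sqrt{k}\,\mathfrak{A}(n/k)\,\mathfrak{a}(\mathfrak{a}^{\rho\nu}-1)/\rho$ genuinely cancel: one comes from the quantile process at $\beta_n(t)$, the other from the expansion of $\beta_n(t)^{-1/\nu}$.

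\textbf{Your route.} You stay on the empirical-process side instead.
\begin{itemize}
\item The exactly centred tail empirical process at the deterministic threshold $u_n$ needs neither a second-order condition nor the reduction to $u_n=q_X(1-k/n)$.
\item The identity $\sum_i\ind\{X_i>X_{n-k:n}\}=k$ links $\widehat x_n$ to $W(1)$.
\item Second-order regular variation (locally uniform, with $\sqrt{k}\,\mathfrak{A}(1/p_n)=O(1)$) is used only to linearize $x\mapsto\PP[X>xu_n]/p_n$ at scale $1/\sqrt{k}$ near $1$ and near $\mathfrak{a}$.
\end{itemize}
In your route no cancellation is actually needed. The statement centres at the exact probability $\PP[X>\mathfrak{a}u_n]/p_n$ and $h(1)=0$, so each bias piece, $\sqrt{k}\,\mathfrak{A}\,h(\widehat x_n)$ and $\sqrt{k}\,\mathfrak{A}\,(h(\mathfrak{a}\widehat x_n)-h(\mathfrak{a}))$, is already $o_{\PP}(1)$ on its own. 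Your decomposition is also the i.i.d.\ analogue of the one the paper uses in the long-memory proof of Theorem~\ref{thm:det_centering}: random centering, plus the derivative times the normalized deviation of $X_{n-k:n}/u_n$, with the latter tied back to the empirical tail at $u_n$. This makes the comparison between the two settings more transparent.

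\textbf{Fixes.} The sign in Step~1 is wrong. From $W_n(\widehat x_n)=\sqrt{k}\,(1-\PP[X>\widehat x_n u_n]/p_n)+o_{\PP}(1)\to W(1)$ you get $\sqrt{k}\,(\widehat x_n^{-\nu}-1)=-W(1)+o_{\PP}(1)$, not $+W(1)$. With your sign, Step~2 would give $W(\mathfrak{a}^{-\nu})+\mathfrak{a}^{-\nu}W(1)$ and hence the wrong variance; your final limit is right only because it silently uses the correct sign.

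Also, to make ``$W_n(\widehat x_n)=W(1)+o_{\PP}(1)$'' rigorous, do one of the following before invoking finite-dimensional convergence of $(W_n(1),W_n(\mathfrak{a}))$:
\begin{itemize}
\item work on a strong-approximation probability space; or
\item use asymptotic equicontinuity of $W_n$ to replace $W_n(\widehat x_n)$ and $W_n(\mathfrak{a}\widehat x_n)$ by $W_n(1)$ and $W_n(\mathfrak{a})$.
\end{itemize}
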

\begin{remark}[Weakly dependent time series]
  The asymptotic variance in Theorem~\ref{thm:comp:iid} comes from
\begin{align*}
		 &
	    \Var[W(\mathfrak{a}^{-\nu})]
    \ + \
    \mathfrak{a}^{-2\nu}
    \Var[W(1)]
    \ - \ 
    2
    \mathfrak{a}^{-\nu}
    \Cov[W(\mathfrak{a}^{-\nu}),W(1)]
  		  \ = \
		  \mathfrak{a}^{-\nu}
		\left|
		  \frac{1}{\mathfrak{a}^{\nu}}
		\ - \ 1
		  \right|
		  \,,
	\end{align*}
  where $W$ is a Brownian motion. 
  We compare this with the short memory setting.
Let $(X_t)$ be a weakly dependent linear time series as in Section~3.2 in \cite{dreesExtremeQuantileEstimation2003}
with non-negative, decreasing coefficients $(a_i)$. 
  Under stronger assumptions, it follows from Theorem~2.1 in \cite{dreesExtremeQuantileEstimation2003},
by an argument similar to that used in the
  proof of Theorem~\ref{thm:comp:iid}, that
  	\begin{align*}
    &
		\sqrt{k}
		\left(
		\frac{1}{n}
		\sum_{t=1}^{n}
		\left(
			\frac{\ind\{X_t>\mathfrak{a}\cdot X_{n-k:n}\}}{\PP[X_0>u_n]}
			\ - \
			\frac{\PP[X_0>\mathfrak{a}\cdot u_n]}{\PP[X_0 > u_n]}
			\right)
		\right)
        \\&
		\ \to_d \
        \left(
		c(\mathfrak{a}^{-\nu},\mathfrak{a}^{-\nu})
		\ + \
		\mathfrak{a}^{-2\nu}
		\cdot
		c(1,1)
		\ - \
		2\cdot \mathfrak{a}^{-\nu}
		\cdot
		c(\mathfrak{a}^{-\nu},1)
        \right)^{1/2}
		Z
	\end{align*}
  as $n\to\infty$, where $Z$ is a standard normal random variable, and 
  where
	\begin{align*}
		c(x,y)
		\ = \
		(x\land y)
		\ + \
		\sum_{m=1}^{\infty}
		\left(
		c_m(x,y) \ + \
		c_m(y,x)
		\right)
	\end{align*}
	with
	\begin{align*}
		c_m(x,y)
		\ = \
		\frac{1}{\sum_{i=0}^{\infty}a_i^{\nu}}
		\sum_{j=0}^{\infty}
		\left(
		(x\cdot a_j^{\nu})\land
		(y\cdot a_{j+m}^{\nu})
		\right)
    \,,
    \qquad m\in \NN
		\,.
	\end{align*}
    Thus, the Brownian motion $W$ is replaced by a Gaussian process with covariance function $c$.
	The term $x\land y$ yields the variance in the i.i.d. setting. The additional contribution resulting from weak dependence is
	\begin{align*}
		 &
		\sum_{m=1}^{\infty}
		2\cdot
		\left(
		c_m(\mathfrak{a}^{-\nu},\mathfrak{a}^{-\nu})
		\ + \
		\mathfrak{a}^{-2\nu} c_m(1,1)
		\ - \
		\mathfrak{a}^{-\nu}
		\left(
		c_m(\mathfrak{a}^{-\nu},1)
		\ + \
		c_m(1,\mathfrak{a}^{-\nu})
		\right)
		\right)
		\\ &
		  \ = \
      \frac{2}{\sum_{i=0}^{\infty}a_i^{\nu}}
		\sum_{m=1}^{\infty}
		\sum_{j=0}^{\infty}
		\left(
		  \mathfrak{a}^{-\nu}
		a_{j+m}^{\nu}
		\ + \
		  \mathfrak{a}^{-2\nu}
		a_{j+m}^{\nu}
		\ - \
		  \mathfrak{a}^{-\nu}
		\left(
		  (\mathfrak{a}^{-\nu}a_j^{\nu})
		  \land
		  a_{j+m}^{\nu}
		\ + \
		  a_j^{\nu}
		\land
		  (\mathfrak{a}^{-\nu}a_{j+m}^{\nu})
		\right)
		  \right)
		\\ &
		  \ = \
		  \mathfrak{a}^{-\nu}
    \frac{2}{\sum_{i=0}^{\infty}a_i^{\nu}}
		\sum_{m=1}^{\infty}
		\sum_{j=0}^{\infty}
		\left(
		  a_{j+m}^{\nu}
		\ + \
    \mathfrak{a}^{-\nu}
		  a_{j+m}^{\nu}
		\ - \
		  \left(
		  (\mathfrak{a}^{-\nu}a_j^{\nu})
		  \land
		  a_{j+m}^{\nu}
		\ + \
		  a_j^{\nu}
		\land
		  (\mathfrak{a}^{-\nu}a_{j+m}^{\nu})
		\right)
		  \right)
          \,.
	\end{align*}
\end{remark}
\begin{remark}[Comparison of univariate results]
  While both the i.i.d. and weakly dependent settings yield the speed $\sqrt{k}$, our long memory setting gives the rate $n^{1-d-1/\alpha}q_{X_0}(1-k/n)$.
A similar difference in speed has also been observed in the univariate setting for the Hill estimator \cite[Section~1.2]{scheffelCentralLimitTheory2025}.
  While we see a Gaussian limit with modified variance under weak dependence, the long memory setting with $\nu\in (1,2)$ changes the class of limiting distributions to $S\alpha S$ where $\alpha=(\nu\land 2)\in(1,2)$. We also note that the dependence of the limit on $\mathfrak{a}$ 
  generally differs among the three settings.
\end{remark}

\section*{Acknowledgments}

Financial support from the French CNRS within the project ``Extreme value analysis of time series through de-randomization techniques'', funded through the IEA program, is gratefully acknowledged. G.~Stupfler acknowledges further financial support from the French \textit{Agence Nationale de la Recherche} under the grants ANR-23-CE40-0009 (EXSTA project) and ANR-11-LABX-0020-01 (Centre Henri Lebesgue), as well as from the Chair Stress Test, RISK Management and Financial Steering of the Foundation Ecole Polytechnique.

\clearpage

\appendix

\renewcommand{\thesection}{\Alph{section}}

\begin{center}
	{\Large Supplementary Material to the article \\[1ex] Central limit theory for serial tail dependence estimators in heavy-tailed long memory linear time series} \\

	Ioan Scheffel$^{a}$, Marco Oesting$^{a,b}$, Gilles Stupfler$^{c}$ \\

	$^{a}$ {\small Institute for Stochastics and Applications, University of Stuttgart, D-70563 Stuttgart, Germany} \\[1ex]
	$^{b}$ {\small Stuttgart Center for Simulation Science (SC SimTech), University of Stuttgart, D-70569 Stuttgart, Germany} \\[1ex]
	$^{c}$ {\small Univ Angers, CNRS, LAREMA, SFR MATHSTIC, F-49000 Angers, France} %
\end{center}
\noindent

Fix $T\ge 1$ and $0<\underline{s}<1<\overline{s}<\infty$. The first lemma reduces the multivariate problem to a series of univariate problems, which motivates us to 
distinguish it from the rest of the supplementary material.
\begin{lemma*}
	Let either of the following conditions hold:
	\begin{enumerate}
		\item
		      $A$ is bounded away from 0, $s_1\in (\underline{s},\infty)$ and $s_2 = \infty$
		\item
		      $A\in \mathcal{C}_T$ and $s_1,s_2\in (\underline{s},\overline{s}]$ satisfy $s_1<s_2$
	\end{enumerate}
	Then there exists $\{\mathfrak{a}_j\}_{j\in \{1,\ldots,T\}}\subset (0,\infty)$ independent of $s_1$ and $s_2$, such that
	\begin{align}
    \label{lem:two_sets}
		\ind\{
    \mathbf{z}
    \in u_n\cdot (s_1\cdot A)\setminus (s_2\cdot A)\}
		\ \le \
		\sum_{j=1}^T
		\ind
		\left\{
		|z_{j-1}|
		\in u_n \cdot \mathfrak{a}_j \cdot [s_1,s_2]
		\right\}
    \,,
    \qquad
    \mathbf{z}\in\RR^T
		\,.
	\end{align}
\end{lemma*}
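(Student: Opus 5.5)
Case (ii) follows almost at once from the definition of $\mathcal{C}_T$; case (i) needs a separate choice of the constants.

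\emph{Case (ii).} Assume $A\in\mathcal{C}_T(\mathfrak{a})$ and take the $\mathfrak{a}_j$ from the definition of $\mathcal{C}_T$. These constants depend only on $A$, so they do not depend on $s_1,s_2$. Let $\mathbf{z}\in u_n\cdot((s_1\cdot A)\setminus(s_2\cdot A))$. Then $\mathbf{x}=\mathbf{z}/u_n\in(s_1\cdot A)\setminus(s_2\cdot A)$, since scaling by $u_n>0$ is a bijection. By the defining inclusion, $|x_{j-1}|\in\mathfrak{a}_j\cdot[s_1,s_2]$ for at least one $j$. Hence $|z_{j-1}|\in u_n\mathfrak{a}_j[s_1,s_2]$, so the right-hand side of \eqref{lem:two_sets} is at least one. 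If $\mathbf{z}$ is not in the set, the left-hand side is zero and the inequality is trivial.

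\emph{Case (i).} Here $s_2=\infty$, so by the convention $\infty\cdot A=\emptyset$ the set is $u_n s_1\cdot A$, and $[s_1,s_2]$ is read as $[s_1,\infty)$. Since $A$ is bounded away from $0$, there is $\rho>0$ with $\max_j|x_{j-1}|>\rho$ for every $\mathbf{x}\in A$, taking sup-norm without loss of generality. Set $\mathfrak{a}_j:=\rho$ for all $j$. If $\mathbf{z}\in u_n s_1\cdot A$, then $\mathbf{z}=u_ns_1\mathbf{x}$ with $\mathbf{x}\in A$. So some coordinate satisfies $|z_{j-1}|=u_ns_1|x_{j-1}|>u_n\rho s_1$, that is, $|z_{j-1}|\in u_n\mathfrak{a}_j[s_1,\infty)$. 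This gives the bound with constants independent of $s_1$.

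The only point needing care is this interpretation of $[s_1,\infty]$ when $s_2=\infty$, together with fixing a uniform separation constant $\rho$ for $A$ from $0$. Neither is a real obstacle. The constant $\underline{s}$ plays no role beyond guaranteeing $s_1>0$.
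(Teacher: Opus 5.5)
Your proof is correct and follows the paper's argument. Case (ii) is read directly off the definition of $\mathcal{C}_T$, and case (i) uses that boundedness away from $0$ gives $A\subset\bigcup_j\{|x_{j-1}|\ge\mathfrak{a}_j\}$, which scales to $\bigcup_j\{|x_{j-1}|\in\mathfrak{a}_j\cdot[s_1,\infty)\}$; your common constant $\mathfrak{a}_j=\rho$ is simply a special case of the paper's coordinate-wise constants.
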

\begin{proof}
  Under condition~(ii), the statement follows immediately from the definition of $\mathcal{C}_T$. It remains to show the statement under condition~(i).
  \\
  \textbf{Proof under condition~(i):}
	A set $A$ is bounded away from $0$ if and only if there exists $\{\mathfrak{a}_j\}_{j\in \{1,\ldots,T\}}\subset (0,\infty)$ such that
	\begin{align*}
		A
		\ \subset\
		\bigcup_{j=1}^T
		\{ | x_{j-1} | \ge \mathfrak{a}_j\}
		\,.
	\end{align*}
	For $s_1\in (\underline{s},\infty)$ and $s_2=\infty$, it holds
	\begin{align*}
		(s_1\cdot A) \setminus (s_2\cdot A)
		\ =\
		s_1\cdot A
		\ \subset\
		s_1\cdot
		\bigcup_{j=1}^T
		\{ |x_{j-1}| \ge \mathfrak{a}_j\}
		\ = \
		\bigcup_{j=1}^T
		\{ |x_{j-1}| \in  \mathfrak{a}_j\cdot [s_1,\infty)\}
		\,.
	\end{align*}
	Taking indicators of these sets proves the claimed inequality.
\end{proof}
\begin{assumption*}
  Throughout the supplementary material, we assume that the conditions of the lemma leading to \eqref{lem:two_sets} are satisfied with $\{\mathfrak{a}_j\}$, $s_1$ and $s_2$ fixed.
\end{assumption*}
\section{Extensions of results from~\cite{scheffelCentralLimitTheory2025}}
\begin{lemma}[Lipschitz bound for $f_{\varepsilon}$]
	\label{lem:lip_f}
	Let Assumption~\ref{asu:f} hold. Then, for all $x,y\in\RR$ satisfying $|x-y|<1$,
	\begin{align*}
		|
		f_\varepsilon(x)
		-
		f_\varepsilon(y)
		|
		 &
		\ \lesssim \
		|x-y|
		\cdot
		g_{\alpha-1}(x)
		\,,
	\end{align*}
	where $\lesssim$ means inequality up to a multiplicative constant that depends only on the distribution of $\varepsilon$.
\end{lemma}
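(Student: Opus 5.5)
Here $g_{\alpha-1}(x)$ is read as $(1+|x|)^{-(\alpha-1)}$, the weight function from \cite{scheffelCentralLimitTheory2025}. The aim is the bound $|f_\varepsilon(x)-f_\varepsilon(y)|\lesssim |x-y|(1+|x|)^{-\alpha}$, which is stronger than the claim because $(1+|x|)^{-\alpha}\le(1+|x|)^{-(\alpha-1)}$.

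The argument is the mean value theorem combined with the decay of $f'_\varepsilon$ from Assumption~\ref{asu:f}. Since $f_\varepsilon\in C^1(\RR)$, for $x,y$ with $|x-y|<1$ there is a $\xi$ between $x$ and $y$ with
\begin{align*}
f_\varepsilon(x)-f_\varepsilon(y) \ = \ f'_\varepsilon(\xi)\,(x-y)\,.
\end{align*}
Assumption~\ref{asu:f} then gives $|f'_\varepsilon(\xi)|\lesssim (1+|\xi|)^{-\alpha}$, with a constant depending only on the law of $\varepsilon$.

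It remains to replace $\xi$ by $x$, and this is the only step that needs care. Because $|\xi-x|\le |x-y|<1$, we have $1+|x|\le 1+|\xi|+1\le 2(1+|\xi|)$. Hence
\begin{align*}
(1+|\xi|)^{-\alpha} \ \le \ 2^{\alpha}(1+|x|)^{-\alpha} \ \le \ 2^{\alpha}(1+|x|)^{-(\alpha-1)} \ = \ 2^{\alpha}\, g_{\alpha-1}(x)\,.
\end{align*}
The factor $2^\alpha\le 4$ is absorbed into the implicit constant, which therefore still depends only on the distribution of $\varepsilon$.

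An alternative route avoids the mean value theorem: write $f_\varepsilon(x)-f_\varepsilon(y)=\int_y^x f'_\varepsilon(t)\,\mathrm{d}t$ and bound the integrand uniformly on the segment $[x\wedge y,x\vee y]$ by the same comparison. Either way there is no real obstacle. The one point to check is the restriction $|x-y|<1$, which is exactly what makes the weights at $\xi$ and at $x$ comparable; without it the local weight $(1+|x|)^{-\alpha}$ could not be used.
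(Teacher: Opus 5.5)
Your proof is correct and is essentially the paper's argument. The paper bounds $|f_\varepsilon(x)-f_\varepsilon(y)|\le\int_{0\wedge(y-x)}^{0\vee(y-x)}|f'_\varepsilon(x+s)|\,\mathrm{d}s$ (your alternative route) and moves the weight from $x+s$ to $x$ via Lemma~\ref{lem:5.1}(i), which is the same comparison $1+|x|\le 2(1+|\xi|)$ you make, with $|x-y|<1$ keeping the constant bounded.

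One correction concerns your reading of the notation. The paper defines $g_\gamma(x)=(1+|x|)^{-(1+\gamma)}$, so $g_{\alpha-1}(x)=(1+|x|)^{-\alpha}$ is exactly your intermediate bound $2^{\alpha}(1+|x|)^{-\alpha}$. Your last step, weakening this to $(1+|x|)^{-(\alpha-1)}$, should be deleted: that weaker weight is not what the lemma asserts, and it is not dominated by $g_\gamma$ for $\gamma\in(0,\alpha-1)$, which Lemma~\ref{lem:aux_R}(ii) later requires.
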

\begin{proof}
	Applying the mean value theorem together with the triangle inequality and Assumption~\ref{asu:f} it follows that
	\begin{align*}
		|
		f_\varepsilon(x)
		-
		f_\varepsilon(y)
		|
		 &
		\ \le \
		\int_{0\land (y-x)}^{0\lor (y-x)}
		|f_{\varepsilon}'(x + s)|
		\,\mathrm{d}s
		\lesssim \
		\int_{0\land (y-x)}^{0\lor (y-x)}
		g_{\alpha-1}(x+s)
		\,\mathrm{d}s
		\\ &
		  \lesssim \
		  g_{\alpha-1}(x)
		  \cdot
		  \int_{0\land (y-x)}^{0\lor (y-x)}
		  (1\lor |s|)^{\alpha}
		\,\mathrm{d}s
		  \,,
	\end{align*}
	where the last step follows from
	Lemma~\ref{lem:5.1}.
	To conclude the proof, note that $|y-x|<1$ by assumption, so that
	\begin{align*}
		\int_{0\land (y-x)}^{0\lor (y-x)}
		(1\lor |s|)^{\alpha}
		\,\mathrm{d}s
		\ = \
		|y-x|
		\,.
	\end{align*}
	This proves the asserted bound for $|f_{\varepsilon}(x)-f_{\varepsilon}(y)|$.
\end{proof}
For $\gamma>0$ we define
\[
	g_{\gamma}(x)
	\ := \
	\frac{1}{(1+|x|)^{1+\gamma}}
	\qquad\text{for}\ x\in\RR
	\,.
\]

\begin{lemma}[Bounds for $g_{\gamma}$]
	\label{lem:5.1}
	Let $\gamma\in (0,\infty)$. Then the following holds for $g_\gamma$:
	\begin{enumerate}[label=(\roman*)]
		\item
		      For all $y,z\in\mathbb{R}$, and all $\gamma\in (0,1)$,
		      \begin{align*}
			      g_{\gamma}
			      (z+y)
			      \ \lesssim \
			      g_\gamma(z)
			      \cdot
			      (1\lor |y|)^{1+\gamma}\,,
		      \end{align*}
		      where $\lesssim$ means inequality up to a multiplicative constant that depends only on $\gamma$.
		\item
		      Let $a,b\in\mathbb{R}$ with $a<b$. Then, for all $z\in\mathbb{R}$, and all $\gamma\in (0,1)$,
		      \begin{align*}
			      \int_a^b g_\gamma(z-w)\,\mathrm{d}w
			      \ \lesssim \
			      g_\gamma(z)
			      \cdot
			      ((b-a) \lor 1)^{1+\gamma}
			      \,,
		      \end{align*}
		      where $\lesssim$ means inequality up to a multiplicative constant that depends only on $\gamma$.
		\item
		      For $c\neq 0$, 
		      \begin{align*}
			      g_{\gamma}(z\cdot c)
			      \ \le \
			      \left(
			      1
			      \lor
			      \frac{1}{|c|}
			      \right)^{1+\gamma}
			      g_{\gamma}(z)
			      \ \lesssim\
			      g_{\gamma}(z)
			      \,,
		      \end{align*}
		      where $\lesssim$ means inequality up to a multiplicative constant that depends only on $\gamma$ and $c$.
		\item
		      Let $\mathfrak{x},\mathfrak{y},\mathfrak{z}\in \RR$ and $\gamma\in (0,1)$. Then
		      \begin{align*}
			       &
			      \int_{\mathfrak{b}\land \mathfrak{c}}^{\mathfrak{b}\lor\mathfrak{c}}
			      \left(
			      1
			      \land |s\cdot \mathfrak{x}|
			      \right)
			      \cdot
			      g_{\gamma}
			      \left(
			      \mathfrak{z}
			      +
			      s\cdot \mathfrak{y}
			      \right)
			      \,\mathrm{d}s
			      \ \lesssim \
			      \left(
			      |\mathfrak{y}|^{\gamma}
			      +
			      |
			      \mathfrak{x}|^{\gamma}
			      \right)
			      \cdot
			      g_{\gamma}(\mathfrak{z})
			      \cdot
			      \left(
			      |\mathfrak{b}|^{1+\gamma}
			      +
			      |\mathfrak{c}|^{1+\gamma}
			      \right)
			      \,,
		      \end{align*}
		      where $\lesssim$ means inequality up to a multiplicative constant that depends only on $\gamma$.
	\end{enumerate}
\end{lemma}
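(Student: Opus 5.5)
Each of the four items reduces to an elementary comparison of the weights $1+|\cdot|$. The one inequality to keep in view is the weak triangle-type bound
\begin{align*}
  1+|z| \ \le\ 1+|z+y|+|y| \ \le\ (1+|z+y|)(1+|y|) \ \le\ 2\,(1+|z+y|)(1\lor|y|)\,.
\end{align*}

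\textbf{Item (i).} Raise the bound above to the power $1+\gamma$ and invert. This gives
\begin{align*}
  g_\gamma(z+y) \ \le\ 2^{1+\gamma}\, g_\gamma(z)\,(1\lor|y|)^{1+\gamma}\,,
\end{align*}
with a constant depending only on $\gamma$. The restriction $\gamma\in(0,1)$ is not even needed here.

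\textbf{Item (ii).} Set $\ell=b-a$. If $\ell\le1$, apply (i) pointwise to $w\in[a,b]$, comparing $g_\gamma(z-w)$ with $g_\gamma(z-a)$ and then with $g_\gamma(z)$. As written this costs a factor $(1\lor|a|)^{1+\gamma}$, which is not uniform in $a$, so that route alone is unsuitable.

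I would instead split according to whether the window lies near $z$. If $|z|\le 2\ell$, bound the integral by $\int_{\RR} g_\gamma = 2/\gamma$, and use that $g_\gamma(z)\ge (1+2\ell)^{-1-\gamma}\gtrsim (\ell\lor1)^{-1-\gamma}$. If $|z|>2\ell$, the only dangerous case is $[a,b]$ close to $z$, where the integral can be of order $\ell\lor1$ while $g_\gamma(z)$ is small.

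This shows the claim as literally stated cannot hold uniformly in $a$: take $a=z$, $b=z+1$, $|z|\to\infty$. The intended statement must therefore carry a location restriction, presumably $a,b$ of the form in which the lemma is used, e.g.\ $[a,b]\ni 0$ or $|a|,|b|\lesssim \ell$. Under such a restriction, $|z-w|\ge |z|-|w|\ge |z|/2$ whenever $|z|>2\ell$, so each integrand is $\lesssim g_\gamma(z)$. The integral is then $\lesssim \ell\, g_\gamma(z)\le (\ell\lor1)^{1+\gamma}g_\gamma(z)$.

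Pinning down the exact hypothesis needed here, namely that $0\in[a,b]$, is the main obstacle. With it, the pointwise route above also works: use (i) with $y=-w$ and $|w|\le\ell$.

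\textbf{Item (iii).} This is the scaling bound $1+|z|\le (1\lor|c|^{-1})(1+|cz|)$. It holds because $|z|=|cz|/|c|$ and $1\le 1\lor|c|^{-1}$. Raising to the power $1+\gamma$ gives the first inequality, and the second is trivial for fixed $c$.

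\textbf{Item (iv).} Write $\mathfrak b,\mathfrak c$ for the limits of integration. The integration range has length at most $|\mathfrak b|+|\mathfrak c|$ and contains points of modulus at most $m:=|\mathfrak b|\lor|\mathfrak c|$.

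By (i) with $y=s\mathfrak y$,
\begin{align*}
  g_\gamma(\mathfrak z+s\mathfrak y)\ \lesssim\ g_\gamma(\mathfrak z)\,(1\lor|s\mathfrak y|)^{1+\gamma}\,.
\end{align*}
The factor $1\land|s\mathfrak x|$ is at most $|s\mathfrak x|^{\gamma}$, since the minimum of $1$ and $t$ is at most $t^\gamma$ for $\gamma\in(0,1)$. This bound alone leaves an unwanted growth $(1\lor|s\mathfrak y|)^{1+\gamma}$.

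Instead I would split on $|s\mathfrak y|\le1$ versus $|s\mathfrak y|>1$. On the first region the integrand is $\lesssim |\mathfrak x|^\gamma |s|^\gamma g_\gamma(\mathfrak z)$. On the second, change variables $w=s\mathfrak y$ and use $\int g_\gamma(\mathfrak z+w)\,\mathrm dw$ as in (ii), which yields $\lesssim |\mathfrak y|^{-1}\cdots$. Converting that to $|\mathfrak y|^\gamma |s|^{1+\gamma}$-type bounds via $1<|s\mathfrak y|$ gives $|\mathfrak y|^{-1}\le |s|\le |s|\,|s\mathfrak y|^{\gamma}$.

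Integrating $|s|^\gamma$ over the range gives $\lesssim |\mathfrak b|^{1+\gamma}+|\mathfrak c|^{1+\gamma}$, which produces the stated bound. The same location issue as in (ii) arises here, since the range need not contain $0$. I expect it to be resolved by the pointwise use of (i) together with $|s|\le m$.
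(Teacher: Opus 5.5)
Your (i) and (iii) are correct; (iii) is the paper's case split $|c|<1$ versus $|c|\ge 1$ compressed into $1+|z|\le(1\lor|c|^{-1})(1+|cz|)$. The paper does not prove (i)--(ii) itself but cites Lemma~A.3 of its companion paper. Keep your observation that (i) needs no restriction on $\gamma$: the paper also applies it with $\gamma=\alpha-1$, which equals $1$ when $\nu>2$. Your objection to (ii) is also correct. For $a=z$, $b=z+1$ the left-hand side equals $\int_0^1(1+u)^{-1-\gamma}\,\mathrm{d}u$ for every $z$, whereas $g_\gamma(z)\to0$ as $|z|\to\infty$, so no constant depending only on $\gamma$ can work. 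In this paper (ii) is invoked only inside the proof of (iv), on the symmetric interval $\{|w|\le|\mathfrak{y}||\mathfrak{b}|\}$. For intervals containing $0$, your split at $|z|=2\ell$ is a valid proof.

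The step that fails is your remark that, once $0\in[a,b]$, the pointwise route via (i) also works. With $|w|\le\ell$ it only gives $\int_a^b g_\gamma(z-w)\,\mathrm{d}w\lesssim\ell\,(1\lor\ell)^{1+\gamma}g_\gamma(z)$, one factor $\ell$ too many when $\ell>1$. For the same reason, settling the location issue in (iv) ``by the pointwise use of (i) together with $|s|\le m$'' runs into exactly the unwanted growth you had already noticed. On $\{|s\mathfrak{y}|>1\}$ it only gives a multiple of $|\mathfrak{y}|^{1+\gamma}m^{2+\gamma}g_\gamma(\mathfrak{z})$, whose ratio to the target $|\mathfrak{y}|^{\gamma}m^{1+\gamma}g_\gamma(\mathfrak{z})$ is the unbounded quantity $|\mathfrak{y}|m$.

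Your first region in (iv) is the same as the paper's; close the second one as the paper does.
\begin{enumerate}
\item That region is nonempty only if $\mathfrak{y}\neq0$ and $R:=|\mathfrak{y}|m>1$.
\item There bound $1\land|s\mathfrak{x}|\le1$ and, by nonnegativity, enlarge the $s$-range to $[-m,m]$.
\item Substitute $w=s\mathfrak{y}$ to get $|\mathfrak{y}|^{-1}\int_{-R}^{R}g_\gamma(\mathfrak{z}+w)\,\mathrm{d}w$.
\item Your split form of (ii) on this interval containing $0$ gives $\lesssim|\mathfrak{y}|^{-1}(2R)^{1+\gamma}g_\gamma(\mathfrak{z})\lesssim|\mathfrak{y}|^{\gamma}m^{1+\gamma}g_\gamma(\mathfrak{z})$.
\item Finish with $m^{1+\gamma}\le|\mathfrak{b}|^{1+\gamma}+|\mathfrak{c}|^{1+\gamma}$.
\end{enumerate}
The paper's three cases, according to which of $1$, $|\mathfrak{y}||\mathfrak{b}|$, $|\mathfrak{y}||\mathfrak{c}|$ is largest, are this argument with $m$ unpacked. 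The conversion $|\mathfrak{y}|^{-1}\le|s|$ in your sketch is not needed, since $R>1$ is used only to drop the ``$\lor1$''.
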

\begin{proof}
	For a proof of (i) and (ii)
	see the
proof of Lemma~A.3 in~\citet{scheffelCentralLimitTheory2025}.
	\\
	\textbf{Proof of (iii):}
	We distinguish $|c|\in (0,1)$ and $|c|\ge 1$. For $|c|\in (0,1)$ it holds $1/|c| >1$, and therefore
	\begin{align*}
		g_{\gamma}(z\cdot c)
		 &
		\ = \
		\frac{1}{(1+|z\cdot c|)^{1+\gamma}}
		\ = \
		\left(
		\frac{1}{|c|}
		\right)^{1+\gamma}
		\frac{1}{(1/|c|+|z|)^{1+\gamma}}
		\\ &
		  \ \le \
		  \left(
		  1\ \lor \
		  \frac{1}{|c|}
		  \right)^{1+\gamma}
		\frac{1}{(1+|z|)^{1+\gamma}}
		\ = \
		  \left(
		  1\ \lor \
		  \frac{1}{|c|}
		  \right)^{1+\gamma}
		g_{\gamma}(z)
		  \,.
	\end{align*}
	If $|c|\ge 1$, then $1/|c|\le 1$, and $|z\cdot c|\ge |z|$, so that
	\begin{align*}
		g_{\gamma}(z\cdot c)
		 &
		\ = \
		\frac{1}{(1+|z\cdot c|)^{1+\gamma}}
		\ \le \
		\frac{1}{(1+|z|)^{1+\gamma}}
		\ = \
		\left(
		1\ \lor \
		\frac{1}{|c|}
		\right)^{1+\gamma}
		g_{\gamma}(z)
		\,.
	\end{align*}
	Combining the two cases proves the asserted bound for $g_{\gamma}(z\cdot c)$.
	\\
	\textbf{Proof of (iv):}
	If
	$\mathfrak{x}=0$ there is nothing to prove. Therefore we assume $\mathfrak{x}\neq 0$.
	\\
  \textit{Case $\mathfrak{y}=0$:} Since $\gamma\in (0,1)$,
  \begin{align}
    \label{eq:1990}
    \left(
    1 \land |s\cdot \mathfrak{x}|
    \right)
    \ \le \ 
    \left(
    1 \land
    |s\cdot \mathfrak{x}|
    \right)^{\gamma}
    \ \le \ 
    |s\cdot \mathfrak{x}|^{\gamma}
    \,,
  \end{align}
  so that
	\begin{align*}
		 &
		\int_{\mathfrak{b}\land \mathfrak{c}}^{\mathfrak{b}\lor\mathfrak{c}}
		\left(
		1 \land |s\cdot \mathfrak{x}|
		\right)
		\cdot
		g_{\gamma}
		\left(
		\mathfrak{z}
		+
		s\cdot \mathfrak{y}
		\right)
		\,\mathrm{d}s
		\ = \
		g_{\gamma}(\mathfrak{z})
		\int_{\mathfrak{b}\land \mathfrak{c}}^{\mathfrak{b}\lor\mathfrak{c}}
		\left(
		1 \land |s\cdot \mathfrak{x}|
		\right)
		\,\mathrm{d}s
		\ \le \
		g_{\gamma}(\mathfrak{z})
		\int_{\mathfrak{b}\land \mathfrak{c}}^{\mathfrak{b}\lor\mathfrak{c}}
		|s\cdot \mathfrak{x}|^{\gamma}
		\,\mathrm{d}s
		\\ &
		  \ \lesssim \
		  g_{\gamma}(\mathfrak{z})
		  \cdot
		  |\mathfrak{x}|^{\gamma}
      \cdot
		\left(|\mathfrak{b}|^{1+\gamma}+|\mathfrak{c}|^{1+\gamma}
		  \right)
    \ = \ 
		  \left(
		  |\mathfrak{y}|^{\gamma}
		  +
		  |\mathfrak{x}|^{\gamma}
		  \right)
		\cdot
		  g_{\gamma}(\mathfrak{z})
		  \cdot
		  \left(
		  |\mathfrak{b}|^{1+\gamma}
		  +
		  |\mathfrak{c}|^{1+\gamma}
		  \right)
		\,,
	\end{align*}
	where the multiplicative constant in $\lesssim$ depends only on $\gamma$.
	\\
	\textit{Case $\mathfrak{y}\neq 0$:}
	We make the split
	\begin{align*}
    &
		\int_{\mathfrak{b}\land \mathfrak{c}}^{\mathfrak{b}\lor\mathfrak{c}}
		\left(
		1 \land |s\cdot \mathfrak{x}|
		\right)
		\cdot
		g_{\gamma}
		\left(
		\mathfrak{z}
		+
		s\cdot \mathfrak{y}
		\right)
		\,\mathrm{d}s
    \\&
		\ \le \
		\int_{\mathfrak{b}\land \mathfrak{c}}^{\mathfrak{b}\lor\mathfrak{c}}
		\left(
			1 \land |s\cdot \mathfrak{x}|
			\right)
		\cdot
		\ind\{|s\cdot \mathfrak{y}|<1\}
    \cdot
		g_{\gamma}
		\left(
		\mathfrak{z}
		+
		s\cdot \mathfrak{y}
		\right)
    \,\mathrm{d}s
		+
		\int_{\mathfrak{b}\land \mathfrak{c}}^{\mathfrak{b}\lor\mathfrak{c}}
		\ind\{|s\cdot \mathfrak{y}|\ge 1\}
		\cdot
		g_{\gamma}
		\left(
		\mathfrak{z}
		+
		s\cdot \mathfrak{y}
		\right)
		\,\mathrm{d}s
		\,.
	\end{align*}
	\\
	\textit{Analysis of $
			\int_{\mathfrak{b}\land \mathfrak{c}}^{\mathfrak{b}\lor\mathfrak{c}}
			\left(
			1 \land |s\cdot \mathfrak{x}|
			\right)
			\ind\{|s\cdot \mathfrak{y}|<1\}
			\cdot
			g_{\gamma}
			\left(
			\mathfrak{z}
			+
			s\cdot \mathfrak{y}
			\right)
			\,\mathrm{d}s
		$:}
  By (i), $\gamma\in (0,1)$, and \eqref{eq:1990},
	\begin{align*}
		 &
		\ind\{|s\cdot \mathfrak{y}|<1\}
		\cdot
		\left(1 \land |s\cdot \mathfrak{x}|
		\right)
		\cdot
		g_{\gamma}
		\left(
		\mathfrak{z}
		+
		s\cdot \mathfrak{y}
		\right)
		\\ &
		  \ \lesssim \
		  \ind\{|s\cdot \mathfrak{y}|<1\}
		  \cdot
		  \left(1 \land |s\cdot \mathfrak{x}|
		  \right)
		\cdot
		  \left(
		  1
		  \lor |s\cdot \mathfrak{y}|
		  \right)^{1+\gamma}
		\cdot
		  g_{\gamma}(\mathfrak{z})
		\\ &
		  \ = \
		  \ind\{|s\cdot \mathfrak{y}|<1\}
		  \cdot
		  \left(1 \land |s\cdot \mathfrak{x}|
		  \right)
		\cdot
		  g_{\gamma}(\mathfrak{z})
		\\ &
		  \ \le \
		  \left(
		  1\land
		  |s\cdot \mathfrak{x}|
		  \right)
		\cdot
		  g_{\gamma}(\mathfrak{z})
		\\ &
		  \ \le \
		  |s\cdot \mathfrak{x}|^{\gamma}
		\cdot
		  g_{\gamma}(\mathfrak{z})
		  \,,
	\end{align*}
	so that
	\begin{align*}
		 &
		\int_{\mathfrak{b}\land \mathfrak{c}}^{\mathfrak{b}\lor\mathfrak{c}}
		\ind\{|s\cdot \mathfrak{y}|<1\}
		\cdot
		\left(
		1 \land |s\cdot \mathfrak{x}|
		\right)
		\cdot
		g_{\gamma}
		\left(
		\mathfrak{z}
		+
		s\cdot \mathfrak{y}
		\right)
		\,\mathrm{d}s
		\\ &
		  \ \lesssim \
		  |\mathfrak{x}|^{\gamma}
		\cdot
		  g_{\gamma}(\mathfrak{z})
		  \cdot
		  \int_{\mathfrak{b}\land \mathfrak{c}}^{\mathfrak{b}\lor\mathfrak{c}}
		|s|^{\gamma}
		\,\mathrm{d}s
		\\ &
		  \ \lesssim \
		  |\mathfrak{x}|^{\gamma}
		\cdot
		  g_{\gamma}(\mathfrak{z})
		  \cdot
		  \left(
		  |\mathfrak{b}|^{1+\gamma}+|\mathfrak{c}|^{1+\gamma}
		  \right)
		\\ &
		  \ \le \
		  \left(
		  |\mathfrak{y}|^{\gamma}
		  +
		  |\mathfrak{x}|^{\gamma}
		  \right)
		\cdot
		  g_{\gamma}(\mathfrak{z})
		  \cdot
		  \left(
		  |\mathfrak{b}|^{1+\gamma}
		  +
		  |\mathfrak{c}|^{1+\gamma}
		  \right)
		\,,
	\end{align*}
	where the constant in $\lesssim$ depends only on $\gamma$.
	\\
	\textit{Analysis of $
			\int_{\mathfrak{b}\land \mathfrak{c}}^{\mathfrak{b}\lor\mathfrak{c}}
			\ind\{|s\cdot \mathfrak{y}|\ge 1\}
			\cdot
			g_{\gamma}
			\left(
			\mathfrak{z}
			+
			s\cdot \mathfrak{y}
			\right)
			\,\mathrm{d}s
		$:}
	Note that $s\in (\mathfrak{b}\land \mathfrak{c}, \mathfrak{b}\lor \mathfrak{c})$ implies $|s| < |\mathfrak{b}|\lor |\mathfrak{c}|$. Moreover, we have
  \begin{align*}
    1\ \ge\  |\mathfrak{y}|\cdot(|\mathfrak{b}|\lor |\mathfrak{c}|)\,,\qquad 
    |\mathfrak{y}||\mathfrak{b}|\ \ge \ (1\lor |\mathfrak{y}||\mathfrak{c}|)
    \qquad
    \text{or}
    \qquad
    |\mathfrak{y}||\mathfrak{c}|\ \ge\  (1\lor |\mathfrak{y}||\mathfrak{b}|)\,.
  \end{align*}
  We consider the three cases separately.
	\\
	\textit{Case
		$1\ge |\mathfrak{y}|\cdot(|\mathfrak{b}|\lor |\mathfrak{c}|)$:}
  Since $|s\cdot \mathfrak{y}|<|\mathfrak{y}|\cdot(|\mathfrak{b}|\lor |\mathfrak{c}|)\leq 1$ for $|s|< (|\mathfrak{b}|\lor |\mathfrak{c}|)$,
	\begin{align*}
		\ind\{1\ge |\mathfrak{y}|(|\mathfrak{b}|\lor |\mathfrak{c}|)\}
		\int_{\mathfrak{b}\land \mathfrak{c}}^{\mathfrak{b}\lor \mathfrak{c}}
		\ind\{|s\cdot \mathfrak{y}|\ge 1\}
		\cdot g_{\gamma}(\mathfrak{z}+s\cdot \mathfrak{y})
		\,\mathrm{d}s
		\ = \ 0
		\,
	\end{align*}
	\\
	\textit{Case
		$|\mathfrak{y}||\mathfrak{b}|\ge (1\lor |\mathfrak{y}||\mathfrak{c}|)$:
	}
	Since $\mathfrak{y}\neq 0$, by the change of variables $s\mapsto s/\mathfrak{y}$,
	\begin{align*}
		 &
		\ind\{
		|\mathfrak{y}||\mathfrak{b}|\ge (1\lor |\mathfrak{y}||\mathfrak{c}|)
		\}
		\int_{\mathfrak{b}\land \mathfrak{c}}^{\mathfrak{b}\lor \mathfrak{c}}
		\ind\{|s\cdot \mathfrak{y}|\ge 1\}
		\cdot g_{\gamma}(\mathfrak{z}+s\cdot \mathfrak{y})
		\,\mathrm{d}s
		\\ &
		  \ \le \
		  \ind\{|\mathfrak{y}||\mathfrak{b}|\ge 1\}
		  \cdot
		  \frac{1}{|\mathfrak{y}|}
		\cdot
		  \int_{|s|\le |\mathfrak{y}||\mathfrak{b}|}
		g_{\gamma}(\mathfrak{z}+s)
		  \,\mathrm{d}s
		\\ &
		  \ \lesssim \
		  \ind\{|\mathfrak{y}||\mathfrak{b}|\ge 1\}
      \cdot
		  g_{\gamma}(\mathfrak{z})
      \cdot
		  \frac{1}{|\mathfrak{y}|}
      \cdot
      \left(
     1 \lor|\mathfrak{b}\cdot\mathfrak{y}|
      \right)^{1+\gamma}
		\\ &
		  \ \le \
		  g_{\gamma}(\mathfrak{z})
		  \cdot
		  |\mathfrak{b}|^{1+\gamma}
		\cdot
		  |\mathfrak{y}|^{\gamma}
		\\ &
		  \ \le \
		  \left(
		  |\mathfrak{y}|^{\gamma}
		  +
		  |\mathfrak{x}|^{\gamma}
		  \right)
		\cdot
		  g_{\gamma}(\mathfrak{z})
		  \cdot
		  \left(
		  |\mathfrak{b}|^{1+\gamma}
		  +
		  |\mathfrak{c}|^{1+\gamma}
		  \right)
		\,,
	\end{align*}
	where the third to last inequality follows from~(ii), and therefore the multiplicative constant in $\lesssim$ depends only on $\gamma$. 
	\\
	\textit{Case
		$|\mathfrak{y}||\mathfrak{c}|\ge (1\lor |\mathfrak{y}||\mathfrak{b}|)$:
	}
	The same argument yields
	\begin{align*}
		 &
		\ind\{|\mathfrak{c}||\mathfrak{y}|\ge (1\lor |\mathfrak{b}||\mathfrak{y}|)\}
		\int_{\mathfrak{b}\land \mathfrak{c}}^{\mathfrak{b}\lor \mathfrak{c}}
		\ind\{|s\cdot \mathfrak{y}|\ge 1\}
		\cdot g_{\gamma}(\mathfrak{z}+s\cdot \mathfrak{y})
		\,\mathrm{d}s
		\\
		 &
		\ \lesssim \
		g_{\gamma}(\mathfrak{z})
		\cdot
		|\mathfrak{c}|^{1+\gamma}
		\cdot
		|\mathfrak{y}|^{\gamma}
		\\ &
		  \ \le \
		  \left(
		  |\mathfrak{y}|^{\gamma}
		  +
		  |\mathfrak{x}|^{\gamma}
		  \right)
		\cdot
		  g_{\gamma}(\mathfrak{z})
		  \cdot
		  \left(
		  |\mathfrak{b}|^{1+\gamma}
		  +
		  |\mathfrak{c}|^{1+\gamma}
		  \right)
		\,,
	\end{align*}
	where the constant in $\lesssim$ depends only on $\gamma$.
\end{proof}
\begin{lemma}[Martingale difference inequality]
	\label{lem:bahr}
	\
	Let $p\in[1,2]$, and
	let $(Y_{m,i}\,, i=1, \ldots,m\,, m\in\NN )\subset L^p(\PP)$ be an
	array of random variables satisfying
	\begin{align}
		\label{eq:bahr:cond:1}
		\EE
		\Bigg[
			Y_{m,\ell + 1} \, \Bigg{|} \, \sum_{i=1}^{\ell}Y_{m,i}
			\Bigg]
		\ = \
		0
		\qquad\text{for all}\ 1\le \ell \le m-1
		\ \text{and all}\  m\in\NN\,.
	\end{align}
	\begin{enumerate}
		\item
		      It holds that
		      \begin{align}
			      \label{eq:bahr:fi}
			      \EE
			      \left|
			      \sum_{i=1}^{m}
			      Y_{m,i}
			      \right|^p
			      \ \le \
			      2
			      \sum_{i=1}^{m}
			      \EE
			      \left|
			      Y_{m,i}
			      \right|^p
			      \qquad\text{for all}\ m\in\NN\,.
		      \end{align}
		\item
		      If there exists a sequence $(Y_i)_{i\in\NN}\subset L^p(\PP)$ such that
		      \begin{align}
			      \label{eq:bahr:cond:2}
			      \lim_{m\to\infty}
			      \sum_{i=1}^m
			      Y_{m,i}
			      =
			      \sum_{i=1}^{\infty}
			      Y_{i}
			      \quad\text{$\PP$-almost surely, and }\quad
			      \liminf_{m\to\infty}
			      \sum_{i=1}^m
			      \EE|
			      Y_{m,i}
			      |^p
			      \le
			      \sum_{i=1}^\infty
			      \EE|
			      Y_{i}
			      |^p
			      \,,
		      \end{align}
		      then
		      \begin{align}
			      \label{eq:bahr:infty}
			      \EE
			      \left|
			      \sum_{i=1}^{\infty}
			      Y_i
			      \right|^p
			      \ \le \
			      2 \sum_{i=1}^{\infty}
			      \EE
			      \left|
			      Y_i
			      \right|^p
			      \,.
		      \end{align}
	\end{enumerate}
\end{lemma}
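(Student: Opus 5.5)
This is a von Bahr--Esseen type inequality for martingale-like arrays. For part (i) I will argue by induction on the number of summands, using a pointwise inequality for $p\in[1,2]$. Fix $m$ and write $S_\ell=\sum_{i=1}^{\ell}Y_{m,i}$. The key elementary fact is the following: for $p\in[1,2]$ and real $x,y$,
\begin{align*}
|x+y|^p \ \le\ |x|^p \ + \ p\,\mathrm{sgn}(x)|x|^{p-1}y \ + \ 2^{2-p}|y|^p\,.
\end{align*}
This is the inequality used by von Bahr and Esseen; for $p=1$ it reduces to the triangle inequality, and for $p=2$ it is an identity. In any case the constant $2^{2-p}\le 2$.

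I apply it with $x=S_\ell$ and $y=Y_{m,\ell+1}$ and take expectations. The linear term is
\begin{align*}
\EE\bigl[\mathrm{sgn}(S_\ell)|S_\ell|^{p-1}Y_{m,\ell+1}\bigr]\,.
\end{align*}
It vanishes by conditioning on $S_\ell$ and using \eqref{eq:bahr:cond:1}, since $\mathrm{sgn}(S_\ell)|S_\ell|^{p-1}$ is $\sigma(S_\ell)$-measurable. Integrability holds by Hölder, because $|S_\ell|^{p-1}\in L^{p/(p-1)}$ and $Y_{m,\ell+1}\in L^p$. Hence
\begin{align*}
\EE|S_{\ell+1}|^p\ \le\ \EE|S_\ell|^p+2\,\EE|Y_{m,\ell+1}|^p\,.
\end{align*}
Iterating from $\EE|S_1|^p\le 2\EE|Y_{m,1}|^p$ gives \eqref{eq:bahr:fi}.

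The main obstacle is the pointwise inequality with a constant at most $2$. I would prove it by homogeneity: set $x=1$ (the case $x=0$ is trivial, and the sign can be absorbed by symmetry) and study
\begin{align*}
\phi(y)=\frac{|1+y|^p-1-py}{|y|^p}\,.
\end{align*}
I would then show $\sup_y\phi\le 2^{2-p}$ by elementary calculus: $\phi$ is bounded near $0$ because $p\le 2$, tends to $1$ at $\pm\infty$, and its maximum is attained at $y=-2$, where $\phi(-2)=(1-1+2p)/2^p=2p/2^p\le 2$. If that computation is delicate, the weaker constant $2$ is all that is needed. As a fallback, I would use the integral form
\begin{align*}
|1+y|^p-1-py=p(p-1)\int_0^1(1-t)|1+ty|^{p-2}y^2\,\mathrm dt
\end{align*}
and bound it separately for $|y|\le 1$ and $|y|>1$.

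For part (ii), I apply (i) to each $m$ and pass to the limit. By Fatou's lemma and the almost sure convergence in \eqref{eq:bahr:cond:2},
\begin{align*}
\EE\Bigl|\sum_{i=1}^\infty Y_i\Bigr|^p\ \le\ \liminf_{m\to\infty}\EE\Bigl|\sum_{i=1}^mY_{m,i}\Bigr|^p\ \le\ 2\liminf_{m\to\infty}\sum_{i=1}^m\EE|Y_{m,i}|^p\,.
\end{align*}
The second hypothesis in \eqref{eq:bahr:cond:2} then bounds the right-hand side by $2\sum_{i=1}^{\infty}\EE|Y_i|^p$, which is \eqref{eq:bahr:infty}.
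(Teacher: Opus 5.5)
Your overall plan is correct, but it takes a different route from the paper. The paper gives no argument for this lemma and simply refers to the proof of Lemma~A.1 in \cite{scheffelCentralLimitTheory2025}; part~(i) is the classical von Bahr--Esseen inequality. You instead reprove (i) from scratch:
\begin{itemize}
\item a pointwise Taylor-type bound, together with \eqref{eq:bahr:cond:1}, kills the linear term;
\item your H\"older integrability check is exactly what is needed, and conditioning only on $S_\ell$ is precisely what the hypothesis provides;
\item you then iterate.
\end{itemize}
Your Fatou argument for (ii) is complete as written. The gain of your route is that it is self-contained and elementary, and it even yields the constant $2^{2-p}$ in place of $2$.

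However, your justification of the pointwise inequality is wrong as written, because $y=-2$ is not a maximiser of $\phi$.
\begin{itemize}
\item A direct computation gives $\phi'(-2)=-p(2-p)2^{-p}<0$ for $p<2$, so $\phi$ increases as $y$ moves below $-2$.
\item For $p=1$ you have $\phi(y)=2+2/y$ on $y<-1$. So $\phi(-2)=1$, while $\phi(y)\to 2$ (not $1$) as $y\to-\infty$.
\item For $p=1.1$ one gets $\phi(-20)\approx 1.72$ versus $\phi(-2)\approx 1.03$.
\end{itemize}

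The inequality itself is true, and here is a short proof you can substitute. Put $h(u)=\mathrm{sgn}(u)|u|^{p-1}$. For $p\in(1,2]$ the map $u\mapsto|u|^p$ is $C^1$ with derivative $p\,h(u)$, so
\begin{align*}
|x+y|^p-|x|^p-p\,h(x)\,y \ = \ p\int_0^1 y\,\bigl(h(x+ty)-h(x)\bigr)\,\mathrm{d}t \ \le\ 2^{2-p}\,p\,|y|^p\int_0^1 t^{p-1}\,\mathrm{d}t \ = \ 2^{2-p}|y|^p\,.
\end{align*}
This uses the H\"older bound $|h(a)-h(b)|\le 2^{2-p}|a-b|^{p-1}$, with $\beta=p-1\in(0,1]$:
\begin{itemize}
\item for $a,b$ of the same sign it follows from $\bigl||a|^{\beta}-|b|^{\beta}\bigr|\le|a-b|^{\beta}$;
\item for opposite signs it follows from concavity, $|a|^{\beta}+|b|^{\beta}\le 2^{1-\beta}(|a|+|b|)^{\beta}=2^{1-\beta}|a-b|^{\beta}$.
\end{itemize}
The case $p=1$ is your triangle-inequality remark.

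If you prefer your integral-remainder fallback, note that the delicate case $y<-1$ is immediate without it. There $|1+y|^p\le|y|^p$, and $-1-py=p|y|-1\le|y|^p$ by Bernoulli's inequality, since $p\le 2$. With the pointwise lemma established this way, the rest of your proof goes through unchanged.
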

\begin{proof}
	See the proof of Lemma~A.1 in~
	\citet{scheffelCentralLimitTheory2025}.
\end{proof}
\begin{lemma}[Moments of the marginal distribution]
	\label{lem:fm}
	Let Assumptions~\ref{asu:f} and~\ref{asu:coef} hold.
	Then
	$X_0\in L^r(\PP)$
	for all $r\in [1,\alpha)$.
\end{lemma}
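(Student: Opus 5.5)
We must show $X_0=\sum_{j\ge0}a_j\varepsilon_{-j}\in L^r(\PP)$ for every $r\in[1,\alpha)$. The natural route is to bound the $L^r$ norm of the series through the partial sums $S_m:=\sum_{j=0}^m a_j\varepsilon_{-j}$, using the martingale difference inequality of Lemma~\ref{lem:bahr}. Since $\varepsilon$ is symmetric with $\EE|\varepsilon|<\infty$ (as $\nu>1$), it is centered. The summands $Y_{m,i}:=a_{i-1}\varepsilon_{-(i-1)}$, $i=1,\ldots,m+1$, are independent and centered, so condition~\eqref{eq:bahr:cond:1} holds. Moreover, by Assumption~\ref{asu:f} the tail $\PP[|\varepsilon|>x]\sim Lx^{-\nu}$ gives $\EE|\varepsilon|^r<\infty$ for all $r<\nu$, hence for all $r<\alpha=2\land\nu$.

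First take $r\in[1,\alpha)$ with $r\le 2$ (automatic, since $\alpha\le2$). Lemma~\ref{lem:bahr}(i) with $p=r$ yields
\begin{align*}
\EE|S_m|^r\ \le\ 2\,\EE|\varepsilon|^r\sum_{j=0}^m|a_j|^r\,.
\end{align*}
By Assumption~\ref{asu:coef}, $|a_j|\lesssim (1+j)^{d-1}$, so $\sum_j|a_j|^r<\infty$ as soon as $r(1-d)>1$. Since $d<1-1/\alpha$ means $\alpha(1-d)>1$, this condition holds for all $r$ in a left neighborhood of $\alpha$, namely $r\in(1/(1-d),\alpha)$. For such $r$, the partial sums form an $L^r$-bounded martingale, which converges almost surely and in $L^r$. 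Alternatively, one can apply Lemma~\ref{lem:bahr}(ii) directly to get $\EE|X_0|^r\le 2\EE|\varepsilon|^r\sum_j|a_j|^r<\infty$. For the almost sure convergence needed in \eqref{eq:bahr:cond:2}, the $L^r$ Cauchy property of $(S_m)$ follows from the same bound applied to increments $S_{m'}-S_m$, combined with the martingale convergence theorem.

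Smaller exponents follow by monotonicity of $L^p$ norms on a probability space: for any $r\in[1,\alpha)$ pick $r'\in(r\lor 1/(1-d),\alpha)$, which is possible because $1/(1-d)<\alpha$. Then $X_0\in L^{r'}(\PP)\subset L^r(\PP)$.

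The only delicate point is the choice of exponent. Summability of $|a_j|^r$ fails for small $r$ (long memory), while moments of $\varepsilon$ fail for $r\ge\nu$. The window $(1/(1-d),\alpha)$ is nonempty precisely because of the restriction $d<1-1/\alpha$ in Assumption~\ref{asu:coef}, and the bound there lifts to all smaller $r$ by Lyapunov's inequality.
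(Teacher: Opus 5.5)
Your proposal is correct and follows essentially the same route as the paper: you reduce to $r\in(1/(1-d),\alpha)$ via monotonicity of $L^p$ norms, then apply Lemma~\ref{lem:bahr} to the independent centered summands $a_j\varepsilon_{-j}$ and use $\sum_j|a_j|^r<\infty$, which holds since $r(1-d)>1$. Your justification of the almost sure convergence in \eqref{eq:bahr:cond:2} via the $L^r$-bounded martingale convergence theorem (valid because $r>1$) supplies a detail that the paper only asserts as ``clearly met''.
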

\begin{proof}
	Since $L^q(\PP)\subset L^p(\PP)$ for $1\le p <  q$, it suffices to show the statement for $r\in (1/(1-d),\alpha)$.
	To this end, we apply Lemma~\ref{lem:bahr} to the array $(Y_{m,i})=(a_i\varepsilon_{-i}: i=0,\ldots,m,m\in\NN)$ and the sequence $(Y_i)=(a_i\varepsilon_{-i})_{i\ge 0}$. Since $r<\alpha$ by assumption, $(Y_{m,i}),(Y_i)\subset L^r(\PP)$. Since the innovations are independent and centered, Condition~\eqref{eq:bahr:cond:1} is satisfied. The other conditions of Lemma~\ref{lem:bahr} are clearly met, so that
	\begin{align*}
		\EE[|X_0|^r]
		\ = \
		\EE
		\left[
			\left|
			\sum_{i=0}^\infty
			Y_i
			\right|^r
			\right]
		\ \lesssim \
		\sum_{i=0}^\infty
		\EE[|Y_i|^r]
		\ = \
		\EE[|\varepsilon|^r]
		\sum_{i=0}^\infty
		|a_i|^r
		\ \lesssim \
		\sum_{i=0}^\infty
		i^{-(1-d)r}
		\,.
	\end{align*}
	The latter expression is finite by the assumption $1/(1-d)<r$, so that $(1-d)r > 1$.
	This shows that $X_0\in L^r(\PP)$.
\end{proof}
\begin{lemma}[Decay rate of the moment of the remainder]
	\label{lem:x0k}
	Let Assumptions~\ref{asu:f} and~\ref{asu:coef} hold.
	Then, for all $k\ge 1$ and $r\in(1/(1-d),\alpha)$,
	\begin{align*}
    \EE
    \left[
    \left|
    \sum_{i=k+1}^{\infty}a_i\varepsilon_{-i}
    \right|^r
    \right]
		\ \lesssim \
		k^{1-(1-d)r}
		\,,
	\end{align*}
	where $\lesssim$ means inequality up to a multiplicative constant that depends on the coefficients $(a_i)$, the distribution of $\varepsilon$, and $r$, and is independent of $k$.
\end{lemma}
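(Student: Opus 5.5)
The bound will follow by exactly the argument used for Lemma~\ref{lem:fm}, applied to the tail of the series instead of the whole series. Fix $r\in(1/(1-d),\alpha)$ and $k\ge 1$. I will invoke Lemma~\ref{lem:bahr} with $p=r\in[1,2]$, using the array $(Y_{m,i})=(a_{k+i}\varepsilon_{-(k+i)}\colon i=1,\ldots,m,\ m\in\NN)$ and the sequence $(Y_i)=(a_{k+i}\varepsilon_{-(k+i)})_{i\ge1}$.

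The hypotheses of Lemma~\ref{lem:bahr} need to be checked.
\begin{enumerate}
\item Integrability: since $r<\alpha\le\nu$, the regular variation in Assumption~\ref{asu:f} gives $\EE|\varepsilon|^r<\infty$, so each $Y_i$ lies in $L^r(\PP)$.
\item Condition~\eqref{eq:bahr:cond:1}: the innovations are independent and centred (they are symmetric and integrable), and $\sum_{i=1}^{\ell}Y_{m,i}$ is a function of $\varepsilon_{-(k+1)},\ldots,\varepsilon_{-(k+\ell)}$ only, which are independent of $\varepsilon_{-(k+\ell+1)}$.
\item Condition~\eqref{eq:bahr:cond:2}: the moment part holds with equality, since $\sum_{i=1}^m\EE|Y_{m,i}|^r$ is a partial sum of $\sum_{i=1}^\infty\EE|Y_i|^r$. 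For the almost sure convergence, the series $\sum_i a_{k+i}\varepsilon_{-(k+i)}$ is a tail of the series defining $X_0$, which converges almost surely. To justify this directly, the partial sums form a martingale bounded in $L^r$ by part (i) of Lemma~\ref{lem:bahr}, because $\sum_i|a_i|^r<\infty$; hence they converge almost surely.
\end{enumerate}

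Part (ii) of Lemma~\ref{lem:bahr} then gives
\[
\EE\left[\left|\sum_{i=k+1}^\infty a_i\varepsilon_{-i}\right|^r\right]\ \le\ 2\,\EE|\varepsilon|^r\sum_{i=k+1}^\infty|a_i|^r .
\]
By Assumption~\ref{asu:coef}, $|a_i|\lesssim i^{-(1-d)}$ for $i\ge1$. Since $(1-d)r>1$, comparing the sum with an integral yields
\[
\sum_{i=k+1}^\infty i^{-(1-d)r}\ \le\ \int_k^\infty x^{-(1-d)r}\,\mathrm{d}x\ =\ \frac{k^{1-(1-d)r}}{(1-d)r-1}.
\]
The resulting constant depends only on $r$, on $\sup_i i^{1-d}|a_i|$, and on $\EE|\varepsilon|^r$, and it does not depend on $k$, as required.

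No step is a genuine obstacle. The only points needing care are the almost sure convergence required by Lemma~\ref{lem:bahr}(ii), which is handled through the martingale convergence argument above, and the observation that the bound $|a_i|\lesssim i^{d-1}$ holds uniformly for all $i\ge1$ rather than only asymptotically. The latter follows because $n^{1-d}a_n$ converges and is therefore bounded.
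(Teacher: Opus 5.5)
Your proposal is correct and follows the paper's proof essentially step for step. Both feed the array $(a_{k+i}\varepsilon_{-(k+i)})$ into Lemma~\ref{lem:bahr}(ii), bound $|a_i|\lesssim i^{-(1-d)}$, and compare the resulting tail sum with an integral using $(1-d)r>1$. Your explicit justification of the almost sure convergence, via partial sums forming an $L^r$-bounded martingale, is a detail the paper leaves implicit.
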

\begin{proof}
	We define the array and limiting sequence
	\begin{align*}
		(Y_{m,i})
		\ = \
		(a_{k+i}\varepsilon_{-(k+i)}\,,i=1,\ldots,m\,,m\in\NN)
		\qquad\text{and}\qquad
		(Y_i)
		\ = \
		(a_{k+i}\varepsilon_{-(k+i)})_{i\in\NN}
		\,.
	\end{align*}
	Note that \eqref{eq:bahr:cond:1} and \eqref{eq:bahr:cond:2} are satisfied for $(Y_{m,i})$ and $(Y_i)$.
	Then we may write
	\begin{align*}
		\sum_{i=k+1}^\infty
		a_i\varepsilon_{-i}
		\ = \
		\sum_{i=1}^\infty
		a_{k+i}\varepsilon_{-(k+i)}
		\ = \
		\sum_{i=1}^\infty
		Y_i
		\,.
	\end{align*}
	The $\varepsilon_j$ are independent, centered and belong to $L^r(\PP)$, so applying Inequality~\eqref{eq:bahr:infty}
	in Lemma~\ref{lem:bahr}(ii), we obtain
	\begin{align*}
    \EE
    \left[
    \left|
    \sum_{i=k+1}^{\infty}a_i\varepsilon_{-i}
    \right|^r
    \right]
		\ \lesssim \
		\sum_{i=k+1}^\infty
		|a_i|^{r}
		\EE|\varepsilon|^r
		\ \lesssim \
		\sum_{i=k+1}^\infty
		i^{-(1-d)r}
		\ \lesssim \
		k^{1-(1-d)r}
		\ \le \
		1
		\,,
	\end{align*}
	where we used $a_i i^{1-d}\sim c_a$ and $(1-d)r>1$.
	The multiplicative constant in $\lesssim$ depends on the coefficients $(a_i)$, the distribution of $\varepsilon$, and $r$, and is independent of $k$.
	This proves the asserted upper bound.
\end{proof}

\section{Multivariate techniques}
We define, for $k\ge 0$, the $k$-truncated time series $(X_{t,k})$ by
$
	X_{t,k}
	:=
	\sum_{i=0}^k
	a_i \varepsilon_{t-i}
$
and make the convention $X_{t,\infty}=X_t$ so that $X_{t,k}=X_{t,\infty}-\sum_{i=k+1}^{\infty} a_i \varepsilon_{t-i}$. 
For $i\in\{1,\ldots,T\}$,
let
$\mathbf{e}_{i}\in\RR^T$ be the $i$-th unit vector of the canonical basis, and define the coefficient matrix $\mathbf{A}_T \in \RR^{T\times T}$ by
\begin{align}
	\label{def:A_T}
	\mathbf{A}_T
	\ := \
	\begin{bmatrix}
		a_0       & 0      & \cdots & 0      \\
		a_1     &  a_0     & \ddots & \vdots \\
		\vdots  & \ddots & \ddots & 0      \\
		a_{T-1} & \ldots & a_1    & a_0
	\end{bmatrix}
  \ = \ 
	\begin{bmatrix}
		1       & 0      & \cdots & 0      \\
		a_1     & 1      & \ddots & \vdots \\
		\vdots  & \ddots & \ddots & 0      \\
		a_{T-1} & \ldots & a_1    & 1
	\end{bmatrix}
	\,.
\end{align}
\subsection{Density of the joint distribution and its gradient}
\begin{lemma}
	\label{lem:multi_density}
	Let Assumptions~\ref{asu:f} and~\ref{asu:coef} hold.
	Then, for all $k\in \NN_0 \cup\{\infty\}$, and all $\mathbf{x}_{}\in\RR^T$, the following holds for the density of $[X_{\ell,k+\ell}]_\ell$ and its gradient:
	\begin{enumerate}
		\item
		      \begin{align*}
			      f_{[X_{\ell,k+\ell}]_\ell}(\mathbf{x})
			       &
			       \ = \
			      \int_{\RR^T}
			      \prod_{i=1}^{T}
			      f_\varepsilon(\mathbf{e}_{i}^{\top}\cdot\mathbf{A}_{T}^{-1}
			      \cdot
				      (\mathbf{x}_{}-\widetilde{\mathbf{x}_{}}_{})
			      )
			      \,\mathrm{d}\PP_{[X_{\ell,k+\ell}-X_{\ell,\ell}]_\ell}(\widetilde{\mathbf{x}})
            \\&
			      \ = \
			      \EE
			      \left[
				      \prod_{i=1}^{T}
				      f_\varepsilon(\mathbf{e}_{i}^{\top}\cdot\mathbf{A}_{T}^{-1}\cdot[x_\ell - (X_{\ell,k+\ell}-X_{\ell,\ell})]_\ell)
				      \right]
			      \,.
		      \end{align*}
		\item
		      \begin{align*}
			       &
			      \nabla
			      f_{[X_{\ell,k+\ell}]_\ell}
			      (\mathbf{x}_{})
			      \\
			       &
			      \ = \
			      \left(
			      \mathbf{A}_{T}^{-1}
			      \right)^{\top}
			      \left[
				      \int_{\RR^T}
				      f'_\varepsilon(\mathbf{e}_{i}^{\top}\cdot\mathbf{A}_{T}^{-1}\cdot
					      (\mathbf{x}_{}-\widetilde{\mathbf{x}_{}}_{})
				      )
				      \prod_{j\neq i}
				      f_\varepsilon(\mathbf{e}_{j}^{\top}\cdot\mathbf{A}_{T}^{-1}\cdot
					      (\mathbf{x}_{}-\widetilde{\mathbf{x}_{}}_{})
				      )
				      \,\mathrm{d}
				      \PP_{[X_{\ell,k+\ell}-X_{\ell,\ell}]_\ell}
				      (
				      \widetilde{\mathbf{x}_{}}_{}
				      )
				      \right]_{i\in\{1,\ldots,T\}}
			      \\ &
			        \ = \
			        \left(
			        \mathbf{A}_{T}^{-1}
			        \right)^{\top}
			      \\ & \qquad
			        \times
			        \left[
				        \EE
				        \left[
					        f'_\varepsilon(\mathbf{e}_{i}^{\top}\cdot\mathbf{A}_{T}^{-1}\cdot
					        (\mathbf{x}_{}-
					        [X_{\ell,k+\ell}-X_{\ell,\ell}]_\ell
					        )
					      )
					        \prod_{j\neq i}
					      f_\varepsilon(\mathbf{e}_{j}^{\top}\cdot\mathbf{A}_{T}^{-1}\cdot
					        (\mathbf{x}_{}-
					        [X_{\ell,k+\ell}-X_{\ell,\ell}]_\ell
					        )
					      )
					        \right]
				      \right]_{i\in\{1,\ldots,T\}}
		      \end{align*}
		\item
    For fixed $\mathbf{z}\in\RR^T$,
    it holds
    $\mathbf{z}^{\top}\cdot\nabla f_{[X_{\ell,k+\ell}]_{\ell}}\in L^1(\RR^T)\cap C^{0}(\RR^T)$.
    \item
            If Assumption~\ref{asu:f_more} holds, then
            \begin{align*}
            \left(
              \mathbf{x}\ \mapsto\ 
              x_{i-1} \frac{\partial}{\partial x_{i-1}} f_{[X_{\ell,k+\ell}]_{\ell}}(\mathbf{x})
            \right)
              \ \in\  L^1(\RR^T)\cap C^{0}(\RR^T)
              \qquad\text{for all $i\in\{1,\ldots,T\}$.}
            \end{align*}
            In particular,
            $(\mathbf{x}\mapsto \mathbf{x}^{\top}\cdot \nabla f_{[X_{\ell,k+\ell}]_{\ell}}(\mathbf{x}))\in L^1(\RR^T)\cap C^{0}(\RR^T)$.
	\end{enumerate}
\end{lemma}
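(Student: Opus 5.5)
The plan is to decompose $[X_{\ell,k+\ell}]_\ell$ into an independent sum and then differentiate under the integral. Write
\begin{align*}
[X_{\ell,k+\ell}]_\ell \ = \ [X_{\ell,\ell}]_\ell + [X_{\ell,k+\ell}-X_{\ell,\ell}]_\ell .
\end{align*}
The first summand equals $\mathbf{A}_T[\varepsilon_{\ell}]_\ell$, where $[\varepsilon_\ell]_\ell=[\varepsilon_0,\ldots,\varepsilon_{T-1}]$. Indeed, $X_{\ell,\ell}=\sum_{i=0}^{\ell}a_i\varepsilon_{\ell-i}$, which is a lower triangular Toeplitz map with unit diagonal, so $\det\mathbf{A}_T=1$. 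The second summand only involves innovations $\varepsilon_{j}$ with $j<0$, so it is independent of the first. By the change of variables formula, $[X_{\ell,\ell}]_\ell$ has density $\mathbf{x}\mapsto\prod_{i=1}^T f_\varepsilon(\mathbf{e}_i^\top\mathbf{A}_T^{-1}\mathbf{x})$. The density of the sum of two independent vectors is then the convolution of this density with the law $\PP_{[X_{\ell,k+\ell}-X_{\ell,\ell}]_\ell}$, which is the formula in (i). For $k=\infty$ the same argument applies, since the series defining the remainder converges almost surely by Lemma~\ref{lem:fm}.

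For (ii), differentiate the integrand of (i) in $x_{m-1}$. By the chain rule, each factor $f_\varepsilon(\mathbf{e}_i^\top\mathbf{A}_T^{-1}(\mathbf{x}-\widetilde{\mathbf{x}}))$ contributes $(\mathbf{A}_T^{-1})_{im}f'_\varepsilon(\cdot)\prod_{j\ne i}f_\varepsilon(\cdot)$. Collecting these terms gives the matrix form $(\mathbf{A}_T^{-1})^\top[\cdots]_i$. The interchange of derivative and integral is justified by dominated convergence: $f_\varepsilon$ and $|f'_\varepsilon|$ are uniformly bounded by Assumption~\ref{asu:f}, the $\PP$-integral is over a probability measure, and the mean value theorem gives a constant dominating function for difference quotients. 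The same bounds together with continuity of $f'_\varepsilon$ give continuity of the gradient in $\mathbf{x}$.

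For (iii), fix $\mathbf{z}$. Continuity follows from (ii). For integrability, apply Tonelli and change variables $\mathbf{y}=\mathbf{A}_T^{-1}(\mathbf{x}-\widetilde{\mathbf{x}})$, which has Jacobian 1. Then
\begin{align*}
\int|\mathbf{z}^\top\nabla f|\,\mathrm{d}\mathbf{x} \ \lesssim \ |\mathbf{z}|\sum_i\|f'_\varepsilon\|_{L^1}\prod_{j\ne i}\|f_\varepsilon\|_{L^1} .
\end{align*}
This is finite because $|f'_\varepsilon(x)|\lesssim(1+|x|)^{-\alpha}$ with $\alpha>1$.

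Part (iv) is the main obstacle, because the weight $x_{i-1}$ is not translation invariant. After the substitution $\mathbf{x}=\mathbf{A}_T\mathbf{y}+\widetilde{\mathbf{x}}$, the weight becomes $x_{i-1}=\sum_m(\mathbf{A}_T)_{im}y_{m-1}+\widetilde{x}_{i-1}$. The integrand is bounded by a sum of terms of two types:
\begin{align*}
|y_{m-1}|\,|f^{(\cdot)}_\varepsilon(y_{\cdot})|\prod(\cdots) \qquad\text{and}\qquad |\widetilde{x}_{i-1}|\,|f'_\varepsilon(y_{i'-1})|\prod_{j\ne i'}f_\varepsilon(y_{j-1}) .
\end{align*}
For terms of the first type, I separate two cases. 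If $m$ matches the coordinate carrying $f'_\varepsilon$, the integral is controlled by $\|xf'_\varepsilon\|_{L^1}<\infty$, using Assumption~\ref{asu:f_more}. Otherwise it is controlled by $\int|y|f_\varepsilon(y)\,\mathrm{d}y=\EE|\varepsilon|<\infty$, since $\nu>1$, multiplied by $\|f'_\varepsilon\|_{L^1}$. Terms of the second type are integrable against $\PP_{[X_{\ell,k+\ell}-X_{\ell,\ell}]_\ell}$ because the remainder has finite first moment by Lemma~\ref{lem:x0k}, or Lemma~\ref{lem:fm} for $k=\infty$, via $L^r\subset L^1$. Continuity is again immediate from (ii). Summing over $i$ yields the final claim that $\mathbf{x}\mapsto\mathbf{x}^\top\nabla f_{[X_{\ell,k+\ell}]_\ell}(\mathbf{x})$ lies in $L^1(\RR^T)\cap C^0(\RR^T)$.
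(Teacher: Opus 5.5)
Your proof is correct, but for (i) and (iii) it takes a different route from the paper. Parts (ii) and (iv) are essentially the paper's argument. In (ii) both use dominated convergence with a bounded dominating function for the difference quotients. In (iv) both substitute $\mathbf{x}=\mathbf{A}_T\mathbf{y}+\widetilde{\mathbf{x}}$ and split the weight $x_{i-1}$ into pieces controlled by $\|xf'_\varepsilon\|_{L^1}$, by $\EE|\varepsilon|\,\|f'_\varepsilon\|_{L^1}$, and by the first moment of the remainder.

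In (i), the paper handles finite $k$ by shifting via stationarity, writing $[X_{\ell,\ell}]_{\ell\in\{0,\ldots,T-1+k\}}=\mathbf{A}_{T+k}[\varepsilon_\ell]_\ell$, inverting $\mathbf{A}_{T+k}$ blockwise with Schur's formula, and integrating out $k$ coordinates. It then treats $k=\infty$ as a limit, identifying the pointwise limit of the densities via Pratt's version of dominated convergence. You instead note that $[X_{\ell,k+\ell}-X_{\ell,\ell}]_\ell$ is $\sigma(\varepsilon_{-1},\varepsilon_{-2},\ldots)$-measurable, hence independent of $\mathbf{A}_T[\varepsilon_\ell]_\ell$. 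The convolution formula then gives (i) for all $k\in\NN_0\cup\{\infty\}$ at once, with no limiting argument. The paper's block computation recovers exactly this decomposition, via $\mathbf{A}_{T,k}[\varepsilon_{-k+\ell}]_\ell=[X_{\ell,k+\ell}-X_{\ell,\ell}]_\ell$.

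In (iii), the paper substitutes only $\mathbf{x}\mapsto\mathbf{A}_T\mathbf{x}$ and removes the random shift from $g_\gamma$ with Lemma~\ref{lem:5.1}(i). That costs a factor $(1\lor|\cdot|)^{1+\gamma}$ and requires $X_{\ell,k+\ell}-X_{\ell,\ell}\in L^{1+\gamma}(\PP)$. Your Tonelli-plus-translation step for each fixed $\widetilde{\mathbf{x}}$ yields $\|\mathbf{z}^\top\nabla f_{[X_{\ell,k+\ell}]_\ell}\|_{L^1}\le T\,\|\mathbf{A}_T^{-1}\mathbf{z}\|_\infty\,\|f'_\varepsilon\|_{L^1}$. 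This needs no moment condition and is uniform in $k$. The paper's version matches the $g_\gamma$-shift estimates used in later sections; yours is shorter and gives a stronger, law-free bound.

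One small citation point concerns (iv). For finite $k$, $[X_{\ell,k+\ell}-X_{\ell,\ell}]_\ell$ is a finite linear combination of innovations, so its first moment is trivially finite. Lemma~\ref{lem:x0k} concerns the tail $\sum_{i>k}a_i\varepsilon_{-i}$, so it is not the direct reference, although it does cover the remainder written as a difference of two such tails.
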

\begin{proof}[\textbf{Proof of (i):}]
	\ \\
	\textbf{Case $k<\infty$:}
	By the stationarity of the (truncated) time series it holds
	\begin{align*}
		f_{
				[X_{\ell,k+\ell}]_\ell
			}(\mathbf{x})
		\ = \
		f_{[X_{k+\ell,k+\ell}]_\ell}(\mathbf{x})
		\ = \
		\int_{\RR^k}
		f_{[X_{\ell,\ell}]_{\ell\in\{0,\ldots,T-1+k\}}}
		\left(
		\begin{bmatrix}
				\mathbf{z} \\
				\mathbf{x}
			\end{bmatrix}
		\right)
		\,\mathrm{d}\mathbf{z}
		\,.
	\end{align*}
	We use that
	\begin{align*}
		[X_{\ell,\ell}]_{\ell\in\{0,\ldots, T-1+k\}}
		\ = \
		\mathbf{A}_{T+k}
		\cdot
		[\varepsilon_\ell]_{\ell\in\{0,\ldots,T-1+k\}}
		\qquad\text{and}\qquad
		\det \mathbf{A}_{T+k}
		\ =\ 1
		\,,
	\end{align*}
	to make a change of variables
	\begin{align*}
		 &
		f_{[X_{\ell,\ell}]_{\ell\in\{0,\ldots,T-1+k\}}}
		\left(
		\begin{bmatrix}
				\mathbf{z} \\
				\mathbf{x}
			\end{bmatrix}
		\right)
		\ = \
		f_{[\varepsilon_{\ell}]_{\ell\in\{0,\ldots,T-1+k\}}}
		\left(
		\mathbf{A}_{T+k}^{-1}
		\cdot
		\begin{bmatrix}
			\mathbf{z} \\
			\mathbf{x}
		\end{bmatrix}
		\right)
		\,.
	\end{align*}
	Our goal is to simplify the expression $
		\mathbf{A}_{T+k}^{-1}
		\cdot
		\begin{bmatrix}
			\mathbf{z} \\
			\mathbf{x}
		\end{bmatrix}
	$.
	Consider $\mathbf{A}_{T+k}$ as a block matrix, that is
	\[
		\mathbf{A}_{T+k}
		\ = \
		\begin{bmatrix}
			\mathbf{A}_k     & \mathbf{0}_{k,T} \\
			\mathbf{A}_{T,k} & \mathbf{A}_{T}
		\end{bmatrix}
		\ \mbox{ where } \ \mathbf{A}_{T,k}
		\ := \
		\begin{bmatrix}
			a_k       & a_{k-1}   & \cdots & a_1    \\
			a_{k+1}   & a_{k}     & \cdots & a_2    \\
			\vdots    & \vdots    & \vdots & \vdots \\
			a_{T-1+k} & a_{T-2+k} & \cdots & a_{T}
		\end{bmatrix}
		\in \RR^{T\times k}
	\]
	and $\mathbf{0}_{k,T}$ is the zero matrix in $\RR^{k\times T}$.
	With Schur's formula for inversion of block matrices we get
	\begin{align*}
		\mathbf{A}^{-1}_{T+k}
		\ = \
		\begin{bmatrix}
			\mathbf{A}^{-1}_k                                     & \mathbf{0}_{k, T}   \\
			-\mathbf{A}^{-1}_{T}\mathbf{A}_{T,k}\mathbf{A}_k^{-1} & \mathbf{A}^{-1}_{T}
		\end{bmatrix}
		\,.
	\end{align*}
	For all $\mathbf{z}\in \mathbb{R}^k$ and $\mathbf{x}\in \mathbb{R}^T$ it then holds
	\begin{align*}
		\mathbf{A}^{-1}_{T+k}
		\begin{bmatrix}
			\mathbf{z} \\
			\mathbf{x}
		\end{bmatrix}
		\ = \
		\begin{bmatrix}
			\mathbf{A}_k^{-1}
			\mathbf{z} \\
			\mathbf{A}^{-1}_{T}
			\left(
			\mathbf{x}
			-
			\mathbf{A}_{T,k}
			\mathbf{A}_{k}^{-1}
			\mathbf{z}
			\right)
		\end{bmatrix}
		\,.
	\end{align*}
	Making the change of variables $\mathbf{A}^{-1}_k \mathbf{z}\mapsto \mathbf{z}$  and recalling that $\det \mathbf{A}^{-1}_k=1$ we get
	\begin{align*}
		 &
		\int_{\RR^k}
		f_{[\varepsilon_{\ell}]_{\ell\in\{0,\ldots,T-1+k\}}}
		\left(
		\mathbf{A}^{-1}_{T+k}
		\begin{bmatrix}
			\mathbf{z} \\
			\mathbf{x}
		\end{bmatrix}
		\right)
		\,\mathrm{d}\mathbf{z}
		\\ &
		  \ = \
		  \int_{\RR^k}
		f_{[\varepsilon_{\ell}]_{\ell\in\{0,\ldots,T-1+k\}}}
		\left(
		  \begin{bmatrix}
				\mathbf{z} \\
				\mathbf{A}^{-1}_{T}
				\left(
				\mathbf{x}
				-
				\mathbf{A}_{T,k}
				\mathbf{z}
				\right)
			\end{bmatrix}
		  \right)
		\,\mathrm{d}\mathbf{z}
		\,.
	\end{align*}
	Using that the innovations are i.i.d., this equals
	\begin{align*}
		 &
		\int_{\RR^k}
		f_{[\varepsilon_{\ell}]_{\ell\in\{0,\ldots,T-1+k\}}}
		\left(
		\begin{bmatrix}
				\mathbf{z} \\
				\mathbf{A}^{-1}_{T}
				\left(
				\mathbf{x}
				-
				\mathbf{A}_{T,k}
				\mathbf{z}
				\right)
			\end{bmatrix}
		\right)
		\,\mathrm{d}\mathbf{z}
		\ = \
		\EE
		\left[
			\prod_{i=1}^{T}
			f_{\varepsilon}
			\left(
			\mathbf{e}_i^\top \mathbf{A}_T^{-1}
			\left(
			\mathbf{x}
			-
			\mathbf{A}_{T,k}
			\cdot
			[\varepsilon_\ell]_{\ell\in\{0,\ldots,k-1\}}
			\right)
			\right)
			\right]
		\\ &
		  \ = \
		  \EE
		  \left[
			  \prod_{i=1}^{T}
			f_{\varepsilon}
			\left(
			  \mathbf{e}_i^\top \mathbf{A}_T^{-1}
			  \left(
			  \mathbf{x}
			  -
			  \mathbf{A}_{T,k}
			  \cdot
			  [\varepsilon_{-k+\ell}]_{\ell\in\{0,\ldots,k-1\}}
			  \right)
			\right)
			\right]
		\,.
	\end{align*}
	We have the simplification
	\begin{align*}
		\mathbf{A}_{T,k}
		\cdot
		[\varepsilon_{-k+\ell}]_{\ell\in\{0,\ldots, k-1\}}
		 &
		\ = \
		\left[
			\sum_{i=0}^{k-1}
			a_{k+\ell-i}
			\varepsilon_{-k+ i}
			\right]_{\ell\in\{0,\ldots,T-1\}}
		\\ &
		  \ = \
		  \left[
			  \sum_{i=1+\ell}^{k+\ell}
			a_{i}
			\varepsilon_{\ell-i}
			\right]_{\ell\in\{0,\ldots,T-1\}}
		\\ &
		  \ = \
		  \left[
			  \sum_{i=0}^{k+\ell}
			a_{i}
			\varepsilon_{\ell-i}
			\ - \
			  \sum_{i=0}^{\ell}
			a_{i}
			\varepsilon_{\ell-i}
			\right]_{\ell\in\{0,\ldots,T-1\}}
		\\ &
		  \ = \
		  \left[
			  X_{\ell, k+\ell}
			\ - \
			  X_{\ell, \ell}
			\right]_{\ell\in\{0,\ldots,T-1\}}
		\,,
	\end{align*}
	so that
	\begin{align*}
		 &
		\EE
		\left[
			\prod_{i=1}^{T}
			f_{\varepsilon}
			\left(
			\mathbf{e}_i^\top \mathbf{A}_T^{-1}
			\left(
			\mathbf{x}
			-
			\mathbf{A}_{T,k}
			\cdot
			[\varepsilon_{-k+\ell}]_{\ell\in\{0,\ldots,k-1\}}
			\right)
			\right)
			\right]
		\\ &
		  \ = \
		  \EE
		  \left[
			  \prod_{i=1}^{T}
			f_{\varepsilon}
			\left(
			  \mathbf{e}_i^\top \cdot\mathbf{A}_T^{-1}
			  \cdot
			  [x_\ell
				  -
				  (
				  X_{\ell, k+\ell}
				\ - \
				  X_{\ell, \ell}
				)
				  ]_\ell
			  \right)
			\right]
		\,.
	\end{align*}
	This proves the claimed identity for $f_{[X_{\ell,k+\ell}]_{\ell}}$ if $k<\infty$.
	\\
	\textbf{Case $k=\infty$:}
	Using the continuity and boundedness of $f_\varepsilon$ and the weak convergence $[X_{\ell,k+\ell}-X_{\ell,\ell}]_\ell\to [X_{\ell}-X_{\ell,\ell}]_\ell$ as $k\to\infty$, we find that
	$f_{[X_{\ell,k+\ell}]_{\ell}}$ converges pointwise to the candidate density
	\begin{align*}
		\mathbf{x}_{}
		\ \mapsto\
		\EE
		\left[
			\prod_{i=1}^T
			f_{\varepsilon}(\mathbf{e}_{i}^{\top}\cdot \mathbf{A}_{T}^{-1}\cdot
			(\mathbf{x}_{}-[X_{\ell}-X_{\ell,\ell}]_{\ell}))
			\right]
		\,.
	\end{align*}
	To show that this is $f_{[X_{\ell}]_{\ell}}$, that is, the density of $[X_{\ell}]_{\ell}$, we also need to show that, for any bounded and continuous function $h:\RR^T \to \RR$, it holds that
	\begin{align*}
		\EE[h([X_{\ell,k+\ell}]_{\ell})]
		\ = \
		\int_{\RR^T}
		h(\mathbf{x}_{})
		f_{[X_{\ell,k+\ell}]_{\ell}}(\mathbf{x}_{})
		\,\mathrm{d}\mathbf{x}_{}
		\ \to \
		\int_{\RR^T}
		h(\mathbf{x}_{})
		\EE
		\left[
			\prod_{i=1}^T
			f_{\varepsilon}(\mathbf{e}_{i}^{\top}\cdot \mathbf{A}_{T}^{-1}\cdot
			(\mathbf{x}_{}-[X_{\ell}-X_{\ell,\ell}]_{\ell}))
			\right]
		\,\mathrm{d}\mathbf{x}_{}
		\,,
	\end{align*}
	as $k\to\infty$.
	To this end, we apply Pratt's version of dominated convergence
	\citep{prattInterchangingLimitsIntegrals1960}.
  First, note that, by $\det \mathbf{A}_{T}^{-1}=1/(\det \mathbf{A}_T) = 1$ and the result for $k<\infty$, it holds
	\begin{align*}
		 &
		\int_{\RR^T}
		h(\mathbf{x}_{})
		f_{[X_{\ell,k+\ell}]_{\ell}}(\mathbf{x}_{})
		\,\mathrm{d}\mathbf{x}_{}
		\ = \
		\int_{\RR^T}
		h(\mathbf{A}_{T}\cdot\mathbf{x}_{})
		f_{[X_{\ell,k+\ell}]_{\ell}}(\mathbf{A}_{T}\cdot\mathbf{x}_{})
		\,\mathrm{d}\mathbf{x}_{}
		\\ &
		  \ = \
		  \int_{\RR^T}
		h(\mathbf{A}_{T}\cdot\mathbf{x}_{})
		  \EE
		  \left[
			  \prod_{i=1}^T
			  f_{\varepsilon}(x_{i-1}-\mathbf{e}_{i}^{\top}\cdot \mathbf{A}_{T}^{-1}\cdot
			  [X_{\ell,k+\ell}-X_{\ell,\ell}]_{\ell})
			  \right]
		\,\mathrm{d}\mathbf{x}_{}
		\,.
	\end{align*}
	By Assumption~\ref{asu:f} and the boundedness of $h$, it holds
	\begin{align*}
		 &
		\left|
		h(\mathbf{x}_{})
		\EE
		\left[
			\prod_{i=1}^T
			f_{\varepsilon}(x_{i-1}-\mathbf{e}_{i}^{\top}\cdot \mathbf{A}_{T}^{-1}\cdot
			[X_{\ell,k+\ell}-X_{\ell,\ell}]_{\ell})
			\right]
		\right|
		\\ &
		  \ \lesssim \
		  \EE
		  \left[
			  \prod_{i=1}^T
			  g_{\alpha-1}(x_{i-1}-\mathbf{e}_{i}^{\top}\cdot \mathbf{A}_{T}^{-1}\cdot
			  [X_{\ell,k+\ell}-X_{\ell,\ell}]_{\ell})
			  \right]
		\ =: \
		  \varphi_{k}(\mathbf{x}_{})
		\,.
	\end{align*}
	Since $g_{\alpha-1}$ is bounded and continuous, it follows from the weak convergence of
	$[X_{\ell,k+\ell}-X_{\ell,\ell}]_{\ell}$
	to
	$[X_{\ell}-X_{\ell,\ell}]_{\ell}$ as $k\to\infty$ that
	\begin{align*}
		\varphi_{k}(\mathbf{x}_{})
		\ \to \
		\varphi_{\infty}(\mathbf{x}_{})
		\ := \
		\EE
		\left[
			\prod_{i=1}^T
			g_{\alpha-1}(x_{i-1}-
			\mathbf{e}_{i}^{\top}\cdot \mathbf{A}_{T}^{-1}\cdot
			[X_{\ell}-X_{\ell,\ell}]_{\ell})
			\right]
		\qquad\text{pointwise for all $\mathbf{x}_{}\in\RR^T$.}\qquad
	\end{align*}
	Making the shift of variables
	$
		x_{i-1}-
		\mathbf{e}_{i}^{\top}\cdot \mathbf{A}_{T}^{-1}\cdot
		[X_{\ell,k+\ell}-X_{\ell,\ell}]_{\ell})
		\mapsto
		x_{i-1}
	$
	for each component $i\in\{1,\ldots,T\}$, we get that
	\begin{align*}
		\int_{\RR^T}
		\varphi_{k}(\mathbf{x}_{})
		\,\mathrm{d}\mathbf{x}_{}
		\ = \
		\int_{\RR^T}
		\varphi_{\infty}(\mathbf{x}_{})
		\,\mathrm{d}\mathbf{x}_{}
		\ = \
		\prod_{i=1}^{T}
		\int_{\RR}
		g_{\alpha-1}(x_{i-1})
		\,\mathrm{d}x_{i-1}
		\ = \
		\norm{g_{\alpha-1}}_{L^1(\RR)}^{T}
		\ < \
		\infty
		\,,
	\end{align*}
	so that applying Pratt's version of dominated convergence
  \cite[]{prattInterchangingLimitsIntegrals1960}
	allows us to conclude the proof.
	\\
	\textbf{Proof of (ii):}
	We show this for the $l$-th component of the gradient, that is $\partial/\partial x_{l-1}$.
	To this end, we use the identity
	\begin{align*}
		\prod_{i=1}^T a_i \ -\  \prod_{i=1}^T b_i
		\  =\
		\sum_{i=1}^T (\prod_{j<i} a_j)(a_i-b_i)(\prod_{j>i} b_j)
		\,,
	\end{align*}
	with
	\begin{align*}
		a_j
		\ :=\
		f_\varepsilon
		(
		\mathbf{e}_{j}^{\top}
		\mathbf{A}_{T}^{-1}
			(\mathbf{x}-[X_{\ell,k+\ell}-X_{\ell,\ell}]_\ell+h\cdot\mathbf{e}_{l})
		)
		\qquad\text{and}\qquad
		b_j
		\ :=\
		f_\varepsilon
		(
		\mathbf{e}_{j}^{\top}
		\mathbf{A}_{T}^{-1}
			(\mathbf{x}-[X_{\ell,k+\ell}-X_{\ell,\ell}]_\ell)
		)
	\end{align*}
	for a fixed $h\neq 0$, and write, using (i),
	\begin{align*}
		 & \frac{f_{
					[X_{\ell,k+\ell}]_\ell}(\mathbf{x}+h\cdot\mathbf{e}_{l})
			-
			f_{[X_{\ell,k+\ell}]_\ell}(\mathbf{x})
		}{h}
		\\ &
		  \ = \
		  \sum_{i=1}^T \EE\left[
			  \frac{
				  f_\varepsilon
				  (
				  \mathbf{e}_{i}^{\top}
				\mathbf{A}_{T}^{-1}
				  (\mathbf{x}-[X_{\ell,k+\ell}-X_{\ell,\ell}]_\ell+h\cdot\mathbf{e}_{l})
				)
				  -
				  f_\varepsilon
				  (
				  \mathbf{e}_{i}^{\top}
				\mathbf{A}_{T}^{-1}
				  (\mathbf{x}-[X_{\ell,k+\ell}-X_{\ell,\ell}]_\ell)
				)
				  }{h} \right.
		\\ &
			  \qquad \times \left.
			  \prod_{j<i}
			f_\varepsilon
			  (
			  \mathbf{e}_{j}^{\top}
			\mathbf{A}_{T}^{-1}
				  (\mathbf{x}-[X_{\ell,k+\ell}-X_{\ell,\ell}]_\ell+h\cdot\mathbf{e}_{l})
			)
			\prod_{j>i}
			f_\varepsilon
			  (
			  \mathbf{e}_{j}^{\top}
			\mathbf{A}_{T}^{-1}
				  (\mathbf{x}-[X_{\ell,k+\ell}-X_{\ell,\ell}]_\ell)
			) \right]
		\displaybreak[0]\\ &
		  \ = \
		  \sum_{i=1}^T
		  \int_{\RR^T}
    \,\mathrm{d}\PP_{
    [X_{\ell,k+\ell}-X_{\ell,\ell}]_{\ell}}(\widetilde{\mathbf{x}_{}}_{})
		\
		  \frac{
			  f_\varepsilon
			  (
			  \mathbf{e}_{i}^{\top}
			\mathbf{A}_{T}^{-1}
			  (\mathbf{x}-\widetilde{\mathbf{x}}+h\cdot\mathbf{e}_{l})
			)
			  -
			  f_\varepsilon
			  (
			  \mathbf{e}_{i}^{\top}
			\mathbf{A}_{T}^{-1}
			  (\mathbf{x}-\widetilde{\mathbf{x}})
			)
			  }{h}
		\\ &
		  \qquad
		  \qquad
		  \times
		  \prod_{j<i}
		f_\varepsilon
		  (
		  \mathbf{e}_{j}^{\top}
		\mathbf{A}_{T}^{-1}
			  (\mathbf{x}-\widetilde{\mathbf{x}}+h\cdot\mathbf{e}_{l})
		)
		\prod_{j>i}
		f_\varepsilon
		  (
		  \mathbf{e}_{j}^{\top}
		\mathbf{A}_{T}^{-1}
			  (\mathbf{x}-\widetilde{\mathbf{x}})
		)
		\,.
	\end{align*}
	We show that we can apply dominated convergence to move
	the limit as $h\to 0$ inside the integral. To this end, note that the terms in the product $j<i$ and $j>i$ are bounded by Assumption~\ref{asu:f}. Also note that
	\begin{align*}
		 &
		\left|
		\frac{
			f_\varepsilon
			(
			\mathbf{e}_{i}^{\top}
			\cdot
			\mathbf{A}_{T}^{-1}
			\cdot
			(\mathbf{x} - \widetilde{\mathbf{x}}+ h\cdot \mathbf{e}_{l})
			)
			-
			f_\varepsilon
			(
			\mathbf{e}_{i}^{\top}
			\cdot
			\mathbf{A}_{T}^{-1}
			\cdot
			(\mathbf{x} - \widetilde{\mathbf{x}})
			)
		}{h}
		\right|
		\\ &
		  \ \le \
		  \frac{1}{|h|}
		\int_{0\land h}^{0\lor h}
		\left|
		  f'_\varepsilon
		  (
		  \mathbf{e}_{i}^{\top}
		\cdot
		  \mathbf{A}_{T}^{-1}
		\cdot
			  (\mathbf{x} - \widetilde{\mathbf{x}}+s\cdot\mathbf{e}_{l})
		)
		\right|
		  \,\mathrm{d}s
	\end{align*}
	and
	$
		\left|
		f'_\varepsilon
		(
		\mathbf{e}_{i}^{\top}
		\cdot
		\mathbf{A}_{T}^{-1}
		\cdot
			(\mathbf{x} - \widetilde{\mathbf{x}}+s\cdot\mathbf{e}_{l})
		)
		\right|
		\ \lesssim \
		(1\lor (|s||\mathbf{e}_{i}^{\top}\cdot \mathbf{A}_{T}^{-1}\cdot \mathbf{e}_{l}|))^{1+\gamma}
	$, by
	Assumption~\ref{asu:f}
	and
	Lemma~\ref{lem:5.1}(i). Therefore,
	\begin{align*}
		 &
		\left|
		\frac{
			f_\varepsilon
			(
			\mathbf{e}_{i}^{\top}
			\cdot
			\mathbf{A}_{T}^{-1}
			\cdot
			(\mathbf{x} - \widetilde{\mathbf{x}}+ h\cdot \mathbf{e}_{l})
			)
			-
			f_\varepsilon
			(
			\mathbf{e}_{i}^{\top}
			\cdot
			\mathbf{A}_{T}^{-1}
			\cdot
			(\mathbf{x} - \widetilde{\mathbf{x}})
			)
		}{h}
		\right|
		\\ &
		  \ \le \
		  \frac{1}{|h|}
		\int_{0\land h}^{0\lor h}
		  (1\lor |s||\mathbf{e}_{i}^{\top}\cdot \mathbf{A}_{T}^{-1}\cdot \mathbf{e}_{l}|)^{1+\gamma}
		\,\mathrm{d}s
		  \ \le \
		  (1\lor |h||\mathbf{e}_{i}^{\top}\cdot \mathbf{A}_{T}^{-1}\cdot \mathbf{e}_{l}|)^{1+\gamma}
		\ \le \
		  2
	\end{align*}
	for $h$ small enough, whatever the values of $\mathbf{x}$ and $\widetilde{\mathbf{x}}$.
	In order to identify 
	the limit as $h\to 0$ inside the integral, we note that
	\begin{align*}
		 &
		\lim_{h\to 0}
		\frac{
			f_\varepsilon
			(
			\mathbf{e}_{i}^{\top}
			\mathbf{A}_{T}^{-1}
			(\mathbf{x}_{}-\widetilde{\mathbf{x}_{}}_{}
			+h\cdot\mathbf{e}_{l})
			)
			-
			f_\varepsilon
			(
			\mathbf{e}_{i}^{\top}
			\mathbf{A}_{T}^{-1}
			(\mathbf{x}_{}-\widetilde{\mathbf{x}_{}}_{})
			)
		}{h}
		\\
		 &
		\ = \
		\frac{\partial}{\partial x_{l-1}}
		f_\varepsilon
		(
		\mathbf{e}_i^{\top}
		\cdot
		\mathbf{A}_{T}^{-1}
		\cdot
		(\mathbf{x} - \widetilde{\mathbf{x}})
		)
		\ = \
		\mathbf{e}_i^{\top}
		\cdot
		\mathbf{A}_{T}^{-1}
		\cdot
		\mathbf{e}_{l}
		\cdot
		f'_\varepsilon
		(
		\mathbf{e}_i^{\top}
		\cdot
		\mathbf{A}_{T}^{-1}
		\cdot
		(\mathbf{x} - \widetilde{\mathbf{x}})
		)
		\,.
	\end{align*}
	Besides, by
	the continuity of $f_{\varepsilon}$
	\begin{align*}
		\lim_{h\to 0}
		\prod_{j<i}
		f_\varepsilon
		(
		\mathbf{e}_{j}^{\top}
		\mathbf{A}_{T}^{-1}
			(\mathbf{x} - \widetilde{\mathbf{x}}
		+h\cdot\mathbf{e}_{l})
		)
		\prod_{j>i}
		f_\varepsilon
		(
		\mathbf{e}_{j}^{\top}
		\mathbf{A}_{T}^{-1}
			(\mathbf{x} - \widetilde{\mathbf{x}})
		)
		\ = \
		\prod_{j\neq i}
		f_\varepsilon
		(
		\mathbf{e}_{j}^{\top}
		\mathbf{A}_{T}^{-1}
			(\mathbf{x} - \widetilde{\mathbf{x}})
		)
		\,.
	\end{align*}
	It follows that
	\begin{align*}
		 &
		\lim_{h\to 0}
		\frac{f_{
					[X_{\ell,k+\ell}]_\ell}(\mathbf{x}+h\cdot\mathbf{e}_{l})
			-
			f_{[X_{\ell,k+\ell}]_\ell}(\mathbf{x})
		}{h}
		\\ &
		  \ = \
		  \sum_{i=1}^T
		  \mathbf{e}_i^{\top}
		\cdot
		  \mathbf{A}_{T}^{-1}
		\cdot
		  \mathbf{e}_{l}
		\int_{\RR^T}
		f'_\varepsilon
		  (
		  \mathbf{e}_i^{\top}
		  \cdot
		  \mathbf{A}_{T}^{-1}
		\cdot
		  (\mathbf{x} - \widetilde{\mathbf{x}})
		)
		\
		  \cdot
		  \
		  \prod_{j\neq i}
		f_\varepsilon
		  (
		  \mathbf{e}_{j}^{\top}
		\mathbf{A}_{T}^{-1}
			  (\mathbf{x} - \widetilde{\mathbf{x}})
		)
		\,\mathrm{d}\PP_{[X_{\ell,k+\ell}-X_{\ell,\ell}]_{\ell}}(\widetilde{\mathbf{x}_{}}_{})
		\,.
	\end{align*}
	Having identified the $l$-th component of the gradient, we now rewrite the whole expression and make it match the announced result.
	It holds
	\begin{align*}
		\nabla f_{[X_{\ell,k+\ell}]_{\ell}}(\mathbf{x}_{})
		 &
		\ = \
		\sum_{l=1}^T
		\mathbf{e}_{l}
		\cdot
		\frac{\partial}{\partial x_{l-1}}
		f_{[X_{\ell,k+\ell}]_{\ell}}(\mathbf{x}_{})
		\\ &
		  \ = \
		  \sum_{l=1}^T
		  \mathbf{e}_{l}
		\cdot
		  \lim_{h\to 0}
		\frac{f_{
					  [X_{\ell,k+\ell}]_\ell}(\mathbf{x}+h\cdot\mathbf{e}_{l})
			  -
			  f_{[X_{\ell,k+\ell}]_\ell}(\mathbf{x})
			  }{h}
		\\ &
		  \ = \
		  \sum_{l=1}^T
		  \mathbf{e}_{l}
		\sum_{i=1}^T
		  \mathbf{e}_{i}^\top
		  \cdot
		  \mathbf{A}_{T}^{-1}
		\cdot
		  \mathbf{e}_{l}
		\\ &
		  \qquad
		  \times \int_{\RR^T}
		f'_\varepsilon
		  (
		  \mathbf{e}_i^{\top}
		  \cdot
		  \mathbf{A}_{T}^{-1}
		\cdot
		  (\mathbf{x} - \widetilde{\mathbf{x}})
		)
		\
		  \cdot
		  \
		  \prod_{j\neq i}
		f_\varepsilon
		  (
		  \mathbf{e}_{j}^{\top}
		\mathbf{A}_{T}^{-1}
			  (\mathbf{x} - \widetilde{\mathbf{x}})
		)
		\,\mathrm{d}\PP_{[X_{\ell,k+\ell}-X_{\ell,\ell}]_{\ell}}(\widetilde{\mathbf{x}_{}}_{})
		\,.
	\end{align*}
	Note that,
	for any sequence $[v_i]_{i\in\{1,\ldots,T\}}$,
	\begin{align*}
		\sum_{l=1}^T
		\mathbf{e}_{l}
		\sum_{i=1}^T
		\mathbf{e}_{i}^\top
		\cdot
		\mathbf{A}_{T}^{-1}
		\cdot
		\mathbf{e}_{l}
		\cdot
		v_i
		 &
		\ = \
		\left(
		\sum_{l=1}^T
		\mathbf{e}_{l}
		\cdot
		\mathbf{e}_{l}^{\top}
		\right)
		\left(
		\sum_{i=1}^T
		\left(
			\mathbf{e}_{i}^\top
			\cdot
			\mathbf{A}_{T}^{-1}
			\right)^{\top}
		\cdot
		v_i
		\right)
		\\ &
		  \ = \
		  \mathbf{I}_{T}
		\cdot
		  \left(
		  \mathbf{A}_{T}^{-1}
		  \right)^{\top}
		\sum_{i=1}^T
		  \mathbf{e}_{i}
		\cdot v_i
		\\
		 &
		\ = \
		\left(
		\mathbf{A}_{T}^{-1}
		\right)^{\top}
		\cdot
		[v_i]_{i\in\{1,\ldots,T\}}
		\,.
	\end{align*}
	Therefore, the
	result follows
	with
	\begin{align*}
		v_i
		\ = \
		\int_{\RR^T}
		f'_\varepsilon
		(
		\mathbf{e}_i^{\top}
		\cdot
		\mathbf{A}_{T}^{-1}
		\cdot
		(\mathbf{x} - \widetilde{\mathbf{x}})
		)
		\
		\cdot
		\
		\prod_{j\neq i}
		f_\varepsilon
		(
		\mathbf{e}_{j}^{\top}
		\mathbf{A}_{T}^{-1}
			(\mathbf{x} - \widetilde{\mathbf{x}})
		)
		\,\mathrm{d}\PP_{[X_{\ell,k+\ell}-X_{\ell,\ell}]_{\ell}}(\widetilde{\mathbf{x}_{}}_{})
		\,.
	\end{align*}
  \\
  \textbf{Proof of (iii):}
	Since $\det \mathbf{A}_T^{-1}=1/(\det \mathbf{A}_T) = 1$, it follows from (ii) that
		\begin{align*}
			 &
			\int_{\RR^T}
			\left|
			\mathbf{z}^{\top}
			\cdot
      \nabla f_{[X_{\ell,k+\ell}]_\ell}
			\left(\mathbf{x}
			\right)
			\right|
			\,\mathrm{d} \mathbf{x}
			\ = \
			\int_{\RR^T}
			\left|
			\mathbf{z}
			^{\top}
			\cdot
      \nabla f_{[X_{\ell,k+\ell}]_\ell}
			\left(\mathbf{A}_T\cdot\mathbf{x}
			\right)
			\right|
			\,\mathrm{d}\mathbf{x}
			\\ &
			  \ = \
			  \int_{\RR^T}
			  \,\mathrm{d}\mathbf{x}
        \\&\  
			\left|
      \left(
      \mathbf{A}_T^{-1}
			  \mathbf{z}
      \right)
			  ^{\top}
			\left[
				  \EE
				  \left[
					  f'_\varepsilon(
					  x_{i-1}-
					  \mathbf{e}_{i}^{\top}\cdot\mathbf{A}_{T}^{-1}\cdot
					  [X_{\ell,k+\ell}-X_{\ell,\ell}]_\ell
					  )
					\prod_{j\neq i}
					f_\varepsilon(x_{j-1}
					  - \mathbf{e}_{j}^{\top}\cdot\mathbf{A}_{T}^{-1}\cdot
					  [X_{\ell,k+\ell}-X_{\ell,\ell}]_\ell
					  )
					\right]
				\right]_{i=1,\ldots,T}
			\right|
			\\ &
			  \ \lesssim \
			  \sum_{i=1}^T
			  \int_{\RR^T}
			\EE
			  \left[
				  \left|
				  f'_\varepsilon(
				  x_{i-1}-
				  \mathbf{e}_{i}^{\top}\cdot\mathbf{A}_{T}^{-1}\cdot
				  [X_{\ell,k+\ell}-X_{\ell,\ell}]_\ell
				  )
				\right|
				  \prod_{j\neq i}
				f_\varepsilon(x_{j-1}
				  - \mathbf{e}_{j}^{\top}\cdot\mathbf{A}_{T}^{-1}\cdot
				  [X_{\ell,k+\ell}-X_{\ell,\ell}]_\ell
				  )
				\right]
			\,\mathrm{d}\mathbf{x}
      \,,
      \end{align*}
      where the multiplicative constant in $\lesssim$ depends only on $\mathbf{z}$ and the coefficients $(a_i)$. 
      Integrating out the density terms, together with Assumption~\ref{asu:f} and Lemma~\ref{lem:5.1}(i) gives,
      for $\gamma\in(0,\alpha-1)$, the upper bound
      \begin{align*}
        &
			  \sum_{i=1}^T
			  \int_{\RR}
			\EE
			  \left[
				  \left|
				  f'_\varepsilon(
				  x_{i-1}-
				  \mathbf{e}_{i}^{\top}\cdot\mathbf{A}_{T}^{-1}\cdot
				  [X_{\ell,k+\ell}-X_{\ell,\ell}]_\ell
				  )
				\right|
				  \right]
			\,\mathrm{d}x_{i-1}
			\\ &
			  \ \le \
			  \sum_{i=1}^T
			  \int_{\RR}
        \EE[
        g_{\gamma}(
				  x_{i-1}-
				  \mathbf{e}_{i}^{\top}\cdot\mathbf{A}_{T}^{-1}\cdot
				  [X_{\ell,k+\ell}-X_{\ell,\ell}]_\ell
        )
        ]
        \,\mathrm{d}x_{i-1}
        \\&
        \ \le \ 
\sum_{i=1}^T
			  \int_{\RR}
        \EE\left[
        \left(
        1\lor
        \left|
				  \mathbf{e}_{i}^{\top}\cdot\mathbf{A}_{T}^{-1}\cdot
				  [X_{\ell,k+\ell}-X_{\ell,\ell}]_\ell
        \right|
        \right)^{1+\gamma}
        \right]
        g_{\gamma}(x_{i-1})
        \,\mathrm{d}x_{i-1}
        \\&
        \ < \ \infty 
			\,,
		\end{align*}
    since
    $
X_{\ell,k+\ell}-X_{\ell,\ell}\in L^{1+\gamma}(\PP)
    $ for all $\ell\in \{0,\ldots,T-1\}$.
    Therefore $\mathbf{z}^{\top}\cdot \nabla f_{[X_{\ell,k+\ell}]_\ell}\in L^1(\RR^T)$. Continuity follows immediately from $f'_{\varepsilon},f_{\varepsilon}\in C^{0}(\RR)\cap L^{\infty}(\RR)$ and dominated convergence. 
	\\
	\textbf{Proof of (iv):}
  Fix $i\in\{1,\ldots,T\}$.
		Since $\det \mathbf{A}_T^{-1}=1/(\det \mathbf{A}_T) = 1$, 
		\begin{align}
      \label{eqq:split}
      \begin{split}
			 &
			\int_{\RR^T}
			\left|
      x_{i-1}
      \frac{\partial}{\partial x_{i-1}}
      f_{[X_{\ell,k+\ell}]_\ell}
			\left(\mathbf{x}
			\right)
			\right|
      \,\mathrm{d}\mathbf{x}
			\ = \
			\int_{\RR^T}
			\left|
      \left(
      \mathbf{e}_i^{\top}
      \cdot
      \mathbf{A}_T
      \cdot
      \mathbf{x}
      \right)
      \cdot
        \mathbf{e}_i^{\top}
      \nabla
      f_{[X_{\ell,k+\ell}]_\ell}
			\left(\mathbf{A}_T \cdot\mathbf{x}
			\right)
			\right|
      \,\mathrm{d}\mathbf{x}
      \\&
      \ = \ 
			\int_{\RR^T}
			\left|
      \left(
        \sum_{l=1}^i
      a_{i-l} x_{l-1}
      \right)
        \mathbf{e}_i^{\top}
      \nabla
      f_{[X_{\ell,k+\ell}]_\ell}
			\left(\mathbf{A}_T\cdot\mathbf{x}
			\right)
			\right|
			\,\mathrm{d}\mathbf{x}
      \ \lesssim \ 
      \sum_{l=1}^i
	\int_{\RR^T}
			\left|
      x_{l-1}
        \frac{\partial}{\partial x_{i-1}}
      f_{[X_{\ell,k+\ell}]_\ell}
			\left(\mathbf{A}_T\cdot\mathbf{x}
			\right)
			\right|
      \,\mathrm{d}\mathbf{x}
      \,,
      \end{split}
      \end{align}
      where the multiplicative constant in $\lesssim$ depends only on the coefficients $(a_i)$. 
      If $l\in \{1,\ldots,i-1\}$, 
    it follows from (ii) that
      \begin{align*}
        &
\int_{\RR^T}
			\left|
      x_{l-1}
        \frac{\partial}{\partial x_{i-1}}
      f_{[X_{\ell,k+\ell}]_\ell}
			\left(\mathbf{A}_T\cdot\mathbf{x}
			\right)
			\right|
      \,\mathrm{d}\mathbf{x}
        \\&
        \ = \ 
						  \int_{\RR^T}
        \,\mathrm{d}\mathbf{x}
        \\&
        \qquad
			\left|
        x_{l-1}
				  \EE
				  \left[
					  f'_\varepsilon(
					  x_{i-1}-
					  \mathbf{e}_{i}^{\top}\cdot\mathbf{A}_{T}^{-1}\cdot
					  [X_{\ell,k+\ell}-X_{\ell,\ell}]_\ell
					  )
					\prod_{j\neq i}
					f_\varepsilon(x_{j-1}
					  - \mathbf{e}_{j}^{\top}\cdot\mathbf{A}_{T}^{-1}\cdot
					  [X_{\ell,k+\ell}-X_{\ell,\ell}]_\ell
					  )
					\right]
			\right|
        \\&
      			  \ \le \
			  \int_{\RR}
			\EE
			  \left[
				  \left|
				  x_{l-1}\cdot
				 f_\varepsilon(
				  x_{l-1}-
				  \mathbf{e}_{l}^{\top}\cdot\mathbf{A}_{T}^{-1}\cdot
				  [X_{\ell,k+\ell}-X_{\ell,\ell}]_\ell
				  )
				\right|
				  \right]
        \,\mathrm{d}x_{l-1} 
        \int_{\RR}
			|f_{\varepsilon}'(x)|
			\,\mathrm{d}x
			\\ &
			  \ \le \
        \left( \EE[|\varepsilon|]
			  +
			  \EE
			  \left[
				  |
				  \mathbf{e}_{l}^{\top}\cdot\mathbf{A}_{T}^{-1}\cdot
				  [X_{\ell,k+\ell}-X_{\ell,\ell}]_\ell
			  |
			  \right] \right)
        \int_{\RR}
			|f_{\varepsilon}'(x)|
			\,\mathrm{d}x
			  			\,.
		\end{align*}
    If $l=i$, then
    \begin{align*}
      &
\int_{\RR^T}
			\left|
      x_{i-1}
      \frac{\partial}{\partial x_{i-1}}
      f_{[X_{\ell,k+\ell}]_\ell}
			\left(\mathbf{A}_T\cdot\mathbf{x}
			\right)
			\right|
      \,\mathrm{d}\mathbf{x}
        \\&
        \ = \ 
       			  \int_{\RR}
			\EE
			  \left[
				  \left|
				  x_{i-1}\cdot
				 f'_\varepsilon(
				  x_{i-1}-
				  \mathbf{e}_{i}^{\top}\cdot\mathbf{A}_{T}^{-1}\cdot
				  [X_{\ell,k+\ell}-X_{\ell,\ell}]_\ell
				  )
				\right|
				  \right]
        \,\mathrm{d}x_{i-1}
			\\ &
			  \ \le \
        \int_{\RR}
        |x_{i-1}||f'_{\varepsilon}(x_{i-1})|
        \,\mathrm{d}x_{i-1}
			  +
			  \EE
			  \left[
				  |
				  \mathbf{e}_{i}^{\top}\cdot\mathbf{A}_{T}^{-1}\cdot
				  [X_{\ell,k+\ell}-X_{\ell,\ell}]_\ell
			  |
			  \right]
        \int_{\RR}
        |f'_{\varepsilon}(x_{i-1})|
        \,\mathrm{d}x_{i-1}
			  			\,.
		\end{align*}
    By Assumption~\ref{asu:f_more},
		\begin{align*}
			\int_{\RR}|x||f'_{\varepsilon}(x)|\,\mathrm{d}x \ < \  \infty\,,
		\end{align*}
    by Assumption~\ref{asu:f},
		\begin{align*}
			\int_{\RR}
			|f_{\varepsilon}'(x)|
			\,\mathrm{d}x
			\ \lesssim \
			\norm{g_{\alpha - 1}(x)}_{L^1(\RR)}
			\ <\  \infty
			\,,
		\end{align*}
    and
    since
    $\varepsilon \in L^1(\PP)$ and 
    $X_{\ell,k+\ell}-X_{\ell,\ell}\in L^{1}(\PP)
    $ for all $\ell\in \{0,\ldots,T-1\}$, we obtain 
		\begin{align*}
            \EE[|\varepsilon|] < \infty \quad \text{and} \quad
			\EE
			\left[
				|
				\mathbf{e}_{i}^{\top}\cdot\mathbf{A}_{T}^{-1}\cdot
				[X_{\ell,k+\ell}-X_{\ell,\ell}]_\ell
				|
				\right]
			\ \lesssim\
			\EE[\norm{
					[X_{\ell,k+\ell}-X_{\ell,\ell}]_\ell
        }_{1}]
			\ <\ \infty\,.
		\end{align*}
    Therefore,
    \begin{align*}
\int_{\RR^T}
			\left|
      x_{l-1}
             \frac{\partial}{\partial x_{i-1}}
      f_{[X_{\ell,k+\ell}]_\ell}
			\left(\mathbf{A}_T\cdot\mathbf{x}
			\right)
			\right|
      \,\mathrm{d}\mathbf{x}
      \ < \ \infty 
      \qquad\text{for all $l\in \{1,\ldots,i\}$,}
    \end{align*}
    so that the statement follows from~\eqref{eqq:split}.
\end{proof}
\subsection{Gradient of $G_{k,n}$}
We first require an auxiliary result about $L^1$-continuity with respect to scaling and translation. 
\begin{lemma}[$L^1$-continuity of scaling and translation]
  \label{lem:alt}
  Let $\mathfrak{G}\in L^1(\RR^T)$, $\mathbf{y}\in\RR^T$, and $s\in\RR\setminus \{0\}$. Suppose that
  $(\mathbf{y}_n)\subset \RR^T$, and  $(s_n)\subset \RR\setminus\{0\}$ satisfy $\mathbf{y}_n \to \mathbf{y}$ and $s_n \to s$ as $n\to\infty$. Then 
  \begin{align*}
    \norm{
    \mathfrak{G}(s_n(\cdot)-\mathbf{y}_n)
    \ - \ 
    \mathfrak{G}(s(\cdot)-\mathbf{y})
    }_{L^1(\RR^T)}
    \ \to \ 0
    \qquad\text{as}\ n\to\infty\,.
  \end{align*}
\end{lemma}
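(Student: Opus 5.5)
The plan is the standard density argument for $L^1$-continuity of translations, extended to include the scaling. I will use that $C_c(\RR^T)$, the continuous functions with compact support, is dense in $L^1(\RR^T)$. Fix $\eta>0$ and choose $\phi\in C_c(\RR^T)$ with $\norm{\mathfrak{G}-\phi}_{L^1(\RR^T)}<\eta$.

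\textbf{Step 1 (split into three terms).} By the triangle inequality,
\begin{align*}
\norm{\mathfrak{G}(s_n(\cdot)-\mathbf{y}_n)-\mathfrak{G}(s(\cdot)-\mathbf{y})}_{L^1}
&\ \le\ \norm{(\mathfrak{G}-\phi)(s_n(\cdot)-\mathbf{y}_n)}_{L^1}
\ + \ \norm{\phi(s_n(\cdot)-\mathbf{y}_n)-\phi(s(\cdot)-\mathbf{y})}_{L^1}
\\&\qquad +\ \norm{(\phi-\mathfrak{G})(s(\cdot)-\mathbf{y})}_{L^1}\,.
\end{align*}

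\textbf{Step 2 (outer terms).} The affine change of variables $\mathbf{w}=s_n\mathbf{x}-\mathbf{y}_n$ has Jacobian $|s_n|^{-T}$. Hence the first term equals $|s_n|^{-T}\norm{\mathfrak{G}-\phi}_{L^1}\le |s_n|^{-T}\eta$, and the third term equals $|s|^{-T}\eta$. Since $s_n\to s\neq 0$, we have $|s_n|\ge |s|/2$ for $n$ large. So the outer terms together are at most $(2^T+1)|s|^{-T}\eta$ for large $n$.

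\textbf{Step 3 (middle term).} Let $\operatorname{supp}\phi\subset\{\norm{\mathbf{w}}\le R\}$. For $n$ large, $|s_n|\ge|s|/2$ and $|\mathbf{y}_n|\le|\mathbf{y}|+1$. Then both $\phi(s_n\mathbf{x}-\mathbf{y}_n)$ and $\phi(s\mathbf{x}-\mathbf{y})$ vanish outside the fixed compact set $K=\{\norm{\mathbf{x}}\le 2(R+|\mathbf{y}|+1)/|s|\}$. For each $\mathbf{x}$, the integrand converges pointwise to $0$ by continuity of $\phi$. It is also bounded by $2\norm{\phi}_\infty\ind_K$, which is integrable. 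Dominated convergence therefore sends the middle term to $0$.

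\textbf{Conclusion.} Combining the steps gives $\limsup_n \norm{\cdots}_{L^1}\le (2^T+1)|s|^{-T}\eta$. Since $\eta>0$ is arbitrary, the claim follows. I expect no step to be a real obstacle. The only point needing care is keeping the Jacobian factors $|s_n|^{-T}$ uniformly bounded, and this is exactly where the hypothesis $s\neq0$ is used.
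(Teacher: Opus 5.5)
Your proof is correct and follows the same density argument as the paper: approximate by a compactly supported continuous function, control the approximation error through the change of variables with Jacobian $|s_n|^{-T}$, and apply dominated convergence to the continuous approximant. The only difference is that the paper splits off the translation $\mathbf{y}_n\to\mathbf{y}$ as a separate fourth term, handled by the standard $L^1$-continuity of translations, whereas you let dominated convergence absorb the joint perturbation $(s_n,\mathbf{y}_n)$ at once; this makes your three-term version slightly more self-contained.
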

\begin{proof}
 By Lemma~3.26(2) in~\cite{altLinearFunctionalAnalysis2016a}, there exists a sequence of continuous functions $(\mathfrak{G}_m)$ with compact support such that 
        \begin{align*}
          &
          \norm{
          \mathfrak{G} - \mathfrak{G}_m
          }_{L^1(\RR^T)}
          \ \to \ 0
          \qquad\text{as}\ m\to\infty\,.
        \end{align*}
        We make the decomposition
        \begin{align*}
          &
          \norm{
    \mathfrak{G}(s_n(\cdot)-\mathbf{y}_n)
    \ - \ 
    \mathfrak{G}(s(\cdot)-\mathbf{y})
          }_{L^1(\RR^T)}
          \\&
          \ \le \ 
          \norm{
    \mathfrak{G}(s_n(\cdot)-\mathbf{y}_n)
    \ - \ 
    \mathfrak{G}(s_n(\cdot)-\mathbf{y})
          }_{L^1(\RR^T)}
                   \\&
          \qquad + \
          \norm{
    \mathfrak{G}(s_n(\cdot)-\mathbf{y})
    \ - \ 
    \mathfrak{G}_m(s_n(\cdot)-\mathbf{y})
          }_{L^1(\RR^T)}
                   \ + \ 
          \norm{
    \mathfrak{G}_m(s_n(\cdot)-\mathbf{y})
    \ - \ 
    \mathfrak{G}_m(s(\cdot)-\mathbf{y})
          }_{L^1(\RR^T)}
          \\&
          \qquad
          + \ 
          \norm{
    \mathfrak{G}_m(s(\cdot)-\mathbf{y})
    \ - \ 
    \mathfrak{G}(s(\cdot)-\mathbf{y})
          }_{L^1(\RR^T)}
                    \,.
        \end{align*}
        By change of variables,
        \begin{align*}
          \norm{
    \mathfrak{G}(s_n(\cdot)-\mathbf{y}_n)
    \ - \ 
    \mathfrak{G}(s_n(\cdot)-\mathbf{y})
          }_{L^1(\RR^T)}
          &
          \ = \ 
          |s_n|^{-T}
          \norm{
          \mathfrak{G}((\cdot)+(\mathbf{y}-\mathbf{y}_n))
    \ - \ 
    \mathfrak{G}
          }_{L^1(\RR^T)}
          \,,
          \\
 \norm{
    \mathfrak{G}(s_n(\cdot)-\mathbf{y})
    \ - \ 
    \mathfrak{G}_m(s_n(\cdot)-\mathbf{y})
          }_{L^1(\RR^T)}
          &
          \ = \ 
          |s_n|^{-T}
 \norm{
    \mathfrak{G}
    \ - \ 
    \mathfrak{G}_m
          }_{L^1(\RR^T)}
          \,,
          \\
          \norm{
    \mathfrak{G}_m(s(\cdot)-\mathbf{y})
    \ - \ 
    \mathfrak{G}(s(\cdot)-\mathbf{y})
          }_{L^1(\RR^T)}
          &
          \ = \ 
          |s|^{-T}
          \norm{
    \mathfrak{G}_m
    \ - \ 
    \mathfrak{G}
          }_{L^1(\RR^T)}
          \,.
        \end{align*}
                By the continuity and compact support of $\mathfrak{G}_m$, the dominated convergence theorem provides for any fixed~$m$
        \begin{align*}
   \norm{
    \mathfrak{G}_m(s_n(\cdot)-\mathbf{y})
    \ - \ 
    \mathfrak{G}_m(s(\cdot)-\mathbf{y})
          }_{L^1(\RR^T)}
          \ \to \ 0
          \qquad\text{as}\  n\to \infty
          \,,
        \end{align*}
        and by Lemma~4.15(1) in~\cite{altLinearFunctionalAnalysis2016a}
        \begin{align*}
    \norm{
          \mathfrak{G}((\cdot)+(\mathbf{y}-\mathbf{y}_n))
    \ - \ 
    \mathfrak{G}
          }_{L^1(\RR^T)}
          \ \to \ 
          0
          \qquad\text{as}\ n\to\infty\,.
        \end{align*}
        It follows that
        \begin{align*}
          \limsup_{n\to \infty}
   \norm{
    \mathfrak{G}(s_n(\cdot)-\mathbf{y}_n)
    \ - \ 
    \mathfrak{G}(s(\cdot)-\mathbf{y})
          }_{L^1(\RR^T)}
          \ \le \ 
          2|s|^{-T}
          \norm{\mathfrak{G}-\mathfrak{G}_m}_{L^1(\RR^T)}
          \ \to \ 0
          \qquad\text{as}\ m\to\infty\,.
        \end{align*}
This proves the claimed convergence.
\end{proof}
\begin{lemma}[Gradient of $G_{k,n}$]
	\label{lem:multi_swap}
	Let Assumptions~\ref{asu:f} and~\ref{asu:coef} hold.
  Then, for all $k\in\NN_0\cup\{\infty\}$, all sets $A\subset\RR^T$, the following hold:
	\begin{enumerate}
		\item For all $s_1,s_2\in\RR\cup\{\infty\}$ satisfying $s_1<s_2$, 
		      \begin{align*}
			      \mathbf{y}_{}
			      \ \mapsto\
			      G_{k,n}(\mathbf{y},s_1,s_2,A)
			      \ := \
			      \EE[G_n([X_{\ell,k+\ell}]_\ell+\mathbf{y},s_1,s_2,A)]
			      \ = \
			      \int_{\mathbb{R}^T} G_n(\mathbf{x},s_1,s_2,A) f_{[X_{\ell,k+\ell}]_\ell}(\mathbf{x}-\mathbf{y})\ \mathrm{d}\mathbf{x}
		      \end{align*}
		      is continuously differentiable, with gradient
		      \begin{align*}
			      \mathbf{y}_{}
			      \ \mapsto\
			      \nabla_{\mathbf{y}}
			      G_{k,n}^{}(\mathbf{y}, s_1,s_2,A)
			      \ = \
			      -
			      \int_{\RR^T}
			      G_n(\mathbf{x},s_1,s_2,A)
			      \nabla
			      f_{[X_{\ell,k+\ell}]_\ell}
			      (\mathbf{x}-\mathbf{y})
			      \,\mathrm{d}\mathbf{x}
			      \,.
		      \end{align*}
   		\item
			      If Assumption~\ref{asu:f_more} holds,
            then
			      \begin{align*}
				      \RR^T\times(0,\infty)
				      \ \to\  \RR\,,
				      \qquad
              \left(
              \mathbf{y},
				      s
              \right)
				      \ \mapsto\
              G_{k,n}(\mathbf{y},s,\infty,A)
			      \end{align*}
            is continuously differentiable, with partial derivatives with respect to $\mathbf{y}$ given in (i), and 
            with partial derivative with respect to $s$ given as 
			      \begin{align*}
              &
				      \frac{\partial}{\partial s}
				      G_{k,n}^{}(\mathbf{y}, s,\infty,A)
              \\&
				        \ = \
				        \frac{1}{s}
				      \left(
				        T \, 
                \PP[[X_{\ell,k+\ell}]_\ell+\mathbf{y}\in u_n\cdot s\cdot A]
				      \ + \
				        \int_{\RR^T}
				      \ind\{\mathbf{x}\in u_n \cdot s\cdot A\}
				        \cdot
				        \mathbf{x}^{\top}
				      \cdot
                \nabla f_{[X_{\ell,k+\ell}]_\ell}(\mathbf{x}-\mathbf{y})
				        \,\mathrm{d}\mathbf{x}
				      \right)
				        \,.
			      \end{align*}
 	\end{enumerate}
\end{lemma}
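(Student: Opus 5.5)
For (i), I will start from $G_{k,n}(\mathbf{y},s_1,s_2,A)=\int G_n(\mathbf{x})f_{[X_{\ell,k+\ell}]_\ell}(\mathbf{x}-\mathbf{y})\,\mathrm{d}\mathbf{x}$. This is immediate from translation invariance of Lebesgue measure together with Lemma~\ref{lem:multi_density}(i).

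To differentiate in the direction $\mathbf{e}_l$, I write the difference quotient as
\begin{align*}
\frac{1}{h}\int G_n(\mathbf{x})\left(f(\mathbf{x}-\mathbf{y}-h\mathbf{e}_l)-f(\mathbf{x}-\mathbf{y})\right)\mathrm{d}\mathbf{x}
= -\int G_n(\mathbf{x})\,\frac{1}{h}\int_0^h \partial_l f(\mathbf{x}-\mathbf{y}-r\mathbf{e}_l)\,\mathrm{d}r\,\mathrm{d}\mathbf{x},
\end{align*}
where $f=f_{[X_{\ell,k+\ell}]_\ell}$. The fundamental theorem of calculus applies since $f\in C^1$ by Lemma~\ref{lem:multi_density}(ii)--(iii).

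Because $0\le G_n\le1$, the error from replacing the average by $\partial_l f(\mathbf{x}-\mathbf{y})$ is bounded by
\begin{align*}
\sup_{|r|\le|h|}\norm{\partial_l f(\cdot-\mathbf{y}-r\mathbf{e}_l)-\partial_l f(\cdot-\mathbf{y})}_{L^1(\RR^T)}.
\end{align*}
This tends to $0$ by Lemma~\ref{lem:alt} with $s_n=1$, since $\partial_l f\in L^1$ by Lemma~\ref{lem:multi_density}(iii). This gives the claimed formula for the gradient. Continuity of the gradient in $\mathbf{y}$ follows by the same bound, again via Lemma~\ref{lem:alt}. The convention $\infty\cdot A=\emptyset$ handles $s_2=\infty$, and nothing depends on the geometry of $A$ beyond measurability.

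For (ii), I substitute $\mathbf{x}=u_n s\mathbf{w}$ to get
\begin{align*}
G_{k,n}(\mathbf{y},s,\infty,A)=(u_ns)^T\int\ind\{\mathbf{w}\in A\}f(u_ns\mathbf{w}-\mathbf{y})\,\mathrm{d}\mathbf{w}.
\end{align*}
This moves the $s$-dependence from the indicator into the smooth integrand. Formally differentiating in $s$ gives two terms: $\tfrac{T}{s}G_{k,n}$, and $(u_ns)^T\int\ind_A(\mathbf{w})\,u_n\mathbf{w}^\top\nabla f(u_ns\mathbf{w}-\mathbf{y})\,\mathrm{d}\mathbf{w}$. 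Undoing the substitution, the second term becomes
\begin{align*}
\frac1s\int\ind\{\mathbf{x}\in u_nsA\}\,\mathbf{x}^\top\nabla f(\mathbf{x}-\mathbf{y})\,\mathrm{d}\mathbf{x},
\end{align*}
which is the claimed formula.

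To justify the differentiation, I write the difference quotient in $s$ as an average over $r\in[s,s+h]$ of the would-be derivative integrand. I then need $L^1$-continuity in $r$ of $\mathbf{w}\mapsto r^{T-1}\mathbf{w}^\top\nabla f(u_nr\mathbf{w}-\mathbf{y})$. I will rewrite this as
\begin{align*}
r^{T-1}\mathbf{w}^\top\nabla f(u_nr\mathbf{w}-\mathbf{y})=\frac{r^{T-2}}{u_n}\,\Phi(u_nr\mathbf{w}-\mathbf{y})+\frac{r^{T-2}}{u_n}\,\mathbf{y}^\top\nabla f(u_nr\mathbf{w}-\mathbf{y}),
\end{align*}
where $\Phi(\mathbf{z})=\mathbf{z}^\top\nabla f(\mathbf{z})$. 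By Lemma~\ref{lem:multi_density}(iv), which requires Assumption~\ref{asu:f_more}, $\Phi\in L^1$. By (iii), $\mathbf{y}^\top\nabla f\in L^1$. Lemma~\ref{lem:alt}, applied with joint scaling and translation $(r,\mathbf{y})$, then gives the required $L^1$-continuity, and the indicator is bounded by $1$.

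The same argument shows that both partial derivatives are jointly continuous in $(\mathbf{y},s)$, which yields $C^1$. The first term is continuous because $G_{k,n}$ is itself an $L^1$-continuous functional of $f(u_ns\cdot-\mathbf{y})$. The main obstacle is exactly this decomposition of the integrand into pieces covered by Lemma~\ref{lem:multi_density}(iii)--(iv): without the $x f'$ integrability of Assumption~\ref{asu:f_more}, the $s$-derivative need not exist for general $A$.
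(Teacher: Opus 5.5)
Your proposal is correct and follows essentially the same route as the paper. In (i) you write the difference quotient via the fundamental theorem of calculus and control the error in $L^1$ with Lemma~\ref{lem:alt}, using Lemma~\ref{lem:multi_density}(iii). In (ii) you rescale so that $s$ enters the density, split $\mathbf{x}^\top\nabla f$ into $\Phi$ plus $\mathbf{y}^\top\nabla f$ (using Lemma~\ref{lem:multi_density}(iii)--(iv) under Assumption~\ref{asu:f_more}), and again apply Lemma~\ref{lem:alt} for differentiability and joint continuity. The differences from the paper are cosmetic: you rescale by $u_n s$ rather than $s$, and the displayed integrand should carry $u_n r^{T}$ rather than $r^{T-1}$, which is harmless since $r$ stays near $s>0$.
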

\begin{proof} We prove the two statements separately. 
\vskip1ex
\noindent
\textbf{Proof of (i):}
	We calculate each partial derivative $\partial/\partial y_{i-1}$ for $i\in \{1,\ldots,T\}$.
	To streamline the analysis, we
	write throughout $G_n(  \mathbf{x})$ for $G_n(\mathbf{x},s_1,s_2,A)$. For all $i\in\{1,\ldots,T\}$ and $h\neq 0$,
	\begin{align*}
		\begin{split}
			 &
			\frac{
				\EE[G_n([X_{\ell,k+\ell}]_\ell+\mathbf{y}+h \mathbf{e}_i)]
				-
				\EE[G_n([X_{\ell,k+\ell}]_\ell+\mathbf{y})]
			}{h}
			\\ &
			  \ = \
			  \int_{\RR^T}
			G_n(\mathbf{x})
			  \frac{
				  f_{[X_{\ell,k+\ell}]_\ell}(\mathbf{x}-\mathbf{y} - h \mathbf{e}_i)
				  -
				  f_{[X_{\ell,k+\ell}]_\ell}(\mathbf{x}-\mathbf{y})
				  }{h}
			\,\mathrm{d}\mathbf{x}
			\,,
		\end{split}
	\end{align*}
	so that, if we may interchange
	the limit $h\to 0$ with the integral, we find
	\begin{align*}
		 &
		\frac{\partial}{\partial y_{i-1}}
		\EE[G_n([X_{\ell,k+\ell}]_\ell+\mathbf{y})]
		\\ &
		  \ = \
		  -
		  \int_{\RR^T}
		G_n(\mathbf{x})
		  \lim_{h\to 0}
		\frac{
			  f_{[X_{\ell,k+\ell}]_\ell}(\mathbf{x}-\mathbf{y} - h \mathbf{e}_i)
			  -
			  f_{[X_{\ell,k+\ell}]_\ell}(\mathbf{x}-\mathbf{y})
			  }{-h}
		\,\mathrm{d}\mathbf{x}
		\\ &
		  \ = \
		  -
		  \int_{\RR^T}
		G_n(\mathbf{x})
		  \frac{\partial}{\partial z_{i-1}}
    f_{[X_{\ell,k+\ell}]_\ell}(\mathbf{z})\Big|_{\mathbf{z}=\mathbf{x}-\mathbf{y}}
		  \,\mathrm{d}\mathbf{x}
		\,,
	\end{align*}
	as required. We now show that we may interchange limit and integral. Since
\begin{align*}
  &
		\frac{
			f_{[X_{\ell,k+\ell}]_\ell}(\mathbf{x}-\mathbf{y})
			-
			f_{[X_{\ell,k+\ell}]_\ell}(\mathbf{x}-\mathbf{y} - h \mathbf{e}_i)
		}{h}
    \ = \
		\int_{0}^{1}
		  \frac{\partial}{\partial z_{i-1}}
		f_{[X_{\ell,k+\ell}]_\ell}
    (\mathbf{z})
    \Big|_{
    \mathbf{z}
    =
		\mathbf{x}-\mathbf{y} - \xi h \mathbf{e}_i
    }
		  \,\mathrm{d}\xi
      \,,
\end{align*}
it holds, by Tonelli's theorem, 
\begin{align*}
  &
  \left|
			\frac{
				\EE[G_n([X_{\ell,k+\ell}]_\ell+\mathbf{y}+h \mathbf{e}_i)]
				-
				\EE[G_n([X_{\ell,k+\ell}]_\ell+\mathbf{y})]
			}{h}
      \ + \
		  \int_{\RR^T}
		G_n(\mathbf{x})
		  \frac{\partial}{\partial z_{i-1}}
		f_{[X_{\ell,k+\ell}]_\ell}(\mathbf{z})
    \Big|_{\mathbf{z}=\mathbf{x}-\mathbf{y}}
		  \,\mathrm{d}\mathbf{x}
  \right|
  \\&
  \ \le \ 
  \int_{\RR^T}G_n(\mathbf{x})
  \left(
  \int_{0}^{1}
  \left|
		  \frac{\partial}{\partial z_{i-1}}
		f_{[X_{\ell,k+\ell}]_\ell}(\mathbf{z})
    \Big|_{\mathbf{z}=\mathbf{x}-\mathbf{y}
    }
    \ - \ 
		  \frac{\partial}{\partial z_{i-1}}
    f_{[X_{\ell,k+\ell}]_\ell}(\mathbf{z})
    \Big|_{\mathbf{z}=\mathbf{x}-\mathbf{y}-\xi h  \mathbf{e}_i
    }
  \right|
  \,\mathrm{d}\xi
  \right)
  \,\mathrm{d}\mathbf{x}
  \\&
  \ \le \ 
  \int_{0}^{1}
  \norm{
		  \frac{\partial}{\partial z_{i-1}}
		f_{[X_{\ell,k+\ell}]_\ell}
    \Big|_{\mathbf{z}=(\cdot)
    }
    \ - \ 
		  \frac{\partial}{\partial z_{i-1}}
    f_{[X_{\ell,k+\ell}]_\ell}
    \Big|_{\mathbf{z}=(\cdot)- \xi h\mathbf{e}_i
    }
  }_{L^1(\RR^T)}
  \,\mathrm{d}\xi
  \,.
\end{align*}
Since $\partial f_{[X_{\ell,k+\ell}]_{\ell}}/\partial z_{i-1}\in L^1(\RR^T)$ by Lemma~\ref{lem:multi_density}(iii), it follows from Lemma~\ref{lem:alt}
that
\begin{align*}
  \int_0^1 \norm{
		  \frac{\partial}{\partial z_{i-1}}
		f_{[X_{\ell,k+\ell}]_\ell}
    \Big|_{\mathbf{z}=(\cdot)
    }
    \ - \ 
		  \frac{\partial}{\partial z_{i-1}}
    f_{[X_{\ell,k+\ell}]_\ell}
    \Big|_{\mathbf{z}=(\cdot)- \xi h\mathbf{e}_i
    }
  }_{L^1(\RR^T)} \, \mathrm{d}\xi 
  \ \to\ 
  0
\end{align*}
as $h\to 0$.
\\
\textbf{Proof of continuity in part (i):} 
Let $\mathbf{h}\in \RR^T$. 
Since $\partial f_{[X_{\ell,k+\ell}]_{\ell}}
/\partial z_{i-1}
\in L^1(\RR^T)$ by Lemma~\ref{lem:multi_density}(iii), it follows from Lemma~\ref{lem:alt} that
  \begin{align*}
    &
    \norm{
		  \frac{\partial}{\partial z_{i-1}}
      G_{k,n}
    \Big|_{\mathbf{z}=(\cdot)
    }
      \ - \ 
		  \frac{\partial}{\partial z_{i-1}}
      G_{k,n}
    \Big|_{\mathbf{z}=(\cdot)+\mathbf{h}
    }
    }_{L^{\infty}(\RR^T)}
    \\&
    \ \le \ 
    \norm{
		  \frac{\partial}{\partial z_{i-1}}
      f_{[X_{\ell,k+\ell}]_\ell}\Big|_{\mathbf{z}=(\cdot)}
      \ - \
		  \frac{\partial}{\partial z_{i-1}}
      f_{[X_{\ell,k+\ell}]_\ell}\Big|_{\mathbf{z}= (\cdot)- \mathbf{h}}
    }_{L^1(\RR^T)}
    \ \to \ 
    0
    \,,
  \end{align*}
  as $\mathbf{h}\to 0$.
This proves (uniform) continuity.
	\\
		\textbf{Proof of (ii):} To streamline the proof, we write $G_{k,n}(\mathbf{y},s)$ for $G_{k,n}(\mathbf{y},s,\infty,A)$.
		By the change of variables $\mathbf{x}\mapsto s \mathbf{x}$,
\[
      G_{k,n}(\mathbf{y},s)
			\ = \
			s^T
			\int_{\RR^T}
			\ind\{\mathbf{x}\in u_n  A\}
			f_{[X_{\ell,k+\ell}]_\ell}(s \mathbf{x}-\mathbf{y})
			\,\mathrm{d}\mathbf{x}
			\,.
\]
    \textbf{Existence of $\frac{\partial}{\partial s}
      G_{k,n}(\mathbf{y},s)
    $ in part (ii):}
    The partial derivative with respect to $s>0$, if it exists, is 
		\begin{align*}
			 &
			\frac{\partial}{\partial s}
      G_{k,n}(\mathbf{y},s)
			\\ &
			  \ = \
			  T
			  s^{T-1}
			\int_{\RR^T}
			\ind\{\mathbf{x}\in u_n  A\}
			  f_{[X_{\ell,k+\ell}]_\ell}(s \mathbf{x}-\mathbf{y})
			  \,\mathrm{d}\mathbf{x}
			\ + \
			  s^T
			  \frac{\partial}{\partial s}
        \left(
			\int_{\RR^T}
			\ind\{\mathbf{x}\in u_n  A\}
			  f_{[X_{\ell,k+\ell}]_\ell}(s \mathbf{x}-\mathbf{y})
			  \,\mathrm{d}\mathbf{x}
        \right)
			\,.
		\end{align*}
    It therefore remains to show, for the second term, that
    we can swap differentiation and integration.
    Since
		\begin{align*}
			\frac{\partial}{\partial s}
      \left(
			f_{[X_{\ell,k+\ell}]_\ell}(s \mathbf{x}-\mathbf{y})
      \right)
			\ = \
			\mathbf{x}^{\top}
      \nabla f_{[X_{\ell,k+\ell}]_\ell}(s \mathbf{x}-\mathbf{y})
					\,
		\end{align*}
    if we are allowed to swap differentiation and integration,
		we will then get, as required,
		\begin{align*}
			 &
			\frac{\partial}{\partial s}
      G_{k,n}(\mathbf{y},s)
			\\ &
			  \ = \
			  T
			  s^{T-1}
			\int_{\RR^T}
			\ind\{\mathbf{x}\in u_n  A\}
			  f_{[X_{\ell,k+\ell}]_\ell}(s \mathbf{x}-\mathbf{y})
			  \,\mathrm{d}\mathbf{x}
			\ + \
			  s^T
			  \int_{\RR^T}
			\ind\{\mathbf{x}\in u_n  A\}
			  \mathbf{x}^{\top}
        \nabla f_{[X_{\ell,k+\ell}]_\ell}(s \mathbf{x}-\mathbf{y})
			  \,\mathrm{d}\mathbf{x}
			\\ &
			  \ = \
			  \frac{1}{s}
			\left(
			  T \, 
			  \PP[[X_{\ell,k+\ell}]_\ell +\mathbf{y}\in u_n  s A]
			\ + \
			  \int_{\RR^T}
			\ind\{\mathbf{x}\in u_n  s  A\}
			  \mathbf{x}^{\top}
        \nabla f_{[X_{\ell,k+\ell}]_\ell}(\mathbf{x}-\mathbf{y})
			  \,\mathrm{d}\mathbf{x}
			\right)
			  \,.
		\end{align*}
		Let us focus on the swap. Let $h\in \RR$ satisfy $|h|<s/2$. Then 
\begin{align}
  &
	\frac{1}{h}
			\left(
			\int_{\RR^T}
			\ind\{\mathbf{x}\in u_n  A\}
			f_{[X_{\ell,k+\ell}]_\ell}((s+h)\mathbf{x}-\mathbf{y})
			\,\mathrm{d}\mathbf{x}
			\ - \
			\int_{\RR^T}
			\ind\{\mathbf{x}\in u_n  A\}
			f_{[X_{\ell,k+\ell}]_\ell}(s \mathbf{x}-\mathbf{y})
			\,\mathrm{d}\mathbf{x}
			\right)
			\nonumber\\ &
			  \ = \
			  \int_{\RR^T}
			\ind\{\mathbf{x}\in u_n  A\}
			  \left(
			\int_{0}^{1}
			\mathbf{x}^{\top}
        \nabla f_{[X_{\ell,k+\ell}]_\ell}((s + h\xi) \mathbf{x}-\mathbf{y})
			  \,\mathrm{d}\xi
			  \right)
			  \,\mathrm{d}\mathbf{x}
        \nonumber\\&
        \ = \ 
\int_{\RR^T}
			\ind\{\mathbf{x}\in u_n  A\}
			  \left(
			\int_{0}^{1}
      \frac{
      \left(
(s + h\xi) \mathbf{x}
      \right)^{\top}
        \nabla f_{[X_{\ell,k+\ell}]_\ell}((s + h\xi) \mathbf{x}-\mathbf{y})
      }{
  s + h\xi
      }
			  \,\mathrm{d}\xi
			  \right)
			  \,\mathrm{d}\mathbf{x}
        \nonumber\\&
        \ = \ 
\int_{\RR^T}
			\ind\{\mathbf{x}\in u_n  A\}
			  \left(
			\int_{0}^{1}
      \frac{
      \mathfrak{H}((s + h\xi) \mathbf{x}-\mathbf{y})
      }{s+h\xi}
      \,\mathrm{d}\xi
      \right)
      \,\mathrm{d}\mathbf{x}
      \nonumber\\&
      \qquad + \ 
      \int_{\RR^T}
      \ind\{\mathbf{x}\in u_n A\}
      \left(
      \int_0^1
      \frac{
      \mathbf{y}^{\top}
        \nabla f_{[X_{\ell,k+\ell}]_\ell}((s + h\xi) \mathbf{x}-\mathbf{y})
      }{
  s + h\xi
      }
			  \,\mathrm{d}\xi
			  \right)
			  \,\mathrm{d}\mathbf{x}
        \,, \label{eq:swap1}
			  \end{align}
        where
        \begin{align*}
          \mathfrak{H}(\mathbf{x})
          \ := \ 
          \mathbf{x}^{\top}
          \nabla f_{[X_{\ell,k+\ell}]_\ell}(\mathbf{x})
          \,,\qquad \mathbf{x}\in \RR^T
          \,.
        \end{align*}
        The function $\mathfrak{H}$ is integrable and continuous, by Lemma~\ref{lem:multi_density}(iv). We expand 
        \begin{align}
          &
			  \int_{\RR^T}
			\ind\{\mathbf{x}\in u_n  A\}
			  \mathbf{x}^{\top}
			  \nabla f_{[X_{\ell,k+\ell}]_\ell}(s \mathbf{x}-\mathbf{y})
			  \,\mathrm{d}\mathbf{x}
          \nonumber\\&
          \ = \ 
          \int_{\RR^T}
          \ind\{\mathbf{x}\in u_n A\}
          \left(
          \int_0^1
          \frac{\mathfrak{H}(s \mathbf{x}-\mathbf{y})}{s}
          \,\mathrm{d}\xi
          \right)
          \,\mathrm{d}\mathbf{x}
          \  + \ 
\int_{\RR^T}
			\ind\{\mathbf{x}\in u_n  A\}
      \int_0^1
      \left(
      \frac{
			  \mathbf{y}^{\top}
			  \nabla f_{[X_{\ell,k+\ell}]_\ell}(s \mathbf{x}-\mathbf{y})
      }{s}
      \,\mathrm{d}\xi
      \right)
			  \,\mathrm{d}\mathbf{x}
        \,. \label{eq:swap2}
        \end{align}
        It then holds, by~\eqref{eq:swap1} and~\eqref{eq:swap2},
        \begin{align*}
          &
          \Bigg|
\frac{1}{h}
			\left(
			\int_{\RR^T}
			\ind\{\mathbf{x}\in u_n  A\}
			f_{[X_{\ell,k+\ell}]_\ell}((s+h)\mathbf{x}-\mathbf{y})
			\,\mathrm{d}\mathbf{x}
			\ - \
			\int_{\RR^T}
			\ind\{\mathbf{x}\in u_n  A\}
			f_{[X_{\ell,k+\ell}]_\ell}(s \mathbf{x}-\mathbf{y})
			\,\mathrm{d}\mathbf{x}
			\right)
          \\&
\qquad - \ 
          \int_{\RR^T}
          \ind\{\mathbf{x}\in u_n A\}
          \mathbf{x}^{\top}\nabla f_{[X_{\ell,k+\ell}]_\ell}(s \mathbf{x}-\mathbf{y})
          \,\mathrm{d}\mathbf{x}
          \Bigg|
          \\&
          \ \le \ 
          \int_{\RR^T}
          \ind\{\mathbf{x}\in u_n A\}
          \Bigg(
          \int_{0}^1
          \left|
          \frac{\mathfrak{H}(s \mathbf{x}-\mathbf{y})}{s}
          \ - \ 
          \frac{\mathfrak{H}((s+h\xi) \mathbf{x} -\mathbf{y})}{s+h\xi}
          \right|
          \\&
          \qquad
          \qquad
          + \ 
          \left|
      \frac{
			  \mathbf{y}^{\top}
			  \nabla f_{[X_{\ell,k+\ell}]_\ell}(s \mathbf{x}-\mathbf{y})
      }{s}
      \ - \ 
      \frac{
			  \mathbf{y}^{\top}
          \nabla f_{[X_{\ell,k+\ell}]_\ell}((s+h\xi) \mathbf{x} -\mathbf{y})
      }{s+h\xi}
          \right|
          \,\mathrm{d}\xi
          \Bigg)
          \,\mathrm{d}\mathbf{x}
                   \,.
        \end{align*}
        Note that 
        \begin{align*}
          &
   \left|
          \frac{\mathfrak{H}(s \mathbf{x}-\mathbf{y})}{s}
          \ - \ 
          \frac{\mathfrak{H}((s+h\xi) \mathbf{x} -\mathbf{y})}{s+h\xi}
          \right|
          \ + \ 
          \left|
      \frac{
			  \mathbf{y}^{\top}
			  \nabla f_{[X_{\ell,k+\ell}]_\ell}(s \mathbf{x}-\mathbf{y})
      }{s}
      \ - \ 
      \frac{
			  \mathbf{y}^{\top}
          \nabla f_{[X_{\ell,k+\ell}]_\ell}((s+h\xi) \mathbf{x} -\mathbf{y})
      }{s+h\xi}
          \right|
          \\&
          \ \le \ 
          \frac{1}{s}
          \left(
\left|
          \mathfrak{H}(s \mathbf{x}-\mathbf{y})
          \ - \ 
          \mathfrak{H}((s+h\xi) \mathbf{x} -\mathbf{y})
          \right|
          \ + \ 
          \left|
			  \mathbf{y}^{\top}
			  \nabla f_{[X_{\ell,k+\ell}]_\ell}(s \mathbf{x}-\mathbf{y})
      \ - \ 
			  \mathbf{y}^{\top}
          \nabla f_{[X_{\ell,k+\ell}]_\ell}((s+h\xi) \mathbf{x} -\mathbf{y})
          \right|
          \right)
          \\&
          \qquad + \ 
          \left|
          \frac{1}{s}
          \ - \ 
          \frac{1}{s+h\xi}
          \right|
          \left(
          \left|
          \mathfrak{H}((s+h\xi) \mathbf{x} -\mathbf{y})
          \right|
          \ + \ 
          \left|
			  \mathbf{y}^{\top}
          \nabla f_{[X_{\ell,k+\ell}]_\ell}((s+h\xi) \mathbf{x} -\mathbf{y})
          \right|
          \right)
          \,.
        \end{align*}
        Therefore, by Tonelli's theorem, 
        \begin{align}
          \label{eqn:tonelli}
          \begin{split}
          &
       \int_{\RR^T}
          \ind\{\mathbf{x}\in u_n A\}
            \Bigg(
          \int_{0}^1
          \left|
          \frac{\mathfrak{H}(s \mathbf{x}-\mathbf{y})}{s}
          \ - \ 
          \frac{\mathfrak{H}((s+h\xi) \mathbf{x} -\mathbf{y})}{s+h\xi}
          \right|
            \\&
          \qquad \qquad+ \ 
          \left|
      \frac{
			  \mathbf{y}^{\top}
			  \nabla f_{[X_{\ell,k+\ell}]_\ell}(s \mathbf{x}-\mathbf{y})
      }{s}
      \ - \ 
      \frac{
			  \mathbf{y}^{\top}
          \nabla f_{[X_{\ell,k+\ell}]_\ell}((s+h\xi) \mathbf{x} -\mathbf{y})
      }{s+h\xi}
          \right|
          \,\mathrm{d}\xi
            \Bigg)
          \,\mathrm{d}\mathbf{x}
          \\
          &
          \le
          \int_0^1
          \Bigg(
   \frac{1}{s}
            \Big(
          \norm{
          \mathfrak{H}(s(\cdot)-\mathbf{y})
          \ - \ 
          \mathfrak{H}((s+h\xi)(\cdot)-\mathbf{y})
          }_{L^1(\RR^T)}
            \\&
          \qquad
          \qquad
            +\  
          \norm{
			  \mathbf{y}^{\top}
			  \nabla f_{[X_{\ell,k+\ell}]_\ell}(s(\cdot)-\mathbf{y})
      \ - \ 
			  \mathbf{y}^{\top}
          \nabla f_{[X_{\ell,k+\ell}]_\ell}((s+h\xi)(\cdot)-\mathbf{y})
          }_{L^1(\RR^T)}
            \Big)
          \\
          &
          \qquad\qquad + \ 
          \left|
          \frac{1}{s}
          \ - \ 
          \frac{1}{s+h\xi}
          \right|
          \Big(
          \norm{
          \mathfrak{H}((s+h\xi)(\cdot)-\mathbf{y})
          }_{L^1(\RR^T)}
          \\&
          \qquad\qquad\qquad\ + \ 
          \norm{
			  \mathbf{y}^{\top}
          \nabla f_{[X_{\ell,k+\ell}]_\ell}((s+h\xi)(\cdot)-\mathbf{y})
          }_{L^1(\RR^T)}
          \Big)
          \Bigg)
          \,\mathrm{d}\xi.
 \end{split}
        \end{align}
        Note that
        \begin{align*}
          &
\norm{
          \mathfrak{H}((s+h\xi)(\cdot)-\mathbf{y})
          }_{L^1(\RR^T)}
          \ + \ 
          \norm{
			  \mathbf{y}^{\top}
          \nabla f_{[X_{\ell,k+\ell}]_\ell}((s+h\xi)(\cdot)-\mathbf{y})
          }_{L^1(\RR^T)}
          \\&
          \ = \ 
          |s+h\xi|^{-T}
          \left(
          \norm{\mathfrak{H}
          }_{L^1(\RR^T)}
          \ + \ 
          \norm{\mathbf{y}^{\top}\nabla f_{[X_{\ell,k+\ell}]_\ell}
          }_{L^1(\RR^T)}
          \right)
          \,,
        \end{align*}
        so that
        \begin{align*}
          &
          \int_0^1
\left|
          \frac{1}{s}
          \ - \ 
          \frac{1}{s+h\xi}
          \right|
          \left(
          \norm{
          \mathfrak{H}((s+h\xi)(\cdot)-\mathbf{y})
          }_{L^1(\RR^T)}
          \ + \ 
          \norm{
			  \mathbf{y}^{\top}
          \nabla f_{[X_{\ell,k+\ell}]_\ell}((s+h\xi)(\cdot)-\mathbf{y})
          }_{L^1(\RR^T)}
          \right)
          \,\mathrm{d}\xi
          \\&
          \ \le \ 
  \left(
          \norm{\mathfrak{H}
          }_{L^1(\RR^T)}
          \ + \ 
          \norm{\mathbf{y}^{\top}\nabla f_{[X_{\ell,k+\ell}]_\ell}
          }_{L^1(\RR^T)}
          \right)
          \sup_{\xi\in (0,1)}
          \left(
          \left|
          \frac{1}{s}
          \ - \ 
          \frac{1}{s+h\xi}
          \right|
          |s+h\xi|^{-T}
          \right)
          \ \to \ 0
        \end{align*}
        as $h\to 0$.
        Let us deal with the first pair of integrals in the right-hand side of~\eqref{eqn:tonelli}.
               Since $\mathfrak{H}$ and $\mathbf{y}^{\top}\nabla f_{[X_{\ell,k+\ell}]_\ell}\in L^1(\RR^T)\cap C^0(\RR^T)$, it follows from Lemma~\ref{lem:alt} that
        \begin{align*}
          &
   \int_0^1
   \frac{1}{s}
          \Bigg(
          \norm{
          \mathfrak{H}(s(\cdot)-\mathbf{y})
          \ - \ 
          \mathfrak{H}((s+h\xi)(\cdot)-\mathbf{y})
          }_{L^1(\RR^T)}
          \\&
           \qquad\qquad+\  
          \norm{
			  \mathbf{y}^{\top}
			  \nabla f_{[X_{\ell,k+\ell}]_\ell}(s(\cdot)-\mathbf{y})
      \ - \ 
			  \mathbf{y}^{\top}
          \nabla f_{[X_{\ell,k+\ell}]_\ell}((s+h\xi)(\cdot)-\mathbf{y})
          }_{L^1(\RR^T)}
          \Bigg)
          \,\mathrm{d}\xi
          \\&
          \ \to \ 0
        \end{align*}
        as $h\to 0$, which proves the claim.
       			\\
    \textbf{Proof of continuity of $\nabla_{\mathbf{y}} G_{k,n}$ in part (ii):}
 Let $(\mathbf{y}_{},s)\in\RR^T\times (0,\infty)$ and $(\mathbf{y}_{m},s_m)_{m\in\NN}\subset \RR^T\times (s/2,3s/2)$ with $(\mathbf{y}_{m},s_m)\to (\mathbf{y}_{},s)$ as $m\to\infty$. By part (i) and the change of variables $\mathbf{x}\mapsto s\mathbf{x}$, 
 \[
      \nabla_{\mathbf{y}} G_{k,n}(\mathbf{y},s)
			\ = \
			- s^T
			\int_{\RR^T}
			\ind\{\mathbf{x}\in u_n  A\}
			\nabla f_{[X_{\ell,k+\ell}]_\ell}(s \mathbf{x}-\mathbf{y})
			\,\mathrm{d}\mathbf{x}
			\,.
\]
 Then, for all $\mathbf{z}\in \RR^T$,
    \begin{align*}
      &
      \left|
      \mathbf{z}^{\top}
      \nabla_{\mathbf{y}}G_{k,n}(\mathbf{y},s)
      \ - \ 
      \mathbf{z}^{\top}
      \nabla_{\mathbf{y}}G_{k,n}(\mathbf{y}_m,s_m)
      \right|
      \\&
      \ \le \ 
      \left|
      s^T - s_m^T
      \right|
      \norm{
      \mathbf{z}^{\top}
      \nabla
      f_{[X_{\ell,k+\ell}]_\ell}(s(\cdot)-\mathbf{y})
          }_{L^1(\RR^T)}
          \\&
          \qquad + \ 
          |s_m|^T
          \norm
{
      \mathbf{z}^{\top}
      \nabla
      f_{[X_{\ell,k+\ell}]_\ell}(s_m(\cdot)-\mathbf{y}_m)
       \ - \ 
      \mathbf{z}^{\top}
      \nabla
      f_{[X_{\ell,k+\ell}]_\ell}(s(\cdot)-\mathbf{y})
          }_{L^1(\RR^T)}
          \\&
          \ \to \ 0
          \,,
    \end{align*}
    as $m\to\infty$, with similar arguments as before.
    \\ 
    \textbf{Proof of continuity of $\frac{\partial}{\partial s} G_{k,n}$ in part (ii):}
     		To prove continuity of the partial derivative in $s$, 
    note that 
    \begin{align*}
      &
      \frac{\partial}{\partial s}
      G_{k,n}(\mathbf{y}_m,s_m)
      \\&
      \ = \ 
      \frac{1}{s_m}
      \left(
      T \, 
      \PP[[X_{\ell,k+\ell}]_\ell + \mathbf{y}_m \in u_n s_m A]
      \ + \ 
      \int_{\RR^T}
      \ind\{\mathbf{x}\in u_n s_mA\} \mathbf{x}^{\top}\nabla f_{[X_{\ell,k+\ell}]_{\ell}}(\mathbf{x}-\mathbf{y}_m)
      \,\mathrm{d}\mathbf{x}
      \right)
            \,.
    \end{align*}
    For the first term, we have
    \begin{align*}
      \PP[[X_{\ell,k+\ell}]_\ell + \mathbf{y}_m \in u_n s_m A]
      \ = \ 
      s_m^{T}
      \int_{\RR^T}
      \ind\{\mathbf{x} \in u_n A\}
      f_{[X_{\ell,k+\ell}]_\ell}(s_m\mathbf{x}-\mathbf{y}_m)
      \,\mathrm{d}\mathbf{x}
    \end{align*}
    and, by Lemma~\ref{lem:alt},
    \begin{align*}
      &
      \left|
      \int_{\RR^T}
      \ind\{\mathbf{x} \in u_n A\}
      f_{[X_{\ell,k+\ell}]_{\ell}}(s_m\mathbf{x}-\mathbf{y}_m)
      \,\mathrm{d}\mathbf{x}
      \ - \ 
      \int_{\RR^T}
      \ind\{\mathbf{x} \in u_n A\}
      f_{[X_{\ell,k+\ell}]_{\ell}}(s\mathbf{x}-\mathbf{y})
      \,\mathrm{d}\mathbf{x}
      \right|
      \\&
      \ \le \ 
      \norm{
      f_{[X_{\ell,k+\ell}]_\ell}(s_m(\cdot)-\mathbf{y}_m)
      \ - \
      f_{[X_{\ell,k+\ell}]_\ell}(s(\cdot)-\mathbf{y})
      }_{L^1(\RR^T)}
      \ \to \ 0
      \,,
    \end{align*}
    so that
    \begin{align*}
      \frac{1}{s_m}
      T\, \PP[[X_{\ell,k+\ell}]_{\ell}+ \mathbf{y}_m \in u_n s_m A] 
      \ \to\  
      \frac{1}{s}
      T\, \PP[[X_{\ell,k+\ell}]_{\ell}+ \mathbf{y} \in u_n s A] 
    \end{align*}
    as $m\to\infty$.
    For the second term, 
    note that
    \begin{align*}
      \int_{\RR^T}
      \ind\{\mathbf{x}\in u_n s_mA\} \mathbf{x}^{\top}\nabla f_{[X_{\ell,k+\ell}]_{\ell}}(\mathbf{x}-\mathbf{y}_m)
      \,\mathrm{d}\mathbf{x}
      \ = \ 
      s_m^{T}
      \int_{\RR^T}
      \ind\{\mathbf{x}\in u_n A\} (s_m\mathbf{x})^{\top}\nabla f_{[X_{\ell,k+\ell}]_{\ell}}(s_m\mathbf{x}-\mathbf{y}_m)
      \,\mathrm{d}\mathbf{x}
      \,,
    \end{align*}
    so that
    it remains to prove that
    \begin{align*}
      \left|
 \int_{\RR^T}
      \ind\{\mathbf{x}\in u_n  A\} (s_m\mathbf{x})^{\top}\nabla f_{[X_{\ell,k+\ell}]_{\ell}}(s_m\mathbf{x}-\mathbf{y}_m)
      \,\mathrm{d}\mathbf{x}
\ - \ 
 \int_{\RR^T}
      \ind\{\mathbf{x}\in u_n  A\} (s\mathbf{x})^{\top}\nabla f_{[X_{\ell,k+\ell}]_{\ell}}(s\mathbf{x}-\mathbf{y})
      \,\mathrm{d}\mathbf{x}
      \right|
      \ \to \ 0
    \end{align*}
    as $m\to\infty$.
    Note that for $m$ sufficiently large $\norm{\mathbf{y}_m-\mathbf{y}}_{\infty}\le 1$, so that 
    \begin{align*}
      &
\left|
 \int_{\RR^T}
      \ind\{\mathbf{x}\in u_n  A\} (s_m\mathbf{x})^{\top}\nabla f_{[X_{\ell,k+\ell}]_{\ell}}(s_m\mathbf{x}-\mathbf{y}_m)
      \,\mathrm{d}\mathbf{x}
\ - \ 
 \int_{\RR^T}
      \ind\{\mathbf{x}\in u_n  A\} (s\mathbf{x})^{\top}\nabla f_{[X_{\ell,k+\ell}]_{\ell}}(s\mathbf{x}-\mathbf{y})
      \,\mathrm{d}\mathbf{x}
      \right|
      \displaybreak[0] \\ &
      \ \le \ 
      \norm{
      s_m
      (\cdot)^{\top}\nabla f_{[X_{\ell,k+\ell}]_{\ell}}(s_m(\cdot)-\mathbf{y}_m)
      \ - \ 
      s
      (\cdot)^{\top}\nabla f_{[X_{\ell,k+\ell}]_{\ell}}(s(\cdot)-\mathbf{y})
      }_{L^1(\RR^T)}
      \\&
      \ \le \ 
      \norm{\mathfrak{H}(s_m(\cdot)-\mathbf{y}_m)
      \ - \ 
      \mathfrak{H}(s(\cdot)-\mathbf{y})}_{L^1(\RR^T)}
      \\&
      \qquad + \ 
      \norm{\mathbf{y}^{\top}
      \left(
      \nabla f_{[X_{\ell,k+\ell}]_{\ell}}(s_m(\cdot)-\mathbf{y}_m)
      \ - \
      \nabla f_{[X_{\ell,k+\ell}]_{\ell}}(s(\cdot)-\mathbf{y})
      \right)
      }_{L^1(\RR^T)}
      \\&
      \qquad + \
\norm{
      (\mathbf{y}_m-\mathbf{y})^{\top}
      \left(
      \nabla f_{[X_{\ell,k+\ell}]_{\ell}}(s_m(\cdot)-\mathbf{y}_m)
      \ - \
      \nabla f_{[X_{\ell,k+\ell}]_{\ell}}(s(\cdot)-\mathbf{y})
      \right)
      }_{L^1(\RR^T)}
      \\&
      \qquad + \ 
      \norm{
      (\mathbf{y}_m-\mathbf{y})^{\top}
      \nabla f_{[X_{\ell,k+\ell}]_{\ell}}(s(\cdot)-\mathbf{y})
      }_{L^1(\RR^T)}
      \displaybreak[0]\\&
      \ \le \ 
   \norm{\mathfrak{H}(s_m(\cdot)-\mathbf{y}_m)
      \ - \ 
      \mathfrak{H}(s(\cdot)-\mathbf{y})}_{L^1(\RR^T)}
      \\&
      \qquad + \ 
      \norm{\mathbf{y}^{\top}
      \left(
      \nabla f_{[X_{\ell,k+\ell}]_{\ell}}(s_m(\cdot)-\mathbf{y}_m)
      \ - \
      \nabla f_{[X_{\ell,k+\ell}]_{\ell}}(s(\cdot)-\mathbf{y})
      \right)
      }_{L^1(\RR^T)}
      \\&
      \qquad + \
      \sum_{j=1}^T
\norm{
\mathbf{e}_j^{\top}
      \left(
      \nabla f_{[X_{\ell,k+\ell}]_{\ell}}(s_m(\cdot)-\mathbf{y}_m)
      \ - \
      \nabla f_{[X_{\ell,k+\ell}]_{\ell}}(s(\cdot)-\mathbf{y})
      \right)
      }_{L^1(\RR^T)}
      \\&
      \qquad + \ 
      \sum_{j=1}^T
      \norm{
\mathbf{e}_j^{\top}
      \nabla f_{[X_{\ell,k+\ell}]_{\ell}}(s(\cdot)-\mathbf{y})
      }_{L^1(\RR^T)}
      \norm{\mathbf{y}_m-\mathbf{y}}_{\infty}
      \,.
    \end{align*}
        By Lemma~\ref{lem:alt}, this upper bound tends to 0 as $m\to\infty$.
\end{proof}
\subsection{Central limit theorem for multivariate partial sums}
Next we provide a proof of Theorem~\ref{thm:linear_clt_multi}, which extends a well known central limit theorem for univariate partial sums of long memory linear time series to the multivariate setting relevant in this work.
\begin{proof}[\textbf{Proof of Theorem~\ref{thm:linear_clt_multi}:}]
\label{sec:linear_clt_multi}
  Since
  \begin{align*}
    \EE
    \left[
    \norm{
    n^{-d-1/\alpha}
    \left(
    \sum_{t=1}^n [X_{t+\ell}]_\ell
    \ - \ 
    \sum_{t=1}^{n-T+1} [X_{t+\ell}]_\ell
    \right)
    }_1
    \right]
    \ \le \ 
    n^{-d-1/\alpha}
    \cdot
    T^2
    \cdot
    \EE[|X_0|]
    \ \to \ 0\qquad\text{as}\ n\to\infty\,,
  \end{align*}
  we show the result without loss of generality for
  \begin{align*}
    \sum_{t=1}^n [X_{t+\ell}]_\ell
    \qquad\text{instead of}\qquad
    \sum_{t=1}^{n-T+1} [X_{t+\ell}]_\ell
    \,.
  \end{align*}
	Define
	\begin{align*}
		[\Delta_{t,t+\ell}]_{\ell}
		\ := \
		[X_{t+\ell}-X_t]_{\ell}
		\ = \
		[X_{t+\ell}]_\ell \ - \ 
		X_{t}\cdot \mathbf{1}_{T}
		\qquad\text{for}\ t\ge 1\,.
	\end{align*}
	Then
	\begin{align*}
		\sum_{t=1}^n
		[X_{t+\ell}]_{\ell}
		\ = \
		\left(\sum_{t=1}^n X_t\right)\cdot \mathbf{1}_{T}
		\ + \
		\sum_{t=1}^n [\Delta_{t,t+\ell}]_{\ell}
		\,.
	\end{align*}
	We show that, as $n\to\infty$,
	\begin{align}
		\label{eq:conv_delta}
		n^{-d-1/\alpha}
		\sum_{t=1}^n [\Delta_{t,t+\ell}]_{\ell}
		\ \to_{\PP} \ \mathbf{0}_{T}\,.
	\end{align}
	Note that by a telescoping sum argument
	\begin{align*}
		\sum_{t=1}^n
		[\Delta_{t,t+\ell}]_{\ell}
		\ = \
		\left[
			\sum_{t=1}^n
			(X_{t+\ell}
			\ - \
			X_{t})
			\right]_{\ell}
		\ = \
		\left[
			\sum_{t=1}^\ell
			(X_{n+t}
			\ - \
			X_{t})
			\right]_{\ell}
		\,.
	\end{align*}
	Since for all $\ell\in \{0,\ldots,T-1\}$
	\begin{align*}
		\EE
		\left[
			\left|
			n^{-d-1/\alpha}
			\sum_{t=1}^\ell
			(X_{n+t}
			\ - \
			X_{t})
			\right|
			\right]
		\ \le \
		n^{-d-1/\alpha}
		\cdot
		\left(
		2\cdot T
		\cdot \EE[|X_0|]
		\right)
		\ \to \ 0
		\qquad\text{as}\ n\to\infty\,,
	\end{align*}
	it readily follows \eqref{eq:conv_delta}.
	The result hence follows from Slutsky's theorem combined with the standard asymptotic theory for
	$n^{-d-1/\alpha}\sum_{t=1}^n X_t$, see, for example, Theorem~3.1 in~\cite{scheffelCentralLimitTheory2025}.
\end{proof}
\section{Separable product bounds featuring $u_n$ and $(s_1,s_2)$}
By \eqref{lem:two_sets}, there exist $\{\mathfrak{a}_{j}\}\subset(0,\infty)$ such that
\begin{align*}
	 &
	G_n(\mathbf{A}_{T}\cdot\mathbf{z}_{}, s_1,s_2,A)
	\ = \
	\ind\{\mathbf{A}_{T}\cdot \mathbf{z}_{}\in u_n \cdot ((s_1\cdot A)\setminus (s_2\cdot A))\}
	\\ &
	  \ \le \
	  \sum_{j=1}^T
	  \ind\left\{
	  \left|
	  \sum_{m=1}^j a_{j-m}z_{m-1}
	\right|
	  \in u_n \cdot
	  \mathfrak{a}_j \cdot [s_1,s_2]
	\right\}
	  \,.
\end{align*}
The main purpose of this section is to separate
the
threshold $u_n$ from the uniformity interval $[s_1,s_2]$
in a way that
preserves the structure required for the later
estimates.
We formulate the argument for general index sets
\(I\subset\{1,\ldots,T\}\) and coefficients $(c_m)_{m\in I}$,
where in the later sections we mostly have $I=\{1,\ldots,T\}$ and
$c_m = a_{j-m}$.
The first lemma contains the basic separation argument.
In the rest of the section we then build the complexity needed for the
subsequent analysis around this.
\begin{lemma}[Separable product bound I]
  \label{lem:gist_separation}
	Let $\emptyset\neq I\subset \{1,\ldots,T\}$, $(c_m)_{m\in I}\subset \RR$, $\mathfrak{a}\in(0,\infty)$, $\gamma\in(0,\infty)$.
	Then
	\begin{align*}
		\int_{\RR^{|I|}}
		\ind
		\left\{
		\left|
		\sum_{m\in I}
		c_{m}
		z_{m-1}
		\right|
		\in u_n \cdot \mathfrak{a} \cdot [s_1,s_2]
		\right\}
		\cdot
		\prod_{\substack{m\in I}}
		g_{\gamma}
		\left(
		z_{m-1}
		\right)
		\,\mathrm{d}\mathbf{z}_{}
		\ \lesssim \
		u_n^{-\gamma}
		\cdot
		\left(
		(s_2-s_1)
		\land 1
		\right)
		\,,
	\end{align*}
	where $\lesssim$ means inequality up to a constant that only depends on $T$, $(c_m)$, $\mathfrak{a}$, $\gamma$, and $\underline{s}$,
	and is independent of $n$ and $s_1,s_2$.
\end{lemma}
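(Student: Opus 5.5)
The plan is to read the integral as a probability for a sum of independent variables, and then to bound the density of that sum by a single $g_\gamma$. Up to the constant $\norm{g_\gamma}_{L^1(\RR)}^{|I|}$, the product $\prod_{m\in I} g_\gamma(z_{m-1})$ is the joint density of i.i.d.\ random variables $(Z_m)_{m\in I}$ with density $\propto g_\gamma$. Hence the left-hand side equals, up to a constant depending only on $T$ and $\gamma$,
\begin{align*}
  \PP\left[ |S| \in u_n\cdot \mathfrak{a}\cdot [s_1,s_2] \right],
  \qquad S := \sum_{m\in I} c_m Z_m \,.
\end{align*}
If all $c_m$ vanish, then $S=0$. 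Since $u_n\,\mathfrak{a}\,s_1\ge \mathfrak{a}\,\underline{s}>0$, this probability is $0$ and there is nothing to prove. So discard the indices with $c_m=0$ and assume $c_m\neq 0$ for all $m\in I$.

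The key step is the density bound $f_S(x)\lesssim g_\gamma(x)$ for all $x\in\RR$, with a constant depending only on $(c_m)$, $\gamma$ and $T$. I would prove it in two parts.
\begin{itemize}
\item A single summand: the density of $c_m Z_m$ is $|c_m|^{-1} g_\gamma(x/c_m)$ up to normalisation, which is $\lesssim g_\gamma(x)$ by Lemma~\ref{lem:5.1}(iii).
\item Closure under convolution: $g_\gamma * g_\gamma \lesssim g_\gamma$. For fixed $x$, split $\int g_\gamma(x-y)\,g_\gamma(y)\,\mathrm{d}y$ into the regions $|y|\le |x|/2$ and $|y|>|x|/2$.
  \begin{itemize}
  \item On $|y|\le |x|/2$ we have $|x-y|\ge |x|/2$, so $g_\gamma(x-y)\le 2^{1+\gamma}g_\gamma(x)$. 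Integrating $g_\gamma(y)$ over this region then gives $\lesssim g_\gamma(x)$.
  \item On $|y|>|x|/2$ we have $g_\gamma(y)\le 2^{1+\gamma}g_\gamma(x)$ by the same comparison. Integrating $g_\gamma(x-y)$ gives at most $\norm{g_\gamma}_{L^1}$, so again $\lesssim g_\gamma(x)$.
  \end{itemize}
\end{itemize}
Induction over the at most $T$ summands yields $f_S\lesssim g_\gamma$. I expect this comparison-of-tails step to be the only real content; everything else is bookkeeping. In particular, it must avoid any dependence on $n$ or on $s_1,s_2$, which it does.

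With the density bound, integrate over $\{x : |x|\in u_n\,\mathfrak{a}\,[s_1,s_2]\}$ and use $s_1>\underline{s}$. There are two regimes, and the statement's bound follows by taking the better one.
\begin{itemize}
\item If $s_2-s_1\le 1$, then $s_2<\infty$. The domain has Lebesgue measure $2u_n\,\mathfrak{a}\,(s_2-s_1)$, and on it $g_\gamma(x)\le (u_n\,\mathfrak{a}\,\underline{s})^{-1-\gamma}$. The product is $\lesssim u_n^{-\gamma}(s_2-s_1)$.
\item If $s_2-s_1>1$, including the case $s_2=\infty$, bound the domain by $\{|x|\ge u_n\,\mathfrak{a}\,\underline{s}\}$. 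Then $\int_{|x|\ge R} g_\gamma(x)\,\mathrm{d}x = 2\gamma^{-1}(1+R)^{-\gamma}\lesssim u_n^{-\gamma}$.
\end{itemize}
Combining the two cases gives $\lesssim u_n^{-\gamma}\bigl((s_2-s_1)\land 1\bigr)$. The constant depends only on $T$, $(c_m)$, $\mathfrak{a}$, $\gamma$ and $\underline{s}$, as claimed.
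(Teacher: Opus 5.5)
Your proof is correct, and it takes a genuinely different route from the paper.

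The paper removes the absolute value by symmetry and then inducts on $|I|$. It covers the event $\{\sum_{m\in I}c_mz_{m-1}\in u_n\mathfrak{a}[s_1,s_2]\}$ by two kinds of events:
\begin{itemize}
\item ``diagonal'' events, where a single term $c_{\mathfrak{m}}z_{\mathfrak{m}-1}$ lies in $\frac{u_n\mathfrak{a}}{|I|}[s_1,s_2]$; these reduce to the case $|I|=1$;
\item ``off-diagonal'' events, where one term exceeds $\frac{u_n\mathfrak{a}s_2}{|I|}$ and another lies below $\frac{u_n\mathfrak{a}s_1}{|I|}$; these are handled by substituting one variable into the other's weight, using monotonicity of $g_\gamma$, shifting the interval, and invoking the induction hypothesis on $|I|-1$ variables.
\end{itemize}

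You instead isolate a structural property of the weight. The function $g_\gamma$ is stable up to constants under rescaling (Lemma~\ref{lem:5.1}(iii)) and under convolution; your two-region split correctly gives $g_\gamma*g_\gamma\le 2^{2+\gamma}\norm{g_\gamma}_{L^1(\RR)}\,g_\gamma$. Hence the density of $\sum_m c_mZ_m$ is dominated by $g_\gamma$, and only a one-dimensional interval/tail estimate remains.

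What your route buys:
\begin{itemize}
\item It is shorter, and it handles the absolute value and the case $s_2=\infty$ without separate reductions.
\item It gives a reusable pointwise bound. Combined with $g_\gamma(x+b)\le(1+|b|)^{1+\gamma}g_\gamma(x)$, it yields the shifted estimate of Lemma~\ref{lem:comprehensive}(i) almost at once.
\item It avoids the most delicate step of the paper's argument. There, the constraint $c_{\mathfrak{m}}z_{\mathfrak{m}-1}>\frac{u_n\mathfrak{a}s_2}{|I|}$ is written unchanged after the substitution $z_{\mathfrak{m}-1}\mapsto z_{\mathfrak{m}-1}-\frac{c_{\mathfrak{n}}}{c_{\mathfrak{m}}}z_{\mathfrak{n}-1}$. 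Under that substitution it should become $c_{\mathfrak{m}}z_{\mathfrak{m}-1}-c_{\mathfrak{n}}z_{\mathfrak{n}-1}>\frac{u_n\mathfrak{a}s_2}{|I|}$, so the monotonicity bound applied next is not justified as written. Your argument establishes the lemma without needing that step.
\end{itemize}

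In exchange, the paper's route stays entirely within indicator integrals and changes of variables, which is the same toolkit used in the subsequent separable product bounds.
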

\begin{proof}
	Since
	\begin{align*}
		 &
		\int_{\RR^{|I|}}
		\ind
		\left\{
		\left|
		\sum_{m\in I}
		c_{m}
		z_{m-1}
		\right|
		\in u_n \cdot \mathfrak{a} \cdot [s_1,s_2]
		\right\}
		\cdot
		\prod_{\substack{m\in I}}
		g_{\gamma}
		\left(
		z_{m-1}
		\right)
		\,\mathrm{d}\mathbf{z}_{}
		\\ &
		  \ \le \
		  \int_{\RR^{|I|}}
		\left(
		  \ind
		  \left\{
		  \sum_{m\in I}
		c_{m}
		  z_{m-1}
		  \in u_n \cdot \mathfrak{a} \cdot [s_1,s_2]
		\right\}
		  \ + \
		  \ind
		  \left\{
		  \sum_{m\in I}
		  (-c_{m})
		z_{m-1}
		  \in u_n \cdot \mathfrak{a} \cdot [s_1,s_2]
		\right\}
		  \right)
		\cdot
		  \prod_{\substack{m\in I}}
		g_{\gamma}
		\left(
		  z_{m-1}
		  \right)
		\,\mathrm{d}\mathbf{z}_{}
		\,,
	\end{align*}
	it is sufficient to prove
	\begin{align*}
		\int_{\RR^{|I|}}
		\ind
		\left\{
		\sum_{m\in I}
		c_{m}
		z_{m-1}
		\in u_n \cdot \mathfrak{a} \cdot [s_1,s_2]
		\right\}
		\cdot
		\prod_{\substack{m\in I}}
		g_{\gamma}
		\left(
		z_{m-1}
		\right)
		\,\mathrm{d}\mathbf{z}_{}
		\ \lesssim \
		u_n^{-\gamma}
		\cdot
		\left(
		(s_2-s_1)
		\land 1
		\right)
		\,,
	\end{align*}
	for $(c_m)_{m\in I}\subset \RR$, since this automatically covers $(-c_m)_{m\in I}\subset \RR$.
	\begin{align*}
	\end{align*}
	First consider $I\subset\{1,\ldots,T\}$ with $|I|=1$. Without loss of generality, we assume $1\in I$, $c_1\neq 0$.
	Making the change of variables
	\begin{align*}
		z_0 \ \mapsto\
		\frac{u_n \cdot \mathfrak{a}}{c_1}
		z_0
	\end{align*}
	and using Lemma~\ref{lem:5.1}(iii), we get
	\begin{align*}
		 &
		\int_{\RR}
		\ind\{c_1z_0 \in u_n \cdot \mathfrak{a}\cdot [s_1,s_2]\}
		g_{\gamma}(z_0)
		\,\mathrm{d}z_0
		\ = \
		u_n
		\frac{\mathfrak{a}}{|c_1|}
		\int_{\RR}
		\ind\{z_0 \in [s_1,s_2]\}
		g_{\gamma}
		\left(
		u_n \cdot z_0
		\frac{\mathfrak{a}}{c_1}
		\right)
		\,\mathrm{d}z_0
		\\ &
		  \ \lesssim \
		  u_n
		  \int_{s_1}^{s_2}
		\ind\{z_0 \in [s_1,s_2]\}
		  g_{\gamma}(u_n \cdot z_0)
		  \,\mathrm{d}z_0
		  \,,
	\end{align*}
	by the symmetry of $g_{\gamma}$.
	The multiplicative constant in $\lesssim$ depends only on $\gamma$, $\mathfrak{a}$, and the coefficient $c_1$.
	We bound this in two different ways. First,
	\begin{align*}
		 &
		u_n
		\int_{s_1}^{s_2}
		g_{\gamma}(u_n \cdot z_0)
		\,\mathrm{d}z_0
		\ \le \
		u_n
		\cdot
		g_{\gamma}(u_n\cdot s_1)
		\cdot
		(s_2-s_1)
		\ = \
		\frac{u_n}{(1+s_1\cdot u_n)^{1+\gamma}}
		\left(
		s_2-s_1
		\right)
		\\ &
		  \ \lesssim \
		  u_n^{-\gamma}(s_2-s_1)
		  \,,
	\end{align*}
	since $s_1>\underline{s}>0$.
	Here the constant in $\lesssim$ depends only on $c_1$, $\mathfrak{a}$, and $\underline{s}$.
	The second bound is
	\begin{align*}
		u_n
		\int_{s_1}^{s_2}
		g_{\gamma}(u_n \cdot z_0)
		\,\mathrm{d}z_0
		\ \lesssim\
		u_n
		\int_{\underline{s}}^{\infty}
		g_{\gamma}(u_n\cdot z_0)\,\mathrm{d}z_0
		\ \lesssim\
		(1+u_n\cdot \underline{s})^{-\gamma}
		\ \lesssim\
		u_n^{-\gamma}\,,
	\end{align*}
	where the multiplicative constant in $\lesssim$ depends only on $\underline{s}$.
	This proves the case with $|I|=1$. We proceed by mathematical induction: suppose that
	there exists $j\in \{1,\ldots,T\}$ such that the statement holds for all $I\subset\{1,\ldots,T\}$ with $|I|=j$, that is,
	\begin{align}
		\label{eq:ind_asu}
		\int_{\RR^{|I|}}
		\ind
		\left\{
		\sum_{m\in I}
		c_{m}
		z_{m-1}
		\in u_n \cdot \mathfrak{a} \cdot [s_1,s_2]
		\right\}
		\cdot
		\prod_{\substack{m\in I}}
		g_{\gamma}
		\left(
		z_{m-1}
		\right)
		\,\mathrm{d}\mathbf{z}_{}
		\ \lesssim \
		u_n^{-\gamma}
		\cdot
		\left(
		(s_2-s_1)
		\land 1
		\right)
	\end{align}
  for $
		I\subset \{1,\ldots,T\}
		\,, |I|=j\,,
  $
	where the multiplicative constant in $\lesssim$ depends only on $T$, $\gamma$, $\mathfrak{a}$, $\underline{s}$, and the coefficients $(c_m)$.
	Let now $I\subset\{1,\ldots,T\}$ with $|I|=j+1$. 
  If $c_m = 0$ for some $m\in I$, then this coordinate is not active in the indicator, so that flushing that inactive variable out of the integral using $\norm{g_{\gamma}}_{L^1(\RR)}<\infty$, we have only $j$ active variables left, so~\eqref{eq:ind_asu} applies. Therefore, we assume that $c_m\neq 0$ for all $m\in I$.  
  Write
	\begin{align*}
		 &
		\left\{
		\sum_{m\in I}
		c_m z_{m-1}
		\in
		u_n \cdot \mathfrak{a}\cdot [s_1,s_2]
		\right\}
		\\ &
		  \ \subset\
		  \bigcup_{\mathfrak{m}\in I}
		\left\{
		  c_{\mathfrak{m}}z_{\mathfrak{m}-1}
		\ \ge \
		  \frac{
			  u_n \cdot \mathfrak{a}\cdot s_1
			  }{j+1}
		\right\}
		  \ \cap\
		  \bigcup_{\mathfrak{n}\in I}
		\left\{
		  c_{\mathfrak{n}}z_{\mathfrak{n}-1}
		\ \le \
		  \frac{
			  u_n \cdot \mathfrak{a}\cdot s_2
			  }{j+1}
		\right\}
		\\ &
		  \ = \
		\ \bigcup_{\substack{\mathfrak{m},\mathfrak{n}\in I \\\mathfrak{m}\neq \mathfrak{n}}}
		\left\{
		c_{\mathfrak{m}}z_{\mathfrak{m}-1}
		\ \ge \
		\frac{
			u_n \cdot \mathfrak{a}\cdot s_1
		}{j+1}
		\qquad\text{and}\qquad
		c_{\mathfrak{n}}z_{\mathfrak{n}-1}
		\ \le \
		\frac{
			u_n \cdot \mathfrak{a}\cdot s_2
		}{j+1}
		\right\}
		\setminus
		\left(
		\bigcup_{\mathfrak{m}\in I}
		\left\{
		c_\mathfrak{m} z_{\mathfrak{m}-1}\in
		\frac{
			u_n \cdot \mathfrak{a}
		}{j+1}
		\cdot [s_1,s_2]
		\right\}
		\right)
		\\ &
		  \qquad \cup\ \bigcup_{\mathfrak{m}\in I}
		\left\{
		  c_\mathfrak{m} z_{\mathfrak{m}-1}\in
		  \frac{
			  u_n \cdot \mathfrak{a}
		}{j+1}
		\cdot [s_1,s_2]
		\right\}
		  \,.
	\end{align*}
	Since
	\begin{align*}
		\left(
		\bigcup_{\mathfrak{m}\in I}
		\left\{
		c_\mathfrak{m} z_{\mathfrak{m}-1}\in
		\frac{
			u_n \cdot \mathfrak{a}
		}{j+1}
		\cdot [s_1,s_2]
		\right\}
		\right)^c
		\ = \
		\bigcap_{\mathfrak{m}\in I}
		\left\{
		c_\mathfrak{m} z_{\mathfrak{m}-1}
		\ > \
		\frac{
			u_n \cdot \mathfrak{a}
		}{j+1}
		s_2
		\qquad \text{or}\qquad
		c_\mathfrak{m} z_{\mathfrak{m}-1}
		\ < \
		\frac{
			u_n \cdot \mathfrak{a}
		}{j+1}
		s_1
		\right\}
		\,,
	\end{align*}
	it holds on the intersection with this event that
	\begin{align*}
		 &
		\ \bigcup_{\substack{\mathfrak{m},\mathfrak{n}\in I \\\mathfrak{m}\neq \mathfrak{n}}}
		\left\{
		c_{\mathfrak{m}}z_{\mathfrak{m}-1}
		\ \ge \
		\frac{
			u_n \cdot \mathfrak{a}\cdot s_1
		}{j+1}
		\qquad\text{and}\qquad
		c_{\mathfrak{n}}z_{\mathfrak{n}-1}
		\ \le \
		\frac{
			u_n \cdot \mathfrak{a}\cdot s_2
		}{j+1}
		\right\}
		\\ &
		  \ = \
		\ \bigcup_{\substack{\mathfrak{m},\mathfrak{n}\in I \\\mathfrak{m}\neq \mathfrak{n}}}
		\left\{
		c_{\mathfrak{m}}z_{\mathfrak{m}-1}
		\ > \
		\frac{
			u_n \cdot \mathfrak{a}\cdot s_2
		}{j+1}
		\qquad\text{and}\qquad
		c_{\mathfrak{n}}z_{\mathfrak{n}-1}
		<
		\frac{
			u_n \cdot \mathfrak{a}\cdot s_1
		}{j+1}
		\right\}
		\,.
	\end{align*}
	It follows that
	\begin{align*}
		 &
		\left\{
		\sum_{m\in I}
		c_m z_{m-1}
		\in
		u_n \cdot \mathfrak{a}\cdot [s_1,s_2]
		\right\}
		\\ &
		  \ \subset\
		\ \bigcup_{\substack{\mathfrak{m},\mathfrak{n}\in I \\\mathfrak{m}\neq \mathfrak{n}}}
		\left\{
		c_{\mathfrak{m}}z_{\mathfrak{m}-1}
		\ > \
		\frac{
			u_n \cdot \mathfrak{a}\cdot s_2
		}{j+1}
		\qquad\text{and}\qquad
		c_{\mathfrak{n}}z_{\mathfrak{n}-1}
		\ < \
		\frac{
			u_n \cdot \mathfrak{a}\cdot s_1
		}{j+1}
		\right\}
		\\ &
		  \qquad\cup \
		  \bigcup_{\mathfrak{m}\in I}
		\left\{
		  c_\mathfrak{m} z_{\mathfrak{m}-1}\in
		  \frac{
			  u_n \cdot \mathfrak{a}
		}{j+1}
		\cdot [s_1,s_2]
		\right\}
		  \,.
	\end{align*}
	Then
	\begin{align*}
		 & \int_{\RR^{|I|}}
		\ind
		\left\{
		\sum_{m\in I}
		c_{m}
		z_{m-1}
		\in u_n \cdot \mathfrak{a} \cdot [s_1,s_2]
		\right\}
		\cdot
		\prod_{\substack{m\in I}}
		g_{\gamma}
		\left(
		z_{m-1}
		\right)
		\,\mathrm{d}\mathbf{z}_{}                        \\
		 &
		\leq \sum_{\mathfrak{m}\in I} \int_{\RR^{|I|}}
		\ind
		\left\{
		c_{\mathfrak{m}} z_{\mathfrak{m}-1}\in
		\frac{
			u_n \cdot \mathfrak{a}
		}{j+1}
		\cdot [s_1,s_2]
		\right\}
		\cdot
		\prod_{\substack{m\in I}}
		g_{\gamma}
		\left(
		z_{m-1}
		\right)
		\,\mathrm{d}\mathbf{z}_{}                        \\
		 &
		+ \sum_{\substack{\mathfrak{m},\mathfrak{n}\in I \\\mathfrak{m}\neq \mathfrak{n}}} \int_{\RR^{|I|}}
		\ind
		\left\{
		c_{\mathfrak{m}}z_{\mathfrak{m}-1}
		\ > \
		\frac{
			u_n \cdot \mathfrak{a}\cdot s_2
		}{j+1}
		\qquad\text{and}\qquad
		c_{\mathfrak{n}}z_{\mathfrak{n}-1}
		\ < \
		\frac{
			u_n \cdot \mathfrak{a}\cdot s_1
		}{j+1}
		\right\}
		\\ & \qquad\qquad\times\
		  \ind
		  \left\{
		  \sum_{m\in I}
		c_{m}
		z_{m-1}
		\in u_n \cdot \mathfrak{a} \cdot [s_1,s_2]
		\right\}
		  \cdot
		  \prod_{\substack{m\in I}}
		g_{\gamma}
		\left(
		  z_{m-1}
		  \right)
		\,\mathrm{d}\mathbf{z}_{}
	\end{align*}
	and we control each term. On the diagonal $\mathfrak{m}=\mathfrak{n}$, the bound follows immediately by \eqref{eq:ind_asu} for $j=1$ by setting
	\begin{align*}
		\widetilde{s}_{1}
		\ = \
		\frac{s_1}{j+1}
		\qquad\text{and}\qquad
		\widetilde{s}_{2}
		\ = \
		\frac{s_2}{j+1}
		\,, \qquad\text{so that}\qquad
		\left( (\widetilde{s}_{2} - \widetilde{s}_{1}) \wedge 1 \right) \leq \left( (s_{2} - s_{1}) \wedge 1 \right),
	\end{align*}
	and integrating out all other variables.
	Next we lay out how we tackle the off-diagonal part.
	If $s_2=\infty$, there is nothing to do. Therefore, assume that $s_2<\infty$.
	For the part off the diagonal we
	draw the smaller variable into the $g_{\gamma}$ of the larger variable. Then we use monotonicity on the half-line of $g_{\gamma}$ to fix the smaller variable to a constant. We then transform back and distribute the constant to the interval $[s_1,s_2]$. Finally, we use the assumption of the mathematical induction.
	Let us execute this. Let $\mathfrak{m},\mathfrak{n}\in I$ satisfying $\mathfrak{m}\neq \mathfrak{n}$ such that
	\begin{align*}
		c_{\mathfrak{m}}z_{\mathfrak{m}-1}
		\ > \
		\frac{
			u_n \cdot \mathfrak{a}\cdot s_2
		}{j+1}
		\qquad\text{and}\qquad
		c_{\mathfrak{n}}z_{\mathfrak{n}-1}
		\ <\
		\frac{
			u_n \cdot \mathfrak{a}\cdot s_1
		}{j+1}
		\,.
	\end{align*}
	Then
	\begin{align*}
		c_{\mathfrak{m}}z_{\mathfrak{m}-1}
		\ - \
		c_{\mathfrak{n}}z_{\mathfrak{n}-1}
		\ > \
		c_{\mathfrak{m}}z_{\mathfrak{m}-1}
		\ - \
		\frac{
			u_n \cdot \mathfrak{a}\cdot s_1
		}{j+1}
		\ >\
		\frac{
			u_n \cdot \mathfrak{a}
		}{j+1}
		\left(s_2 - s_1
		\right)
		\ \ge \
		0
		\,,
	\end{align*}
	so that by the monotonicity of $g_\gamma$ on $[0,\infty)$,
	\begin{align*}
		g_{\gamma}
		\left(
		c_{\mathfrak{m}}z_{\mathfrak{m}-1}
		\ - \
		c_{\mathfrak{n}}z_{\mathfrak{n}-1}
		\right)
		\ \le \
		g_\gamma
		\left(
		c_{\mathfrak{m}}z_{\mathfrak{m}-1}
		\ - \
		\frac{
			u_n \cdot \mathfrak{a}\cdot s_1}{j+1}
		\right)
		\,.
	\end{align*}
	By
	\begin{align}
		\label{eq:g_sandwitch}
		g_{\gamma}(z_{\mathfrak{m}-1})
		\
    \lesssim\  g_{\gamma}(c_{\mathfrak{m}}z_{\mathfrak{m}-1})
		\ \lesssim\
		g_{\gamma}(z_{\mathfrak{m}-1})
		\,,
	\end{align}
	where the multiplicative constant in $\lesssim$ depends only on $c_{\mathfrak{m}}$ and $\gamma$.
	With the change of variables
	\begin{align*}
		z_{\mathfrak{m}-1}
		\ \mapsto\
		z_{\mathfrak{m}-1}
		\ - \
		\frac{c_{\mathfrak{n}}}{c_{\mathfrak{m}}}
		z_{\mathfrak{n}-1}
		\,,
	\end{align*}
	we get
	\begin{align*}
		 &
		\int_{\RR^{|I|}}
		\ind
		\left\{
		c_{\mathfrak{m}}z_{\mathfrak{m}-1}
		\ > \
		\frac{
			u_n \cdot \mathfrak{a}\cdot s_2
		}{j+1}
		\qquad\text{and}\qquad
		c_{\mathfrak{n}}z_{\mathfrak{n}-1}
		<
		\frac{
			u_n \cdot \mathfrak{a}\cdot s_1
		}{j+1}
		\right\}
		\\ & \qquad\times\
		  \ind
		  \left\{
		  \sum_{m\in I}
		c_{m}
		z_{m-1}
		\in u_n \cdot \mathfrak{a} \cdot [s_1,s_2]
		\right\}
		  \cdot
		  \prod_{\substack{m\in I}}
		g_{\gamma}
		\left(
		  z_{m-1}
		  \right)
		\,\mathrm{d}\mathbf{z}_{}
		\\ &
		  \ \lesssim \
		  \int_{\RR^{|I|}}
		\ind
		  \left\{
		  c_{\mathfrak{m}}z_{\mathfrak{m}-1}
		\ > \
		  \frac{
			  u_n \cdot \mathfrak{a}\cdot s_2
			  }{j+1}
		\qquad\text{and}\qquad
		  c_{\mathfrak{n}}z_{\mathfrak{n}-1}
		<
		  \frac{
			  u_n \cdot \mathfrak{a}\cdot s_1
			  }{j+1}
		\right\}
		\\ & \qquad\times\
		  \ind
		  \left\{
		  \sum_{m\in I}
		c_{m}
		z_{m-1}
		\in u_n \cdot \mathfrak{a} \cdot [s_1,s_2]
		\right\}
		  g_{\gamma}(c_{\mathfrak{m}}z_{\mathfrak{m}-1})
		  \cdot
		\prod_{\substack{m\in I \\m\neq \mathfrak{m}}}
		g_{\gamma}
		\left(
		z_{m-1}
		\right)
		\,\mathrm{d}\mathbf{z}_{}
		\\ &
		  \ = \
		  \int_{\RR^{|I|}}
		\ind
		  \left\{
		  c_{\mathfrak{m}}z_{\mathfrak{m}-1}
		\ > \
		  \frac{
			  u_n \cdot \mathfrak{a}\cdot s_2
			  }{j+1}
		\qquad\text{and}\qquad
		  c_{\mathfrak{n}}z_{\mathfrak{n}-1}
		<
		  \frac{
			  u_n \cdot \mathfrak{a}\cdot s_1
			  }{j+1}
		\right\}
		\\ & \qquad\times\
		  \ind
		  \left\{
		\sum_{\substack{m\in I  \\ m\neq \mathfrak{n}}}
		c_{m}
		z_{m-1}
		\in u_n \cdot \mathfrak{a} \cdot [s_1,s_2]
		\right\}
		g_{\gamma}
		\left(
		c_{\mathfrak{m}}z_{\mathfrak{m}-1}
		\ - \
		c_{\mathfrak{n}}z_{\mathfrak{n}-1}
		\right)
		\cdot
		\prod_{\substack{m\in I \\ m\neq \mathfrak{m}}}
		g_{\gamma}
		\left(
		z_{m-1}
		\right)
		\,\mathrm{d}\mathbf{z}_{}
		\\ &
		  \ \le \
		  \int_{\RR^{|I|}}
		\ind
		  \left\{
		\sum_{\substack{m\in I  \\ m\neq \mathfrak{n}}}
		c_{m}
		z_{m-1}
		\in u_n \cdot \mathfrak{a} \cdot [s_1,s_2]
		\right\}
		g_{\gamma}
		\left(
		c_{\mathfrak{m}}z_{\mathfrak{m}-1}
		\ - \
		\frac{u_n \cdot \mathfrak{a}}{j+1}s_1
		\right)
		\cdot
		\prod_{\substack{m\in I \\ m\neq \mathfrak{m}}}
		g_{\gamma}
		\left(
		z_{m-1}
		\right)
		\,\mathrm{d}\mathbf{z}_{}
		\,,
	\end{align*}
	where the multiplicative constant in $\lesssim$ depends only on $c_{\mathfrak{m}}$ and $\gamma$.
	By the change of variables
	\begin{align*}
		z_{\mathfrak{m}-1}
		\ \mapsto\
		z_{\mathfrak{m}-1}
		\ + \
		\frac{1}{c_{\mathfrak{m}}}
		\frac{u_n \cdot \mathfrak{a}}{j+1}
		\cdot
		s_1
	\end{align*}
	and Lemma~\ref{lem:5.1}(iii) again, this equals
	\begin{align*}
		 &
		\int_{\RR^{|I|}}
		\ind
		\left\{
		\sum_{\substack{m\in I   \\m\neq \mathfrak{n}}}
		c_{m}
		z_{m-1}
		\in u_n \cdot \mathfrak{a} \cdot
		\left[
			s_{1}
			-
			\frac{s_1}{j+1}
			\,,
			s_{2}
			-
			\frac{s_1}{j+1}
			\right]
		\right\}
		g_{\gamma}(c_{\mathfrak{m}}z_{\mathfrak{m}-1})
		\prod_{\substack{m\in I  \\ m\neq \mathfrak{m}}}
		g_{\gamma}
		\left(
		z_{m-1}
		\right)
		\,\mathrm{d}\mathbf{z}_{}
		\\ &
		  \ \lesssim \
		  \int_{\RR^{|I|}}
		\ind
		  \left\{
		\sum_{\substack{m\in I   \\m\neq \mathfrak{n}}}
		c_{m}
		z_{m-1}
		\in u_n \cdot \mathfrak{a} \cdot
		\left[
			s_{1}
			-
			\frac{s_1}{j+1}
			\,,
			s_{2}
			-
			\frac{s_1}{j+1}
			\right]
		\right\}
		\prod_{m\in I}
		g_{\gamma}
		\left(
		z_{m-1}
		\right) \,\mathrm{d}\mathbf{z}_{}
		\\ &
		  \ \lesssim \
		  \norm{g_{\gamma}}_{L^1(\RR)}
		\int_{\RR^{|I|-1}}
		\ind
		  \left\{
		\sum_{\substack{m\in I   \\m\neq \mathfrak{n}}}
		c_{m}
		z_{m-1}
		\in u_n \cdot \mathfrak{a} \cdot
		\left[
			s_{1}
			-
			\frac{s_1}{j+1}
			\,,
			s_{2}
			-
			\frac{s_1}{j+1}
			\right]
		\right\}
		\prod_{{\substack{m\in I \\ m\neq \mathfrak{n}}}}
		g_{\gamma}
		\left(
		z_{m-1}
		\right) \,\mathrm{d}\mathbf{z}_{}
		\\ &
		  \ \lesssim \
		  u_n^{-\gamma}
		\left( \left(
			  s_{2}
			  -
			  \frac{s_1}{j+1}
			\ - \
			  \left(
			  s_{1}
			  -
			  \frac{s_1}{j+1}
			  \right)
			\right) \wedge 1 \right)
		\\ &
		  \ = \
		  u_n^{-\gamma}\cdot ((s_2-s_1) \wedge 1)
		  \,,
	\end{align*}
	where the second to last inequality follows from \eqref{eq:g_sandwitch}, and
	the last inequality follows from $|I\setminus \{\mathfrak{n}\}|=j$ and \eqref{eq:ind_asu} with
	\begin{align*}
		\widetilde{s}_{1}
		\ = \
		s_1
		\ - \
		\frac{s_1}{j+1}
		\qquad\text{and}\qquad
		\widetilde{s}_{2}
		\ = \
		s_2
		\ - \
		\frac{s_1}{j+1}
		\,.
	\end{align*}
	The multiplicative constant in $\lesssim$ depends only on $T$, $(c_m)$, $\mathfrak{a},$ $\gamma$,
	and $\underline{s}$.
	Applying mathematical induction finishes the proof.
\end{proof}
Next we provide the variants of the bounds needed for the subsequent analysis.
\begin{lemma}[Separable product bound II]
	\label{lem:comprehensive}
	Let $\emptyset\neq I\subset \{1,\ldots,T\}$, $(c_m)_{m\in I}\subset \RR$, $\mathfrak{a}\in(0,\infty)$, $\gamma\in(0,\alpha - 1)$,
	and let $\varphi,\psi:\{1,\ldots,T\}\to\RR$ and $\mathfrak{x}\in\RR$. Then
	\begin{enumerate}
		\item
		      \begin{align*}
			       &
			      \left(
			      1\land |\mathfrak{x}|
			      \right)
			      \int_{\RR^{|I|}}
			      \ind
			      \left\{
			      \left| \sum_{m\in I}
			      c_{m}z_{m-1} \right|
			      \in u_n \cdot \mathfrak{a} \cdot [s_1,s_2]
			      \right\}
			      \cdot
			      \prod_{m\in I}
			      g_{\gamma}
			      \left(
			      z_{m-1}+
			      \varphi(m)+
			      \psi(m)
			      \right)
			      \,\mathrm{d}\mathbf{z}_{}
			      \\ &
			        \ \lesssim \
			        u_n^{-\gamma}\cdot ((s_2-s_1)\land 1)
			        \cdot
			        \left(
			        1
			        +
			        \left|
			        \sum_{m\in I}
			      c_m
			        \varphi(m)
			      \right|^{1+\gamma}
			        \right)
			      \cdot
			        \left(
			        |\mathfrak{x}|+
			        \left|
			        \sum_{m\in I}
			      c_m \psi(m)
			      \right|^{1+\gamma}
			        \right)
			      \,,
		      \end{align*}
		\item
		      \begin{align*}
			       &
			      \int_{\mathfrak{b}\land \mathfrak{c}}^{\mathfrak{b}\lor\mathfrak{c}}
			      \left(
			      1 \land |s\cdot \mathfrak{x} |
			      \right)
			      \left(
			      \int_{\RR^{|I|}}
			      \ind
			      \left\{
			      \left| \sum_{m\in I}
			      c_{m}z_{m-1} \right|
			      \in u_n \cdot \mathfrak{a} \cdot [s_1,s_2]
			      \right\}
			      \cdot
			      \prod_{m\in I}
			      g_{\gamma}
			      \left(
			      z_{m-1}+
			      \varphi(m)
			      +
			      s\cdot\psi(m)
			      \right)
			      \,\mathrm{d}\mathbf{z}_{}
			      \right)
			      \,\mathrm{d}s
			      \\ &
			        \ \lesssim \
			        u_n^{-\gamma}\cdot ((s_2-s_1)\land 1)
			      \\ &
			        \qquad \times\
			        \left(
			        \left|
			        \sum_{m\in I}
			      c_m
			        \psi(m)
			      \right|^{\gamma}
			        \ + \
			        |\mathfrak{x}|^{\gamma}
			        \right)
			      \ \cdot \
			        \left(
			        1 +
			        \left|
			        \sum_{m\in I}
			      c_m
			        \varphi(m)
			      \right|^{1+\gamma}
			        \right)
			      \cdot
			        \left(
			        |\mathfrak{b}|^{1+\gamma}
			        +
			        |\mathfrak{c}|^{1+\gamma}
			        \right)
		      \end{align*}
	\end{enumerate}
	where $\lesssim$ means inequality up to a multiplicative constant that depends only on $T$, $(c_m)$, $\mathfrak{a}$, $\gamma$, and $\underline{s}$, and is independent of $\varphi$, $\psi$, $\mathfrak{x}$, $n$, $s_1$ and $s_2$.
\end{lemma}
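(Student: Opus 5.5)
The plan is to reduce both statements to Lemma~\ref{lem:gist_separation} by removing the shifts $\varphi(m)+\psi(m)$ (respectively $\varphi(m)+s\psi(m)$) from the arguments of $g_\gamma$. The obstruction is that the shifts are coordinatewise, while the indicator depends only on the linear form $\sum_{m\in I}c_m z_{m-1}$.

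The first step is the change of variables $z_{m-1}\mapsto z_{m-1}-\varphi(m)-\psi(m)$ for each $m\in I$. This is a translation, so it has unit Jacobian. It turns the integrand into
\begin{align*}
\ind\Bigl\{\Bigl|\textstyle\sum_{m\in I}c_m z_{m-1}-\Phi-\Psi\Bigr|\in u_n\mathfrak{a}[s_1,s_2]\Bigr\}\prod_{m\in I}g_\gamma(z_{m-1}),
\end{align*}
where $\Phi:=\sum_{m\in I} c_m\varphi(m)$ and $\Psi:=\sum_{m\in I}c_m\psi(m)$. The shift is now a single scalar $\Phi+\Psi$ inside the indicator. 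Next I reduce to one variable. Pick $\mathfrak{m}\in I$ with $c_{\mathfrak{m}}\neq0$; if all $c_m=0$, the indicator is a constant, and I handle that degenerate case separately. Substituting $z_{\mathfrak{m}-1}$ so that $w=\sum_m c_m z_{m-1}$ leaves the law of the sum under the product density. That law has density $p=g$-convolution, and by Lemma~\ref{lem:5.1}(ii)/(iii) and an induction as in Lemma~\ref{lem:gist_separation}, $p(w)\lesssim g_\gamma(w)$. The quantity to bound is then
\begin{align*}
\int \ind\{|w-\Phi-\Psi|\in u_n\mathfrak{a}[s_1,s_2]\}\,g_\gamma(w)\,\mathrm{d}w .
\end{align*}
Shifting back and applying Lemma~\ref{lem:5.1}(i) gives $g_\gamma(w+\Phi+\Psi)\lesssim g_\gamma(w)(1\lor|\Phi+\Psi|)^{1+\gamma}$. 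The case $|I|=1$ of Lemma~\ref{lem:gist_separation} then yields $u_n^{-\gamma}((s_2-s_1)\land1)(1+|\Phi|^{1+\gamma})(1+|\Psi|^{1+\gamma})$, using $(1\lor|a+b|)^{1+\gamma}\lesssim(1+|a|^{1+\gamma})(1+|b|^{1+\gamma})$.

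For (i) this is not yet enough: the claimed bound carries the factor $(|\mathfrak{x}|+|\Psi|^{1+\gamma})$ in place of $(1+|\Psi|^{1+\gamma})$, and it must be extracted from the prefactor $1\land|\mathfrak{x}|$. I would split according to whether $|\Psi|\ge1$ or $|\Psi|<1$. If $|\Psi|\ge 1$, then $(1\land|\mathfrak{x}|)(1+|\Psi|^{1+\gamma})\le 2|\Psi|^{1+\gamma}$. If $|\Psi|<1$, then $(1\land|\mathfrak{x}|)(1\lor|\Phi+\Psi|)^{1+\gamma}\lesssim|\mathfrak{x}|(1+|\Phi|^{1+\gamma})$. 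In both cases the bound is at most the asserted right-hand side.

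For (ii), the first steps are carried out for each fixed $s$, now with $\Psi$ replaced by $s\Psi$. Using the one-dimensional representation and Fubini, the $s$-integral becomes
\begin{align*}
\int \ind\{|w|\in u_n\mathfrak{a}[s_1,s_2]\}\left(\int_{\mathfrak{b}\land\mathfrak{c}}^{\mathfrak{b}\lor\mathfrak{c}}(1\land|s\mathfrak{x}|)\,g_\gamma(w+\Phi+s\Psi)\,\mathrm{d}s\right)\mathrm{d}w .
\end{align*}
The inner integral is of exactly the form in Lemma~\ref{lem:5.1}(iv), with $\mathfrak{z}=w+\Phi$ and $\mathfrak{y}=\Psi$. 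It is therefore bounded by $(|\Psi|^\gamma+|\mathfrak{x}|^\gamma)\,g_\gamma(w+\Phi)\,(|\mathfrak{b}|^{1+\gamma}+|\mathfrak{c}|^{1+\gamma})$. Then Lemma~\ref{lem:5.1}(i) gives $g_\gamma(w+\Phi)\lesssim g_\gamma(w)(1+|\Phi|^{1+\gamma})$, and the one-variable case of Lemma~\ref{lem:gist_separation} finishes the argument.

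The main obstacle I expect is the density bound $p\lesssim g_\gamma$ for the law of $\sum_m c_m z_{m-1}$ under $\prod g_\gamma$. If that step proves delicate, I would avoid it. The fallback is to rerun the inductive decomposition in the proof of Lemma~\ref{lem:gist_separation} with the indicator interval shifted by the scalar $\Phi+\Psi$. Every step there is translation-invariant in the interval except the final one-dimensional estimate, and that estimate is where Lemma~\ref{lem:5.1}(i) and, for (ii), Lemma~\ref{lem:5.1}(iv) enter.
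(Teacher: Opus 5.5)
Your proof is correct, but after the common first translation $z_{m-1}\mapsto z_{m-1}-\varphi(m)-\psi(m)$ it takes a different route from the paper. Write $\Phi=\sum_{m\in I}c_m\varphi(m)$ and $\Psi=\sum_{m\in I}c_m\psi(m)$ as you do. The paper stays in $\RR^{|I|}$. After discarding the coordinates with $c_m=0$, it fixes $\mathfrak{m}\in I$ and translates once more, $z_{\mathfrak{m}-1}\mapsto z_{\mathfrak{m}-1}+(\Phi+\Psi)/c_{\mathfrak{m}}$. This restores the unshifted indicator $\ind\{|\sum_{m\in I}c_mz_{m-1}|\in u_n\mathfrak{a}[s_1,s_2]\}$ and puts the whole scalar shift onto the single factor $g_\gamma(z_{\mathfrak{m}-1}+(\Phi+\Psi)/c_{\mathfrak{m}})$. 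Lemma~\ref{lem:5.1}(i) is then applied to that one factor. For (ii), Lemma~\ref{lem:5.1}(iv) is applied first, with $\mathfrak{y}=\Psi/c_{\mathfrak{m}}$ and $\mathfrak{z}=z_{\mathfrak{m}-1}+\Phi/c_{\mathfrak{m}}$, and then (i). The full multi-dimensional Lemma~\ref{lem:gist_separation} concludes. In (i) the prefactor is absorbed via $(1\land|\mathfrak{x}|)(1+|\Psi|^{1+\gamma})\le|\mathfrak{x}|+|\Psi|^{1+\gamma}$, so your case split on $|\Psi|$ is not needed. This one-coordinate translation is the simple answer to the ``obstruction'' you describe, and it is the natural fallback. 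Rerunning the induction of Lemma~\ref{lem:gist_separation} with a shifted interval would require restating its induction hypothesis for shifted intervals.

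Your route instead pushes the product measure forward to the density $p$ of the linear form and uses $p\lesssim g_\gamma$. After that you need only the case $|I|=1$ of Lemma~\ref{lem:gist_separation}, so your route shows that the inductive part of that lemma can be bypassed. The price is the convolution estimate, and your justification of it does not work as stated:
\begin{itemize}
\item Lemma~\ref{lem:5.1}(ii) has a constant that grows with the length of the interval.
\item Lemma~\ref{lem:5.1}(i) only gives $g_\gamma(z-w)g_\gamma(w)\lesssim g_\gamma(z)$ uniformly in $w$, and this bound is not integrable in $w$.
\end{itemize}
The correct elementary argument splits at $|w|=|z|/2$. Where $|w|\le|z|/2$, monotonicity and Lemma~\ref{lem:5.1}(iii) give $g_\gamma(z-w)\le g_\gamma(z/2)\le 2^{1+\gamma}g_\gamma(z)$. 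Where $|w|>|z|/2$, the same bound holds for $g_\gamma(w)$. In each region the other factor integrates to at most $\norm{g_\gamma}_{L^1(\RR)}$. Combine this with Lemma~\ref{lem:5.1}(iii) for the scalings by $c_m$ and with induction over the nonzero $c_m$. This yields $p\lesssim g_\gamma$ with a constant depending only on $T$, $(c_m)$ and $\gamma$, which is what you need. The degenerate case is trivial: if all $c_m=0$ the integrand vanishes, because $0\notin u_n\mathfrak{a}[s_1,s_2]$.
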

\begin{proof}
	If $c_m=0$ for all $m\in I$,
	\begin{align*}
		\ind
		\left\{
		\left| \sum_{m\in I}
		c_{m}z_{m-1} \right|
		\in u_n \cdot \mathfrak{a}\cdot [s_1,s_2]
		\right\}
		\ = \ 0
		\,,
	\end{align*}
	so there is nothing to show. Therefore we assume $|\{m\in I\mid c_m \neq 0\}|>0$. Since
	\begin{align*}
		\sum_{m\in I}c_mz_{m-1}
		\
		=
		\
		\sum_{\substack{m\in I \\ c_m \neq 0}} c_m z_{m-1}
		\,,
	\end{align*}
	and
	\begin{align*}
		\int_{\RR^{|\{m\in I\mid c_m =0\}|}}
		\prod_{\substack{m\in I \\ c_m = 0}}
		g_{\gamma}(z_{m-1}+\varphi(m))
		\,\mathrm{d}\mathbf{z}_{}
		\ = \
		\norm{g_{\gamma}}_{L^1(\RR)}^{
			|\{m\in I\mid c_m =0\}|
		}
		\ < \infty\,,
	\end{align*}
	we may assume without loss of generality that $c_m\neq 0$ for all $m\in I$.
	\\
	\textbf{Proof of (i):}
	For all $m\in I$ we make the change of variables
	\begin{align*}
		z_{m-1}
		\ \mapsto\
		z_{m-1}
		-(\varphi(m)+\psi(m))
	\end{align*}
	to get
	\begin{align*}
		 &
		\int_{\RR^{|I|}}
		\ind
		\left\{
		\left| \sum_{m\in I}
		c_{m}z_{m-1} \right|
		\in u_n \cdot \mathfrak{a} \cdot [s_1,s_2]
		\right\}
		\cdot
		\prod_{m\in I}
		g_{\gamma}
		\left(
		z_{m-1}+
		\varphi(m)
		+\psi(m)
		\right)
		\,\mathrm{d}\mathbf{z}_{}
		\\ &
		  \ = \
		  \int_{\RR^{|I|}}
		\ind
		  \left\{
		  \left| \sum_{m\in I}
		c_{m}
		\left(
		  z_{m-1}
		  -
		  \varphi(m)
		-\psi(m)
		\right) \right|
		  \in u_n \cdot \mathfrak{a} \cdot [s_1,s_2]
		\right\}
		  \cdot
		  \prod_{m\in I}
		g_{\gamma}
		\left(
		  z_{m-1}
		  \right)
		\,\mathrm{d}\mathbf{z}_{}
		\,.
	\end{align*}
	Next we invoke Lemma~\ref{lem:5.1}(i) to separate the $\mathbf{z}{}$ variable from $\varphi$ and $\psi$ in $g_\gamma$. We cannot do this separately for each $m\in I$, because this would yield a product that is unsuitable for the subsequent analysis. We therefore fix $\mathfrak{m}\in I$, shift all the relevant terms to the corresponding $g_\gamma$, and invoke Lemma~\ref{lem:5.1}(i) once.
	Making the change of variables
	\begin{align*}
		z_{\mathfrak{m}-1}
		\ \mapsto\
		z_{\mathfrak{m}-1}
		\ + \
		\frac{1}{c_{\mathfrak{m}}}
		\sum_{m \in I}
		c_m
		\left(
		\varphi(m)
		+\psi(m)
		\right)
	\end{align*}
	this equals
	\begin{align*}
		 &
		\int_{\RR^{|I|}}
		\ind
		\left\{
		\left| \sum_{m\in I}
		c_{m}z_{m-1} \right|
		\in u_n \cdot \mathfrak{a} \cdot [s_1,s_2]
		\right\}
		\cdot
		\prod_{\substack{m\in I \\ m\neq \mathfrak{m}}}
		g_{\gamma}
		\left(
		z_{m-1}
		\right)
		\cdot
		g_{\gamma}
		\left(
		z_{\mathfrak{m}-1}
		\ + \
		\frac{1}{
			c_{\mathfrak{m}}
		}
		\sum_{m\in I}
		c_m 
    \left( \varphi(m)
		\ + \
		\psi(m)
		\right) 
    \right)
		\,\mathrm{d}\mathbf{z}_{}
		\,.
	\end{align*}
	Applying Lemma~\ref{lem:5.1}(i) twice,
	\begin{align*}
		 &
		g_{\gamma}
		\left(
		z_{\mathfrak{m}-1}
		\ + \
		\frac{1}{
			c_{\mathfrak{m}}
		}
		\sum_{m\in I}
		c_m
		\varphi(m)
		\ + \
		\frac{1}{
			c_{\mathfrak{m}}
		}
		\sum_{m\in I}
		c_m
		\psi(m)
		\right)
		\\ &
		  \ \lesssim \
		  g_{\gamma}(z_{\mathfrak{m}-1})
		  \left(
		  1
		  \ + \
		  \left|
		  \frac{1}{c_{\mathfrak{m}}}
		  \sum_{m\in I}
		c_m
		  \varphi(m)
		\right|^{1+\gamma}
		  \right)
		\left(
		  1
		  \ + \
		  \left|
		  \frac{1}{c_{\mathfrak{m}}}
		  \sum_{m\in I}
		c_m
		  \psi(m)
		\right|
		  ^{1+\gamma}
		  \right)
		\\ &
		  \ \lesssim \
		  g_{\gamma}(z_{\mathfrak{m}-1})
		  \left(
		  1
		  \ + \
		  \left|
		  \sum_{m\in I}
		c_m
		  \varphi(m)
		\right|^{1+\gamma}
		  \right)
		\left(
		  1
		  \ + \
		  \left|
		  \sum_{m\in I}
		c_m
		  \psi(m)
		\right|^{1+\gamma}
		  \right)
		\,,
	\end{align*}
	where the multiplicative constant in $\lesssim$ depends only on $\gamma$.
	The inequality
	\begin{align*}
		\left(1\land|\mathfrak{x}|
		\right)
		\cdot
		\left(
		1
		\ + \
		\left|
		\sum_{m\in I}
		c_m
		\psi(m)
		\right|^{1+\gamma}
		\right)
		\ \le \
		\left(
		|\mathfrak{x}|
		\ + \
		\left|
		\sum_{m\in I}
		c_m
		\psi(m)
		\right|^{1+\gamma}
		\right)
		\,,
	\end{align*}
	together with Lemma~\ref{lem:gist_separation} yields the claimed inequality.
	\\
	\textbf{Proof of (ii):}
	For all $m\in I$ we make the change of variables
	\begin{align*}
		z_{m-1}
		\ \mapsto\
		z_{m-1}
		-(\varphi(m)+s\cdot \psi(m))
	\end{align*}
	to get
	\begin{align*}
		 &
		\int_{\RR^{|I|}}
		\ind
		\left\{
		\left| \sum_{m\in I}
		c_{m}z_{m-1} \right|
		\in u_n \cdot \mathfrak{a} \cdot [s_1,s_2]
		\right\}
		\cdot
		\prod_{m\in I}
		g_{\gamma}
		\left(
		z_{m-1}+
		\varphi(m)
		+s\cdot \psi(m)
		\right)
		\,\mathrm{d}\mathbf{z}_{}
		\\ &
		  \ = \
		  \int_{\RR^{|I|}}
		\ind
		  \left\{
		  \left| \sum_{m\in I}
		c_{m}
		\left(
		  z_{m-1}
		  -
		  \varphi(m)
		-s\cdot\psi(m)
		\right) \right|
		  \in u_n \cdot \mathfrak{a} \cdot [s_1,s_2]
		\right\}
		  \cdot
		  \prod_{m\in I}
		g_{\gamma}
		\left(
		  z_{m-1}
		  \right)
		\,\mathrm{d}\mathbf{z}_{}
		\,.
	\end{align*}
	Fix $\mathfrak{m}\in I$. Making the change of variables
	\begin{align*}
		z_{\mathfrak{m}-1}
		\ \mapsto\
		z_{\mathfrak{m}-1}
		\ + \
		\frac{1}{c_{\mathfrak{m}}}
		\sum_{m \in I}
		c_m
		\left(
		\varphi(m)
		+s\cdot \psi(m)
		\right)
	\end{align*}
	this equals
	\begin{align*}
		 &
		\int_{\RR^{|I|}}
		\ind
		\left\{
		\left| \sum_{m\in I}
		c_{m}z_{m-1} \right|
		\in u_n \cdot \mathfrak{a} \cdot [s_1,s_2]
		\right\}
		\cdot
		\prod_{\substack{m\in I \\ m\neq \mathfrak{m}}}
		g_{\gamma}
		\left(
		z_{m-1}
		\right)
		\cdot
		g_{\gamma}
		\left(
		z_{\mathfrak{m}-1}
		\ + \
		\frac{1}{
			c_{\mathfrak{m}}
		}
		\sum_{m\in I}
		c_m
    \left(
		\varphi(m)
		\ + \
	s
		\psi(m)
    \right)
		\right)
		\,\mathrm{d}\mathbf{z}_{}
		\,.
	\end{align*}

	By Lemma~\ref{lem:5.1}(iv),
	with
	\begin{align*}
		\mathfrak{x}
		= \mathfrak{x}
		\,,
		\qquad \mathfrak{y}
		\ = \
		\frac{1}{c_{\mathfrak{m}}}
		\sum_{m\in I}
		c_m \psi(m)
		\,,
		\qquad
		\mathfrak{z}
		\ = \
		z_{\mathfrak{m}-1}
		\ + \
		\frac{1}{c_{\mathfrak{m}}}
		\sum_{m\in I}
		c_m
		\varphi(m)
		\,,
	\end{align*}
	and then
	Lemma~\ref{lem:5.1}(i),
	\begin{align*}
		 &
		\int_{\mathfrak{b}\land \mathfrak{c}}^{\mathfrak{b}\lor \mathfrak{c}}
		(1\land |s\cdot \mathfrak{x}|)
		\cdot
		g_{\gamma}
		\left(
		z_{\mathfrak{m}-1}
		\ + \
		\frac{1}{
			c_{\mathfrak{m}}
		}
		\sum_{m\in I}
		c_m
		\left(
		\varphi(m)
		+
		s\cdot \psi(m)
		\right)
		\right) \,\mathrm{d}s
		\\ &
		  \ \lesssim \
		  \left(
		  \left|
		  \sum_{m\in I}
		c_m
		  \psi(m)
		\right|^{\gamma}
		  \ + \
		  |\mathfrak{x}|^{\gamma}
		  \right)
		\ \cdot \
		  \left(
		  1 +
		  \left|
		  \sum_{m\in I}
		c_m
		  \varphi(m)
		\right|^{1+\gamma}
		  \right)
		\cdot
		  \left(
		  |\mathfrak{b}|^{1+\gamma}
		  +
		  |\mathfrak{c}|^{1+\gamma}
		  \right)
		\cdot g_{\gamma}(z_{\mathfrak{m}-1})
		  \,,
	\end{align*}
	where the multiplicative constant in $\lesssim$ depends only on $\gamma$.
	Applying Lemma~\ref{lem:gist_separation} yields the result.
\end{proof}

The next lemma flushes the inactive variables out of the analysis, while keeping enough degrees of freedom for the subsequent analysis.
\begin{lemma}[Flushing inactive variables]
	\label{lem:flush_inactive}
	Let $\gamma\in (0,\infty)$ and
	$\mathfrak{y}_1,\ldots,\mathfrak{y}_T\in \RR$. Then
	\begin{align*}
		 &
		\int_{\RR^T}
		G_{n}(\mathbf{A}_{T}\cdot\mathbf{z}_{},s_1,s_2,A)
		\prod_{m=1}^T g_{\gamma}(z_{m-1}+\mathfrak{y}_m) \,\mathrm{d}\mathbf{z}_{}
		\\ &
		  \ \lesssim \
		  \sum_{j=1}^T
		  \int_{\RR^{j}}
		\ind
		  \left\{
		  \left|
		  \sum_{m=1}^j
		  a_{j-m} z_{m-1}
		\right|
		  \in u_n \cdot \mathfrak{a}_j\cdot [s_1,s_2]
		\right\}
		  \prod_{m=1}^j
		  g_{\gamma}(z_{m-1}+\mathfrak{y}_m)
		  \,\mathrm{d}\mathbf{z}_{}
		\,,
	\end{align*}
	where $\lesssim$ means inequality up to a multiplicative constant that depends only on $T$ and $\gamma$.
\end{lemma}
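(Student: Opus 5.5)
The plan is to start from the pointwise bound \eqref{lem:two_sets}, evaluated at $\mathbf{A}_T\cdot\mathbf{z}$. Since $\mathbf{A}_T$ is lower triangular with unit diagonal, the $(j-1)$-th coordinate of $\mathbf{A}_T\mathbf{z}$ is $\sum_{m=1}^j a_{j-m}z_{m-1}$. This gives
\begin{align*}
G_n(\mathbf{A}_T\mathbf{z},s_1,s_2,A)
\ \le\
\sum_{j=1}^T
\ind\left\{
\left|\sum_{m=1}^j a_{j-m}z_{m-1}\right|
\in u_n\cdot\mathfrak{a}_j\cdot[s_1,s_2]
\right\}\,,
\end{align*}
which is exactly the display at the start of this section. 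We multiply this inequality by the nonnegative weight $\prod_{m=1}^T g_\gamma(z_{m-1}+\mathfrak{y}_m)$ and integrate over $\RR^T$. Tonelli's theorem, applied to a finite sum of nonnegative functions, lets us interchange the sum over $j$ and the integral.

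Fix $j$. The $j$-th indicator depends only on $z_0,\ldots,z_{j-1}$, so the variables $z_j,\ldots,z_{T-1}$ are inactive. Using Fubini, we integrate them out first. Each contributes
\begin{align*}
\int_{\RR} g_\gamma(z_{m-1}+\mathfrak{y}_m)\,\mathrm{d}z_{m-1}
\ =\ \norm{g_\gamma}_{L^1(\RR)}\,,
\end{align*}
by translation invariance of Lebesgue measure. This quantity is finite for every $\gamma>0$, because $g_\gamma(x)=(1+|x|)^{-1-\gamma}$. What remains for index $j$ is $\norm{g_\gamma}_{L^1(\RR)}^{T-j}$ times the $j$-dimensional integral in the statement.

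Summing over $j$ then gives the claim with constant $\max_{j}\norm{g_\gamma}_{L^1(\RR)}^{T-j}$. This constant depends only on $T$ and $\gamma$, and in particular not on the shifts $\mathfrak{y}_m$, on $n$, or on $s_1,s_2$. The argument contains no genuine obstacle. The only point to check is that the shifts $\mathfrak{y}_m$ of the flushed variables do not enter the constant, and translation invariance handles this. The degrees of freedom $z_0,\ldots,z_{j-1}$, together with their shifts, are kept intact, so that Lemma~\ref{lem:comprehensive} can be applied afterwards with $I=\{1,\ldots,j\}$ and $c_m=a_{j-m}$.
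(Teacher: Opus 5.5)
Your proposal is correct and follows the paper's proof: both apply the pointwise bound \eqref{lem:two_sets} at $\mathbf{A}_T\mathbf{z}$, whose $j$-th coordinate is $\sum_{m=1}^j a_{j-m}z_{m-1}$. Both then integrate out the inactive coordinates $z_j,\ldots,z_{T-1}$ by a translation, so each contributes $\norm{g_\gamma}_{L^1(\RR)}$ and the resulting constant is independent of the shifts $\mathfrak{y}_m$.
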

\begin{proof}
	By \eqref{lem:two_sets},
	\begin{align*}
		G_{n}(\mathbf{A}_{T}\cdot\mathbf{z}_{},s_1,s_2,A)
		\ \le \
		\sum_{j=1}^T
		\ind
		\left\{
		\left|
		\sum_{m=1}^j
		a_{j-m} z_{m-1}
		\right|
		\in u_n \cdot \mathfrak{a}_j\cdot [s_1,s_2]
		\right\}
		\,,
	\end{align*}
	so that
	\begin{align*}
		 &
		\int_{\RR^T}
		G_{n}(\mathbf{A}_{T}\cdot\mathbf{z}_{},s_1,s_2,A)
		\prod_{m=1}^T
		g_{\gamma}(z_{m-1}+\mathfrak{y}_m)
		\,\mathrm{d}\mathbf{z}_{}
		\\ &
		  \ \le \
		  \sum_{j=1}^T
		  \left(
		  \int_{\RR^{j}}
		\ind
		  \left\{
		  \left|
		  \sum_{m=1}^j
		  a_{j-m} z_{m-1}
		\right|
		  \in u_n \cdot \mathfrak{a}_j\cdot [s_1,s_2]
		\right\}
		  \prod_{m=1}^j
		  g_{\gamma}(z_{m-1}+\mathfrak{y}_m)
		  \,\mathrm{d}\mathbf{z}_{}
		\right.
		\\ &
		  \left.
		  \qquad\times \
		  \int_{\RR^{T-j}}
		\prod_{m=j+1}^T
		  g_{\gamma}(z_{m-1}+\mathfrak{y}_m)
		  \,\mathrm{d}\mathbf{z}_{}
		\right)
		  \,.
	\end{align*}
	For $m>j$, we make the changes of variables
	\begin{align*}
		z_{m-1}
		\ \mapsto\ z_{m-1} - \mathfrak{y}_m
		\,,
	\end{align*}
	so that
	\begin{align*}
		\int_{\RR^{T-j}}
		\prod_{m=j+1}^T
		g_{\gamma}(z_{m-1}+\mathfrak{y}_m)
		\,\mathrm{d}\mathbf{z}_{}
		\ = \
		\left(
		\norm{g_{\gamma}}_{L^1(\RR)}
		\right)^{T-j}
		\ < \ \infty
		\,.
	\end{align*}
	This gives the claimed upper bound.
\end{proof}

We finish the section with a lemma that bounds two single probabilistic terms
which will appear in the subsequent analysis. Since the bounds developed above
are analytic in nature, the lemma also illustrates how they apply to the probabilistic problem at hand.
\begin{lemma}[Separable product bound III]
	\label{lem:single}
	Let Assumptions~\ref{asu:f} and~\ref{asu:coef}
	hold, let $T\ge 1$, let $\gamma\in(0,\alpha - 1)$.
	Then,
	for $n\in\NN$, we have the following upper bounds:
	\begin{enumerate}
		\item
		      \begin{align*}
			      \norm{
				      \nabla G_{\infty,n}(\mathbf{0}_{T},
				      s_1,s_2
				      ,A
				      )
			      }_{\infty}
			      \ \lesssim \
			      u_n^{-\gamma}\cdot ((s_2-s_1)\land 1)
			      \,,
		      \end{align*}
		\item
		      \begin{align*}
			      \PP[[X_{\ell}]_\ell \in u_n \cdot ((s_1\cdot A) \setminus (s_2 \cdot A))]
			      \ \lesssim \
			      u_n^{-\gamma}\cdot
			      ((s_2-s_1)\land 1)
			      \,.
		      \end{align*}
	\end{enumerate}
	In both (i) and (ii)
	$\lesssim$ means inequality up to a multiplicative constant that
	only depends on $A$, $T$, $\underline{s}$,
	$\gamma$, the coefficients $(a_i)$, and the distribution of $\varepsilon$, and
	is independent of $s_1$, $s_2$, and $n$.
\end{lemma}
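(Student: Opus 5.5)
Both bounds reduce to the separable product bounds once the joint density is written in innovation coordinates. We start from the representations in Lemma~\ref{lem:multi_density} with $k=\infty$ and change variables $\mathbf{x}=\mathbf{A}_T\mathbf{z}$, which has unit Jacobian. Writing $\mathbf{W}:=\mathbf{A}_T^{-1}[X_\ell-X_{\ell,\ell}]_\ell$ for the random shift coming from the remote past, we get
\begin{align*}
\PP[[X_{\ell}]_\ell \in u_n \cdot ((s_1\cdot A) \setminus (s_2 \cdot A))]
\ = \
\EE\left[\int_{\RR^T} G_n(\mathbf{A}_T\mathbf{z},s_1,s_2,A)\prod_{m=1}^T f_\varepsilon(z_{m-1}-W_{m-1})\,\mathrm{d}\mathbf{z}\right].
\end{align*}
For (i) we use Lemma~\ref{lem:multi_swap}(i) and Lemma~\ref{lem:multi_density}(ii). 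Each component of $\nabla G_{\infty,n}(\mathbf{0}_T,s_1,s_2,A)$ is then a finite linear combination, with coefficients from $(\mathbf{A}_T^{-1})^\top$, of terms of the same form. In each such term exactly one factor $f_\varepsilon$ is replaced by $f'_\varepsilon$. By Assumption~\ref{asu:f}, both $f_\varepsilon$ and $|f'_\varepsilon|$ are bounded by a constant times $g_{\alpha-1}$, and hence by a constant times $g_\gamma$ for $\gamma<\alpha-1$. So in both cases it suffices to bound
\begin{align*}
\EE\left[\int_{\RR^T} G_n(\mathbf{A}_T\mathbf{z},s_1,s_2,A)\prod_{m=1}^T g_\gamma(z_{m-1}-W_{m-1})\,\mathrm{d}\mathbf{z}\right].
\end{align*}
The absolute value in (i) is moved inside the integral and the expectation.

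Next we apply Lemma~\ref{lem:flush_inactive} with $\mathfrak{y}_m=-W_{m-1}$, for fixed realisations of $\mathbf{W}$. This bounds the inner integral by a sum over $j\in\{1,\ldots,T\}$ of integrals over $\RR^j$. Each has the indicator $\{|\sum_{m\le j}a_{j-m}z_{m-1}|\in u_n\mathfrak{a}_j[s_1,s_2]\}$ and shifted weights $g_\gamma(z_{m-1}-W_{m-1})$. We then apply Lemma~\ref{lem:comprehensive}(i) with $I=\{1,\ldots,j\}$, $c_m=a_{j-m}$, $\varphi(m)=-W_{m-1}$ and $\psi\equiv0$, taking $\mathfrak{x}$ with $|\mathfrak{x}|\ge1$ so that the $\mathfrak{x}$-factors are harmless constants. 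Each such integral is then
\begin{align*}
\lesssim\ u_n^{-\gamma}((s_2-s_1)\land1)\left(1+\Big|\sum_{m\le j}a_{j-m}W_{m-1}\Big|^{1+\gamma}\right),
\end{align*}
with a constant independent of $n$, $s_1$, $s_2$ and $\mathbf{W}$.

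It remains to take expectations, and this is the only step that needs care: we must show that $\EE|\sum_{m\le j}a_{j-m}W_{m-1}|^{1+\gamma}<\infty$. This holds because $\sum_{m\le j}a_{j-m}W_{m-1}=\mathbf{e}_j^\top\mathbf{A}_T\mathbf{W}=X_{j-1}-X_{j-1,j-1}$, a tail sum of the linear process. Since $1+\gamma<\alpha$, Lemma~\ref{lem:fm} (or Lemma~\ref{lem:x0k}, after raising the exponent to lie in $(1/(1-d),\alpha)$ if needed and using Jensen) shows that it lies in $L^{1+\gamma}(\PP)$. Measurability and the use of Tonelli to interchange expectation and integration are routine, since all integrands are nonnegative after taking absolute values. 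Summing over $j$ and over the $T$ gradient components gives both (i) and (ii), with constants depending only on $A$ (through $\mathfrak{a}_j$), $T$, $\underline{s}$, $\gamma$, $(a_i)$ and the law of $\varepsilon$.
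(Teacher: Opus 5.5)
Your proof is correct. For (i) it is the paper's argument: Lemma~\ref{lem:multi_swap}(i) and Lemma~\ref{lem:multi_density}(ii), the unit-Jacobian change of variables $\mathbf{x}=\mathbf{A}_T\mathbf{z}$, domination by $\prod_m g_\gamma(z_{m-1}-W_{m-1})$, Lemma~\ref{lem:flush_inactive}, and Lemma~\ref{lem:comprehensive}(i) with $\psi\equiv 0$. Your explicit choice $|\mathfrak{x}|\ge 1$ (say $\mathfrak{x}=1$) is slightly more careful than the paper, which leaves $\mathfrak{x}$ implicit.

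For (ii) you take a different route. You push the joint density of $[X_\ell]_\ell$ through the same multivariate pipeline as in (i), whereas the paper never uses the joint law there. The paper instead:
\begin{enumerate}
\item applies the inclusion \eqref{lem:two_sets} directly;
\item uses stationarity to reduce each term to $\PP[|X_0|\in \mathfrak{a}_j u_n[s_1,s_2]]$;
\item writes $f_{X_0}(z)=\EE[f_\varepsilon(z-(X_0-X_{0,0}))]$ and pulls the shift out with Lemma~\ref{lem:5.1}(i);
\item finishes with the one-dimensional case $|I|=1$ of Lemma~\ref{lem:gist_separation}.
\end{enumerate}
Your version gains uniformity: one computation covers both parts, and the random shift is absorbed by the same $\varphi$ in Lemma~\ref{lem:comprehensive}(i). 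The paper's version gains economy: it shows that (ii) needs only the marginal density and none of the inductive multivariate separation.

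Your moment step is fine, with one small caveat. Note that $X_{j-1}-X_{j-1,j-1}=X_{j-1}-\sum_{i<j}a_i\varepsilon_{j-1-i}$, so Lemma~\ref{lem:fm} together with $\varepsilon\in L^{1+\gamma}(\PP)$ gives integrability directly. This covers $j=1$, where Lemma~\ref{lem:x0k} does not apply as stated because it requires $k\ge 1$.
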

\begin{proof}[\textbf{Proof of (i):}]
	By Lemma~\ref{lem:multi_swap},
	\begin{align*}
		\norm{
			\nabla G_{\infty,n}(\mathbf{0}_{T},
			s_1,s_2
			,A
			)
		}_{\infty}
		 &
		\ \le \
		\int_{\RR^T}
		G_{n}(\mathbf{z}_{},
		s_1,s_2
		,A
		)
		\norm{ \nabla f_{[X_{\ell}]_{\ell}}(\mathbf{z}_{})}_{\infty}
		\,\mathrm{d}\mathbf{z}_{}
		\\ &
		  \ = \
		  \int_{\RR^T}
		G_{n}(\mathbf{A}_{T}\cdot \mathbf{z}_{},
		  s_1,s_2
		  ,A
		  )
		  \norm{ \nabla f_{[X_{\ell}]_{\ell}}(\mathbf{A}_{T}\cdot\mathbf{z}_{})}_{\infty}
		\,\mathrm{d}\mathbf{z}_{}
		\,,
	\end{align*}
	where the last equality follows from the change of variables $\mathbf{z}_{}\mapsto \mathbf{A}_{T}\cdot \mathbf{z}_{}$, with $\det \mathbf{A}_{T}=1$.
	By Lemma~\ref{lem:multi_density} 
	and Assumption~\ref{asu:f},
	\begin{align*}
		\norm{\nabla f_{[X_{\ell}]_\ell}(\mathbf{A}_{T}\cdot\mathbf{z}_{})}_{\infty}
		 &
		\ \lesssim \
		\sum_{m=1}^T
		\EE
		\left[
			\left|
			f'_\varepsilon(z_{m-1} - \mathbf{e}_{m}^{\top}\cdot\mathbf{A}_{T}^{-1}\cdot
			[X_{\ell}-X_{\ell,\ell}]_\ell
			)
			\right|
			\prod_{h\neq m}
			f_\varepsilon(z_{h-1}- \mathbf{e}_{h}^{\top}\cdot\mathbf{A}_{T}^{-1}\cdot
			[X_{\ell}-X_{\ell,\ell}]_\ell
			)
			\right]
		\\ &
		  \ \lesssim \
		  \EE
		  \left[
			  \prod_{m=1}^T
			  g_{\gamma}(z_{m-1}- \mathbf{e}_{m}^{\top}\cdot\mathbf{A}_{T}^{-1}\cdot
			  [X_{\ell}-X_{\ell,\ell}]_\ell
			  )
			  \right]
		\,,
	\end{align*}
	where the multiplicative constant in $\lesssim$ depends only on $T$, the coefficients $(\mathfrak{a}_i)$, and the distribution of $\varepsilon$.
	By Lemma~\ref{lem:flush_inactive},
	\begin{align*}
		 &
		\int_{\RR^T}
		G_n(\mathbf{A}_{T}\cdot \mathbf{z}_{},s_1,s_2,A)
		\prod_{m=1}^T
		g_{\gamma}(z_{m-1}- \mathbf{e}_{m}^{\top}\cdot\mathbf{A}_{T}^{-1}\cdot
		[X_{\ell}-X_{\ell,\ell}]_\ell
		)
		\,\mathrm{d}\mathbf{z}_{}
		\\ &
		  \ \lesssim \
		  \sum_{j=1}^T
		  \int_{\RR^j}
		\ind
		  \left\{
		  \left| \sum_{m=1}^j
		  a_{j-m} z_{m-1} \right|
		  \in u_n \cdot \mathfrak{a}_j \cdot [s_1,s_2]
		\right\}
		  \prod_{m=1}^j
		  g_{\gamma}(z_{m-1}- \mathbf{e}_{m}^{\top}\cdot\mathbf{A}_{T}^{-1}\cdot
		  [X_{\ell}-X_{\ell,\ell}]_\ell
		  )
		  \,\mathrm{d}\mathbf{z}_{}
		\,,
	\end{align*}
	Since
	\begin{align*}
		\sum_{m=1}^j
		a_{j-m}
		\mathbf{e}_{m}^{\top}
		\ = \
		\mathbf{e}_{j}^{\top}\mathbf{A}_{T}
	\end{align*}
	it holds
	\begin{align*}
		\sum_{m=1}^j
		a_{j-m}
		\cdot
		\mathbf{e}_{m}^{\top}
		\cdot
		\mathbf{A}_{T}^{-1}
		\cdot
		[X_{\ell}-X_{\ell,\ell}]_{\ell}
		\ = \
		\mathbf{e}_{j}^{\top}
		\cdot
		[X_{\ell}-X_{\ell,\ell}]_{\ell}
		\ = \
		X_{j-1}
		-
		X_{j-1,j-1}
		\,.
	\end{align*}
	Applying Lemma~\ref{lem:comprehensive}(i) with
	\begin{align*}
		\varphi(m)
		\ = \
		\ - \
		\mathbf{e}_{m}^{\top}
		\cdot
		\mathbf{A}_{T}^{-1}
		\cdot
		[X_{\ell}-X_{\ell,\ell}]_{\ell}
		\,,\qquad
		\psi
		\ \equiv \ 0
		\,,
		\qquad m\in \{1,\ldots,j\},
	\end{align*}
	we get
	\begin{align*}
		 &
		\sum_{j=1}^T
		\int_{\RR^j}
		\ind
		\left\{
		\left| \sum_{m=1}^j
		a_{j-m} z_{m-1} \right|
		\in u_n \cdot \mathfrak{a}_j \cdot [s_1,s_2]
		\right\}
		\prod_{m=1}^j
		g_{\gamma}(z_{m-1}- \mathbf{e}_{m}^{\top}\cdot\mathbf{A}_{T}^{-1}\cdot
		[X_{\ell}-X_{\ell,\ell}]_\ell
		)
		\,\mathrm{d}\mathbf{z}_{}
		\\ &
		  \ \lesssim \
		  u_n^{-\gamma}\cdot ((s_2-s_1)\land 1) \cdot
		  \sum_{j=1}^T
		  \left(
		  1
		  +
		  \left|
		  X_{j-1}-X_{j-1,j-1}
		  \right|^{1+\gamma}
		  \right)
		\,.
	\end{align*}
	To obtain the claimed upper bound $u_n^{-\gamma}((s_2-s_1)\land 1)$, note that $1+\gamma <\alpha$, so that
	\begin{align*}
		\EE
		\left[
			\sum_{j=1}^T
			\left(
			1
			+
			\left|
			X_{j-1}-X_{j-1,j-1}
			\right|^{1+\gamma}
			\right)
			\right]
		\ = \
		T
		\left(
		1
		\ + \
		\EE[
			\left|
			X_{0}-X_{0,0}
			\right|^{1+\gamma}
		]
		\right)
		\ < \ \infty\,.
	\end{align*}
	\\
	\textbf{Proof of (ii):}
	It holds $u_n\cdot ((s_1\cdot A)\setminus (s_2\cdot A))\subset \bigcup_{
			j=1}^T\{|x_{j-1}|\in \mathfrak{a}_j\cdot u_n\cdot[s_1,s_2]\}$, so that, by stationarity of $(X_t)$,
	\[
		\PP[[X_{\ell}]_\ell \in u_n \cdot (s_{1}\cdot A \setminus s_2\cdot A)]
		\ \le \
		\sum_{j=1}^T
		\PP[|X_0| \in \mathfrak{a}_j\cdot u_n\cdot [s_1,s_2]]
		\,.
	\]
	Then, for 
  $j\in \{1,\ldots,T\}$,
	\begin{align*}
		 &
		\PP[|X_0| \in \mathfrak{a}_j\cdot u_n\cdot [s_1,s_2]]
		\ = \
		\int_{\RR}
		\ind\{|z_0| \in \mathfrak{a}_j\cdot u_n \cdot[s_1,s_2]\}
		\cdot
		f_{X_0}(z_0)
		\,\mathrm{d}z_0
		\\ &
		  \ = \
		  \int_{\RR}
		\ind\{|z_0| \in \mathfrak{a}_j\cdot u_n\cdot [s_1,s_2]\}
		  \cdot
		  \EE[f_{\varepsilon}(z_0 - (X_0-X_{0,0}))]
		\,\mathrm{d}z_0
		\\ &
		  \ \lesssim \
		  \EE[1\lor |X_0-X_{0,0}|^{1+\gamma}]
		\int_{\RR}
		\ind\{|z_0| \in \mathfrak{a}_j\cdot u_n \cdot[s_1,s_2]\}
		  \cdot
		  g_{\gamma}(z_0)
		  \,\mathrm{d}z_0
		\\ &
		  \ \lesssim \
		  u_n^{-\gamma}
		\cdot
		  ((s_2-s_1)\land 1)
		  \,,
	\end{align*}
	since $1+\gamma<\alpha$, by Lemma~\ref{lem:5.1}(i), and
	\begin{align*}
		 &
		\int_{\RR}
		\ind\{|z_0| \in \mathfrak{a}_j\cdot u_n \cdot[s_1,s_2]\}
		\cdot
		g_{\gamma}(z_0)
		\,\mathrm{d}z_0
		\ \lesssim \
		u_n^{-\gamma}\cdot ((s_2-s_1)\land 1)
		\,,
	\end{align*}
	by Lemma~\ref{lem:gist_separation},
	where the constant in $\lesssim$ depends only on $A$, $\underline{s}$, $\gamma$, the coefficients $(\mathfrak{a}_i)$, and the distribution of $\varepsilon$, and is independent of $s_1$, $s_2$, and $n$.
\end{proof}
\section{Analysis of gradient differences}
In the subsequent analysis we have to control differences of gradients
of the form
\[
	\nabla_{\mathbf{z}} G_{k,n}(\mathbf{z},s_1,s_2,A)
	\Big|_{\mathbf{z}=\mathbf{x}_{}+\mathbf{y}_{}}
	\ - \
	\nabla_{\mathbf{z}} G_{k,n}(\mathbf{z},s_1,s_2,A)
	\Big|_{\mathbf{z}=\mathbf{x}_{}}
	\quad\text{or}\quad
	\nabla_{\mathbf{x}} G_{\infty,n}(\mathbf{x},s_1,s_2,A)
	\ - \
	\nabla_{\mathbf{x}} G_{k,n}(\mathbf{x},s_1,s_2,A)
	\,,
\]
where $\nabla G_{k,n}$ is defined in 
Lemma~\ref{lem:multi_swap} for $k\in \NN\cup \{\infty\}$.
These contain differences of
products that create algebraic complexity.
To organize this, we introduce the
\(\widetilde R\)-notation.
The following lemma then explains how this notation will be used. Part (i) rewrites the
relevant differences of products in terms of \(\widetilde R\)-terms.
Part (ii) gives a bound for these terms that is useful in the broader analysis.
The final two lemmas of this section
express the gradient differences in \(\widetilde R\)-notation,
and apply the bounds of the last section to yield useful bounds for the final analysis.

\begin{definition}[$\widetilde{R}$-notation]
	\label{def:Rtilde}
	We define, for functions
	$\varphi,\psi:\{1,\ldots,T\}\to \RR$
	and $i,l\in \{1,\ldots,T\}$, 
	\begin{align*}
		 &
		\widetilde{R}_{i,l}
		(\varphi,\psi)
		\\
		 &
		\ := \
		\begin{cases}
			\left(
			\prod_{\substack{j<l}} f_{\varepsilon}(\varphi(j))
			\right)
			\cdot
			(f_{\varepsilon}(\varphi(l))-f_{\varepsilon}(\varphi(l)+\psi(l)))\cdot f'_{\varepsilon}(\varphi(i) + \psi(i))\cdot
			\left(
			\prod_{\substack{j>l  \\ j\neq i}} f_{\varepsilon}(\varphi(j)+\psi(j))
			\right)
			\,, & \quad l < i\,, \\
			\left(
			\prod_{\substack{j<i}} f_{\varepsilon}(\varphi(j))
			\right)
			\cdot
			(f'_{\varepsilon}(\varphi(i))-f'_{\varepsilon}(\varphi(i)+\psi(i)))\cdot
			\left(
			\prod_{\substack{j>i }} f_{\varepsilon}(\varphi(j)+\psi(j))
			\right)
			\,, & \quad l = i\,, \\
			f'_{\varepsilon}(\varphi(i))\cdot
			\left(
			\prod_{\substack{j<l  \\ j\neq i}} f_{\varepsilon}(\varphi(j))
			\right)
			\cdot
			(f_{\varepsilon}(\varphi(l))-f_{\varepsilon}(\varphi(l)+\psi(l)))\cdot
			\left(
			\prod_{\substack{j>l}} f_{\varepsilon}(\varphi(j)+\psi(j))
			\right)
			\,, & \quad l > i\,.
		\end{cases}
	\end{align*}
\end{definition}
\begin{lemma}[Properties of $\widetilde{R}_{i,l}$]
	\label{lem:aux_R}
	Let Assumption~\ref{asu:f} hold.
	Then, for all functions $\varphi,\psi:\{1,\ldots,T\}\to \RR$, and all $i,l\in \{1,\ldots,T\}$, the following statements about $\widetilde{R}_{i,l}(\varphi,\psi)$ are true:
	\begin{enumerate}
		\item
		      \begin{align*}
			      \sum_{l=1}^T
			      \widetilde{R}_{i,l}(\varphi,\psi)
			      \ = \
			      f_{\varepsilon}'(\varphi(i))
			      \cdot
			      \prod^T_{\begin{smallmatrix}
						               j=1\\ j\neq i
					               \end{smallmatrix}}
			      f_{\varepsilon}(\varphi(j))
			      \ - \
			      f_{\varepsilon}'(\varphi(i)+\psi(i))
			      \cdot
			      \prod^T_{\begin{smallmatrix}
						               j=1\\j\neq i
					               \end{smallmatrix}}
			      f_{\varepsilon}(\varphi(j)+\psi(j))
		      \end{align*}
		\item
		      For all $\gamma\in(0,\alpha - 1)$,
		      \begin{align*}
			      \left|
			      \widetilde{R}_{i,l}(
			      \varphi,\psi
			      )
			      \right|
			       &
			      \ \lesssim \
			      \left(
			      1 \land
			      |\psi(l)|
			      \right)
			      \cdot
			      \prod_{m= 1}^{T}
			      \left(
			      g_{\gamma}(\varphi(m))
			      \ + \
			      g_{\gamma}(\varphi(m)+\psi(m))
			      \right)
			      \\ &
			        \ = \
			        \left(
			        1 \land
			        |\psi(l)|
			        \right)
			      \sum_{I\subset\{1,\ldots,T\}}
			      \left(
			        \prod_{m\notin I}
			      g_{\gamma}(\varphi(m))
			        \right)
			        \left(
			        \prod_{m\in I}
			      g_{\gamma}(\varphi(m)+\psi(m))
			        \right)
			        \,.
		      \end{align*}
	\end{enumerate}
	In (ii) $\lesssim$ means inequality up to a multiplicative constant
	that only depends on
	the distribution of $\varepsilon$, and is independent of
	$T$, $\gamma$,
	$\varphi$, $\psi$, $i$ and $\ell$.
\end{lemma}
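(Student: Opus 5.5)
Part (i) is a telescoping identity. I fix $i$ and write $a_j := f_\varepsilon(\varphi(j))$, $b_j := f_\varepsilon(\varphi(j)+\psi(j))$ for $j\neq i$, and $a_i := f'_\varepsilon(\varphi(i))$, $b_i := f'_\varepsilon(\varphi(i)+\psi(i))$. The identity $\prod_j a_j-\prod_j b_j=\sum_{l=1}^T(\prod_{j<l}a_j)(a_l-b_l)(\prod_{j>l}b_j)$, already used in the proof of Lemma~\ref{lem:multi_density}(ii), then applies. I check case by case against Definition~\ref{def:Rtilde}. For $l<i$, the factor with index $i$ lies in the "$j>l$" product and equals $b_i=f'_\varepsilon(\varphi(i)+\psi(i))$. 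For $l=i$, the middle factor is $a_i-b_i$, the difference of $f'_\varepsilon$ values. For $l>i$, the factor $a_i=f'_\varepsilon(\varphi(i))$ sits in the "$j<l$" product. In each case the $l$-th summand is exactly $\widetilde R_{i,l}(\varphi,\psi)$, and summing over $l$ gives (i).

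For (ii), I bound every factor in $\widetilde R_{i,l}$ using Assumption~\ref{asu:f}. Since $\gamma<\alpha-1$, we have $(1+|x|)^{-\alpha}\le g_\gamma(x)$, so all undifferenced factors $f_\varepsilon(\cdot)$ and $f'_\varepsilon(\cdot)$ are $\lesssim g_\gamma$ at their own argument. Each such factor is therefore bounded by $g_\gamma(\varphi(m))+g_\gamma(\varphi(m)+\psi(m))$, whichever argument it carries. It remains to treat the difference factor with index $l$, which is either $f_\varepsilon(\varphi(l))-f_\varepsilon(\varphi(l)+\psi(l))$ or the corresponding difference of $f'_\varepsilon$. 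I split into two cases.

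If $|\psi(l)|\ge1$, I use the triangle inequality: the difference is at most $g_\gamma(\varphi(l))+g_\gamma(\varphi(l)+\psi(l))$ times a constant, and here $1=1\land|\psi(l)|$.

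If $|\psi(l)|<1$, I apply Lemma~\ref{lem:lip_f} for $f_\varepsilon$, or the Lipschitz condition on $f'_\varepsilon$ in Assumption~\ref{asu:f}, with $x=\varphi(l)$ and $y=\varphi(l)+\psi(l)$. This yields the bound $|\psi(l)|\cdot g_{\alpha-1}(\varphi(l))$, respectively $|\psi(l)|(1+|\varphi(l)|)^{-\alpha}$. Both are $\lesssim|\psi(l)|\,g_\gamma(\varphi(l))$, because $g_{\alpha-1}\le g_\gamma$ for $\gamma\le\alpha-1$.

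Multiplying these bounds gives the first inequality in (ii). The constant depends only on the constants in Assumption~\ref{asu:f}: each factor contributes one constant, and there are $T$ factors, so strictly the constant is $C^T$. The claimed independence of $T$ should be read as allowing this, or one absorbs it by normalising the assumption constants to be at most $1$ after rescaling; I would note this point. The second expression in (ii) follows by expanding the product $\prod_m(x_m+y_m)=\sum_{I}\prod_{m\notin I}x_m\prod_{m\in I}y_m$.

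The only delicate step is keeping track of which argument each factor carries. This is harmless because every factor is bounded by the symmetric sum $g_\gamma(\varphi(m))+g_\gamma(\varphi(m)+\psi(m))$.
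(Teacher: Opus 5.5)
Your proposal is correct and follows the paper's proof essentially step for step. For (i) you use the same telescoping identity, with the $f'_\varepsilon$ values as the $i$-th factor. For (ii) you use the same case split $|\psi(l)|<1$ versus $|\psi(l)|\ge 1$: Lemma~\ref{lem:lip_f} or the Lipschitz bound on $f'_\varepsilon$ handles the difference factor, and Assumption~\ref{asu:f} bounds every remaining factor by $g_\gamma(\varphi(m))+g_\gamma(\varphi(m)+\psi(m))$.

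Your observation that the constant is really of the form $C^T$ is accurate, so the paper's claim of independence from $T$ is loose. It is harmless because $T$ is fixed throughout. Your suggested rescaling to force $C\le 1$ would not remove this dependence in general, and it is not needed.
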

\begin{proof}
	Fix $i,l\in\{1,\ldots,T\}$.
	\\
	\textbf{Proof of (i):}
	The statement follows from the definition of $\widetilde{R}_{i,l}$ and
	\begin{align*}
		 &
		c_i\cdot \prod_{\substack{j=1                                \\ j\neq i}}^T a_j
		\
		-
		\
		d_i \cdot \prod_{\substack{j=1                               \\ j\neq i}}^T b_j
		\\
		 & \ =\  \sum_{\substack{l=1}}^{i-1}
		\left(
		\prod_{\substack{j<l}} a_j
		\right)
		(a_{l}-b_{l})\cdot d_i\cdot
		\left(
		\prod_{\substack{j>l                                         \\ j\neq i}} b_j
		\right)
		\\ &
		  \qquad
		  +
		  \
		  \left(\prod_{j<i} a_j\right)(c_i-d_i)\left(\prod_{j>i} b_j\right)
		\\ &
		  \qquad
		  +
		  \
		\sum_{\substack{l=i+1}}^T c_i\cdot\left(\prod_{\substack{j<l \\ j\neq i}} a_j\right)(a_{l}-b_{l})\left(\prod_{\substack{j>l \\ j\neq i}} b_j\right)\,,
	\end{align*}
	with
	\begin{align*}
		a_j
		\ = \
		f_{\varepsilon}(\varphi(j))
		\,,
		\qquad
		b_j
		\ = \
		f_{\varepsilon}(\varphi(j)+\psi(j))
		\,,
		\qquad
		c_i
		\ = \
		f'_{\varepsilon}(\varphi(i))
		\,,
		\qquad
		d_i
		\ = \
		f'_{\varepsilon}(\varphi(i)+\psi(i))
		\,.
	\end{align*}
	\\
	\textbf{Proof of (ii):}
	Expanding the product
	\begin{align}
		\label{eq:lhs}
		\prod_{m=1}^T (a_m + b_m)
		\ = \
		\sum_{I\subset\{1,\ldots,T\}}
		\prod_{m\notin I} a_m
		\prod_{m\in I} b_m
		\,,
	\end{align}
	with
	\begin{align*}
		a_m \ = \ g_{\gamma}(\varphi(m))
		\,,
		\qquad
		b_m \ = \ g_{\gamma}(\varphi(m)  +  \psi(m))
		\,,\qquad m\in \{1,\ldots,T\}
		\,,
	\end{align*}
	it remains to prove that the left-hand side of \eqref{eq:lhs} is a valid upper bound for $|\widetilde{R}_{i,l}(\varphi,\psi)|$.
	\\
	\textit{Case $|\psi(l)|<1$:}
	In the product structure of $\widetilde{R}_{i,l}$ there appears exactly one factor that is a difference term, namely the $l$-th term.
	We begin by bounding the absolute value of this difference term. If this features $f'_{\varepsilon}$, we use Assumption~\ref{asu:f}, and if this features $f_{\varepsilon}$, we use Lemma~\ref{lem:lip_f}. The difference term is then bounded above by
	\begin{align*}
		|\psi(l)|
		\cdot
		g_{\gamma}(\varphi(l))
		\ \lesssim \
		\left(
		1\land
		|\psi(l)|
		\right)
		\cdot
		\left(
		g_{\gamma}(\varphi(l))
		\ + \
		g_{\gamma}(\varphi(l)+\psi(l))
		\right)
		\,,
	\end{align*}
	where the multiplicative constant in $\lesssim$ depends only on
	the distribution of $\varepsilon$.
	By Assumption~\ref{asu:f},
	any remaining factor
	$j\in \{1,\ldots,T\}\setminus\{l\}$ is bounded above by
	\begin{align*}
		g_{\gamma}(\varphi(j))
		\ + \
		g_{\gamma}(\varphi(j)+\psi(j))
		\,.
	\end{align*}
	up to a  multiplicative constant that depends only on the distribution of $\varepsilon$.
	\\
	\textit{Case $|\psi(l)|\ge 1$:}
	We use the triangle inequality and Assumption~\ref{asu:f} to bound the absolute value of the 
	difference term above by
	\begin{align*}
		g_{\gamma}(\varphi(l))
		\ + \
		g_{\gamma}(\varphi(l)+\psi(l))
		\,,
	\end{align*}
	up to a  multiplicative constant that depends only on the distribution of $\varepsilon$.
	All remaining terms are bounded as in the previous case. Since $|\psi(l)|\ge 1$, it holds $(1\land|\psi(l)|)=1$, so we multiply this factor to the bound for free.
\end{proof}
The next lemma expresses gradient differences in terms of $\widetilde{R}_{}$-notation.
\begin{lemma}[Gradient differences and $\widetilde{R}$-notation]
	\label{lem:grad:lip}
	Let Assumptions~\ref{asu:f} and~\ref{asu:coef} hold.
	Then, for all $\mathbf{x}_{},\mathbf{y}_{}\in \RR^{T}$ and all $k\in\NN_0\cup\{\infty\}$ we have the following identities:
	\begin{enumerate}
		\item
		      \begin{align*}
			       &
			      \nabla_{\mathbf{z}} G_{k,n}
			      (\mathbf{z}_{},s_1,s_2,A)
			      \Big|_{\mathbf{z}=\mathbf{x}+\mathbf{y}}
			      \ - \
			      \nabla_{\mathbf{z}} G_{k,n}
			      (\mathbf{z}_{},s_1,s_2,A)
			      \Big|_{\mathbf{z}=\mathbf{x}}
			      \\ &
			        \ = \
			        \left(
			        \mathbf{A}_{T}^{-1}
			        \right)^{\top}
			      \\ &
			        \qquad \times\
			        \left[
				        \int_{\RR^T}
				      G_n
				        (\mathbf{A}_{T}\cdot \mathbf{z}_{},s_1,s_2,A)
				        \,\mathrm{d}\mathbf{z}_{}
				      \int_{\RR^T}
				      \,\mathrm{d}
				      \PP_{[X_{\ell,k+\ell}-X_{\ell,\ell}]_\ell}
				        (\widetilde{\mathbf{x}_{}}_{})
				      \right.
			      \\ &
				        \qquad
				        \qquad
				        \left.
				        \widetilde{R}_{i,l}
				      \left(
				        \left(
					        j
					        \
					        \mapsto
					        \
					        z_{j-1} - \mathbf{e}_{j}^{\top}
					      \cdot
					        \mathbf{A}_{T}^{-1}
					      \cdot
					        (\mathbf{x}_{}+\widetilde{\mathbf{x}}_{})
					      \right)
				      \,,
				        \left(
					        j
					        \
					        \mapsto
					        \
					        - \mathbf{e}_{j}^{\top}
					        \cdot
					        \mathbf{A}_{T}^{-1}
					        \cdot
					        \mathbf{y}_{}
					        \right)
				      \right)
				      \right]_{i,l\in \{1,\ldots,T\}}
			      \cdot
			        \mathbf{1}_{T}
			      \,.
		      \end{align*}
		\item
		      \begin{align*}
			      \begin{split}
				       &
				      \nabla_{\mathbf{x}} G_{\infty,n}(\mathbf{x}_{},s_1,s_2,A)
				      \ - \
				      \nabla_{\mathbf{x}} G_{k,n}(\mathbf{x}_{},s_1,s_2,A)
				      \\ &
				        \ = \
				        \left(
				        \mathbf{A}_{T}^{-1}
				        \right)^{\top}
				      \\ &
				        \qquad
				        \times
				        \left[
					        \int_{\RR^T}
					      G_n
					        (\mathbf{A}_{T}\cdot \mathbf{z}_{},s_1,s_2,A)
					        \,\mathrm{d}\mathbf{z}_{}
					      \int_{\RR^T}
					      \,\mathrm{d}\PP_{[X_{\ell}-X_{\ell,k+\ell}]_{\ell}}(\widetilde{\mathbf{x}_{}}_{})
					      \int_{\RR^T}
					      \,\mathrm{d}\PP_{[X_{\ell,k+\ell}-X_{\ell,\ell}]_{\ell}}(\widetilde{\mathbf{y}_{}}_{})
					      \right.
				      \\ &
					        \qquad
					        \qquad
					        \widetilde{R}_{i,l}
					      \left(
					        \left(
						        j\ \mapsto \
						        z_{j-1}
						        \ - \
						        \mathbf{e}_{j}^{\top}
						      \cdot
						        \mathbf{A}_{T}^{-1}
						      \cdot
						        \left(
						        \mathbf{x}_{}
						        \ + \
						        \widetilde{\mathbf{y}_{}}_{}
						        \right)
						      \right)
					      \left. ,
					        \left(
						        j\ \mapsto \
						        -
						        \mathbf{e}_{j}^{\top}
						        \cdot
						        \mathbf{A}_{T}^{-1}
						        \cdot
						        \widetilde{\mathbf{x}_{}}_{}
						        \right)
					      \right)
					      \right]_{i,l\in\{1,\ldots,T\}}
				      \cdot
				        \mathbf{1}_{T}
			      \end{split}
		      \end{align*}
	\end{enumerate}
\end{lemma}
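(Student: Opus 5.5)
Both identities follow the same route: write each gradient via Lemma~\ref{lem:multi_swap}(i) and Lemma~\ref{lem:multi_density}(ii), subtract under a common integral, and convert the resulting difference of products into $\widetilde{R}$-terms by Lemma~\ref{lem:aux_R}(i).

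For (i), Lemma~\ref{lem:multi_swap}(i) gives
\[
\nabla_{\mathbf{z}} G_{k,n}(\mathbf{z})\big|_{\mathbf{z}=\mathbf{w}} = -\int_{\RR^T} G_n(\mathbf{x}')\,\nabla f_{[X_{\ell,k+\ell}]_\ell}(\mathbf{x}'-\mathbf{w})\,\mathrm{d}\mathbf{x}'
\]
for $\mathbf{w}=\mathbf{x}+\mathbf{y}$ and for $\mathbf{w}=\mathbf{x}$. I substitute the representation of Lemma~\ref{lem:multi_density}(ii). This pulls out $(\mathbf{A}_T^{-1})^{\top}$ and leaves an expectation over $\widetilde{\mathbf{x}}\sim\PP_{[X_{\ell,k+\ell}-X_{\ell,\ell}]_\ell}$ of $f'_\varepsilon$ times a product of $f_\varepsilon$'s, each evaluated at $\mathbf{e}_j^{\top}\mathbf{A}_T^{-1}(\mathbf{x}'-\mathbf{w}-\widetilde{\mathbf{x}})$.

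Next I change variables $\mathbf{x}'=\mathbf{A}_T\mathbf{z}$, which has unit Jacobian. The arguments then become $z_{j-1}-\mathbf{e}_j^{\top}\mathbf{A}_T^{-1}(\mathbf{x}+\widetilde{\mathbf{x}})$, shifted additionally by $-\mathbf{e}_j^{\top}\mathbf{A}_T^{-1}\mathbf{y}$ in the $\mathbf{w}=\mathbf{x}+\mathbf{y}$ case.

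Set $\varphi(j)=z_{j-1}-\mathbf{e}_j^{\top}\mathbf{A}_T^{-1}(\mathbf{x}+\widetilde{\mathbf{x}})$ and $\psi(j)=-\mathbf{e}_j^{\top}\mathbf{A}_T^{-1}\mathbf{y}$. With the overall minus sign, the $i$-th component of the difference is the integral of the $\mathbf{x}$-term minus the $(\mathbf{x}+\mathbf{y})$-term, i.e.
\[
f'_\varepsilon(\varphi(i))\prod_{j\neq i}f_\varepsilon(\varphi(j)) - f'_\varepsilon(\varphi(i)+\psi(i))\prod_{j\neq i}f_\varepsilon(\varphi(j)+\psi(j)).
\]
By Lemma~\ref{lem:aux_R}(i) this equals $\sum_l\widetilde{R}_{i,l}(\varphi,\psi)$, and summing over $l$ is exactly multiplication of the matrix $[\cdot]_{i,l}$ by $\mathbf{1}_T$. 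The only thing to justify is splitting and recombining the integrals, i.e.\ applying Fubini. Each single term is absolutely integrable with respect to $\mathrm{d}\mathbf{z}\,\mathrm{d}\PP$: bound $G_n\le 1$ and use the $g_\gamma$ bounds as in the proof of Lemma~\ref{lem:multi_density}(iii).

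For (ii), both gradients are evaluated at the same point $\mathbf{x}$, and the point is to put the two densities into a common form. Write $X_{\ell}-X_{\ell,\ell}=(X_\ell-X_{\ell,k+\ell})+(X_{\ell,k+\ell}-X_{\ell,\ell})$. The two summands are independent, since they involve disjoint innovations: indices below $\ell-k-\ell=-k$ versus indices in $[-k,-1]$ relative to time $0$. Hence $\PP_{[X_\ell-X_{\ell,\ell}]_\ell}$ is the convolution of $\PP_{[X_\ell-X_{\ell,k+\ell}]_\ell}$ and $\PP_{[X_{\ell,k+\ell}-X_{\ell,\ell}]_\ell}$.

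The $k=\infty$ gradient therefore becomes an integral over $\widetilde{\mathbf{x}}$ and $\widetilde{\mathbf{y}}$ with arguments shifted by $\widetilde{\mathbf{y}}+\widetilde{\mathbf{x}}$. The $k$-gradient can be written with the same double integral, since integrating against $\PP_{[X_\ell-X_{\ell,k+\ell}]_\ell}(\mathrm{d}\widetilde{\mathbf{x}})$ is trivial when $\widetilde{\mathbf{x}}$ does not appear. Take $\varphi(j)=z_{j-1}-\mathbf{e}_j^{\top}\mathbf{A}_T^{-1}(\mathbf{x}+\widetilde{\mathbf{y}})$ and $\psi(j)=-\mathbf{e}_j^{\top}\mathbf{A}_T^{-1}\widetilde{\mathbf{x}}$. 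With the minus sign from Lemma~\ref{lem:multi_swap}(i), the $\infty$-minus-$k$ difference is again the $\varphi$-term minus the $(\varphi+\psi)$-term, and Lemma~\ref{lem:aux_R}(i) finishes as before.

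I expect the main obstacle to be bookkeeping of signs and of the ordering of the two evaluation points, so that the result matches the convention "first minus second" in Lemma~\ref{lem:aux_R}(i). The independence and convolution step in (ii) also needs care. The case $k=\infty$ in (i) is covered directly by Lemma~\ref{lem:multi_density}.
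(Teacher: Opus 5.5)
Your proposal is correct and follows the paper's proof essentially step for step. Both use the representation from Lemma~\ref{lem:multi_swap}(i) and Lemma~\ref{lem:multi_density}(ii), the unit-Jacobian substitution $\mathbf{x}'=\mathbf{A}_T\mathbf{z}$, and the telescoping identity of Lemma~\ref{lem:aux_R}(i); in part (ii) both use the independence of $[X_\ell-X_{\ell,k+\ell}]_\ell$ and $[X_{\ell,k+\ell}-X_{\ell,\ell}]_\ell$. Your sign bookkeeping, where both differences reduce to the $\varphi$-term minus the $(\varphi+\psi)$-term, is right, and your Fubini justification is more explicit than the paper's.
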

\begin{proof}[\textbf{Proof of (i):}]
	Note that
	for arbitrary $\mathbf{x}_{}\in \RR^T$ it holds that
	\begin{align*}
		 &
		\nabla_{\mathbf{x}} G_{k,n}
		(\mathbf{x}_{},s_1,s_2,A)
		\\ &
		  \ = \
		  -
		  \int_{\RR^T}
		G_n
		  (\mathbf{z}_{},s_1,s_2,A)
		  \nabla f_{[X_{\ell,k+\ell}]_\ell}
		(\mathbf{z}_{}-\mathbf{x}_{})
		  \,\mathrm{d}\mathbf{z}_{}
		\\ &
		  \ = \
		  -
		  \sum_{i=1}^{T}
		\left(
		  \mathbf{A}^{-1}_T
		  \right)^{\top}
		\mathbf{e}_i
		\\ &
		  \qquad\quad
		  \int_{\RR^T}
		G_n
		  (\mathbf{z}_{},s_1,s_2,A)
		  \,\mathrm{d}\mathbf{z}_{}
		\int_{\RR^T}
		\,\mathrm{d}
		\PP_{[X_{\ell,k+\ell}-X_{\ell,\ell}]_\ell}
		  (\widetilde{\mathbf{x}_{}}_{})
		\\ &
		  \qquad\qquad
		  f'_\varepsilon
		  \left(
		  \mathbf{e}_{i}^\top\cdot \mathbf{A}_{T}^{-1}
		\cdot
			  (\mathbf{z}_{}-\mathbf{x}_{}-\widetilde{\mathbf{x}_{}}_{})
		\right)
		\prod_{j\neq i}
		f_\varepsilon
		  \left(
		  \mathbf{e}_{j}^\top\cdot \mathbf{A}_{T}^{-1}
		\cdot
			  (\mathbf{z}_{}-\mathbf{x}_{}-\widetilde{\mathbf{x}_{}}_{})
		\right)
      \displaybreak[0]
		\\ &
		  \ = \
		  -
		  \sum_{i=1}^{T}
		\left(
		  \mathbf{A}^{-1}_T
		  \right)^{\top}
		\mathbf{e}_i
		\\ &
		  \qquad\quad
		  \int_{\RR^T}
		G_n
		  (\mathbf{A}\cdot\mathbf{z}_{},s_1,s_2,A)
		  \,\mathrm{d}\mathbf{z}_{}
		\int_{\RR^T}
		\,\mathrm{d}
		\PP_{[X_{\ell,k+\ell}-X_{\ell,\ell}]_\ell}
		  (\widetilde{\mathbf{x}_{}}_{})
		\\ &
		  \qquad\qquad
		  f'_\varepsilon
		  \left(
		  z_{i-1}-\mathbf{e}_{i}^\top\cdot \mathbf{A}_{T}^{-1}
		\cdot
		  (\mathbf{x}_{}+\widetilde{\mathbf{x}_{}}_{})
		\right)
		\prod_{j\neq i}
		f_\varepsilon
		  \left(
		  z_{j-1}-\mathbf{e}_{j}^\top\cdot \mathbf{A}_{T}^{-1}
		\cdot
		  (\mathbf{x}_{}+\widetilde{\mathbf{x}_{}}_{})
		\right)
		\,,
	\end{align*}
	where the
	first equality follows from Lemma~\ref{lem:multi_swap},
	the second equality follows from Lemma~\ref{lem:multi_density}(ii) and the third equality follows from the change of variables $\mathbf{A}_{T}^{-1}\mathbf{z}_{}\mapsto \mathbf{z}_{}$, where $\det \mathbf{A}_T^{-1}= (\det \mathbf{A}_T)^{-1} =1$.
	Therefore, for arbitrary $\mathbf{y}_{}\in \RR^T$,
	\begin{align*}
		 &
		\nabla_{\mathbf{z}} G_{k,n}
		(\mathbf{z},s_1,s_2,A)
		\Big|_{\mathbf{z}=\mathbf{x}+\mathbf{y}}
		\ - \
		\nabla_{\mathbf{z}} G_{k,n}
		(\mathbf{z},s_1,s_2,A)
		\Big|_{\mathbf{z}=\mathbf{x}}
		\\ &
		  \ = \
		  \sum_{i=1}^{T}
		\left(
		  \mathbf{A}^{-1}_T
		  \right)^{\top}
		\mathbf{e}_i
		\\ &
		  \qquad\quad
		  \int_{\RR^T}
		G_n
		  (\mathbf{A}_{T}\cdot\mathbf{z}_{},s_1,s_2,A)
		  \,\mathrm{d}\mathbf{z}_{}
		\int_{\RR^T}
		\,\mathrm{d}
		\PP_{[X_{\ell,k+\ell}-X_{\ell,\ell}]_\ell}
		  (\widetilde{\mathbf{x}_{}}_{})
		\\ &
		  \qquad\qquad
		  \bigg[
			  f'_\varepsilon
			  \left(
			  z_{i-1}-\mathbf{e}_{i}^\top\cdot \mathbf{A}_{T}^{-1}
			\cdot
			  (\mathbf{x}_{}+\widetilde{\mathbf{x}_{}}_{})
			\right)
			\prod_{j\neq i}
			f_\varepsilon
			  \left(
			  z_{j-1}-\mathbf{e}_{j}^\top\cdot \mathbf{A}_{T}^{-1}
			\cdot
			  (\mathbf{x}_{}+\widetilde{\mathbf{x}_{}}_{})
			\right)
		\\ &
			  \qquad\qquad
			  \ - \
			  f'_\varepsilon
			  \left(
			  z_{i-1}-\mathbf{e}_{i}^\top\cdot \mathbf{A}_{T}^{-1}
			\cdot
			  (\mathbf{x}+\mathbf{y}+\widetilde{\mathbf{x}_{}}_{})
			\right)
			\prod_{j\neq i}
			f_\varepsilon
			  \left(
			  z_{j-1}-\mathbf{e}_{j}^\top\cdot \mathbf{A}_{T}^{-1}
			\cdot
			  (\mathbf{x}+\mathbf{y}+\widetilde{\mathbf{x}_{}}_{})
			\right)
			\bigg]
		\,,
	\end{align*}
	which, by Lemma~\ref{lem:aux_R}(i), is equal to
	\begin{align*}
		 &
		\sum_{i=1}^{T}
		\left(
		\mathbf{A}^{-1}_T
		\right)^{\top}
		\mathbf{e}_i
		\\ &
		  \quad
		  \int_{\RR^T}
		G_n
		  (\mathbf{A}_{T}\cdot \mathbf{z}_{},s_1,s_2,A)
		  \,\mathrm{d}\mathbf{z}_{}
		\int_{\RR^T}
		\,\mathrm{d}
		\PP_{[X_{\ell,k+\ell}-X_{\ell,\ell}]_\ell}
		  (\widetilde{\mathbf{x}_{}}_{})
		\\ &
		  \qquad \sum_{l=1}^T
		  \widetilde{R}_{i,l}
		\left(
		  \left(
			  j
			  \
			  \mapsto
			  \
			  z_{j-1} - \mathbf{e}_{j}^{\top}
			\cdot
			  \mathbf{A}_{T}^{-1}
			\cdot
			  (\mathbf{x}_{}+\widetilde{\mathbf{x}}_{})
			\right)
		\,,
		  \left(
			  j
			  \
			  \mapsto
			  \
			  - \mathbf{e}_{j}^{\top}
			  \cdot
			  \mathbf{A}_{T}^{-1}
			  \cdot
			  \mathbf{y}_{}
			  \right)
		\right)
		\\ &
		  \ = \
		  \left(
		  \mathbf{A}_{T}^{-1}
		  \right)^{\top}
		\\ &
		  \qquad
		  \times\
		  \left[
			  \int_{\RR^T}
			G_n
			  (\mathbf{A}_{T}\cdot \mathbf{z}_{},s_1,s_2,A)
			  \,\mathrm{d}\mathbf{z}_{}
			\int_{\RR^T}
			\,\mathrm{d}
			\PP_{[X_{\ell,k+\ell}-X_{\ell,\ell}]_\ell}
			  (\widetilde{\mathbf{x}_{}}_{})
			\right.
		\\ &
			  \left.
			  \qquad
			  \qquad
			  \widetilde{R}_{i,l}
			\left(
			  \left(
				  j
				  \
				  \mapsto
				  \
				  z_{j-1} - \mathbf{e}_{j}^{\top}
				\cdot
				  \mathbf{A}_{T}^{-1}
				\cdot
				  (\mathbf{x}_{}+\widetilde{\mathbf{x}}_{})
				\right)
			\,,
			  \left(
				  j
				  \
				  \mapsto
				  \
				  - \mathbf{e}_{j}^{\top}
				  \cdot
				  \mathbf{A}_{T}^{-1}
				  \cdot
				  \mathbf{y}_{}
				  \right)
			\right)
			\right]_{i,l\in \{1,\ldots,T\}}
		\cdot
		  \mathbf{1}_{T}
		\,.
	\end{align*}
	\\
	\textbf{Proof of (ii):}
	From Lemma~\ref{lem:multi_swap} it follows for all $\mathbf{x}_{}\in\RR^T$ that 
	\begin{align}
		\label{eq:gf1}
		\begin{split}
			 &
			\nabla_{\mathbf{x}} G_{\infty,n}(\mathbf{x}_{},s_1,s_2,A)
			\ - \
			\nabla_{\mathbf{x}} G_{k,n}(\mathbf{x}_{},s_1,s_2,A)
			\\ &
			  \ = \
			  \int_{\RR^{T}}
			G_{n}(\mathbf{z}_{},s_1,s_2,A)
			  \left(
			  \nabla f_{[X_{\ell,k+\ell}]_{\ell}}(\mathbf{z}_{}-\mathbf{x}_{})
			  \ - \
			  \nabla f_{[X_{\ell}]_{\ell}}(\mathbf{z}_{}-\mathbf{x}_{})
			  \right)
			  \,\mathrm{d}\mathbf{z}_{}
			\\ &
			  \ = \
			  \int_{\RR^{T}}
			G_{n}(\mathbf{A}_{T}\cdot\mathbf{z}_{},s_1,s_2,A)
			  \left(
			  \nabla f_{[X_{\ell,k+\ell}]_{\ell}}(\mathbf{A}_{T}\cdot\mathbf{z}_{}-\mathbf{x}_{})
			  \ - \
			  \nabla f_{[X_{\ell}]_{\ell}}(\mathbf{A}_{T}\cdot\mathbf{z}_{}-\mathbf{x}_{})
			  \right)
			  \,\mathrm{d}\mathbf{z}_{}
			\,,
		\end{split}
	\end{align}
	where for the change of variables in the last step we used again $\det \mathbf{A}_T=1$.
	Note that
	by Lemma~\ref{lem:multi_density}(ii) again,
	\begin{align*}
		 &
		\nabla f_{[X_{\ell,k+\ell}]_{\ell}}(\mathbf{A}_{T}\cdot\mathbf{z}_{}-\mathbf{x}_{})
		\ - \
		\nabla f_{[X_{\ell}]_{\ell}}(\mathbf{A}_{T}\cdot\mathbf{z}_{}-\mathbf{x}_{})
		\\ &
		  \ = \
		  \left(
		  \mathbf{A}_{T}^{-1}
		  \right)^{\top}
		\\ &
		  \times
		  \left[
			  \EE
			  \left[
				  f'_\varepsilon(
				  z_{i-1}
				  \ - \
          \mathbf{e}_i^{\top}
          \cdot
          \mathbf{A}_T^{-1}
          \cdot
				  (\mathbf{x}_{}+
				  [X_{\ell,k+\ell}-X_{\ell,\ell}]_\ell
				  )
				)
				\prod_{j\neq i}
				f_\varepsilon(
				  z_{j-1}
				  \ - \
          \mathbf{e}_j^{\top}
          \cdot
          \mathbf{A}_T^{-1}
          \cdot
				  (\mathbf{x}_{}+
				  [X_{\ell,k+\ell}-X_{\ell,\ell}]_\ell
				  )
				)
				\right.
				  \right.
		\\ &
				  \left.
				  \left.
				  \qquad
				  - \
				  f'_\varepsilon(
				  z_{i-1}
				  \ - \
          \mathbf{e}_i^{\top}
          \cdot
          \mathbf{A}_T^{-1}
          \cdot
				  (\mathbf{x}_{}+
				  [X_{\ell}-X_{\ell,\ell}]_\ell
				  )
				)
				\prod_{j\neq i}
				f_\varepsilon(
				  z_{j-1}
				  \ - \
          \mathbf{e}_j^{\top}
          \cdot
          \mathbf{A}_T^{-1}
          \cdot
				  (
				  \mathbf{x}_{}+
				  [X_{\ell}-X_{\ell,\ell}]_\ell
				  )
				)
				\right]
			\right]_{i\in\{1,\ldots,T\}}
		\,.
	\end{align*}
	By Lemma~\ref{lem:aux_R}(i) and independence of the random vectors $[X_{\ell,k+\ell}-X_{\ell,\ell}]_\ell$ and $[X_{\ell}-X_{\ell,k+\ell}]_\ell$, this equals
	\begin{align*}
		 &
		\left(
		\mathbf{A}_{T}^{-1}
		\right)^{\top}
		\\ &
		  \times
		  \Bigg[
			  \EE
			  \Bigg[
				  \sum_{l=1}^{T}
				\widetilde{R}_{i,l}
				\big(
				  \left(
				  j\ \mapsto \
				  z_{j-1}
				  \ - \
          \mathbf{e}_j^{\top}
          \cdot
          \mathbf{A}_T^{-1}
          \cdot
				  \left(
				  \mathbf{x}_{}
				  \ + \
				  [X_{\ell,k+\ell}-X_{\ell,\ell}]_{\ell}
				  \right)
				\right)
				\,, \\
                & \qquad \qquad \qquad \quad 
				  \left(
				  j\ \mapsto \
				  -
          \mathbf{e}_j^{\top}
          \cdot
          \mathbf{A}_T^{-1}
          \cdot
				  [X_{\ell}-X_{\ell,k+\ell}]_{\ell}
				  \right)
				\big)
				  \Bigg]
			\Bigg]_{i\in\{1,\ldots,T\}}
		\\ &
		  \ = \
		  \left(
		  \mathbf{A}_{T}^{-1}
		  \right)^{\top}
		\\ &
		  \qquad
		  \times
		  \left[
			  \int_{\RR^T}
			\,\mathrm{d}\PP_{[X_{\ell}-X_{\ell,k+\ell}]_\ell}(\widetilde{\mathbf{x}_{}}_{})
			\int_{\RR^T}
			\,\mathrm{d}\PP_{[X_{\ell,k+\ell}-X_{\ell,\ell}]_\ell}(\widetilde{\mathbf{y}_{}}_{})
			\right.
		\\ &
			  \left.
			  \qquad
			  \qquad
			  \widetilde{R}_{i,l}
			\left(
			  \left(
				  j\ \mapsto \
				  z_{j-1}
				  \ - \
				  \mathbf{e}_{j}^{\top}
				\cdot
				  \mathbf{A}_{T}^{-1}
				\cdot
				  \left(
				  \mathbf{x}_{}
				  \ + \
				  \widetilde{\mathbf{y}_{}}_{}
				  \right)
				\right)
			\,,
			  \left(
				  j\ \mapsto \
				  -
				  \mathbf{e}_{j}^{\top}
				  \cdot
				  \mathbf{A}_{T}^{-1}
				  \cdot
				  \widetilde{\mathbf{x}_{}}_{}
				  \right)
			\right)
			\right]_{i\in\{1,\ldots,T\}}
		\cdot
		  \mathbf{1}_{T}
	\end{align*}
    The proof is complete.
\end{proof}
The final result of this section bounds the right-hand sides of the integrands in the gradient
difference identities 
of Lemma~\ref{lem:grad:lip}, and their variants needed later.
It thereby shows how the analytic estimates developed so far are brought into
the probabilistic setting considered in the next section.
\begin{lemma}[Separable product bounds IV]
	\label{lem:ready}
	Let Assumptions~\ref{asu:f} and~\ref{asu:coef} hold, and let
	$\mathfrak{b},\mathfrak{c}\in\RR$, $\varphi,\psi:\{1,\ldots,T\}\to\RR$, $\gamma\in(0,\alpha - 1)$, and $i,l\in\{1,\ldots,T\}$. Then:
	\begin{enumerate}
		\item
		      \begin{align*}
			       &
			      \int_{\RR^T}
			      G_n(\mathbf{A}_{T}\cdot \mathbf{z}_{},s_1,s_2,A)
			      \left|
			      \widetilde{R}_{i,l}\left(
			      j\mapsto z_{j-1} + \varphi(j), \psi
			      \right)
			      \right|
			      \,\mathrm{d}\mathbf{z}_{}
			      \\ &
			        \ \lesssim \
			        u_n^{-\gamma}\cdot ((s_2-s_1)\land 1)
			        \sum_{j=1}^T
			        \sum_{I\subset \{1,\ldots,j\}}
			      \left(
			        1+
			        \left|
			        \sum_{m=1}^j
			        a_{j-m}
			        \varphi(m)
			      \right|^{1+\gamma}
			        \right)
			      \left(
			        |\psi(l)|
			        +
			        \left|
			        \sum_{m\in I}
			      a_{j-m}
			        \psi(m)
			      \right|^{1+\gamma}
			        \right)
		      \end{align*}
		\item
		      \begin{align*}
			       &
			      \int_{\mathfrak{b}\land \mathfrak{c}}^{\mathfrak{b}\lor \mathfrak{c}}
			      \left(
			      \int_{\RR^T}
			      G_n(\mathbf{A}_{T}\cdot \mathbf{z}_{},s_1,s_2,A)
			      \left|
			      \widetilde{R}_{i,l}(
			      j\mapsto z_{j-1} + \varphi(j), s\cdot \psi)
			      \right|
			      \,\mathrm{d}\mathbf{z}_{}
			      \right)
			      \,\mathrm{d}s
			      \\ &
			        \ \lesssim \
			        u_n^{-\gamma}\cdot ((s_2-s_1)\land 1)
			      \\ &
			       \times\
			        \sum_{j=1}^T
			        \sum_{I\subset\{1,\ldots,j\}}
			      \left(
			        \left|
			        \sum_{m\in I}
			      a_{j-m} \psi(m)
			      \right|^{\gamma}
			        \ + \
			        |\psi(l)|^{\gamma}
			        \right)
			      \ \times \
			        \left(
			        1 \ + \
			        \left|
			        \sum_{m=1}^j
			        a_{j-m} \varphi(m)
			      \right|^{1+\gamma}
			        \right)
			      \left(
			        |\mathfrak{b}|^{1+\gamma}
			        +
			        |\mathfrak{c}|^{1+\gamma}
			        \right)
			      \,.
		      \end{align*}
		      In both (i) and (ii), $\lesssim$ means inequality up to a multiplicative constant
		      that depends only on $T$, $(a_m)$, $(\mathfrak{a}_j)$, $\gamma$, $\underline{s}$ and the distribution of $\varepsilon$.
	\end{enumerate}
\end{lemma}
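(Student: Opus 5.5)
The plan is to combine the pointwise bound of Lemma~\ref{lem:aux_R}(ii) for $\widetilde R_{i,l}$ with the flushing and separation machinery of Lemmas~\ref{lem:flush_inactive} and~\ref{lem:comprehensive}. For part (i), apply Lemma~\ref{lem:aux_R}(ii) with the first argument $j\mapsto z_{j-1}+\varphi(j)$ and second argument $\psi$. This gives
\[
\bigl|\widetilde{R}_{i,l}(j\mapsto z_{j-1}+\varphi(j),\psi)\bigr|
\ \lesssim\
(1\land|\psi(l)|)\sum_{I\subset\{1,\ldots,T\}}\prod_{m\notin I}g_\gamma(z_{m-1}+\varphi(m))\prod_{m\in I}g_\gamma(z_{m-1}+\varphi(m)+\psi(m)).
\]
Each summand is a product over all $m$ of $g_\gamma(z_{m-1}+\mathfrak{y}_m)$, with $\mathfrak{y}_m=\varphi(m)+\psi(m)\ind\{m\in I\}$. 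So Lemma~\ref{lem:flush_inactive} applies term by term. It bounds the $\mathbf{z}$-integral against $G_n(\mathbf{A}_T\mathbf{z},s_1,s_2,A)$ by a sum over $j\in\{1,\ldots,T\}$ of integrals over $\RR^j$. These involve the indicator $\{|\sum_{m=1}^j a_{j-m}z_{m-1}|\in u_n\mathfrak{a}_j[s_1,s_2]\}$ and the product $\prod_{m=1}^j g_\gamma(z_{m-1}+\mathfrak{y}_m)$. In this reduction only the part $I\cap\{1,\ldots,j\}$ of $I$ survives, which is why the final sum runs over $I\subset\{1,\ldots,j\}$.

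Next, apply Lemma~\ref{lem:comprehensive}(i) with index set $\{1,\ldots,j\}$, coefficients $c_m=a_{j-m}$, $\mathfrak{a}=\mathfrak{a}_j$ and $\mathfrak{x}=\psi(l)$. The shifts are $\varphi$ restricted to $\{1,\ldots,j\}$ and $\psi\ind_I$. The lemma yields the factor $u_n^{-\gamma}((s_2-s_1)\land 1)$. It also yields $(1+|\sum_{m\le j}a_{j-m}\varphi(m)|^{1+\gamma})$ and $(|\psi(l)|+|\sum_{m\in I}a_{j-m}\psi(m)|^{1+\gamma})$, which is exactly the claimed bound. The factor $(1\land|\psi(l)|)$ is absorbed precisely as in that lemma. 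The constants depend only on $T$, $(a_m)$, $(\mathfrak{a}_j)$, $\gamma$, $\underline{s}$ and the law of $\varepsilon$, which matches the statement.

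For part (ii), proceed identically with $\psi$ replaced by $s\psi$. Lemma~\ref{lem:aux_R}(ii) then gives the prefactor $(1\land|s\psi(l)|)$. Next, use Tonelli to move the $s$-integral inside, then flush the inactive variables with Lemma~\ref{lem:flush_inactive}; that lemma's constant does not depend on the shifts, hence not on $s$. Finally, apply Lemma~\ref{lem:comprehensive}(ii) with $\mathfrak{x}=\psi(l)$ and $\psi\ind_I$ restricted to $\{1,\ldots,j\}$ as the $s$-scaled shift. This produces $(|\sum_{m\in I}a_{j-m}\psi(m)|^\gamma+|\psi(l)|^\gamma)$, the $\varphi$-factor, and $(|\mathfrak{b}|^{1+\gamma}+|\mathfrak{c}|^{1+\gamma})$.

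The main point to check is the order of operations in (ii). Lemma~\ref{lem:comprehensive}(ii) needs the $s$-integral and the $\mathbf{z}$-integral together, with the factor $(1\land|s\mathfrak{x}|)$ outside the $\mathbf{z}$-integral. Flushing must therefore happen before the $s$-integration, pointwise in $s$, with the flushing bound uniform in $s$. I expect that bookkeeping to be the only real obstacle. The remaining step, matching the index sets after flushing (summands with identical restriction merge up to a factor $2^{T}$), is routine.
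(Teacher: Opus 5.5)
Your proposal is correct and follows the paper's proof step for step. You use the pointwise bound of Lemma~\ref{lem:aux_R}(ii), then flush inactive variables with Lemma~\ref{lem:flush_inactive} using shifts $\mathfrak{y}_m=\varphi(m)+\ind\{m\in I\}\psi(m)$ (respectively $s\psi(m)$), merge subsets $I$ with the same trace on $\{1,\ldots,j\}$, and finish with Lemma~\ref{lem:comprehensive}(i), respectively (ii), taking $c_m=a_{j-m}$, $\mathfrak{a}=\mathfrak{a}_j$, $\mathfrak{x}=\psi(l)$, and $\psi\ind_I$ as the second shift. Your observation that flushing is done pointwise in $s$ with an $s$-independent constant is exactly what justifies applying Lemma~\ref{lem:comprehensive}(ii) in part (ii).
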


\begin{proof}[\textbf{Proof of (i):}]
	By Lemma~\ref{lem:aux_R}(ii),
	\begin{align*}
		 &
		\left|
		\widetilde{R}_{i,l}\left(
		j\mapsto z_{j-1} + \varphi(j), \psi
		\right)
		\right|
    \\&
		\ \lesssim \
		(1\land |\psi(l)|)
		\sum_{I\subset\{1,\ldots,T\}}
		\left(
		\prod_{m\notin I}
		g_{\gamma}(z_{m-1}+\varphi(m))
		\right)
		\left(
		\prod_{m\in I}
		g_{\gamma}(z_{m-1}+\varphi(m)+\psi(m))
		\right)
		\,,
	\end{align*}
	where the multiplicative constant in $\lesssim$ depends only on the distribution of $\varepsilon$.
	We invoke this bound
	inside the integral to get
	\begin{align*}
		 &
		\int_{\RR^T}
		G_n(\mathbf{A}_{T}\cdot \mathbf{z}_{},s_1,s_2,A)
		\left|
		\widetilde{R}_{i,l}\left(
		j\mapsto z_{j-1} + \varphi(j), \psi
		\right)
		\right|
		\,\mathrm{d}\mathbf{z}_{}
		\\ &
		  \ \lesssim \
		  (1\land |\psi(l)|)
      \\&\qquad\times\ 
		  \sum_{I\subset\{1,\ldots,T\}}
		\int_{\RR^T}
		G_n(\mathbf{A}_{T}\cdot \mathbf{z}_{},s_1,s_2,A)
		  \left(
		  \prod_{m\notin I}
		g_{\gamma}(z_{m-1}+\varphi(m))
		  \right)
		  \left(
		  \prod_{m\in I}
		g_{\gamma}(z_{m-1}+\varphi(m)+\psi(m))
		  \right)
		  \,\mathrm{d}\mathbf{z}_{}
		\,,
	\end{align*}
	where the multiplicative constant in $\lesssim$ depends only on the distribution of $\varepsilon$.
	Applying Lemma~\ref{lem:flush_inactive}, for all $I\subset \{1,\ldots,T\}$, with
	\begin{align*}
		\mathfrak{y}_m
		\ = \
		\varphi(m)
		\ + \
		\ind\{m\in I\}
		\cdot
		\psi(m)
		\,,
		\qquad
		m\in \{1,\ldots,T\}
		\,,
	\end{align*}
	this is bounded above by
	\begin{align*}
		 &
		(1\land |\psi(l)|)
		\sum_{I\subset\{1,\ldots,T\}}
		\sum_{j=1}^T
		\\ &
		  \
		  \int_{\RR^j}
		\ind
		  \left\{
		  \left| \sum_{m=1}^j
		  a_{j-m}z_{m-1} \right|
		  \cdot u_n \mathfrak{a}_j \cdot [s_1,s_2]
		\right\}
      \Bigg(
		\prod_{\substack{m\notin I \\ m\le j}}
		g_{\gamma}(z_{m-1}+\varphi(m))
    \Bigg)
    \Bigg(
		\prod_{\substack{m\in I    \\ m\le j}}
		g_{\gamma}(z_{m-1}+\varphi(m)+\psi(m))
    \Bigg)
		\,\mathrm{d}\mathbf{z}_{}
		\,,
	\end{align*}
	up to a multiplicative constant that depends only on $T$, $\gamma$, and the distribution of $\varepsilon$.
	Since sets $I,I'\subset \{1,\ldots,T\}$ with $I\cap\{1,\ldots,j\}=I'\cap\{1,\ldots,j\}$ yield the same summand, this is bounded above by a multiple of
	\begin{align*}
		 &
		(1\land |\psi(l)|)
		\sum_{j=1}^T
		\sum_{I\subset\{1,\ldots,j\}}
		\\ &
		  \
		  \int_{\RR^j}
		\ind
		  \left\{
		  \left| \sum_{m=1}^j
		  a_{j-m}z_{m-1} \right|
		  \cdot u_n \cdot \mathfrak{a}_j \cdot [s_1,s_2]
		\right\}
		  \left(
		  \prod_{\substack{m\notin I}}
		g_{\gamma}(z_{m-1}+\varphi(m))
		  \right)
		  \left(
		  \prod_{\substack{m\in I}}
		g_{\gamma}(z_{m-1}+\varphi(m)+\psi(m))
		  \right)
		  \,\mathrm{d}\mathbf{z}_{}
		\,.
	\end{align*}
	By Lemma~\ref{lem:comprehensive}(i), this is then bounded above by
	\begin{align*}
		u_n^{-\gamma}\cdot ((s_2-s_1)\land 1)
		\sum_{j=1}^T
		\sum_{I\subset\{1,\ldots,j\}}
		\left(
		1+
		\left|
		\sum_{m=1}^j
		a_{j-m}
		\varphi(m)
		\right|^{1+\gamma}
		\right)
		\left(
		|\psi(l)|
		+
		\left|
		\sum_{m\in I}
		a_{j-m}
		\psi(m)
		\right|^{1+\gamma}
		\right)
		\,,
	\end{align*}
	up to a multiplicative constant that depends only on $T$, $(a_m)$, $(\mathfrak{a}_j)$, $\gamma$, $\underline{s}$ and the distribution of $\varepsilon$.
	\\
	\textbf{Proof of (ii):}
	By Lemma~\ref{lem:aux_R}(ii),
	\begin{align*}
		 &
		\left|
		\widetilde{R}_{i,l}(
		j\mapsto z_{j-1} + \varphi(j), s\cdot \psi)
		\right|
		\\ &
		  \ \lesssim \
		  \left(
		  1\land |s\cdot \psi(l)|
		  \right)
		\sum_{I\subset\{1,\ldots,T\}}
		\left(
		  \prod_{m\notin I}
		g_{\gamma}(z_{m-1}+\varphi(m))
		  \right)
		  \left(
		  \prod_{m\in I}
		g_{\gamma}(z_{m-1}+\varphi(m)+s\cdot \psi(m))
		  \right)
		  \,,
	\end{align*}
	where the multiplicative constant in $\lesssim$ depends only on the distribution of $\varepsilon$.
	Applying Lemma~\ref{lem:flush_inactive}, for all $I\subset\{1,\ldots,T\}$, with
	\begin{align*}
		\mathfrak{y}_m
		\ = \
		\varphi(m)
		\ + \
		\ind\{m\in I\}
		\cdot
		s
		\cdot \psi(m)
		\,,
		\qquad
		m\in \{1,\ldots,T\}
		\,,
	\end{align*}
	and proceeding as in the proof of (i), we get
	\begin{align*}
		 &
		\int_{\mathfrak{b}\land \mathfrak{c}}^{\mathfrak{b}\lor \mathfrak{c}}
		\left(
		\int_{\RR^T}
		G_n(\mathbf{A}_{T}\cdot \mathbf{z}_{},s_1,s_2,A)
		\left|
		\widetilde{R}_{i,l}(
		j\mapsto z_{j-1} + \varphi(j), s\cdot \psi)
		\right|
		\,\mathrm{d}\mathbf{z}_{}
		\right)
		\,\mathrm{d}s
		\\ &
		  { \ \lesssim \ }
		\sum_{j=1}^T
		  \sum_{I\subset \{1,\ldots,j\}}
		\int_{\mathfrak{b}\land \mathfrak{c}}^{\mathfrak{b}\lor \mathfrak{c}}
		\left(
		  1\land |s\cdot \psi(l)|
		  \right)
		\,\mathrm{d}s
		\\ & \qquad\times\
		  \left(
		  \int_{\RR^j}
		\ind
		  \left\{
		  \left| \sum_{m=1}^j
		  a_{j-m}z_{m-1} \right|
		  \in u_n \cdot \mathfrak{a}_j \cdot [s_1,s_2]
		\right\}
		  \right.
		\\ &
		  \left.
		  \qquad
		  \qquad
		  \qquad
		  \times\
		  \prod_{m\notin I}
		g_{\gamma}(z_{m-1}+\varphi(m))
		  \cdot
		  \prod_{m\in I}
		g_{\gamma}(z_{m-1}+\varphi(m)+s\cdot \psi(m))
		  \,\mathrm{d}\mathbf{z}_{}
		\right)
		\\
		 &
		\ \lesssim \
		u_n^{-\gamma}\cdot ((s_2-s_1)\land 1)
		\\ & \qquad\times\
		  \sum_{j=1}^T
		  \sum_{I\subset\{1,\ldots,j\}}
		\left(
		  \left|
		  \sum_{m\in I}
		a_{j-m} \psi(m)
		\right|^{\gamma}
		  \ + \
		  |\psi(l)|^{\gamma}
		  \right)
		\ \times \
		  \left(
		  1 \ + \
		  \left|
		  \sum_{m= 1}^j
		  a_{j-m} \varphi(m)
		\right|^{1+\gamma}
		  \right)
		\left(
		  |\mathfrak{b}|^{1+\gamma}
		  +
		  |\mathfrak{c}|^{1+\gamma}
		  \right)
		\,,
	\end{align*}
	where the last step follows from Lemma~\ref{lem:comprehensive}(ii).
	The multiplicative constant in $\lesssim$ depends only on
	$T$, $(a_m)$, $(\mathfrak{a}_j)$, $\gamma$, $\underline{s}$ and the distribution of $\varepsilon$.
\end{proof}
\section{Reduction Principle}
We are ready to
analyze the probabilistic part of the problem. We first formulate the main
result of the section, which is Lemma~\ref{lem:reduction_principle}. We
next provide a short proof of
Theorem~\ref{lem:reduction_principle_optimal}, assuming that Lemma~\ref{lem:reduction_principle} holds, and we then proceed to prove Lemma~\ref{lem:reduction_principle} in several steps.
\begin{lemma}[$L^r$-reduction principle]
	\label{lem:reduction_principle}
	Let Assumptions~\ref{asu:f} and~\ref{asu:coef} hold.
	For all
	$\gamma\in(0,1)$ and $r\in[1,2]$ satisfying
	\begin{align}
    \label{eq:paramrange}
		\gamma
		\ < \
		\min
		\left\{
		\frac{d}{1-d}
		\,,
		1
		-
		\frac{1}{r(1-d)}
		\,,
		\frac{\alpha}{r}
		-
		1
		\right\}
		\qquad\text{and}\qquad
		\frac{1}{1-d}
		\ < \
		r
		\ < \
		\alpha
		\,,
	\end{align}
	it holds for $n\in \NN$,
	\begin{align*}
		 &
		\EE
		\left[
			\left|
      \sum_{t=1}^{n-T+1}
			G_n([X_{t+\ell}]_\ell, s_1,s_2,A)
			\ - \
			\EE[G_n([X_{t+\ell}]_\ell,s_1,s_2,A)]
			\ - \
			\left(
			\nabla_{\mathbf{y}} G_{\infty,n}(\mathbf{y}_{},s_1,s_2,A)\Big|_{\mathbf{y}=\mathbf{0}_T}
			\right)^{\top}
			\cdot
			[X_{t+\ell}]_{\ell}
			\right|^{r}
			\right]
		\\ &
		  \ \lesssim \
		  ((s_2-s_1)\land 1)
		  \cdot
		  n^{r+1-(1-d)(1+\gamma)r}
		\,,
	\end{align*}
	where $\lesssim$ means inequality up to a multiplicative constant that depends only on $A$, $T$, $\underline{s}$, $\gamma$, $r$, the coefficients $(a_i)$, and the distribution of $\varepsilon$, and is independent of $s_1$, $s_2$, and $n$.
\end{lemma}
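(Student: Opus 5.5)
We will follow the martingale-decomposition route sketched in \eqref{eq:heuristic}, as in the univariate argument of \cite{scheffelCentralLimitTheory2025}, using the separable product bounds to control each piece uniformly in $(s_1,s_2)$.

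\textbf{Step 1: martingale expansion.} Writing $P_{t-k}$ for $\EE[\cdot\mid\mathcal{F}_{t-k}]-\EE[\cdot\mid\mathcal{F}_{t-k-1}]$, we expand
$G_n([X_{t+\ell}]_\ell)-\EE G_n=\sum_{k\ge0}P_{t-k}G_n([X_{t+\ell}]_\ell)$ and likewise $[X_{t+\ell}]_\ell=\sum_{k\ge0}\varepsilon_{t-k}[a_{k+\ell}]_\ell$. Here we use the convention $a_j=0$ for $j<0$, and note that for $k<0$ only the first $T-1$ lags, a finite number, appear. Regrouping by the innovation index $i=t-k$, the quantity to bound becomes $\sum_{i}\sum_{t}(P_iG_n([X_{t+\ell}]_\ell)-\varepsilon_i[a_{t-i+\ell}]_\ell^\top\nabla G_{\infty,n}(\mathbf{0}_T))$. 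This is a sum of martingale differences in $i$, so Lemma~\ref{lem:bahr}(ii) with $p=r\in(1,2]$ reduces the $r$-th moment to $2\sum_i\EE|\sum_t D_{t,i}|^r$. Using Minkowski in $t$ and stationarity, this is at most $\lesssim n\,(\sum_{k}\|D_{k}\|_r)^r$ for the part with $k\le n$, plus a tail part $\sum_{i<-n}$ handled analogously. Here $D_k:=P_0G_n([X_{k+\ell}]_\ell)-\varepsilon_0[a_{k+\ell}]_\ell^\top\nabla G_{\infty,n}(\mathbf{0}_T)$.

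\textbf{Step 2: splitting $D_k$.} As in \eqref{eq:heuristic}, $P_0G_n([X_{k+\ell}]_\ell)=G_{k-1,n}(\mathbf{U}_{k})-G_{k,n}(\mathbf{U}_{k+1})$, where the $\mathbf{U}$ are the $\mathcal{F}_{-1}$- or $\mathcal{F}_0$-measurable remainders. We decompose $D_k$ into four terms:
\begin{enumerate}
\item $G_{k-1,n}-G_{k,n}$ evaluated at $\mathbf{U}_k$;
\item the second-order Taylor remainder of $G_{k,n}(\mathbf{U}_{k+1}+\varepsilon_0[a_{k+\ell}]_\ell)-G_{k,n}(\mathbf{U}_{k+1})$, written as $\int_0^1$ of gradient differences;
\item $\varepsilon_0[a_{k+\ell}]^\top(\nabla G_{k,n}(\mathbf{U}_{k+1})-\nabla G_{k,n}(\mathbf{0}))$;
\item $\varepsilon_0[a_{k+\ell}]^\top(\nabla G_{k,n}(\mathbf{0})-\nabla G_{\infty,n}(\mathbf{0}))$.
\end{enumerate}
Each gradient difference is rewritten via Lemma~\ref{lem:grad:lip} in $\widetilde R$-notation and bounded by Lemma~\ref{lem:ready}: part (ii) for the Taylor-integral term, with $s$ the interpolation variable and $\psi\propto\varepsilon_0[a_{k+\ell}]$, and part (i) otherwise. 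This yields pointwise bounds of the form
$u_n^{-\gamma}((s_2-s_1)\land1)\cdot(\text{polynomial in }|\varepsilon_0 a_k|,\ |X_j-X_{j,k+j}|,\dots)$,
with exponents $\gamma$ or $1+\gamma$, combined with a factor $1\land|\psi|$. Term (i) is treated the same way, by writing $G_{k-1,n}-G_{k,n}$ as an expectation over the single innovation $a_k\varepsilon$ and using the density difference $\widetilde R$ identity. Since $u_n\ge1$, we drop $u_n^{-\gamma}$.

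\textbf{Step 3: moments and summation.} Taking $L^r$ norms, we use Lemma~\ref{lem:x0k}: $\|X_0-X_{0,k}\|_{r(1+\gamma)}^{1+\gamma}\lesssim k^{(1-(1-d)r(1+\gamma))/r}$, which is admissible since $r(1+\gamma)<\alpha$ by \eqref{eq:paramrange}. We also use $\EE|(1\land|a_k\varepsilon_0|)\varepsilon_0|^r\lesssim |a_k|^{r\gamma}$-type bounds, and exploit independence of $\varepsilon_0$ from the remainders. The target is $\|D_k\|_r\lesssim((s_2-s_1)\land1)^{1/r}k^{-(1-d)(1+\gamma)+1/r}$; the factor $(s_2-s_1)\land1\le1$ survives in the $r$-th power because it is at most its $1/r$ power. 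Summing over $k\le n$, which is possible because $\gamma<d/(1-d)$ makes the exponent larger than $-1$ (a divergent sum), gives $(\sum_k\|D_k\|_r)^r\lesssim n^{r-(1-d)(1+\gamma)r}$. Multiplying by $n$ then yields the claimed rate $n^{r+1-(1-d)(1+\gamma)r}$. The condition $\gamma<1-1/(r(1-d))$ is used to make the tail sum over $i<-n$ (that is, $k>n$) comparable.

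\textbf{Main obstacle.} The delicate point is Step 2 for term (iii) and the Taylor remainder. There the random shift $\mathbf{U}_{k+1}$ enters both the $\varphi$-argument, giving a harmless factor $1+|\cdot|^{1+\gamma}$, and the product $\varepsilon_0\cdot(1\land|\psi|)$. We must extract decay $k^{-(1-d)(1+\gamma)}$ jointly from $|a_k|^{\gamma}$ and $|X_0-X_{0,k}|$ while keeping all moments below $\alpha$. We would first try to balance the exponents exactly as in the univariate proof, assigning the $|\cdot|^\gamma$ factor of Lemma~\ref{lem:ready}(ii) to $a_k\varepsilon_0$, and only then check that the multivariate sums over $I\subset\{1,\dots,j\}$ introduce no worse terms.
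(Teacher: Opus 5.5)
\textbf{The gap is in Step~3: your per-lag decay target is too weak, and your summation does not follow from it.}

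Your architecture is the paper's: projection onto innovations, von Bahr--Esseen via Lemma~\ref{lem:bahr}, Taylor-type remainders controlled through Lemmas~\ref{lem:grad:lip} and~\ref{lem:ready}, and moments from Lemma~\ref{lem:x0k}. Your four-term split of $D_k$ is an acceptable variant of $R_1+R_2+R_3$ in Lemma~\ref{lem:core_r}. The paper merges your (i) and (ii) by writing $\EE[G_n\mid\mathcal F_{-(k+1)}]=\int G_{k-1,n}(\cdot+z[a_{k+\ell}]_\ell)\,\mathrm d\PP_\varepsilon(z)$. Note that your (i) needs $\EE\varepsilon=0$ to be of second order; otherwise it is only $O(k^{-(1-d)})$.

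\textbf{Why the target fails.} With $\|D_k\|_r\lesssim k^{-(1-d)(1+\gamma)+1/r}$ you only get $(\sum_{k\le 2n}\|D_k\|_r)^r\lesssim n^{r+1-(1-d)(1+\gamma)r}$, not $n^{r-(1-d)(1+\gamma)r}$. After multiplying by $n$ you lose a full power of $n$. Worse, the tail $i<-n$ is then of order $n^r\sum_{|i|>n}|i|^{1-(1-d)(1+\gamma)r}$. This diverges, because $(1-d)(1+\gamma)<1$ (from $\gamma<d/(1-d)$) and $r<\alpha\le 2$. What you need, and what Lemma~\ref{lem:boundsRj_3} proves, is $\|D_k\|_r\lesssim((s_2-s_1)\wedge1)\,k^{-(1-d)(1+\gamma)}$ for $k\ge1$.

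\textbf{How to get the correct decay in (iii) and (iv).} In these terms the shift $\psi$ in the gradient difference is the remainder $[X_\ell-X_{\ell,k+\ell}]_\ell$. In (iii) it appears as $\mathbf U_{k+1}$; in (iv) as an independent copy integrated out via Lemma~\ref{lem:grad:lip}(ii). Lemma~\ref{lem:ready}(i) bounds the difference \emph{linearly} in $|\psi(l)|$.
\begin{itemize}
\item Balancing a factor $|a_k|^\gamma$ against $|X_0-X_{0,k}|$, as you propose in your last paragraph, gives exactly the insufficient $k^{1/r-(1-d)(1+\gamma)}$.
\item Instead, keep the full prefactor $|a_k|\lesssim k^{-(1-d)}$ multiplying $\varepsilon_0$. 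The dominant piece $|a_k|\,|\varepsilon_0|\,\|\mathbf U_{k+1}\|_1$ then has $L^r$-norm $\lesssim k^{-(1-d)}\,k^{1/r-(1-d)}=k^{1/r-2(1-d)}$, by independence and Lemma~\ref{lem:x0k}.
\item The hypothesis $\gamma<1-1/(r(1-d))$ is precisely what gives $k^{1/r-2(1-d)}\le k^{-(1-d)(1+\gamma)}$. This is where that condition is used, not in the tail sum.
\end{itemize}

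For (i) and (ii), the factor $|a_k|^{1+\gamma}$ comes from the prefactor $|a_k|$ times the $|a_k|^\gamma$ of Lemma~\ref{lem:ready}(ii). The condition $(1+\gamma)r<\alpha$ supplies the moments of $|\varepsilon_0|^{1+\gamma}$ and $|\mathbf U|^{1+\gamma}$.

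\textbf{Remaining bookkeeping.} With the correct decay, the tail over $i<-n$ is summable because $r(1-d)(1+\gamma)>r(1-d)>1$, as in Lemma~\ref{lem:analytic}. Your Step~2 does not cover the $T$ projections with $k\le 0$. Bound them crudely by Lemma~\ref{lem:2.2}. They contribute $n((s_2-s_1)\wedge1)\le ((s_2-s_1)\wedge1)\, n^{r+1-(1-d)(1+\gamma)r}$, which holds since $(1-d)(1+\gamma)\le 1$.
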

\begin{proof}[\textbf{Proof of Theorem~\ref{lem:reduction_principle_optimal}:}]
Following Remark~3.4 in~\cite{scheffelCentralLimitTheory2025},
  a version of the upper bound in Lemma~\ref{lem:reduction_principle} with optimal parameter values $(\gamma_0,r_0)$ is given in Theorem~\ref{lem:reduction_principle_optimal}, where
    \begin{equation}
 (\gamma_0, r_0) := \left\{ \begin{array}{l}
    \left( \dfrac{d}{1-d}, \ \alpha(1-d) \right)
    \ \text{ if } \
    \dfrac{1}{(1-d)(1-2d)}
    < \alpha\,,\\[10pt]
    \left( \dfrac{\alpha(1-d)-1}{\alpha(1-d)+1}, \ \dfrac{1}{2}
    \left(
    \alpha + \dfrac{1}{1-d}
    \right) \right)
     \ \text{ otherwise,}
    \end{array} \right.
  \end{equation}
  yielding the optimal value $\kappa_0 := \kappa(\gamma_0,r_0)=1 + 1/r_0 - (1-d)(1+\gamma_0)$
  as
  \begin{align}
  \kappa_0
    =
  \left\{ \begin{array}{l}
    \dfrac{1}{\alpha(1-d)}
    \ \text{ if } \
    \dfrac{1}{(1-d)(1-2d)}
    < \alpha\,,\\[10pt]
    \dfrac{2(1-d) + (1-\alpha(1-d)(1-2d))}{\alpha(1-d)+1} = d
    +
    (1-d)
    \dfrac{3-\alpha(1-d)}{\alpha(1-d)+1}
    \ \text{ otherwise.}
  \end{array} \right.
\end{align}
We remark that, while $\gamma_0$ lies on the boundary of the admissible parameter range~\eqref{eq:paramrange}, the value $\gamma_0-\delta$ lies inside this range for each $\delta>0$ sufficiently small. We may therefore apply Lemma~\ref{lem:reduction_principle} with $(\gamma,r)=(\gamma_0-\delta,r_0)$. 
The multiplicative constant in $\lesssim$ then also depends on $\delta$. 
\end{proof}
\subsection{Martingale decomposition}
The derivation of the martingale decomposition follows the argument that leads to
Lemma~B.3 in~\cite{scheffelCentralLimitTheory2025}, where its conceptual motivation is explained in detail.
Define, for $n\in\NN$, $t\in \mathbb{Z}$,
\begin{align*}
	 &
	\mathcal{U}_n([X_{t+\ell}]_\ell,s_1,s_2,A)
	\\ &
	:= \
	  G_n([X_{t+\ell}]_\ell,s_1,s_2,A) \ -\  \EE[G_n([X_{t+\ell}]_\ell,s_1,s_2,A)] \ -\
	  \left(
	  \nabla_{\mathbf{y}} G_{\infty,n}(\mathbf{y},s_1,s_2,A)
	  \Big|_{\mathbf{y}=\mathbf{0}_T}
	\right)^\top\cdot [X_{t+\ell}]_\ell
\end{align*}
where the gradient
$
	\nabla_{\mathbf{y}} G_{\infty,n}(\mathbf{y},s_1,s_2,A)
	\Big|_{\mathbf{y}=\mathbf{0}_T}
$ is well-defined under the assumptions of Lemma~\ref{lem:multi_swap}.
Let
$\mathcal{F}_{k}=\sigma(\varepsilon_{k},\varepsilon_{k-1},\ldots)$ 
be the past $\sigma$-algebra generated by the sequence $(\varepsilon_{t})$ up to index $k$. For $Y\in L^1(\PP)$, we define the projection
\begin{align*}
	P_k\,Y
	\ := \
	\EE[Y\mid \mathcal{F}_{k}]
	\ - \
	\EE[Y\mid \mathcal{F}_{k-1}]
	\qquad\text{for all}\ k \in\mathbb{Z}\,.
\end{align*}
\begin{lemma}[Moment bounds for the centered process $\mathcal{U}_n$ and its projection]
	\label{lem:2.2}
	Let Assumptions~\ref{asu:f} and~\ref{asu:coef} hold.
	Then, the following holds for all $r\in [1,\alpha)$:
	\begin{enumerate}
		\item
		      $\norm{\mathcal{U}_n([X_\ell]_\ell,s_1,s_2,A)}_{L^r(\PP)}\lesssim ((s_2-s_1)\land 1)^{1/r}$, and in particular
		      $\mathcal{U}_n([X_{\ell}]_{\ell},s_1,s_2,A)\in L^r(\PP)$.
		\item
		      $\norm{
				      P_k\,\mathcal{U}_n([X_{\ell}]_{\ell},s_1,s_2,A)
			      }_{L^r(\PP)}\lesssim \norm{\mathcal{U}_n([X_{\ell}]_{\ell},s_1,s_2,A)}_{L^r(\PP)}$,
		      and  $P_k\,\mathcal{U}_n([X_{\ell}]_{\ell},s_1,s_2,A)\in L^r(\PP)$ for all $k\in\mathbb{Z}$.
	\end{enumerate}
	In both (i) and (ii) $\lesssim$ means inequality up to a multiplicative constant that
	depends only on $A$, $T$, $\underline{s}$, $\gamma$, $r$, the coefficients $(a_i)$, and the distribution of $\varepsilon$, and is independent of $s_1$, $s_2$, $n$, and $k$.
\end{lemma}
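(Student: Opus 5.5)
We bound the three summands of $\mathcal{U}_n([X_\ell]_\ell,s_1,s_2,A)$ separately in $L^r(\PP)$ and then use Minkowski's inequality. The indicator part is $G_n([X_\ell]_\ell,s_1,s_2,A)-\EE[G_n([X_\ell]_\ell,s_1,s_2,A)]$. Since $G_n$ takes values in $\{0,1\}$, we have $\EE[G_n^r]=\EE[G_n]=\PP[[X_\ell]_\ell\in u_n\cdot((s_1\cdot A)\setminus(s_2\cdot A))]$. By Lemma~\ref{lem:single}(ii) with any fixed $\gamma\in(0,\alpha-1)$, this is $\lesssim u_n^{-\gamma}((s_2-s_1)\land 1)\le (s_2-s_1)\land 1$, because $u_n\ge 1$. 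Hence $\norm{G_n}_{L^r(\PP)}\lesssim ((s_2-s_1)\land 1)^{1/r}$. By Jensen's inequality, the constant $\EE[G_n]$ obeys the same bound, even the stronger one $(s_2-s_1)\land 1\le((s_2-s_1)\land 1)^{1/r}$.

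For the linear part we use Hölder's inequality:
\begin{align*}
\left|\left(\nabla_{\mathbf{y}}G_{\infty,n}(\mathbf{y},s_1,s_2,A)\Big|_{\mathbf{y}=\mathbf{0}_T}\right)^{\top}[X_\ell]_\ell\right|
\ \le\ \norm{\nabla G_{\infty,n}(\mathbf{0}_T,s_1,s_2,A)}_\infty\sum_{\ell=0}^{T-1}|X_\ell|\,.
\end{align*}
Lemma~\ref{lem:single}(i) bounds the gradient by $u_n^{-\gamma}((s_2-s_1)\land 1)\le (s_2-s_1)\land 1$. Lemma~\ref{lem:fm} gives $X_0\in L^r(\PP)$ for $r\in[1,\alpha)$, and stationarity then bounds the $L^r$-norm of the linear part by $T\norm{X_0}_{L^r}((s_2-s_1)\land 1)$. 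Again this is $\le$ a constant times $((s_2-s_1)\land1)^{1/r}$, since $r\ge1$ and $(s_2-s_1)\land 1\le 1$. Summing the three bounds proves (i). The constants depend only on $A$ (through $\mathfrak{a}_j$), $T$, $\underline{s}$, $\gamma$, $r$, $(a_i)$ and the law of $\varepsilon$, as required. Integrability of $\mathcal{U}_n$ in $L^r(\PP)$ follows immediately.

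For (ii), write $Y:=\mathcal{U}_n([X_\ell]_\ell,s_1,s_2,A)$, which lies in $L^r(\PP)$ by (i). Conditional expectation is a contraction on $L^r(\PP)$ by the conditional Jensen inequality, so
\begin{align*}
\norm{P_kY}_{L^r}\ \le\ \norm{\EE[Y\mid\mathcal{F}_k]}_{L^r}+\norm{\EE[Y\mid\mathcal{F}_{k-1}]}_{L^r}\ \le\ 2\norm{Y}_{L^r}\,.
\end{align*}
This bound is uniform in $k$ and shows $P_kY\in L^r(\PP)$.

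No step is a real obstacle, since the heavy lifting is already done in Lemma~\ref{lem:single}. The only point needing care is the bookkeeping of the exponent. The indicator contributes exactly the power $1/r$ of $(s_2-s_1)\land 1$, while the other two terms contribute the first power. These combine to the power $1/r$ only because $(s_2-s_1)\land 1\le 1$ and $r\ge 1$.
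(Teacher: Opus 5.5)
Your proof is correct and follows essentially the same route as the paper. You apply Minkowski to the three summands of $\mathcal{U}_n$, use Lemma~\ref{lem:single} (dropping the factor $u_n^{-\gamma}\le 1$), Lemma~\ref{lem:fm} and stationarity, and combine the exponents via $(s_2-s_1)\land 1\le 1$ for (i); for (ii) you use the conditional Jensen contraction, which gives the factor $2$.
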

\begin{remark}[Dropping $u_n^{-\gamma/r}$ rate]
	\label{rem:dropping}
	A sharper bound with a
	factor $u_n^{-\gamma/r}\in(0,1)$ is possible. Since the final analysis does not crucially depend on this factor and the gains of keeping it are small, we drop it.
\end{remark}
\begin{proof}[\textbf{Proof of (i):}]
	Since, for all $\mathbf{x}_{}\in\RR^T$,
	\begin{align*}
		\norm{\mathbf{x}_{}^\top\cdot [X_\ell]_\ell}_{L^r(\PP)}
		\ \le \
		\sum_{\ell=0}^{T-1}|x_\ell|
		\norm{X_\ell}_{L^r(\PP)}
		\ \le \
		T
		\norm{\mathbf{x}_{}}_{\infty}
		\norm{X_0}_{L^r(\PP)}
		\,,
	\end{align*}
	by stationarity of $(X_t)$, it follows from the definition of $\mathcal{U}_n$ that
	\begin{align*}
		 &
		\norm{\mathcal{U}_n([X_{\ell}]_\ell,s_1,s_2,A)}_{L^r(\PP)}
		\\
		 &
		\ \le \
		\left(
		\PP[[X_{\ell}]_{\ell}\in u_n \cdot ((s_1\cdot A)\setminus(s_2\cdot A))]
		\right)^{1/r}
		\ + \
		\PP[[X_{\ell}]_{\ell}\in u_n \cdot ((s_1\cdot A)\setminus(s_2\cdot A))]
		\\ &
		  \qquad + \
		  \norm{
			  \left(
			  \nabla_{\mathbf{y}} G_{\infty,n}(\mathbf{y},s_1,s_2,A)
			  \Big|_{\mathbf{y}=\mathbf{0}_T}
			\right)
			  ^\top\cdot [X_{\ell}]_\ell
			  }_{L^r(\PP)}
		\\ &
		  \ \le \
		  2
		  \left(
		  \PP[[X_{\ell}]_{\ell}\in u_n \cdot ((s_1\cdot A)\setminus(s_2\cdot A))]
		\right)^{1/r}
		\ + \
		  T
		  \norm{
			  \nabla_{\mathbf{y}} G_{\infty,n}(\mathbf{y},s_1,s_2,A)
			  \Big|_{\mathbf{y}=\mathbf{0}_T}
		}_{\infty}
		\norm{
			  X_{0}
		}_{L^r(\PP)}
		\\ &
		  \ \lesssim \
		  \left(
		  \PP[[X_{\ell}]_{\ell}\in u_n \cdot ((s_1\cdot A)\setminus(s_2\cdot A))]
		\right)^{1/r}
		\ + \
		  \norm{
			  \nabla_{\mathbf{y}} G_{\infty,n}(\mathbf{y},s_1,s_2,A)
			  \Big|_{\mathbf{y}=\mathbf{0}_T}
		}_{\infty}
	\end{align*}
	since $X_0\in L^r(\PP)$ by Lemma~\ref{lem:fm}. The constant in $\lesssim$ depends only on $T$, $r$, the coefficients $(a_i)$, and the distribution of $\varepsilon$.
	By Lemma~\ref{lem:single}, dropping the $u_n^{-\gamma}$ rate (see Remark~\ref{rem:dropping}), this is bounded above by
	\begin{align*}
		\left(
		(s_2-s_1)\land 1
		\right)^{1/r}
		\ + \
		((s_2-s_1)\land 1)
		\ \lesssim \
		\left(
		(s_2-s_1)\land 1
		\right)^{1/r}
		\,,
	\end{align*}
	up to a multiplicative constant that depends only on $A$, $T$, $\underline{s}$, $r$, $\gamma$, the coefficients $(a_i)$, and the distribution of $\varepsilon$.
	\\
	\textbf{Proof of (ii):}
	By Jensen's inequality and the properties of conditional expectation,
	\begin{align*}
		\norm{
			P_k\,Y
		}_{L^r(\PP)}
		\  \le   \
		\norm{\EE[Y\mid \mathcal{F}_k]}_{L^r(\PP)}
		\ + \
		\norm{\EE[Y\mid \mathcal{F}_{k-1}]}_{L^r(\PP)}
		\ \le \
		2
		\norm{Y}_{L^r(\PP)}
		\ < \ \infty
	\end{align*}
	for $Y\in L^r(\PP)$. The statement then follows from part~(i).
\end{proof}
We define
\begin{align*}
	 &
	\mathcal{S}_n(s_1,s_2,A)
  \ := \ \sum_{t=1}^{n-T+1}
	\mathcal{U}_n([X_{t+\ell}]_\ell,s_1,s_2,A)
	\\ &
	= \
    \sum_{t=1}^{n-T+1}
    \left(
	G_n([X_{t+\ell}]_\ell,s_1,s_2,A) \ -\  \EE[G_n([X_{t+\ell}]_\ell,s_1,s_2,A)] \ -\
	  \left(
	  \nabla_{\mathbf{y}} G_{\infty,n}(\mathbf{y},s_1,s_2,A)
	  \Big|_{\mathbf{y}=\mathbf{0}_T}
	\right)^\top [X_{t+\ell}]_\ell
    \right)
	  \,.
\end{align*}
\begin{lemma}[Moment bounds after martingale decomposition]
	\label{lem:cond_decomp}
	Let Assumptions~\ref{asu:f} and~\ref{asu:coef} hold. Then, for all $r\in[1,\alpha)$,
	\begin{align*}
		\mathbf{E}\left|
		\mathcal{S}_n(s_1,s_2,A)
		\right|^r
		\ \lesssim \
		\sum_{k\le n}^{}
		\left(
    \sum_{t=(k-T+1)\lor 1}^{n-T+1}
		\norm{
				P_{k-t}\, \mathcal{U}_n([X_\ell]_\ell,s_1,s_2,A)
			}_{L^r(\PP)}
		\right)^r
		\,,
	\end{align*}
	where $\lesssim$ means inequality up to a multiplicative constant that is independent of all other quantities in this work, in particular, independent of $r$ and $n$.
\end{lemma}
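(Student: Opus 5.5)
The approach is the standard projective (martingale) decomposition of $\mathcal{S}_n$, combined with the von Bahr--Esseen type inequality of Lemma~\ref{lem:bahr}. Each summand $\mathcal{U}_n([X_{t+\ell}]_\ell,s_1,s_2,A)$ is centered and, by Lemma~\ref{lem:2.2}(i), lies in $L^r(\PP)$. It is also measurable with respect to $\mathcal{F}_{t+T-1}$, since $[X_{t+\ell}]_\ell$ depends only on $\varepsilon_{t+T-1},\varepsilon_{t+T-2},\ldots$.

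\textbf{Step 1: write the sum as a martingale series.} Fix $t$. By $L^r$-martingale convergence, $\EE[\mathcal{U}_n(\cdot)\mid\mathcal{F}_{j}]\to 0$ in $L^1$ and almost surely as $j\to-\infty$; the limit is zero because the variable is centered and the tail $\sigma$-field is trivial by Kolmogorov's 0--1 law. Hence
\[
\mathcal{U}_n([X_{t+\ell}]_\ell)=\sum_{k\le t+T-1}P_k\,\mathcal{U}_n([X_{t+\ell}]_\ell).
\]
Summing over $t$ and exchanging the finite $t$-sum with the $k$-sum gives
\[
\mathcal{S}_n=\sum_{k\le n} Y_k,\qquad Y_k:=\sum_{t=(k-T+1)\lor 1}^{n-T+1}P_k\,\mathcal{U}_n([X_{t+\ell}]_\ell).
\]
Here the range of $t$ is exactly the set of $t$ with $k\le t+T-1$. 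Each $Y_k$ is $\mathcal{F}_k$-measurable and satisfies $\EE[Y_k\mid\mathcal{F}_{k-1}]=0$, so the $Y_k$ form martingale differences.

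\textbf{Step 2: apply Lemma~\ref{lem:bahr}.} Enumerate the $Y_k$ in increasing order of $k$ (i.e.\ from $-\infty$ upward). Then condition~\eqref{eq:bahr:cond:1} holds, because partial sums of earlier terms are $\mathcal{F}_{k-1}$-measurable. To pass to the infinite sum I will use truncations $\sum_{m\le k\le n}Y_k$, which converge almost surely and in $L^1$ to $\mathcal{S}_n$ as $m\to-\infty$. Part~(ii) of that lemma then yields
\[
\EE|\mathcal{S}_n|^r\le 2\sum_{k\le n}\EE|Y_k|^r,
\]
with a constant that is absolute, as the statement requires. Strictly speaking, Lemma~\ref{lem:bahr}(ii) is phrased for a sequence indexed by $\NN$. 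I will apply part~(i) to the finite truncation and then let $m\to-\infty$ using Fatou's lemma, which is cleaner.

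\textbf{Step 3: Minkowski and stationarity.} Minkowski's inequality gives
\[
\norm{Y_k}_{L^r}\le\sum_t\norm{P_k\,\mathcal{U}_n([X_{t+\ell}]_\ell)}_{L^r}.
\]
The shift $\varepsilon_j\mapsto\varepsilon_{j+t}$ preserves the law of the i.i.d.\ sequence. Under this shift $P_k\,\mathcal{U}_n([X_{t+\ell}]_\ell)$ maps to $P_{k-t}\,\mathcal{U}_n([X_\ell]_\ell)$, so the two have equal $L^r$ norms. This is where the deterministic centering and the deterministic gradient vector matter, since both are invariant under the shift. Combining the three steps gives the claimed bound.

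\textbf{Main obstacle.} The delicate point is the rigorous justification in Step 1. It requires the almost sure convergence of the projection series, the interchange of sums, and the limiting argument in Step 2 under only $r<\alpha$ moments, where there is no $L^2$ structure to rely on. Each $t$-sum is finite, however, so the $L^1$ martingale convergence together with Fatou's lemma should suffice.
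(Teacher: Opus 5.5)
Your proposal is correct and follows essentially the same route as the paper. The paper gets the bound by extending the $T=1$ argument of Lemma~B.2 in \cite{scheffelCentralLimitTheory2025}: projective decomposition, the martingale difference inequality of Lemma~\ref{lem:bahr}, Minkowski's inequality and stationarity. The two facts it singles out for $T>1$ are that $\mathcal{S}_n(s_1,s_2,A)$ is $\mathcal{F}_n$-measurable and that $P_k\,\mathcal{U}_n([X_{t+\ell}]_\ell,s_1,s_2,A)=0$ for $k>t+T-1$, which are exactly what give your index ranges $k\le n$ and $t\ge (k-T+1)\lor 1$.
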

\begin{proof}
	For a proof with $T=1$ see Lemma~B.2 in~\cite{scheffelCentralLimitTheory2025}. The proof extends readily
	to the case $T>1$ by considering
	$[X_0,\ldots,X_{T-1}]$ instead of $X_0$
	and noting that
	$\mathcal{S}_n(s_1,s_2,A)$ is $\mathcal{F}_{n}$-measurable, and
  $P_k\ \mathcal{U}_n([X_{t+\ell}]_\ell,s_1,s_2,A)= 0$ for $k-T+1>t$.
  We omit the details.
\end{proof}
\begin{lemma}[Error decomposition]
	\label{lem:core_r}
	Let Assumptions~\ref{asu:f} and~\ref{asu:coef} hold. Then, for all $k\in\NN$, it holds
	$
		P_{-k}\,\mathcal{U}_n([X_{\ell}]_{\ell},s_1,s_2,A)
		=
		R_1
		+
		R_2
		+
		R_3
	$,
	with
	\begin{align*}
		R_1
		    & \ := \
		G_{k-1,n}(
		\mathbf{y},s_1,s_2,A
		)
		\Big|_{\mathbf{y}=[X_{\ell}-X_{\ell,k-1+\ell}]_\ell}
		\ - \
		\int_\RR
		G_{k-1,n}(
		\mathbf{y},s_1,s_2,A
		)
		\Big|_{\mathbf{y}=
					[X_{\ell}-X_{\ell,k+\ell}+z\cdot a_{k+\ell}]_\ell
			}
		\,\mathrm{d}\PP_\varepsilon(z)
		\\ &
		  \qquad\ - \
		  \varepsilon_{-k}\cdot
		  [a_{k+\ell}]_\ell^{\top}
		\cdot
		  \nabla_{\mathbf{y}}
		G_{k-1,n}(
		  \mathbf{y},s_1,s_2,A
		  )
		  \Big|_{\mathbf{y}=
					  [X_{\ell}-X_{\ell,k+\ell}]_\ell
				  }
		\,,
		\\
		R_2
		    & \ := \
		\varepsilon_{-k}
		\cdot
		[a_{k+\ell}]_\ell^{\top}
		\cdot
		\left(
		\nabla_{\mathbf{y}}
		G_{\infty,n}(
		\mathbf{y},s_1,s_2,A
		)
		\Big|_{\mathbf{y}=
					[X_{\ell}-X_{\ell,k+\ell}]_\ell
			}
		\ - \
		\nabla_{\mathbf{y}}
		G_{\infty,n}(
		\mathbf{y},s_1,s_2,A
		)
		\Big|_{\mathbf{y}=
				\mathbf{0}_T
			}
		\right)
		\,,
		\\
		R_3 & \ := \
		\varepsilon_{-k}
		\cdot
		[a_{k+\ell}]_\ell^{\top}
		\cdot
		\left(
		\nabla_{\mathbf{y}}
		G_{k-1,n}(
		\mathbf{y},s_1,s_2,A
		)
		\Big|_{\mathbf{y}=
					[X_{\ell}-X_{\ell,k+\ell}]_\ell
			}
		\ - \
		\nabla_{\mathbf{y}}
		G_{\infty,n}(
		\mathbf{y},s_1,s_2,A
		)
		\Big|_{\mathbf{y}=
					[X_{\ell}-X_{\ell,k+\ell}]_\ell
			}
		\right)
		\,.
	\end{align*}
\end{lemma}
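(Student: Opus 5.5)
The statement to prove is Lemma~\ref{lem:core_r}. The plan is to compute $P_{-k}\,\mathcal{U}_n([X_\ell]_\ell,s_1,s_2,A)$ term by term and then recognise $R_1+R_2+R_3$ as a telescoping rewrite.

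\textbf{Step 1 (constant term).} The centering constant $\EE[G_n([X_\ell]_\ell)]$ has $P_{-k}=0$.

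\textbf{Step 2 (linear term).} Since $X_\ell=\sum_j a_j\varepsilon_{\ell-j}$, we have $P_{-k}X_\ell=a_{k+\ell}\varepsilon_{-k}$, using that $\varepsilon$ is centered. Hence
\begin{align*}
P_{-k}\big(\nabla G_{\infty,n}(\mathbf{0}_T)^\top[X_\ell]_\ell\big)
\ = \
\varepsilon_{-k}\,[a_{k+\ell}]_\ell^\top\,\nabla G_{\infty,n}(\mathbf{0}_T)\,.
\end{align*}

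\textbf{Step 3 (indicator term).} Write $[X_\ell]_\ell=[X_{\ell,k-1+\ell}]_\ell+[X_\ell-X_{\ell,k-1+\ell}]_\ell$. The first summand depends only on $\varepsilon_\ell,\ldots,\varepsilon_{-k+1}$ and is independent of $\mathcal{F}_{-k}$. The second summand is $\mathcal{F}_{-k}$-measurable. By the freezing lemma for conditional expectations and the definition of $G_{k,n}$,
\begin{align*}
\EE[G_n([X_\ell]_\ell)\mid\mathcal{F}_{-k}]
\ = \
G_{k-1,n}([X_\ell-X_{\ell,k-1+\ell}]_\ell)\,.
\end{align*}
For $\mathcal{F}_{-k-1}$, split off in addition the term $\varepsilon_{-k}$. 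Note that $X_\ell-X_{\ell,k-1+\ell}=a_{k+\ell}\varepsilon_{-k}+(X_\ell-X_{\ell,k+\ell})$. Integrating $\varepsilon_{-k}$ out against $\PP_\varepsilon$ gives
\begin{align*}
\EE[G_n([X_\ell]_\ell)\mid\mathcal{F}_{-k-1}]
\ = \
\int_\RR G_{k-1,n}([X_\ell-X_{\ell,k+\ell}+z\,a_{k+\ell}]_\ell)\,\mathrm{d}\PP_\varepsilon(z)\,.
\end{align*}
This is equivalent to $G_{k,n}([X_\ell-X_{\ell,k+\ell}]_\ell)$, matching the heuristic~\eqref{eq:heuristic}. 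Measurability and integrability are fine because $G_n$ is an indicator, hence bounded.

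\textbf{Step 4 (assembly).} Combining Steps 1--3,
\begin{align*}
P_{-k}\,\mathcal{U}_n
\ = \
G_{k-1,n}(\cdot)-\int G_{k-1,n}(\cdot)\,\mathrm{d}\PP_\varepsilon
\;-\;\varepsilon_{-k}\,[a_{k+\ell}]_\ell^\top\,\nabla G_{\infty,n}(\mathbf{0}_T)\,.
\end{align*}
Now add and subtract $\varepsilon_{-k}[a_{k+\ell}]_\ell^\top\nabla G_{k-1,n}$ and $\varepsilon_{-k}[a_{k+\ell}]_\ell^\top\nabla G_{\infty,n}$, both evaluated at $[X_\ell-X_{\ell,k+\ell}]_\ell$. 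The three resulting groups are exactly $R_1$, $R_3$ and $R_2$, so the sum telescopes.

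The gradients used here exist and are finite by Lemma~\ref{lem:multi_swap}(i). Finiteness of the expressions holds because $\varepsilon\in L^1$ and the gradients are bounded by Lemma~\ref{lem:single}(i); the same argument applies at any shift.

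The only delicate point is the independence and index bookkeeping in Step 3. One must check that $X_{\ell,k-1+\ell}$ involves exactly the innovations with indices greater than $-k$ (up to $\ell$), so that it is independent of $\mathcal{F}_{-k}$, and that the $\varepsilon_{-k}$ coefficient in coordinate $\ell$ is $a_{k+\ell}$. Once this is verified, the remaining steps are algebra.
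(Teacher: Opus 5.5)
Your proposal is correct and follows the paper's proof essentially step for step. You compute $P_{-k}X_\ell=a_{k+\ell}\varepsilon_{-k}$, evaluate the two conditional expectations by freezing the $\mathcal{F}_{-k}$- (resp.\ $\mathcal{F}_{-(k+1)}$-)measurable tail and integrating out the independent truncated part, and then check that $R_1+R_2+R_3$ telescopes to the resulting expression. The one point the paper justifies that you leave implicit is passing $P_{-k}$ through the infinite series defining $X_\ell$; the paper uses dominated convergence, and your own splitting $X_\ell=X_{\ell,k-1+\ell}+(X_\ell-X_{\ell,k-1+\ell})$ from Step~3 would also give it directly.
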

\begin{proof}
	By definition,
	\begin{align*}
		 &
		R_1
		\ + \
		R_2
		\ + \
		R_3
		\\ &
		  \ = \
		  G_{k-1,n}(
		  \mathbf{y},s_1,s_2,A
		  )
		  \Big|_{\mathbf{y}=
					  [X_{\ell}-X_{\ell,k-1+\ell}]_\ell
				  }
		\ - \
		  \int_{\RR}
		G_{k-1,n}(
		  \mathbf{y},s_1,s_2,A
		  )
		  \Big|_{\mathbf{y}=
					  [X_{\ell}-X_{\ell,k+\ell}+z\cdot a_{k+\ell}]_\ell
				  }
		\,\mathrm{d}\PP_\varepsilon(z)
		\\ &
		  \qquad - \
		  \varepsilon_{-k}
		\cdot
		  [a_{k+\ell}]_\ell^{\top}
		\cdot
		  \nabla_{\mathbf{y}}
		G_{\infty,n}(
		  \mathbf{y},s_1,s_2,A
		  )
		  \Big|_{\mathbf{y}=
				  \mathbf{0}_T
				  }
		\,.
	\end{align*}
	For fixed $j,k\in\mathbb{Z}$, observe that
	\begin{align*}
		P_{-k}
		\varepsilon_{-j}
		 &
		\ = \
		\begin{cases}
			0\,\quad                & \text{if} \ j\neq k\,, \\
			\varepsilon_{-k}\,\quad & \text{if} \ j= k\,,    \\
		\end{cases}
	\end{align*}
	so, by dominated convergence, it follows that
	\begin{align*}
		P_{k}\, X_\ell
		\ = \
		\sum_{i=0}^\infty
		a_i
		P_{k}\,\varepsilon_{\ell-i}
		\ = \
		\begin{cases}
			0\,\quad                         & \text{if} \ k>\ell\,,    \\
			a_{\ell-k}\varepsilon_{k}\,\quad & \text{if} \ k\le \ell\,. \\
		\end{cases}
	\end{align*}
	Thus,
	\begin{align}
		\label{eq:PkU}
		\begin{split}
			 &
			P_{k}\,\mathcal{U}_n([X_{\ell}]_{\ell},s_1,s_2,A)
			\\
			 &
			\ = \
			P_{k}\,
			\Big(G_n([X_{\ell}]_{\ell},s_1,s_2,A) \ -\  \EE[G_n([X_{\ell}]_\ell,s_1,s_2,A)] \ -\
			\nabla_{\mathbf{y}}
			\left(
			G_{\infty,n}(
			\mathbf{y},s_1,s_2,A
			)
			\Big|_{\mathbf{y}=
					\mathbf{0}_T
				}
			\right)^{\top}
			\cdot [X_\ell]_{\ell}\Big)
			\\
			 &
			\ = \
			\EE[G_n([X_\ell]_{\ell},s_1,s_2,A)\, | \, \mathcal{F}_{k}]
			\ - \
			\EE[G_n([X_{\ell}]_{\ell},s_1,s_2,A)\, | \, \mathcal{F}_{k-1}]
			\\ &
			  \qquad - \
			  \varepsilon_{k}
			\cdot
			  \left(
			  \left(
			  \nabla_{\mathbf{y}}
			G_{\infty,n}(
			  \mathbf{y},s_1,s_2,A
			  )
			  \Big|_{\mathbf{y}=
					  \mathbf{0}_T
					  }
			\right)^{\top}
			\cdot
			  [
				  a_{\ell-k}
				\cdot
				  \ind\{k\le \ell\}
				  ]_{\ell}
			\right)
			  \,.
		\end{split}
	\end{align}
	Since we assume $k\ge 1$ and $\ell\in\{0,\ldots,T-1\}$, it holds $\ind\{-k\le \ell\}=1$, so that by \eqref{eq:PkU},
	\begin{align*}
		 &
		P_{-k}\,\mathcal{U}_n([X_{\ell}]_\ell,s_1,s_2,A)
		\\
		 &
		\ = \
		\EE[G_n([X_\ell]_\ell,s_1,s_2,A)\, | \, \mathcal{F}_{-k}]
		\ - \
		\EE[G_n([X_\ell]_\ell,s_1,s_2,A)\, | \, \mathcal{F}_{-(k+1)}]
		\\ &
		  \qquad - \
		  \varepsilon_{-k}
		\cdot
		  [a_{k+\ell}
		]_\ell^{\top}
		\cdot
		  \nabla_{\mathbf{y}}
		G_{\infty,n}(
		  \mathbf{y},s_1,s_2,A
		  )
		  \Big|_{\mathbf{y}=
				  \mathbf{0}_T
				  }
		\,.
	\end{align*}
	Note that
	$X_{t,t+k-1}=\sum_{j=0}^{t+k-1}a_j\varepsilon_{t-j}$ is independent of $\mathcal{F}_{-k}$, and $X_t-X_{t,t+k-1}=\sum_{j=t+k}^{\infty}a_j\varepsilon_{t-j}$ is $\mathcal{F}_{-k}$-measurable.
	Therefore,
	\begin{align*}
		\EE[G_n([X_\ell]_\ell,s_1,s_2,A)\, | \, \mathcal{F}_{-k}] & \ = \ \EE[G_n([X_{\ell,k-1+\ell}+(X_\ell - X_{\ell,k-1+\ell})]_\ell,s_1,s_2,A
			\mid \mathcal{F}_{-k}
		]
		\\ &
		  \ = \
		  \int_{\RR^T}
		G_n(\mathbf{x}+[X_{\ell}-X_{\ell,k-1+\ell}]_\ell,s_1,s_2,A)
		  \,\mathrm{d}
		\PP_{
			  [X_{\ell,k-1+\ell}]_{\ell}
		}(\mathbf{x})
		\\ &
		  \ = \
		  G_{k-1,n}(
		  \mathbf{y},s_1,s_2,A
		  )
		  \Big|_{\mathbf{y}=
					  [X_\ell-X_{\ell,k-1+\ell}]_\ell
				  }
		\,.
	\end{align*}
	Similarly,
	\begin{align*}
		 &
		\EE[G_n([X_\ell]_\ell,s_1,s_2,A)\, | \, \mathcal{F}_{-(k+1)}]
		\\
		 &
		\ = \
		\int_{\RR^T}
		G_n(\mathbf{x}+[X_{\ell}-X_{\ell,k+\ell}]_\ell,s_1,s_2,A)
		\,\mathrm{d}
		\PP_{
			[X_{\ell,k+\ell}]_{\ell}
		}(\mathbf{x}_{})
		\\ &
		  \ = \
		  \int_{\RR}
		\left(
		  \int_{\RR^T}
		G_n(\mathbf{x}+[X_{\ell}-X_{\ell,k+\ell}+z\cdot a_{k+\ell}]_\ell,s_1,s_2,A)
		  \,\mathrm{d}
		\PP_{
			  [X_{\ell,k-1+\ell}]_{\ell}
		}(\mathbf{x})
		\right)
		  \,\mathrm{d}\PP_{\varepsilon}(z)
		\\ &
		  \ = \
		  \int_{\RR}
		G_{k-1,n}(
		  \mathbf{y},s_1,s_2,A
		  )
		  \Big|_{\mathbf{y}=
					  [X_\ell - X_{\ell,k+\ell} + z \cdot a_{k+\ell}]_\ell
				  }
		\,\mathrm{d}\PP_\varepsilon(z)
		\,.
	\end{align*}
	Consequently, combining the expressions gives
	\begin{align*}
		 & P_{-k}\,\mathcal{U}_n([X_\ell]_\ell,s_1,s_2,A) \\
		 & \ = \
		G_{k-1,n}(
		\mathbf{y},s_1,s_2,A
		)
		\Big|_{\mathbf{y}=
					[X_\ell-X_{\ell,k-1+\ell}]_\ell
			}
		\ - \
		\int_\RR
		G_{k-1,n}(
		\mathbf{y},s_1,s_2,A
		)
		\Big|_{\mathbf{y}=
					[X_\ell - X_{\ell,k+\ell}+z\cdot a_{k+\ell}]_\ell
			}
		\,\mathrm{d}\PP_\varepsilon(z)
		\\ &
		  \qquad - \
		  \varepsilon_{-k}
		\cdot
		  [a_{k+\ell}]_\ell^{\top}
		\cdot
		  \nabla_{\mathbf{y}}
		G_{\infty,n}(
		  \mathbf{y},s_1,s_2,A
		  )
		  \Big|_{\mathbf{y}=
				  \mathbf{0}_T
				  }
		\\
		 & \ = \  R_1
		\ + \
		R_2
		\ + \
		R_3\,,
	\end{align*}
	as claimed.
\end{proof}
\subsection{Analysis of the remainder terms $R_j$}
In this subsection we use the tools developed in the previous sections. In the next lemma we provide intermediate bounds that are closely linked to the bounds of gradient differences provided in the last section.
\begin{lemma}[Intermediate bounds of the remainder terms]
	\label{lem:boundsRj}
	Let Assumptions~\ref{asu:f} and~\ref{asu:coef} hold, and let $k\in \NN$. Then the following statements are true:
	\begin{enumerate}
		\item
		      \begin{align*}
			       &
			      |R_1|
			      \cdot
			      k^{1-d}
			      \\ &
			        \ \lesssim \
			        \max_{i,l\in \{1,\ldots,T\}}
			      \left[
				        \int_{\RR}\,\mathrm{d}\PP_{\varepsilon}(z)
				      \int_{\RR^T}
				      \,\mathrm{d}\PP_{[X_{\ell,k-1+\ell}-X_{\ell,\ell}]}(\widetilde{\mathbf{x}_{}}_{})
				      \int_{\varepsilon_{-k}\land z}^{\varepsilon_{-k}\lor z}
				      \,\mathrm{d}s
				        \int_{\RR^T}
				      G_{n}(\mathbf{A}_{T}\cdot \mathbf{z}_{}, s_1,s_2,A)
				        \,\mathrm{d}\mathbf{z}_{}
				      \right.
			      \\ &
				        \qquad
				        \qquad
				        \qquad
				        \left.
				        \left|
				        \widetilde{R}_{i,l}
				      \left(
				        j\mapsto z_{j-1}- \mathbf{e}_{j}^{\top}\cdot \mathbf{A}_{T}^{-1}
				      \cdot
				        \left(
				        [X_\ell-X_{\ell,k+\ell}]_\ell + \widetilde{\mathbf{x}_{}}_{}
				        \right)
				      \,,
				        j\mapsto -s\cdot \mathbf{e}_{j}^{\top}
				      \cdot \mathbf{A}_{T}^{-1}\cdot [a_{k+\ell}]_{\ell}
				        \right)
				      \right|
				        \right]
			      \,.
		      \end{align*}
		\item
		      \begin{align*}
			       &
			      |R_2|
			      \cdot
			      k^{1-d}
			      \\ &
			        \ \lesssim \
			        \left|
			        \varepsilon_{-k}
			      \right|
			      \\ &
			        \qquad\times\ \max_{i,l\in \{1,\ldots,T\}}
			      \left[
				        \int_{\RR^T}
				      \,\mathrm{d}\PP_{[X_{\ell }-X_{\ell,\ell}]_{\ell}}(\widetilde{\mathbf{x}_{}}_{})
				      \int_{\RR^T}
				      G_n(\mathbf{A}_{T}\cdot \mathbf{z}_{},s_1,s_2,A)
				        \,\mathrm{d}\mathbf{z}_{}
				      \right.
			      \\ &
				        \qquad
				        \qquad
				        \qquad
				        \qquad
				        \left|
				        \widetilde{R}_{i,l}
				      \left(
				        \left(
					        j
					        \
					        \mapsto
					        \
					        z_{j-1}
					        - \mathbf{e}_{j}^{\top}
					        \cdot
					        \mathbf{A}_{T}^{-1}
					        \cdot
					        \widetilde{\mathbf{x}}_{}
					        \right)
				      \,,
				        \left(
					        \left.
					        j
					        \
					        \mapsto
					        \
					        -
					        \mathbf{e}_{j}^{\top}
					        \cdot
					        \mathbf{A}_{T}^{-1}
					        \cdot
					        [X_{\ell}-X_{\ell,k+\ell}]_\ell
					        \right)
				      \right)
				      \right|
				        \right]
			      \,.
		      \end{align*}
		\item
		      \begin{align*}
			       &
			      |R_3|\cdot k^{1-d}
			      \\ &
			        \ \lesssim\
			        |\varepsilon_{-k}|
			      \\ &
			        \times\max_{i,l\in \{1,\ldots T\}}
			      \left[
				        \int_{\RR^T}
				      \,\mathrm{d}\PP_{[X_{\ell}-X_{\ell,k-1+\ell}]_{\ell}}(\widetilde{\mathbf{x}_{}}_{})
				      \int_{\RR^T}
				      \,\mathrm{d}\PP_{[X_{\ell,k-1+\ell}-X_{\ell,\ell}]_{\ell}}(\widetilde{\mathbf{y}_{}}_{})
				      \int_{\RR^T}
				      G_n
				        (\mathbf{A}_{T}\cdot \mathbf{z}_{})
				        \,\mathrm{d}\mathbf{z}_{}
				      \right.
			      \\ &
				        \left.
				        \qquad
				        \qquad
				        \qquad
				        \left|
				        \widetilde{R}_{i,l}
				      \left(
				        \left(
					        j\ \mapsto \
					        z_{j-1}
					        \ - \
					        \mathbf{e}_{j}^{\top}
					      \cdot
					        \mathbf{A}_{T}^{-1}
					      \cdot
					        \left(
					        [X_{\ell}-X_{\ell,k+\ell}]_{\ell}
					        \ + \
					        \widetilde{\mathbf{y}_{}}_{}
					        \right)
					      \right)
				      \,,
				        \left(
					        j\ \mapsto \
					        -
					        \mathbf{e}_{j}^{\top}
					        \cdot
					        \mathbf{A}_{T}^{-1}
					        \cdot \widetilde{\mathbf{x}_{}}
					        \right)
				      \right)
				      \right|
				        \right]
		      \end{align*}
	\end{enumerate}
	In particular, in (ii) and (iii), $\varepsilon_{-k}$ is independent of the second factor.
	The multiplicative constant in $\lesssim$ depends throughout (i)-(iii) only on
	$T$ and the coefficients $(a_i)$.
\end{lemma}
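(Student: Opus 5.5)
The plan is to treat each of $R_1,R_2,R_3$ separately. In each case I write the relevant difference of $G_{k-1,n}$ or $\nabla G$ in integral form using Lemma~\ref{lem:multi_swap}. I then convert the resulting gradient differences into $\widetilde{R}$-notation via Lemma~\ref{lem:grad:lip}. The factor $k^{1-d}$ on the left is absorbed by the fact that $|a_{k+\ell}|\lesssim k^{-(1-d)}$ uniformly in $\ell\in\{0,\ldots,T-1\}$, by Assumption~\ref{asu:coef}. Matrix factors $(\mathbf{A}_T^{-1})^\top$ and the sum over $i,l\in\{1,\ldots,T\}$ are bounded by a constant depending on $T$ and $(a_i)$ times the maximum over $i,l$.

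For $R_1$, write $\mathbf{w}:=[X_\ell-X_{\ell,k+\ell}]_\ell$, so that $[X_\ell-X_{\ell,k-1+\ell}]_\ell=\mathbf{w}+\varepsilon_{-k}[a_{k+\ell}]_\ell$. Since $\int z\,\mathrm{d}\PP_\varepsilon(z)=0$, I rewrite $R_1$ as
\[
\int_\RR\Big(G_{k-1,n}(\mathbf{w}+\varepsilon_{-k}\mathbf{a})-G_{k-1,n}(\mathbf{w}+z\mathbf{a})-(\varepsilon_{-k}-z)\,\mathbf{a}^\top\nabla G_{k-1,n}(\mathbf{w})\Big)\,\mathrm{d}\PP_\varepsilon(z),
\]
with $\mathbf{a}=[a_{k+\ell}]_\ell$. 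By the fundamental theorem of calculus along the segment (legitimate by $C^1$ from Lemma~\ref{lem:multi_swap}(i)), the integrand becomes $\int_z^{\varepsilon_{-k}}\mathbf{a}^\top(\nabla G_{k-1,n}(\mathbf{w}+s\mathbf{a})-\nabla G_{k-1,n}(\mathbf{w}))\,\mathrm{d}s$. Next I apply Lemma~\ref{lem:grad:lip}(i) with $\mathbf{x}=\mathbf{w}$, $\mathbf{y}=s\mathbf{a}$ and truncation level $k-1$. This gives the $\widetilde R_{i,l}$ integrand against $\PP_{[X_{\ell,k-1+\ell}-X_{\ell,\ell}]_\ell}$ with second argument $j\mapsto -s\,\mathbf{e}_j^\top\mathbf{A}_T^{-1}\mathbf{a}$. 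Pulling absolute values inside and bounding $|\mathbf{a}|_\infty\lesssim k^{-(1-d)}$ then yields (i).

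For $R_2$, the outer factor is $\varepsilon_{-k}\mathbf{a}^\top$, contributing $|\varepsilon_{-k}|\,k^{-(1-d)}$. The remaining gradient difference $\nabla G_{\infty,n}(\mathbf{w})-\nabla G_{\infty,n}(\mathbf{0})$ is handled by Lemma~\ref{lem:grad:lip}(i) with $k=\infty$, $\mathbf{x}=\mathbf{0}$ and $\mathbf{y}=\mathbf{w}$. It is integrated against $\PP_{[X_\ell-X_{\ell,\ell}]_\ell}$ and has second argument $j\mapsto-\mathbf{e}_j^\top\mathbf{A}_T^{-1}\mathbf{w}$, which gives (ii).

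For $R_3$, I use Lemma~\ref{lem:grad:lip}(ii) with truncation $k-1$ at the point $\mathbf{x}=\mathbf{w}$. This produces integrals against $\PP_{[X_\ell-X_{\ell,k-1+\ell}]_\ell}$ and $\PP_{[X_{\ell,k-1+\ell}-X_{\ell,\ell}]_\ell}$, up to an overall sign that disappears under absolute values, which gives (iii).

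The independence claim holds because $\mathbf{w}$ depends only on $\varepsilon_{-j}$ with $j\ge k+1$, and the remaining integrals are deterministic given $\mathbf{w}$.

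The main obstacle is the bookkeeping in $R_1$: the variable $z$ is integrated while $\varepsilon_{-k}$ is random, so the segment must run between $z$ and $\varepsilon_{-k}$. The linear term must also be split correctly using centeredness of $\varepsilon$, so that the difference lands at base point $\mathbf{w}$ rather than $\mathbf{w}+z\mathbf{a}$. I would verify this identity first; the other two parts are direct substitutions.
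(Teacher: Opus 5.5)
Your proposal is correct and follows the paper's proof essentially step by step. The steps coincide: for $R_1$, the same use of centeredness of $\varepsilon$ to shift the linear term to base point $[X_\ell-X_{\ell,k+\ell}]_\ell$, then the fundamental theorem of calculus on the segment between $z$ and $\varepsilon_{-k}$, then Lemma~\ref{lem:grad:lip}(i) at truncation level $k-1$; for $R_2$, Lemma~\ref{lem:grad:lip}(i) with $k=\infty$; for $R_3$, Lemma~\ref{lem:grad:lip}(ii). The only cosmetic difference is how $k^{1-d}$ is absorbed: you bound $\|[a_{k+\ell}]_\ell\|_\infty$ and $(\mathbf{A}_T^{-1})^\top$ separately, whereas the paper bounds the product directly via $\|\mathbf{A}_T^{-1}[a_{k+\ell}]_\ell\|_\infty\lesssim k^{-(1-d)}$.
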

\begin{proof} 
	Throughout the proof we use that
	\begin{align}
		\label{eq:factor}
		\norm{
			\mathbf{A}_{T}^{-1}\cdot [a_{k+\ell}]_{\ell}
		}_{\infty}
		\ \lesssim \
		k^{-(1-d)}
		\,,
	\end{align}
	where the multiplicative constant in $\lesssim$ depends only on $T$ and the coefficients $(a_i)$.
	\\
	\textbf{Proof of (i):}
	By definition,
	\begin{align}
		\label{eq:r1:0}
		\begin{split}
			R_1
			 &
			\ = \
			G_{k-1,n}(
			\mathbf{y},s_1,s_2,A
			)
			\Big|_{\mathbf{y}=[X_{\ell}-X_{\ell,k-1+\ell}]_\ell}
			\ - \
			\int_\RR
			G_{k-1,n}(
			\mathbf{y},s_1,s_2,A
			)
			\Big|_{\mathbf{y}=
						[X_{\ell}-X_{\ell,k+\ell}+z\cdot a_{k+\ell}]_\ell
				}
			\,\mathrm{d}\PP_\varepsilon(z)
			\\ &
			  \qquad\ - \
			  \varepsilon_{-k}\cdot
			  [a_{k+\ell}]_\ell^{\top}
			\cdot
			  \nabla_{\mathbf{y}}
			G_{k-1,n}(
			  \mathbf{y},s_1,s_2,A
			  )
			  \Big|_{\mathbf{y}=
						  [X_{\ell}-X_{\ell,k+\ell}]_\ell
					  }
			\\
			 &
			\ = \
			\int_\RR
			\bigg[
				G_{k-1,n}(
				\mathbf{y},s_1,s_2,A
				)
				\Big|_{\mathbf{y}=
							[X_{\ell}-X_{\ell,k+\ell}+\varepsilon_{-k}\cdot a_{k+\ell}]_\ell
					}
				\ -\
				G_{k-1,n}(
				\mathbf{y},s_1,s_2,A
				)
				\Big|_{\mathbf{y}=
							[X_{\ell}-X_{\ell,k+\ell}+z\cdot a_{k+\ell}]_\ell
					}
			\\ &
				  \qquad\ - \
				  \varepsilon_{-k}\cdot
				  [a_{k+\ell}]_\ell^{\top}
				\cdot
				  \nabla_{\mathbf{y}}
				G_{k-1,n}(
				  \mathbf{y},s_1,s_2,A
				  )
				  \Big|_{\mathbf{y}=
							  [X_{\ell}-X_{\ell,k+\ell}]_\ell
						  }
				\bigg]
			\,\mathrm{d}\PP_\varepsilon(z)
			\\ &
			  \ = \
			  \int_\RR
			  \bigg[
				  \left(
				  \int_{z}^{\varepsilon_{-k}}
					  [a_{k+\ell}]_\ell^\top
				  \cdot
				  \nabla_{\mathbf{y}}
				G_{k-1,n}(
				  \mathbf{y},s_1,s_2,A
				  )
				  \Big|_{\mathbf{y}=
							  [X_{\ell}-X_{\ell,k+\ell}+s\cdot a_{k+\ell}]_\ell
						  }
				\,\mathrm{d}s
				  \right)
			\\ &
				  \qquad\ - \
				  (
				  \varepsilon_{-k}
				-
				  z
				  )
				  \cdot
				  [a_{k+\ell}]_\ell^\top
				  \cdot
				  \nabla_{\mathbf{y}}
				G_{k-1,n}(
				  \mathbf{y},s_1,s_2,A
				  )
				  \Big|_{\mathbf{y}=
							  [X_{\ell}-X_{\ell,k+\ell}]_\ell
						  }
				\bigg]
			\,\mathrm{d}\PP_{\varepsilon}(z)
			\\ &
			  \ = \
			  \int_\RR
			  \,\mathrm{d}\PP_{\varepsilon}(z)
			\int_{z}^{\varepsilon_{-k}}
			\,\mathrm{d}s
			\\ & \qquad
			  [a_{k+\ell}]_\ell^\top
			  \bigg[
				  \nabla_{\mathbf{y}}
				G_{k-1,n}(
				  \mathbf{y},s_1,s_2,A
				  )
				  \Big|_{\mathbf{y}=
							  [X_{\ell}-X_{\ell,k+\ell}+s\cdot a_{k+\ell}]_\ell
						  }
				\ - \
				  \nabla_{\mathbf{y}}
				G_{k-1,n}(
				  \mathbf{y},s_1,s_2,A
				  )
				  \Big|_{\mathbf{y}=
							  [X_{\ell}-X_{\ell,k+\ell}]_\ell
						  }
				\bigg]
			\,.
		\end{split}
	\end{align}
	Note that the third
	equality follows by applying the fundamental theorem of calculus to $G_{k-1,n}$, which is justified by
	Lemma~\ref{lem:multi_swap}.
	Note also that adding the term
	\begin{align*}
		z
		\cdot
		[a_{k+\ell}]_\ell^\top
		\cdot
		\nabla_{\mathbf{y}}
		G_{k-1,n}(
		\mathbf{y},s_1,s_2,A
		)
		\Big|_{\mathbf{y}=
					[X_{\ell}-X_{\ell,k+\ell}]_\ell
			}
	\end{align*}
	inside the integral is valid because $\varepsilon$ is centered.
	Then we use Lemma~\ref{lem:grad:lip}(i) for $k-1$
	with
	\begin{align*}
		\mathbf{x}_{}
		\ = \
		[X_{\ell}-X_{\ell,k+\ell}]_{\ell}
		\,,\qquad
		\mathbf{y}_{}
		\ = \
		s\cdot [a_{k+\ell}]_\ell
		\,,
	\end{align*}
	to get
	\begin{align*}
		 &
		\nabla_{\mathbf{y}}
		G_{k-1,n}(
		\mathbf{y},s_1,s_2,A
		)
		\Big|_{\mathbf{y}=
					[X_{\ell}-X_{\ell,k+\ell}+s\cdot a_{k+\ell}]_\ell
			}
		\ - \
		\nabla_{\mathbf{y}}
		G_{k-1,n}(
		\mathbf{y},s_1,s_2,A
		)
		\Big|_{\mathbf{y}=
					[X_{\ell}-X_{\ell,k+\ell}]_\ell
			}
		\\ &
		  \ = \
		  \left(
		  \mathbf{A}_{T}^{-1}
		  \right)^{\top}
		\\ &
		  \qquad
		  \times \
		  \left[
			  \int_{\RR^T}
			G_{n}(\mathbf{A}_{T}\cdot \mathbf{z}_{}, s_1,s_2,A)
			  \,\mathrm{d}\mathbf{z}_{}
			\int_{\RR^T}
			\,\mathrm{d}\PP_{[X_{\ell,k-1+\ell}-X_{\ell,\ell}]}(\widetilde{\mathbf{x}_{}}_{})
			\right.
		\\ &
			  \qquad
			  \qquad
			  \left.
			  \widetilde{R}_{i,l}
			\left(
			  j\mapsto z_{j-1}- \mathbf{e}_{j}^{\top}\cdot \mathbf{A}_{T}^{-1}
			\cdot
			  \left(
			  [X_\ell-X_{\ell,k+\ell}]_\ell + \widetilde{\mathbf{x}_{}}_{}
			  \right)
			\,,
			  j\mapsto -s\cdot \mathbf{e}_{j}^{\top}
			\cdot \mathbf{A}_{T}^{-1}\cdot [a_{k+\ell}]_{\ell}
			  \right)
			\right]_{i,l\in \{1,\ldots,T\}}
		\cdot
		  \mathbf{1}_{T}
		\,.
	\end{align*}
	Therefore, taking absolute values and using \eqref{eq:factor} and Fubini's theorem,
	\begin{align*}
		 &
		|R_1|
		\\ &
		  \ \lesssim \
		  k^{-(1-d)}
		\\ &
		  \qquad\times
		  \max_{i,l\in \{1,\ldots,T\}}
		\left[
			  \int_{\RR}\,\mathrm{d}\PP_{\varepsilon}(z)
			\int_{\RR^T}
			\,\mathrm{d}\PP_{[X_{\ell,k-1+\ell}-X_{\ell,\ell}]}(\widetilde{\mathbf{x}_{}}_{})
			\int_{\varepsilon_{-k}\land z}^{\varepsilon_{-k}\lor z}
			\,\mathrm{d}s
			  \int_{\RR^T}
			G_{n}(\mathbf{A}_{T}\cdot \mathbf{z}_{}, s_1,s_2,A)
			  \,\mathrm{d}\mathbf{z}_{}
			\right.
		\\ &
			  \qquad
			  \qquad
			  \qquad
			  \left.
			  \left|
			  \widetilde{R}_{i,l}
			\left(
			  j\mapsto z_{j-1}- \mathbf{e}_{j}^{\top}\cdot \mathbf{A}_{T}^{-1}
			\cdot
			  \left(
			  [X_\ell-X_{\ell,k+\ell}]_\ell + \widetilde{\mathbf{x}_{}}_{}
			  \right)
			\,,
			  j\mapsto -s\cdot \mathbf{e}_{j}^{\top}
			\cdot \mathbf{A}_{T}^{-1}\cdot [a_{k+\ell}]_{\ell}
			  \right)
			\right|
			  \right]
		\,,
	\end{align*}
	where the multiplicative constant in $\lesssim$ depends only on $T$ and the coefficients $(a_i)$.
	\\
	\textbf{Proof of (ii):}
	By definition
	\begin{align*}
		R_2
		\ = \
		\varepsilon_{-k}
		\cdot
		[a_{k+\ell}]_\ell^{\top}
		\cdot
		\left(
		\nabla_{\mathbf{y}}
		G_{\infty,n}(
		\mathbf{y},s_1,s_2,A
		)
		\Big|_{\mathbf{y}=
					[X_{\ell}-X_{\ell,k+\ell}]_\ell
			}
		\ - \
		\nabla_{\mathbf{y}}
		G_{\infty,n}(
		\mathbf{y},s_1,s_2,A
		)
		\Big|_{\mathbf{y}=
				\mathbf{0}_T
			}
		\right)
		\,,
	\end{align*}
	and by Lemma~\ref{lem:grad:lip}(i) for $k=\infty$  with
	\begin{align*}
		\mathbf{x}_{}
		\ = \ \mathbf{0}_{T}
		\qquad\text{and}\qquad
		\mathbf{y}_{}
		\ = \
		[X_{\ell}-X_{\ell,k+\ell}]_{\ell}
		\,,
	\end{align*}
	one has
	\begin{align*}
		 &
		\nabla_{\mathbf{y}}
		G_{\infty,n}(
		\mathbf{y},s_1,s_2,A
		)
		\Big|_{\mathbf{y}=
					[X_{\ell}-X_{\ell,k+\ell}]_\ell
			}
		\ - \
		\nabla_{\mathbf{y}}
		G_{\infty,n}(
		\mathbf{y},s_1,s_2,A
		)
		\Big|_{\mathbf{y}=
				\mathbf{0}_T
			}
		\\ &
		  \ = \
		  \int_{\RR^T}
		G_n
		  (\mathbf{A}_{T}\cdot \mathbf{z}_{},s_1,s_2,A)
		  \,\mathrm{d}\mathbf{z}_{}
		\int_{\RR^T}
		\,\mathrm{d}
		\PP_{[X_{\ell}-X_{\ell,\ell}]_\ell}
		  (\widetilde{\mathbf{x}_{}}_{})
		\\ &
		  \qquad
		  \left(
		  \mathbf{A}^{-1}_T
		  \right)^{\top}
		\left[
			  \widetilde{R}_{i,l}
			\left(
			  \left(
			  j
			  \
			  \mapsto
			  \
			  z_{j-1} - \mathbf{e}_{j}^{\top}
			  \cdot
			  \mathbf{A}_{T}^{-1}
			  \cdot
			  \widetilde{\mathbf{x}}_{}
			  \right)
			\,,
			  \left(
			  j
			  \
			  \mapsto
			  \
			  - \mathbf{e}_{j}^{\top}
			  \cdot
			  \mathbf{A}_{T}^{-1}
			  \cdot
			  [X_{\ell}-X_{\ell,k+\ell}]_{\ell}
			  \right)
			\right)
			  \right]_{i,l\in \{1,\ldots,T\}}
		\cdot
		  \mathbf{1}_{T}
		\,.
	\end{align*}
	Taking absolute values and using \eqref{eq:factor} and Fubini's theorem yields the result.
	\\
	\textbf{Proof of (iii):}
	By definition
	\begin{align*}
		R_3 & \ := \
		\varepsilon_{-k}
		\cdot
		[a_{k+\ell}]_\ell^{\top}
		\cdot
		\left(
		\nabla_{\mathbf{y}}
		G_{k-1,n}(
		\mathbf{y},s_1,s_2,A
		)
		\Big|_{\mathbf{y}=
					[X_{\ell}-X_{\ell,k+\ell}]_\ell
			}
		\ - \
		\nabla_{\mathbf{y}}
		G_{\infty,n}(
		\mathbf{y},s_1,s_2,A
		)
		\Big|_{\mathbf{y}=
					[X_{\ell}-X_{\ell,k+\ell}]_\ell
			}
		\right)
		\,,
	\end{align*}
	and by Lemma~\ref{lem:grad:lip}(ii) for $k-1$ with
	\begin{align*}
		\mathbf{x}_{}
		\ = \
		[X_{\ell}-X_{\ell,k+\ell}]_{\ell}
		\,,
	\end{align*}
	one has
	\begin{align*}
		 &
		\nabla_{\mathbf{y}}
		G_{\infty,n}(
		\mathbf{y},s_1,s_2,A
		)
		\Big|_{\mathbf{y}=
					[X_{\ell}-X_{\ell,k+\ell}]_{\ell}
			}
		\ - \
		\nabla_{\mathbf{y}}
		G_{k-1,n}(
		\mathbf{y},s_1,s_2,A
		)
		\Big|_{\mathbf{y}=
					[X_{\ell}-X_{\ell,k+\ell}]_{\ell}
			}
		\\ &
		  \ = \
		  \left(
		  \mathbf{A}_{T}^{-1}
		  \right)^{\top}
		\\ &
		  \qquad
		  \times
		  \left[
			  \int_{\RR^T}
			\,\mathrm{d}\PP_{[X_{\ell}-X_{\ell,k-1+\ell}]_{\ell}}(\widetilde{\mathbf{x}_{}}_{})
			\int_{\RR^T}
			\,\mathrm{d}\PP_{[X_{\ell,k-1+\ell}-X_{\ell,\ell}]_{\ell}}(\widetilde{\mathbf{y}_{}}_{})
			\int_{\RR^T}
			G_n
			  (\mathbf{A}_{T}\cdot \mathbf{z}_{},s_1,s_2,A)
			  \,\mathrm{d}\mathbf{z}_{}
			\right.
		\\ &
			  \left.
			  \left.
			  \qquad
			  \qquad
			  \widetilde{R}_{i,l}
			\left(
			  \left(
			  j\ \mapsto \
			  z_{j-1}
			  \ - \
			  \mathbf{e}_{j}^{\top}
			\cdot
			  \mathbf{A}_{T}^{-1}
			\cdot
			  \left(
			  [X_{\ell}-X_{\ell,k+\ell}]_{\ell}
			  \ + \
			  \widetilde{\mathbf{y}_{}}_{}
			  \right)
			\right)
			\,,
			  \left(
			  j\ \mapsto \
			  -
			  \mathbf{e}_{j}^{\top}
			  \cdot
			  \mathbf{A}_{T}^{-1}
			  \cdot
			  \widetilde{\mathbf{x}_{}}_{}
			  \right)
			\right.
			  \right]
		\right]_{i,l\in\{1,\ldots,T\}}
		\cdot
		  \mathbf{1}_{T}
		\,.
	\end{align*}
	Taking absolute values and using \eqref{eq:factor} and Fubini's theorem yields the result.
\end{proof}
In the next lemma we apply the results of the second to last section to separate the threshold $u_n$ and the uniformity interval $[s_1,s_2]$ and simplify the intermediate bounds.
\begin{lemma}[Separable product bounds V]
	\label{lem:boundsRj_2}
	Let Assumptions~\ref{asu:f} and~\ref{asu:coef} hold.
	Then the following bounds hold for $\gamma\in (0,\alpha-1)$:
	\begin{enumerate}
		\item
		      \begin{align*}
			       &
			      \int_{\RR}\,\mathrm{d}\PP_{\varepsilon}(z)
			      \int_{\RR^T}
			      \,\mathrm{d}\PP_{[X_{\ell,k-1+\ell}-X_{\ell,\ell}]_{\ell}}(\widetilde{\mathbf{x}_{}}_{})
			      \int_{\varepsilon_{-k}\land z}^{\varepsilon_{-k}\lor z}
			      \,\mathrm{d}s
			      \int_{\RR^T}
			      G_{n}(\mathbf{A}_{T}\cdot \mathbf{z}_{}, s_1,s_2,A)
			      \,\mathrm{d}\mathbf{z}_{}
			      \\ &
			        \qquad
			        \qquad
			        \qquad
			        \left|
			        \widetilde{R}_{i,l}
			      \left(
			        j\mapsto z_{j-1}- \mathbf{e}_{j}^{\top}\cdot \mathbf{A}_{T}^{-1}
			      \cdot
			        \left(
			        [X_\ell-X_{\ell,k+\ell}]_\ell + \widetilde{\mathbf{x}_{}}_{}
			        \right)
			      \,,
			        j\mapsto -s\cdot \mathbf{e}_{j}^{\top}\cdot \mathbf{A}_{T}^{-1}\cdot [a_{k+\ell}]_{\ell}
			        \right)
			      \right|
			      \\ &
			        \ \lesssim \
			        u_n^{-\gamma}\cdot ((s_2-s_1)\land 1)
			        \cdot
			        k^{-\gamma(1-d)}
			      \cdot
			        \left(
			        1+
			        |\varepsilon_{-k}|^{1+\gamma}
			        \right)
			      \cdot
			        \sum_{j=1}^T
			        \left(
			        1 \ + \
			        \left|
			        X_{j-1}-X_{j-1,k+j-1}
			        \right|^{1+\gamma}
			        \right)
			      \,.
		      \end{align*}
		\item
		      \begin{align*}
			       &
			      \int_{\RR^T}
			      \,\mathrm{d}\PP_{[X_{\ell}-X_{\ell,\ell}]_{\ell}}(\widetilde{\mathbf{x}_{}}_{})
			      \int_{\RR^T}
			      G_n(\mathbf{A}_{T}\cdot \mathbf{z}_{},s_1,s_2,A)
			      \,\mathrm{d}\mathbf{z}_{}
			      \\ &
			        \qquad
			        \left|
			        \widetilde{R}_{i,l}
			      \left(
			        \left(
				        j
				        \
				        \mapsto
				        \
				        z_{j-1}
				        - \mathbf{e}_{j}^{\top}
				        \cdot
				        \mathbf{A}_{T}^{-1}
				        \cdot
				        \widetilde{\mathbf{x}}_{}
				        \right)
			      \,,
			        \left(
				        \left.
				        j
				        \
				        \mapsto
				        \
				        -
				        \mathbf{e}_{j}^{\top}
				        \cdot
				        \mathbf{A}_{T}^{-1}
				        \cdot
				        [X_{\ell}-X_{\ell,k+\ell}]_\ell
				        \right)
			      \right)
			      \right|
			        \right.
			      \\
			       &
			      \ \lesssim\
			      u_n^{-\gamma}\cdot ((s_2-s_1)\land 1)
			      \cdot
			      \left(
			      \norm{[X_{\ell}-X_{\ell,k+\ell}]_{\ell}}_{1}
			      \ + \
			      \norm{[X_{\ell}-X_{\ell,k+\ell}]_{\ell}}_{1}
			      ^{1+\gamma}
			      \right)
		      \end{align*}
		\item
		      \begin{align*}
			       &
			      \int_{\RR^T}
			      \,\mathrm{d}\PP_{[X_{\ell}-X_{\ell,k-1+\ell}]_{\ell}}(\widetilde{\mathbf{x}_{}}_{})
			      \int_{\RR^T}
			      \,\mathrm{d}\PP_{[X_{\ell,k-1+\ell}-X_{\ell,\ell}]_{\ell}}(\widetilde{\mathbf{y}_{}}_{})
			      \int_{\RR^T}
			      G_n
			      (\mathbf{A}_{T}\cdot \mathbf{z}_{},s_1,s_2,A)
			      \,\mathrm{d}\mathbf{z}_{}
			      \\ &
			        \qquad
			        \left|
			        \widetilde{R}_{i,l}
			      \left(
			        \left(
				        j\ \mapsto \
				        z_{j-1}
				        \ - \
				        \mathbf{e}_{j}^{\top}
				      \cdot
				        \mathbf{A}_{T}^{-1}
				      \cdot
				        \left(
				        [X_{\ell}-X_{\ell,k+\ell}]_{\ell}
				        \ + \
				        \widetilde{\mathbf{y}_{}}_{}
				        \right)
				      \right)
			      \,,
			        \left(
				        j\ \mapsto \
				        -
				        \mathbf{e}_{j}^{\top}
				        \cdot
				        \mathbf{A}_{T}^{-1}
				        \cdot \widetilde{\mathbf{x}_{}}
				        \right)
			      \right)
			      \right|
			      \\
			       &
			      \ \lesssim\
			      u_n^{-\gamma}\cdot ((s_2-s_1)\land 1)
			      \sum_{j=1}^T
			      \bigg[
				      \left(
				      \EE[\norm{[X_{\ell}-X_{\ell,k-1+\ell}]}_1]
				      +
				      \EE[\norm{[X_{\ell}-X_{\ell,k-1+\ell}]}_1^{1+\gamma}]
				      \right)
			      \\ & \qquad
				        \times
				        \left(
				        1+
				        |X_{j-1}-X_{j-1,k+j-1}|^{1+\gamma}
				        \right)
				      \bigg]
		      \end{align*}
	\end{enumerate}
	Throughout (i)-(iii), the multiplicative constant in $\lesssim$
	depends only on $T$, $(a_i)$, $(\mathfrak{a}_j)$, $\gamma$, $\underline{s}$, and the distribution of $\varepsilon$.
\end{lemma}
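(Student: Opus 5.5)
The three bounds follow the same template: apply Lemma~\ref{lem:ready} pointwise in the outer integration variables, then integrate those variables out using the moment bounds of Lemmas~\ref{lem:fm} and~\ref{lem:x0k}. In each case I identify $\varphi$ and $\psi$ in Lemma~\ref{lem:ready} and simplify the linear combinations $\sum_{m=1}^j a_{j-m}\varphi(m)$ and $\sum_{m\in I}a_{j-m}\psi(m)$. The first combination collapses through the identity $\sum_{m=1}^j a_{j-m}\mathbf{e}_m^{\top}\mathbf{A}_T^{-1}=\mathbf{e}_j^{\top}$, as in the proof of Lemma~\ref{lem:single}. The second involves only partial sums over $I$, so I bound it crudely by a constant times $\|\mathbf{A}_T^{-1}\mathbf{w}\|_\infty\lesssim\|\mathbf{w}\|_1$, where $\mathbf{w}$ is the shift vector.

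\textbf{Part (i).} Apply Lemma~\ref{lem:ready}(ii) with $\varphi(j)=-\mathbf{e}_j^{\top}\mathbf{A}_T^{-1}([X_\ell-X_{\ell,k+\ell}]_\ell+\widetilde{\mathbf{x}})$, $\psi(j)=-\mathbf{e}_j^{\top}\mathbf{A}_T^{-1}[a_{k+\ell}]_\ell$, and endpoints $\mathfrak{b}=\varepsilon_{-k}$, $\mathfrak{c}=z$. By \eqref{eq:factor}, $|\psi|\lesssim k^{-(1-d)}$, so the factor $|\cdots|^{\gamma}+|\psi(l)|^\gamma$ is $\lesssim k^{-\gamma(1-d)}$. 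The $\varphi$-term becomes $1+|X_{j-1}-X_{j-1,k+j-1}+\widetilde{x}_{j-1}|^{1+\gamma}$. I split this with $|a+b|^{1+\gamma}\lesssim|a|^{1+\gamma}+|b|^{1+\gamma}$. The $\widetilde{\mathbf{x}}$ part integrates to a finite constant because $X_{\ell,k-1+\ell}-X_{\ell,\ell}$ has a finite $(1+\gamma)$-moment uniformly in $k$, by Lemma~\ref{lem:bahr} as in Lemma~\ref{lem:fm}, using $1+\gamma<\alpha$. The endpoint factor $|\varepsilon_{-k}|^{1+\gamma}+|z|^{1+\gamma}$ integrates against $\PP_\varepsilon(\mathrm{d}z)$ to at most $C(1+|\varepsilon_{-k}|^{1+\gamma})$. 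Multiplying the three factors gives the claimed bound.

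\textbf{Parts (ii) and (iii).} For (ii), apply Lemma~\ref{lem:ready}(i) with $\varphi(j)=-\mathbf{e}_j^{\top}\mathbf{A}_T^{-1}\widetilde{\mathbf{x}}$ and $\psi(j)=-\mathbf{e}_j^{\top}\mathbf{A}_T^{-1}[X_\ell-X_{\ell,k+\ell}]_\ell$. The $\varphi$-factor becomes $1+|\widetilde{x}_{j-1}|^{1+\gamma}$, which integrates to a constant under $\PP_{[X_\ell-X_{\ell,\ell}]_\ell}$. The $\psi$-factor is bounded by $|\psi(l)|+|\cdots|^{1+\gamma}\lesssim\|[X_\ell-X_{\ell,k+\ell}]_\ell\|_1+\|[X_\ell-X_{\ell,k+\ell}]_\ell\|_1^{1+\gamma}$. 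For (iii), apply Lemma~\ref{lem:ready}(i) with $\varphi(j)=-\mathbf{e}_j^{\top}\mathbf{A}_T^{-1}([X_\ell-X_{\ell,k+\ell}]_\ell+\widetilde{\mathbf{y}})$ and $\psi(j)=-\mathbf{e}_j^{\top}\mathbf{A}_T^{-1}\widetilde{\mathbf{x}}$. The $\varphi$-factor gives $1+|X_{j-1}-X_{j-1,k+j-1}|^{1+\gamma}$ after integrating $\widetilde{\mathbf{y}}$ exactly as in (i). The $\psi$-factor integrates in $\widetilde{\mathbf{x}}$ to $\EE\|\cdot\|_1+\EE\|\cdot\|_1^{1+\gamma}$ of $[X_\ell-X_{\ell,k-1+\ell}]_\ell$. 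This yields a product of $\widetilde{\mathbf{x}}$- and $\widetilde{\mathbf{y}}$-dependent terms, which is what the statement requires.

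\textbf{Main obstacle.} The delicate step is (i), for two reasons. First, the endpoint integral over $s$ between $z$ and $\varepsilon_{-k}$ must be handled by Lemma~\ref{lem:ready}(ii), rather than by bounding the integrand uniformly, so that the small factor $|\psi|^\gamma\sim k^{-\gamma(1-d)}$ appears. Second, this factor must stay separated from the moments of $z$ and $\varepsilon_{-k}$, which is possible precisely because of the separable form of Lemma~\ref{lem:ready}(ii). Finally, the uniform-in-$k$ finiteness of $\EE|X_{\ell,k-1+\ell}-X_{\ell,\ell}|^{1+\gamma}$ should be checked. It follows from Lemma~\ref{lem:bahr} and $\sum|a_i|^{1+\gamma}<\infty$, since $\gamma$ can be taken with $(1-d)(1+\gamma)>1$; for smaller $\gamma$, monotonicity of $L^p$ norms together with Lemma~\ref{lem:fm} suffices.
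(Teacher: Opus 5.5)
Your proposal is correct and follows the paper's proof: for (i) you apply Lemma~\ref{lem:ready}(ii), and for (ii)--(iii) Lemma~\ref{lem:ready}(i), with the same choices of $\varphi$ and $\psi$. You also use the same collapse of $\sum_{m=1}^j a_{j-m}\varphi(m)$ via the rows of $\mathbf{A}_T$, the same crude $\norm{\cdot}_1$ bound on the partial sums over $I$, and the same step of integrating out the outer variables with $(1+\gamma)$-moments. Your remark on the uniform-in-$k$ finiteness of $\EE|X_{\ell,k-1+\ell}-X_{\ell,\ell}|^{1+\gamma}$ (Lemma~\ref{lem:bahr} at some $r\in(1/(1-d),\alpha)$, then monotonicity of $L^p$ norms) spells out a step the paper leaves implicit.
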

\begin{proof}[\textbf{Proof of (i):}]
	We want to apply Lemma~\ref{lem:ready}(ii) with
	\begin{align*}
		\varphi(j) \ = \ - \mathbf{e}_{j}^{\top} \cdot \mathbf{A}_{T}^{-1}
		\cdot
		\left(
		[X_\ell-X_{\ell,k+\ell}]_\ell + \widetilde{\mathbf{x}_{}}_{}
		\right)
		\qquad\text{and}\qquad
		\psi(j) \ = \ -\mathbf{e}_{j}^{\top} \cdot \mathbf{A}_{T}^{-1}\cdot [a_{k+\ell}]_{\ell}
		\,.
	\end{align*}
	Note that
	\begin{align*}
		\sum_{m=1}^j
		a_{j-m} \mathbf{e}_{m}^{\top}
		\ = \
		\mathbf{e}_{j}^{\top}
		\cdot
		\mathbf{A}_{T}
		\,,
	\end{align*}
	so that
	\begin{align*}
		\sum_{m=1}^j
		a_{j-m} \varphi(m)
		\ = \
		-([X_{j-1}-X_{j-1,k+j-1}]+\widetilde{x}_{j-1})
		\,.
	\end{align*}
	Also
	\begin{align*}
		\left|
		\mathbf{e}_{l}^{\top}
		\cdot \mathbf{A}_{T}^{-1}\cdot [a_{k+\ell}]_{\ell}
		\right|
		\ \lesssim \
		k^{-(1-d)}
		\,,
	\end{align*}
	and for $I\subset\{1,\ldots,j\}$,
	\begin{align*}
		\left|
		\sum_{m\in I}
		a_{j-m}
		\psi(m)
		\right|
		\ \lesssim \
		\max_{m\in I}
		\left|
		\psi(m)
		\right|
		\ \lesssim \
		\norm{[a_{k+\ell}]_\ell}_{\infty}
		\ \lesssim \
		k^{-(1-d)},
	\end{align*}
	where the multiplicative constant in $\lesssim$ depends only on $T$, and the coefficients $(a_i)$.
	Therefore, by Lemma~\ref{lem:ready}(ii),
	\begin{align}
		\label{eq:expands_to}
		\begin{split}
			 &
			\int_{\varepsilon_{-k}\land z}^{\varepsilon_{-k}\lor z}
			\,\mathrm{d}s
			\int_{\RR^T}
			G_{n}(\mathbf{A}_{T}\cdot \mathbf{z}_{}, s_1,s_2,A)
			\,\mathrm{d}\mathbf{z}_{}
			\\ &
			  \left|
			  \widetilde{R}_{i,l}
			\left(
			  j\mapsto z_{j-1}- \mathbf{e}_{j}^{\top} \cdot \mathbf{A}_{T}^{-1}
			\cdot
			  \left(
			  [X_\ell-X_{\ell,k+\ell}]_\ell + \widetilde{\mathbf{x}_{}}_{}
			  \right)
			\,,
			  j\mapsto -s\cdot \mathbf{e}_{j}^{\top} \cdot \mathbf{A}_{T}^{-1}\cdot [a_{k+\ell}]_{\ell}
			  \right)
			\right|
			\\ &
			  \ \lesssim \
			  u_n^{-\gamma}\cdot ((s_2-s_1)\land 1)
			\\ &
			  \qquad\times\
			  \sum_{j=1}^T
			  \sum_{I\subset \{1,\ldots,j\}}
			\left(
			  \left|
			  \sum_{m\in I}
			a_{j-m}
			  \psi(m)
			\right|^{\gamma}
			  \ + \
			  |
			  \mathbf{e}_{l}^{\top}
			\cdot \mathbf{A}_{T}^{-1}\cdot [a_{k+\ell}]_{\ell}
			  |^{\gamma}
			  \right)
			\\ &
			  \qquad\times \
			  \left(
			  1 \ + \
			  \left|
			  [X_{j-1}-X_{j-1,k+j-1}]+\widetilde{x}_{j-1}
			  \right|^{1+\gamma}
			  \right)
			\left(
			  |\varepsilon_{-k}|^{1+\gamma}
			  +
			  |z|^{1+\gamma}
			  \right)
			\,.
		\end{split}
	\end{align}
	where the multiplicative constant in $\lesssim$ depends only on $T$, $(a_i)$, $(\mathfrak{a}_j)$, $\gamma$, $\underline{s}$, and the distribution of $\varepsilon$.
	Note that
	\begin{align*}
		\left|
		\sum_{m\in I}
		a_{j-m}
		\psi(m)
		\right|^{\gamma}
		\ + \
		|
		\mathbf{e}_{l}^{\top}
		\cdot \mathbf{A}_{T}^{-1}\cdot [a_{k+\ell}]_{\ell}
		|^{\gamma}
		\ \lesssim \
		k^{-\gamma(1-d)}
		\,,
	\end{align*}
	and
	\begin{align*}
		1 \ + \
		\left|
		[X_{j-1}-X_{j-1,k+j-1}]+\widetilde{x}_{j-1}
		\right|^{1+\gamma}
		\ \lesssim \
		1
		\ + \
		\left|X_{j-1}-X_{j-1,k+j-1}\right|^{1+\gamma}
		\ + \
		|\widetilde{x}_{j-1}|^{1+\gamma}
		\,,
	\end{align*}
	where the multiplicative constant in $\lesssim$ depends only on $T$, $(a_i)$, and $\gamma$.
	The bound in \eqref{eq:expands_to}
	expands to
	\begin{align*}
		u_n^{-\gamma}\cdot ((s_2-s_1)\land 1)
		\cdot
		k^{-\gamma(1-d)}
		\cdot
		\left(
		|\varepsilon_{-k}|^{1+\gamma}
		+
		|z|^{1+\gamma}
		\right)
		\cdot
		\sum_{j=1}^T
		\left(
		1
		\ + \
		\left|X_{j-1}-X_{j-1,k+j-1}\right|^{1+\gamma}
		\ + \
		|\widetilde{x}_{j-1}|^{1+\gamma}
		\right)
		\,,
	\end{align*}
	up to a multiplicative constant that depends only on $T$, $(a_i)$, $\gamma$.
	Note that, by $1+\gamma< \alpha$, 
	\begin{align*}
		 &
		\sum_{j=1}^T
		\int_{\RR}\,\mathrm{d}\PP_{\varepsilon}(z)
		\int_{\RR^T}
		\,\mathrm{d}\PP_{[X_{\ell,k-1+\ell}-X_{\ell,\ell}]_{\ell}}(\widetilde{\mathbf{x}_{}}_{})
		\left(
		1
		\ + \
		\left|X_{j-1}-X_{j-1,k+j-1}\right|^{1+\gamma}
		\ + \
		|\widetilde{x}_{j-1}|^{1+\gamma}
		\right)
		\left(
		|\varepsilon_{-k}|^{1+\gamma}
		+
		|z|^{1+\gamma}
		\right)
		\\ &
		  \ \lesssim \
		  \sum_{j=1}^T
		  \left(
		  1
		  +
		  \left|X_{j-1}-X_{j-1,k+j-1}\right|^{1+\gamma}
		  \right)
		\left(
		  1
		  + |\varepsilon_{-k}|^{1+\gamma}
		  \right)
		\,,
	\end{align*}
	where the multiplicative constant in $\lesssim$ depends only on $(a_i)$, $\gamma$, and the distribution of $\varepsilon$.
	The result follows.
	\\
	\textbf{Proof of (ii):}
	We want to apply Lemma~\ref{lem:ready}(i) with
	\begin{align*}
		\varphi(m)
		\ = \
		-\mathbf{e}_{m}^{\top}
		\cdot \mathbf{A}_{T}^{-1}\cdot \widetilde{\mathbf{x}_{}}_{}
		\qquad\text{and}\qquad
		\psi(m)
		\ = \
		-\mathbf{e}_{m}^{\top}
		\cdot \mathbf{A}_{T}^{-1}\cdot [X_{\ell}-X_{\ell,k+\ell}]_{\ell}
		\,,
	\end{align*}
	so that
	\begin{align*}
		\sum_{m=1}^j
		a_{j-m}
		\varphi(m)
		\ = \
		-\widetilde{x}_{j-1}
		\,.
	\end{align*}
	By Lemma~\ref{lem:ready}(i),
	\begin{align*}
		 &
		\int_{\RR^T}
		G_n(\mathbf{A}_{T}\cdot \mathbf{z}_{},s_1,s_2,A)
		\left|
		\widetilde{R}_{i,l}
		\left(
		\left(
			j
			\
			\mapsto
			\
			z_{j-1}
			- \mathbf{e}_{j}^{\top}
			\cdot
			\mathbf{A}_{T}^{-1}
			\cdot
			\widetilde{\mathbf{x}_{}}_{}
			\,,
			j
			\
			\mapsto
			\
			-
			\mathbf{e}_{j}^{\top}
			\cdot
			\mathbf{A}_{T}^{-1}
			\cdot
			[X_{\ell}-X_{\ell,k+\ell}]_\ell
			\right)
		\right)
		\right|
		\,\mathrm{d}\mathbf{z}_{}
		\\ &
		  \ \lesssim \
		  u_n^{-\gamma}\cdot ((s_2-s_1)\land 1)
		  \sum_{j=1}^T
		  \sum_{I\subset \{1,\ldots,j\}}
		\left(
		  1+|\widetilde{x}_{j-1}|^{1+\gamma}
		  \right)
		\left(
		  |\mathbf{e}_{l}^{\top} \mathbf{A}_{T}^{-1}\cdot [X_{\ell}-X_{\ell,k+\ell}]_{\ell}|
		  +
		  \left|
		  \sum_{m\in I}
		a_{j-m}\psi(m)
		\right|
		  ^{1+\gamma}
		  \right)
		\,,
	\end{align*}
	where the multiplicative constant in $\lesssim$ depends only on $T$, $(a_i)$, $(\mathfrak{a}_j)$, $\gamma$, $\underline{s}$, and the distribution of $\varepsilon$.
	Using
	\begin{align*}
		|\mathbf{e}_{l}^{\top} \mathbf{A}_{T}^{-1}\cdot [X_{\ell}-X_{\ell,k+\ell}]_{\ell}|
		\ \lesssim \
		\norm{
			[X_{\ell}-X_{\ell,k+\ell}]_{\ell}
		}_{1}
		\,,
	\end{align*}
  one finds, by Lemma~\ref{lem:x0k},
	\begin{align*}
		\left|
		\sum_{m\in I}
		a_{j-m}\psi(m)
		\right|
		\ \lesssim \
		\max_{m\in I}
		|\psi(m)|
		\ \lesssim \
		\norm{[X_{\ell}-X_{\ell,k+\ell}]_\ell}_1
		\ \in \ L^{1+\gamma}(\PP)
		\,,
	\end{align*}
	and
	\begin{align*}
		\int_{\RR^T}
		\left(
		1+|\widetilde{x}_{j-1}|^{1+\gamma}
		\right)
		\,\mathrm{d}\PP_{[X_{\ell}-X_{\ell,\ell}]_{\ell}}(\widetilde{\mathbf{x}_{}}_{})
		\ = \
		1+
		\EE[|
			X_{j-1}-X_{j-1,j-1}
			|^{1+\gamma}]
		\ < \ \infty
		\,.
	\end{align*}
	This yields the result.
	\\
	\textbf{Proof of (iii):}
	We want to apply Lemma~\ref{lem:ready}(i) with
	\begin{align*}
		\varphi(j) \ = \ -\mathbf{e}_{j}^{\top} \cdot \mathbf{A}_{T}^{-1}\cdot
		\left(
		\widetilde{\mathbf{y}_{}}_{}
		\ + \
		[X_{\ell}-X_{\ell,k+\ell}]_{\ell}
		\right)
		\qquad\text{and}\qquad
		\psi(j) \ = \ -\mathbf{e}_{j}^{\top}
		\cdot \mathbf{A}_{T}^{-1}\cdot \widetilde{\mathbf{x}_{}}_{}
		\,,
	\end{align*}
	so that
	\begin{align*}
		\sum_{m=1}^j
		a_{j-m}
		\varphi(m)
		\ = \
		-(\widetilde{y}_{j-1}+ (X_{j-1}-X_{j-1,k+j-1}))
		\,.
	\end{align*}
	By Lemma~\ref{lem:ready}(i),
	\begin{align*}
		 &
		\int_{\RR^T}
		G_n(\mathbf{A}_{T}\cdot \mathbf{z}_{},s_1,s_2,A)
		\left|
		\widetilde{R}_{i,l}
		\left(
		\left(
			j
			\
			\mapsto
			\
			z_{j-1}
			- \mathbf{e}_{j}^{\top}
			\cdot
			\mathbf{A}_{T}^{-1}
			\cdot
			\left(
			\widetilde{\mathbf{y}_{}}_{}
			+
			[X_{\ell}-X_{\ell,k+\ell}]_{\ell}
			\right)
			\,,
			j
			\
			\mapsto
			\
			-
			\mathbf{e}_{j}^{\top}
			\cdot
			\mathbf{A}_{T}^{-1}
			\cdot
			\widetilde{\mathbf{x}_{}}_{}
			\right)
		\right)
		\right|
		\,\mathrm{d}\mathbf{z}_{}
		\\ &
		  \ \lesssim \
		  u_n^{-\gamma}\cdot ((s_2-s_1)\land 1)
		  \sum_{j=1}^T
		  \sum_{I\subset \{1,\ldots,j\}}
		\left(
		  1+ |\widetilde{y}_{j-1}|^{1+\gamma}
		  + |X_{j-1}-X_{j-1,k+j-1}|^{1+\gamma}
		  \right)
		\\ & \qquad\times\
		  \left(
		  |\mathbf{e}_{l}^{\top} \cdot \mathbf{A}_{T}^{-1}\cdot \widetilde{\mathbf{x}_{}}_{}|
		  +
		  \left|
		  \sum_{m\in I}
		a_{j-m}
		  \psi(m)
		\right|^{1+\gamma}
		  \right)
		\,,
	\end{align*}
	where the multiplicative constant in $\lesssim$ depends only on $T$, $(a_i)$, $(\mathfrak{a}_j)$, $\gamma$, $\underline{s}$, and the distribution of $\varepsilon$.
	Using
	\begin{align*}
		|\mathbf{e}_{l}^{\top} \cdot \mathbf{A}_{T}^{-1}\cdot \widetilde{\mathbf{x}_{}}_{}|
		\ \lesssim \
		\norm{
			\widetilde{\mathbf{x}_{}}_{}
		}_{1}
		\,,
	\end{align*}
	\begin{align*}
		\left|
		\sum_{m\in I}
		a_{j-m}
		\psi(m)
		\right|
		\ \lesssim \
		\max_{m\in I}
		|\psi(m)|
		\ \lesssim \
		\norm{\widetilde{\mathbf{x}_{}}_{}}_1
	\end{align*}
  and, by Lemma~\ref{lem:x0k},
	\begin{align*}
		\int_{\RR^T}
		\left(
		\norm{
			\widetilde{\mathbf{x}_{}}_{}
		}_{1}
		+
		\norm{
			\widetilde{\mathbf{x}_{}}_{}
		}_{1}^{1+\gamma}
		\right)
		\,\mathrm{d}\PP_{[X_{\ell}-X_{\ell,k-1+\ell}]}(\widetilde{\mathbf{x}_{}}_{})
		\ = \
		\EE[\norm{[X_{\ell}-X_{\ell,k-1+\ell}]}_1]
		+
		\EE[\norm{[X_{\ell}-X_{\ell,k-1+\ell}]}_1^{1+\gamma}]
    \ <\ \infty\,,
	\end{align*}
	\begin{align*}
		\int_{\RR^T}
		(1+|\widetilde{y}_{j-1}|^{1+\gamma})
		\,\mathrm{d}\PP_{[X_{\ell,k-1+\ell}-X_{\ell,\ell}]}(\widetilde{\mathbf{y}_{}}_{})
		\ = \
		1+
		\EE[|X_{j-1,k-1+j-1}-X_{j-1,j-1}|^{1+\gamma}]
		\ < \ \infty
	\end{align*}
	yields the result.
\end{proof}
This is the main result of the subsection, where we complete the analysis of the remainder terms $R_j$ by taking the $L^r(\PP)$-norm of the intermediate bounds provided so far.
\begin{lemma}[$L^r$-bounds of the remainder terms $R_j$]
	\label{lem:boundsRj_3}
	Let Assumptions~\ref{asu:f} and~\ref{asu:coef} hold, and
	let
	$\gamma\in(0,\alpha-1)$ and $r\in(1,2)$
	satisfy
	\begin{align*}
		\gamma
		\ < \
		\min
		\left\{
		\frac{d}{1-d}
		\,,
		1
		-
		\frac{1}{r(1-d)}
		\,,
		\frac{\alpha}{r}
		-
		1
		\right\}
		\qquad\text{and}\qquad
		\frac{1}{1-d}
		\ < \
		r
		\ < \
		\alpha
		\,.
	\end{align*}
	Then, for all $j=1,2,3$,
	\begin{align*}
		\norm{R_j}_{L^r(\PP)}
		\ \lesssim \
		u_n^{-\gamma}
		\cdot ((s_2-s_1)\land 1)
		\cdot k^{-(1+\gamma)(1-d)}
		\,,
	\end{align*}
	where the multiplicative constant in $\lesssim$ depends only on $T$, $(a_i)$, $(\mathfrak{a}_j)$, $r$, $\gamma$, $\underline{s}$, and the distribution of $\varepsilon$.
\end{lemma}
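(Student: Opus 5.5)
The plan is to combine the pointwise bounds of Lemma~\ref{lem:boundsRj} with the integrated bounds of Lemma~\ref{lem:boundsRj_2}, and then take $L^r(\PP)$-norms using independence and the moment estimate of Lemma~\ref{lem:x0k}. For each $j\in\{1,2,3\}$, Lemma~\ref{lem:boundsRj} gives $|R_j|\lesssim k^{-(1-d)}\cdot Q_j$, where $Q_j$ is the maximum over $i,l$ of the integral expressions. Lemma~\ref{lem:boundsRj_2} bounds each $Q_j$ (times $|\varepsilon_{-k}|$ for $j=2,3$) by $u_n^{-\gamma}((s_2-s_1)\land 1)$ times an explicit random factor. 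It therefore remains to show that each random factor has $L^r$-norm $\lesssim k^{-\gamma(1-d)}$. Combined with the prefactor $k^{-(1-d)}$, this yields the claimed rate $k^{-(1+\gamma)(1-d)}$.

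\textbf{Term $R_1$.} The random factor is $k^{-\gamma(1-d)}(1+|\varepsilon_{-k}|^{1+\gamma})\sum_j(1+|X_{j-1}-X_{j-1,k+j-1}|^{1+\gamma})$. The tail $X_{j-1}-X_{j-1,k+j-1}$ involves only innovations with index $\le -k-1$, so it is independent of $\varepsilon_{-k}$. The $L^r$-norm therefore factorizes into $\|1+|\varepsilon|^{1+\gamma}\|_{L^r}$ and $\|1+|X_{j-1}-X_{j-1,k+j-1}|^{1+\gamma}\|_{L^r}$. Both are finite because $(1+\gamma)r<\alpha$, and the second is bounded uniformly in $k$ by Lemma~\ref{lem:x0k} applied with exponent $(1+\gamma)r\in(1/(1-d),\alpha)$. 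The $k^{-\gamma(1-d)}$ is explicit here, so $R_1$ is done.

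\textbf{Term $R_2$.} The random factor is $|\varepsilon_{-k}|(N_k+N_k^{1+\gamma})$ with $N_k=\|[X_\ell-X_{\ell,k+\ell}]_\ell\|_1$, which is again independent of $\varepsilon_{-k}$, so the norm factorizes. The decay must now come from the moments of $N_k$. Lemma~\ref{lem:x0k} with exponent $q$ gives $\|N_k\|_{L^q}\lesssim k^{(1-(1-d)q)/q}=k^{1/q-(1-d)}$.
\begin{itemize}
\item For $N_k^{1+\gamma}$, take $q=(1+\gamma)r$. This gives $\|N_k^{1+\gamma}\|_{L^r}\lesssim k^{1/r-(1+\gamma)(1-d)}$, which is at most $k^{-\gamma(1-d)}$ because $1/r<1-d$.
\item For $N_k$, take $q=r$. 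This gives $\|N_k\|_{L^r}\lesssim k^{1/r-(1-d)}$, and we need $1/r-(1-d)\le-\gamma(1-d)$, i.e.\ $\gamma\le 1-1/(r(1-d))$. This is exactly the second constraint in the parameter range.
\end{itemize}
Hence $\|R_2\|_{L^r}\lesssim u_n^{-\gamma}((s_2-s_1)\land1)k^{-(1-d)}k^{-\gamma(1-d)}$.

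\textbf{Term $R_3$.} The factor $\EE[\|[X_\ell-X_{\ell,k-1+\ell}]\|_1]+\EE[\|\cdot\|_1^{1+\gamma}]$ is deterministic. By Lemma~\ref{lem:x0k} (Jensen handles the $L^1$ part via $L^r$), it is $\lesssim k^{1/r-(1-d)}+k^{1-(1+\gamma)(1-d)}$. The remaining random part $|\varepsilon_{-k}|(1+|X_{j-1}-X_{j-1,k+j-1}|^{1+\gamma})$ is a product of independent factors, each bounded in $L^r$ as in the treatment of $R_1$.

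The main obstacle is the exponent $k^{1-(1+\gamma)(1-d)}$ coming from the $(1+\gamma)$-th moment in $R_3$. It must be $\le k^{-\gamma(1-d)}$, which requires $1-(1-d)\le 0$, and that is false. I would first try to bound this term in a different way. One option is to interpolate, $\EE[N^{1+\gamma}]\le\|N\|_{L^{r}}^{1+\gamma}$ via Lyapunov, which gives $k^{(1+\gamma)(1/r-(1-d))}$. This is $\le k^{-\gamma(1-d)}$ provided $(1+\gamma)/r\le 1-d$, which follows from $\gamma<1-1/(r(1-d))$ together with $\gamma<d/(1-d)$; this is where the constraint $\gamma<d/(1-d)$ should be used. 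With that refinement, all three bounds combine to the statement.
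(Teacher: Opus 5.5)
Your proposal is correct and follows the paper's proof. You combine Lemmas~\ref{lem:boundsRj} and~\ref{lem:boundsRj_2}, factorize the $L^r(\PP)$-norms using the independence of $\varepsilon_{-k}$ from the tail sums, and extract $k^{-\gamma(1-d)}$ via Lemma~\ref{lem:x0k} and $\gamma<1-1/(r(1-d))$. The one wrinkle is in $R_3$, where your intermediate bound $k^{1-(1+\gamma)(1-d)}$ came from applying Lemma~\ref{lem:x0k} at exponent $1+\gamma<1/(1-d)$, which is outside its admissible range. That ``obstacle'' is therefore an artifact of the misapplied lemma. Your Lyapunov fix $\EE[N^{1+\gamma}]\le\|N\|_{L^r(\PP)}^{1+\gamma}$ is legitimate because $1+\gamma<1/(1-d)<r$; this is where $\gamma<d/(1-d)$ actually enters. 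It works just as well as the paper's $\EE[N^{1+\gamma}]\le\|N^{1+\gamma}\|_{L^r(\PP)}\lesssim k^{1/r-(1+\gamma)(1-d)}$.
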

\begin{proof} We use first Lemmas~\ref{lem:boundsRj} and~\ref{lem:boundsRj_2} and treat each $R_j$ separately.
	\\
	\textbf{Analysis of $R_1$:}
	It suffices to bound the $L^r(\PP)$ norm of
	\begin{align*}
		\left(
		1+
		|\varepsilon_{-k}|^{1+\gamma}
		\right)
		\cdot
		\left(
		1 \ + \
		\left|
		X_{j-1}-X_{j-1,k+j-1}
		\right|^{1+\gamma}
		\right)
	\end{align*}
  for each $j$. The two factors have a finite $r$th moment, due to the assumption $(1+\gamma)r<\alpha$ and Lemma~\ref{lem:x0k}, and they are independent, so their product has a finite $r$th moment as well.
	\\
	\textbf{Analysis of $R_2$:}
	First use the independence property and the fact that $\varepsilon_{-k}$ has a finite $L^r(\PP)$ norm, to find that it is enough to show that
	\begin{align*}
		k^{-(1-d)}
		\left(
		\norm{[X_{\ell}-X_{\ell,k+\ell}]_{\ell}}_{1}
		\ + \
		\norm{[X_{\ell}-X_{\ell,k+\ell}]_{\ell}}_{1}^{1+\gamma}
		\right)
	\end{align*}
	is bounded in $L^r(\PP)$ by a multiple of $k^{-(1+\gamma)(1-d)}$.
	By stationarity of $(X_t)$ and Lemma~\ref{lem:x0k},
	\begin{align*}
		\norm{
			\norm{[X_{\ell}-X_{\ell,k+\ell}]_{\ell}}_1
		}_{L^r(\PP)}
		\ \lesssim \
		\sum_{\ell=0}^{T-1}
		\norm{
			X_{\ell}-X_{\ell,k+\ell}
		}_{L^r(\PP)}
		\ \lesssim \
		\norm{
			X_{0}-X_{0,k}
		}_{L^r(\PP)}
		\ \lesssim \
		k^{1/r-(1-d)}
		\,,
	\end{align*}
	where the multiplicative constant in $\lesssim$ depends only on $T$, $(a_i)$,
	and the distribution of $\varepsilon$.
	By
	convexity of $x\mapsto x^{1+\gamma}$,
	\begin{align*}
		\norm{[X_{\ell}-X_{\ell,k+\ell}]_{\ell}}_1^{1+\gamma}
		\ \lesssim \
		\sum_{\ell=0}^{T-1}
		\left|
		X_{\ell}-X_{\ell,k+\ell}
		\right|^{1+\gamma}
		\,,
	\end{align*}
	and therefore, by Lemma~\ref{lem:x0k} again,
	\begin{align*}
		\norm{
			\norm{[X_{\ell}-X_{\ell,k+\ell}]_{\ell}}_1^{1+\gamma}
		}_{L^r(\PP)}
		\ \lesssim \
		\sum_{\ell=0}^{T-1}
		\norm{
			\left|
			X_{\ell}-X_{\ell,k+\ell}
			\right|^{1+\gamma}
		}_{L^r(\PP)}
		\ \lesssim \
		\norm{
			|X_0-X_{0,k}|^{1+\gamma}
		}_{L^r(\PP)}
		\ \lesssim \
		k^{1/r-(1+\gamma)(1-d)}
		\,,
	\end{align*}
	where we used that by assumption $1/(1-d)<r<(1+\gamma)r<\alpha$.
	Here the multiplicative constant in $\lesssim$ depends only on $T$, $(a_i)$, $\gamma$, and the distribution of $\varepsilon$.
	Consequently
	\begin{align*}
		k^{-(1-d)} \norm{
			\norm{[X_{\ell}-X_{\ell,k+\ell}]_{\ell}}_1
		}_{L^r(\PP)}
		\ + \
		\norm{
			\norm{[X_{\ell}-X_{\ell,k+\ell}]_{\ell}}_1^{1+\gamma}
		}_{L^r(\PP)}
		\ \lesssim \
		k^{1/r-2(1-d)}
		\,.
	\end{align*}
	Now, since $\gamma< 1- 1/(r(1-d)) $ by assumption,
	\begin{align*}
		\frac{1}{r}
		\ - \
		2
		(1-d)
		\ = \
		-(1-d)
		\left(
		1
		-
		\frac{1}{r(1-d)}
		\right)
		\ - \
		(1-d)
		\ \le \
		-(1-d)\gamma
		\ - \
		(1-d)
		\ = \
		-(1-d)(1+\gamma)
		\,,
	\end{align*}
	so that
	indeed
	\begin{align*}
		k^{-(1-d)} \norm{
			\norm{[X_{\ell}-X_{\ell,k+\ell}]_{\ell}}_1
		}_{L^r(\PP)}
		\ + \
		\norm{
			\norm{[X_{\ell}-X_{\ell,k+\ell}]_{\ell}}_1^{1+\gamma}
		}_{L^r(\PP)}
		\ \lesssim \
		k^{-(1+\gamma)(1-d)}
		\,,
	\end{align*}
	where the multiplicative constant in $\lesssim$ depends only on $T$, $(a_i)$, $\gamma$, and the distribution of $\varepsilon$.
	\\
	\textbf{Analysis of $R_3$:}
	Again use the independence property and the fact that $\varepsilon_{-k}$ has a finite $L^r(\PP)$ norm, to find that
	\begin{align*}
		\norm{ |\varepsilon_{-k}| \cdot \left(
			1+|X_{j-1}-X_{j-1,k+j-1}|^{1+\gamma} \right)
		}_{L^r(\PP)}
		\ < \
		\infty
	\end{align*}
	due to $(1+\gamma)r<\alpha$.
		Then note that by convexity,
	\begin{align*}
		 &
		\EE[\norm{[X_{\ell}-X_{\ell,k-1+\ell}]_\ell}_1]
		\ + \
		\EE[\norm{[X_{\ell}-X_{\ell,k-1+\ell}]_\ell}_1^{1+\gamma}]
		\\ &
		  \ \le \
		  \norm{\norm{[X_{\ell}-X_{\ell,k-1+\ell}]_\ell}_1}_{L^r(\PP)}
		\ + \
		  \norm{\norm{[X_{\ell}-X_{\ell,k-1+\ell}]_\ell}_1^{1+\gamma}}_{L^r(\PP)}
		\\ &
		  \ \lesssim \
		  k^{1/r-(1-d)}
		\,,
	\end{align*}
	and the proof finishes as in (ii).
\end{proof}
\subsection{Proof of Lemma~\ref{lem:reduction_principle}}
We are going to apply Lemmas~\ref{lem:cond_decomp}~(moment bounds after martingale decomposition),~\ref{lem:core_r}~(error decomposition),~\ref{lem:boundsRj_3}~($L^r$-bounds of the remainder terms $R_j$), and Lemma~B.4 in~\cite{scheffelCentralLimitTheory2025},
which we are going to recall now.
\begin{lemma}[Rate of divergence of convolution-type sum; see Lemma~B.4 in~\cite{scheffelCentralLimitTheory2025}]
	\label{lem:analytic}
	Let Assumptions~\ref{asu:f} and~\ref{asu:coef} hold, and
	let
	$\gamma\in(0,\alpha-1)$ and $r\in(1,2)$
	satisfy
	\begin{align*}
		\gamma
		\ < \
		\min
		\left\{
		\frac{d}{1-d}
		\,,
		1
		-
		\frac{1}{r(1-d)}
		\,,
		\frac{\alpha}{r}
		-
		1
		\right\}
		\qquad\text{and}\qquad
		\frac{1}{1-d}
		\ < \
		r
		\ < \
		\alpha
		\,.
	\end{align*}
	Then
	\begin{align*}
		\sum_{k = -\infty}^{n}
		\left(
		\sum_{t=k\lor 1}^{n}
		\left(
			1
			\land
				(t-k)
				^{-(1-d)(1+\gamma)}
			\right)
		\right)^r
		\ \lesssim\
		n^{r+1-(1-d)(1+\gamma)r}
		\,,
	\end{align*}
	where $\lesssim$ denotes inequality up to
	a multiplicative constant that depends only on
	$\gamma$, $\alpha$, and $d$. We adopt the convention $1/0=\infty$.
\end{lemma}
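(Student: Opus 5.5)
The plan is a direct elementary estimate. Write $\beta:=(1-d)(1+\gamma)$ and first record the two facts about $\beta$ that the parameter restrictions provide. From $\gamma<d/(1-d)$ we get $1+\gamma<1/(1-d)$, hence $\beta<1$. From $r>1/(1-d)$ we get $\beta r\ge (1-d)r>1$. These two inequalities, $\beta<1<\beta r$, are all that will be used. The target exponent is $r+1-\beta r=1+r(1-\beta)$.

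Split the outer sum according to the sign of $k$.

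\emph{Case $1\le k\le n$.} The inner sum equals $\sum_{t=k}^{n}\bigl(1\land (t-k)^{-\beta}\bigr)$. The term $t=k$ contributes $1$ (by the convention $1/0=\infty$). The remaining terms give $\sum_{m=1}^{n-k}m^{-\beta}\lesssim (n-k+1)^{1-\beta}$, because $\beta<1$. Hence this part of the outer sum is at most
\[
\sum_{k=1}^n (n-k+1)^{r(1-\beta)}\ \lesssim\ n^{1+r(1-\beta)}.
\]

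\emph{Case $k\le 0$.} Put $m=-k\ge 0$. The inner sum is $\sum_{t=1}^n (t+m)^{-\beta}$, up to the harmless term $t=m=0$ handled by the convention, and I bound it in two ways.
\begin{itemize}
\item If $m\le n$, the sum is $\lesssim (n+m)^{1-\beta}\lesssim n^{1-\beta}$. Summing over these at most $n+1$ values of $m$ gives $\lesssim n^{1+r(1-\beta)}$.
\item If $m>n$, each summand is at most $m^{-\beta}$, so the inner sum is $\le n\,m^{-\beta}$. The contribution is then
\[
n^r\sum_{m>n}m^{-\beta r}\ \lesssim\ n^{r}\,n^{1-\beta r},
\]
where the tail series converges precisely because $\beta r>1$.
\end{itemize}

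Adding the three pieces yields $\lesssim n^{r+1-(1-d)(1+\gamma)r}$. The constants depend only on $\beta$ and $r$, hence only on $\gamma$, $d$ and the admissible range determined by $\alpha$.

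The only delicate point is the far-past regime $m>n$. There the crude bound $(n+m)^{1-\beta}$ would diverge when summed over $m$, so one must use the bound $n\,m^{-\beta}$ together with $\beta r>1$. This is exactly where the condition $r>1/(1-d)$ enters. The remaining steps are routine comparisons of sums with integrals.
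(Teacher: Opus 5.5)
Your proof is correct. The paper does not prove this lemma itself; it imports it verbatim from Lemma~B.4 of \cite{scheffelCentralLimitTheory2025}. Your three-regime estimate ($1\le k\le n$; $0\le -k\le n$; $-k>n$) is therefore a self-contained verification rather than a competing route.

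The two inequalities you isolate do all the work:
\begin{itemize}
\item $\beta<1$ follows from $\gamma<d/(1-d)$ and handles the two near regimes.
\item $\beta r>1$ follows from $r>1/(1-d)$ and makes the far-past tail $n^r\sum_{m>n}m^{-\beta r}$ summable with the right order.
\end{itemize}

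Two small points of precision.
\begin{itemize}
\item For $k\le 0$ the inner index starts at $t=1$, so $t-k\ge 1$ always. There is no $t=m=0$ term to handle, and the convention $1/0=\infty$ only matters in the case $k\ge 1$, $t=k$.
\item Your constants involve $r$, through $1/(\beta r-1)$ and through the powers $r(1-\beta)$. This dependence is harmless. From $(1-d)r>1$ one gets $\beta r-1>\gamma$, so $\sum_{m>n}m^{-\beta r}\le n^{1-\beta r}/\gamma$. Since $r<2$, all remaining factors such as $2^{r(1-\beta)}$ or $(1+C)^r$ are bounded uniformly. Every constant is thus controlled by $\gamma$ and $d$ alone, consistent with the stated dependence on $\gamma$, $\alpha$, $d$. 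This is cleaner than your phrase ``the admissible range determined by $\alpha$''.
\end{itemize}
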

\noindent
\textbf{Proof of Lemma~\ref{lem:reduction_principle}:}
By Lemma~\ref{lem:cond_decomp},
\[
	\EE
	\left|
	\mathcal{S}_{n}(s_1,s_2,A)
	\right|^r
	\ \lesssim \
	\sum_{k=-\infty}^{n}
	\left(
  \sum_{t=(k-T+1)\lor 1}^{n-T+1}
	\norm{
			P_{k-t}\, \mathcal{U}_n([X_{\ell}]_{\ell},s_1,s_2,A)
		}_{L^r(\PP)}
	\right)^r
	\,,
\]
where the multiplicative constant in $\lesssim$ is independent of all other quantities in this work.
We split the inner sum based on whether $t-k\ge 1$ and use the convexity of $x\mapsto x^r$ (due to $r>1$) to get
\begin{align*}
	\EE
	\left|
  \mathcal{S}_n(s_1,s_2,A)
	\right|^r
	 &
	\ \lesssim \
	\sum_{k=-\infty}^{n}
	\left(
	\sum_{
		\begin{smallmatrix}
      ((k-T+1) \lor 1)\le t \le n-T+1\\
			t-k \ge 1
		\end{smallmatrix}
	}
	\norm{
		P_{k-t}\, \mathcal{U}_n([X_{\ell}]_{\ell},s_1,s_2,A)
	}_{L^r(\PP)}
	\right)^{r}
	\\ &
	  \qquad + \
	  \sum_{k=-\infty}^{n}
	\left(
	  \sum_{
			  \begin{smallmatrix}
          ((k-T+1) \lor 1)\le t \le n-T+1\\
				t-k < 1
			\end{smallmatrix}
			  }
	\norm{
			  P_{k-t}\, \mathcal{U}_n([X_{\ell}]_{\ell},s_1,s_2,A)
			  }_{L^r(\PP)}
	\right)^r
	  \,.
\end{align*}
\textbf{Case $t-k\ge 1$:}
We apply Lemmas~\ref{lem:core_r},~\ref{lem:boundsRj_3}, and~\ref{lem:analytic}.
This yields
\begin{align*}
	 &
	\sum_{k=-\infty}^{n}
	\left(
	\sum_{
		\begin{smallmatrix}
      ((k-T+1) \lor 1)\le t \le n-T+1\\
			t-k \ge 1
		\end{smallmatrix}
	}
	\norm{
		P_{k-t}\, \mathcal{U}_n([X_{\ell}]_{\ell},s_1,s_2,A)
	}_{L^r(\PP)}
	\right)^{r}
	\\ &
	  \ \lesssim \
	  \left(
	  u_n^{-\gamma }
	\cdot
	  ((s_2-s_1) \land 1)
	  \right)^r
	  \sum_{k=-\infty}^{n}
	\left(
	  \sum_{
		  t=k\lor 1
		  }^n
	  \left(
	  1
	  \land
		  (t-k)^{-(1-d)(1+\gamma)}
	\right)
	\right)^{r}
	\\ &
	  \ \lesssim \
	  \left(
	  (s_2-s_1)\land 1
	  \right)
	  \cdot
	  n^{r+1-(1-d)(1+\gamma)r}
	\,,
\end{align*}
since 
$r>1$.
Here the multiplicative constant in $\lesssim$ depends only on $T$, $(a_i)$, $(\mathfrak{a}_j)$, $\gamma$, $\underline{s}$, and the distribution of $\varepsilon$.
\\
\textbf{Case $t-k < 1 $:}
Note that the constraints $t<k+1$ and $t \ge ((k-T+1) \lor 1)$ ensure that the inner sum is empty when $k\le 0$.
By Lemma~\ref{lem:2.2},
\begin{align*}
	 &
	\sum_{k=-\infty}^{n}
	\left(
	\sum_{
			\begin{smallmatrix}
        ((k-T+1) \lor 1)\le t \le n-T+1\\
				t-k < 1
			\end{smallmatrix}
		}
	\norm{
			P_{k-t}\, \mathcal{U}_n([X_{\ell}]_{\ell},s_1,s_2,A)
		}_{L^r(\PP)}
	\right)^r
	\\ &
	  \ \lesssim \
	  \left(
	  \norm{\mathcal{U}_n([X_\ell]_\ell,s_1,s_2,A)}_{L^r(\PP)}
	\right)^{r}
	\sum_{k=1}^{n}
	\left(
	  \#
	  \{
	  t\in\NN
	  \,\colon\,
	  ((k-T+1) \lor 1)\le t \le k+1
	  \}
	  \right)^r
	\\ &
	  \ \lesssim \
	  (
	  (s_2-s_1)
	  \land 1
	  )
	  \cdot
	  n \cdot T^r
	  \ \lesssim \
	  n
	  \cdot
	  (
	  (s_2-s_1)
	  \land 1
	  )
	  \,,
\end{align*}
where the multiplicative constant in $\lesssim$ depends only on $T$, $(a_i)$, $(\mathfrak{a}_j)$, $r$, $\gamma$, $\underline{s}$, and the distribution of $\varepsilon$.
We conclude that
\begin{align*}
	\EE
	\left|
	\mathcal{S}_n(s_1,s_2,A) \right|^r
	\ \lesssim \
	(
	(s_2-s_1)
	\land 1
	)
	\cdot
	\left(
	n^{r+1 - (1-d)(1+\gamma)r}
	\ + \ n
	\right)
	\,.
\end{align*}
Observe that $1-(1-d)(1+\gamma) \ge 0$  due to $\gamma \le d/(1-d)$. Therefore
\begin{align*}
	(
	(s_2-s_1)
	\land 1
	)
	\cdot
	\left(
	n^{r+1 - (1-d)(1+\gamma)r}
	\ + \ n
	\right)
	\ \le \
	2
	\cdot
	(
	(s_2-s_1)
	\land 1
	)
	\cdot
	n^{r+1 - (1-d)(1+\gamma)r}
	\,.
\end{align*}
This finishes the proof.
\qed
\section{Proof of Theorem~\ref{thm:main}}
\label{app:proofthmmain}
We adapt the chaining argument in the proof of Theorem~2.1 in~\cite{koulAsymptoticsEmpiricalProcesses2001} to our setting.
Choose $\delta>0$ sufficiently small and set
$
	u_n=o(n^{(d+1/(\nu \land 2)-\kappa_0-\delta)/(\nu+1)})
$, where $\kappa_0$ is defined in \eqref{eq:kappa_0}.
We define, for $k\in\NN_0$ and $j\in\{0,\ldots,2^k\}$ the partition
\begin{align*}
	\pi_{j,k}
	\ = \
	\underline{s}
	\ + \
	\frac{2^{k} - j}{2^k}
	\left(
	\overline{s}
	-
	\underline{s}
	\right)
	\,,
\end{align*}
so that
\begin{align*}
	\underline{s}
	\
	=
	\
	\pi_{2^k,k}
	\ < \
	\pi_{2^{k}-1,k}
	\ < \
	\cdots
	\ < \
	\pi_{1,k}
	\ < \
	\pi_{0,k}
	\ = \
	\overline{s}
	\,.
\end{align*}
For $k\in\NN_0$ and $s\in (\underline{s},\overline{s}]$, we define $j^s_k \in\{0,\ldots,2^k-1\}$ to be the unique index such that
\begin{align*}
	\pi_{j^s_k+1,k}
	\ < \
	s
	\ \le \
	\pi_{j^s_k,k}
	\,.
\end{align*}
Note that the partitions are nested with increasing $k$ so that one has $s \ \le \ \pi_{j^s_{k+1},k+1} \ \le \ \pi_{j^s_k,k}$.  We define a chain linking $\overline{s}$ to each point $s\in (\underline{s},\overline{s}]$ by
\begin{align*}
	\pi_{j^s_{K}+1,K}
	\ < \
	s
	\ \le \
	\pi_{j^s_{K},K}
	\ \le \
	\cdots
	\ \le \
	\pi_{j^s_{1},1}
	\ \le \
	\pi_{j^s_{0},0}
	\ = \
	\overline{s}
	\,,
\end{align*}
where we choose
$
  K=K_n=\lfloor\log_2(
		n^{2-d-1/\alpha}
    /
	f_{X_0}(u_n)
  )
  \rfloor
$,
so that
$
\limsup_{n\to\infty}
	2^{-K_n}
	\frac{
		n^{2-d-1/\alpha}
	}{f_{X_0}(u_n)}
  <\infty
$.
By the assumption on $u_n$ and regular variation of $f_{X_0}$, we have that $f_{X_0}(u_n)$ decays at most polynomially, so that, by $2-d-1/\alpha\in(1,2)$, $K_n\to\infty$ logarithmically.
By construction, each link in the chain comes from a different partition. Since $(\pi_{j,k})\subset (\pi_{j,k+1})$, the next result identifies the possible coordinates of adjacent links of the chain in terms of the larger partition. 
\begin{lemma}[Partition coordinates of adjacent chain links]
	\label{lem:partition}
		For all $k\in\NN$ and all $s\in (\underline{s},\overline{s}]$ it holds
	$
		\pi_{j_{k+1}^s,k+1}
		,
		\pi_{j_{k}^s,k}
		\in \{
		\pi_{2j_{k}^s,k+1},
		\pi_{2j_{k}^s+1,k+1}
		\}
	$.
\end{lemma}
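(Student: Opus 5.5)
The proof reduces to comparing the two dyadic partitions explicitly. We use the closed form $\pi_{j,k}=\underline{s}+\frac{2^k-j}{2^k}(\overline{s}-\underline{s})$ and the nesting identity $\pi_{j,k}=\pi_{2j,k+1}$. The identity follows from
\[
\frac{2^k-j}{2^k}=\frac{2^{k+1}-2j}{2^{k+1}} .
\]
With it, the claim $\pi_{j^s_k,k}\in\{\pi_{2j^s_k,k+1},\pi_{2j^s_k+1,k+1}\}$ is immediate, since it equals the first element.

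For $\pi_{j^s_{k+1},k+1}$, the key step is to show that $j^s_{k+1}\in\{2j^s_k,\,2j^s_k+1\}$. The level-$k$ cell containing $s$ is $(\pi_{j^s_k+1,k},\pi_{j^s_k,k}]$. By the nesting identity this equals $(\pi_{2j^s_k+2,k+1},\pi_{2j^s_k,k+1}]$. Since the partition points are strictly decreasing in $j$, this interval is the disjoint union of the two level-$(k+1)$ cells
\[
(\pi_{2j^s_k+1,k+1},\pi_{2j^s_k,k+1}] \quad\text{and}\quad (\pi_{2j^s_k+2,k+1},\pi_{2j^s_k+1,k+1}].
\]
The level-$(k+1)$ cells $(\pi_{j+1,k+1},\pi_{j,k+1}]$, $j\in\{0,\ldots,2^{k+1}-1\}$, partition $(\underline{s},\overline{s}]$. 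By uniqueness of $j^s_{k+1}$, the cell containing $s$ must therefore be one of these two, so $j^s_{k+1}\in\{2j^s_k,2j^s_k+1\}$. Taking $\pi_{\cdot,k+1}$ of this index gives the claim.

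No real obstacle arises. The only care needed is in the bookkeeping of the half-open intervals, namely left-open and right-closed, which is consistent with the defining inequality $\pi_{j+1,k}<s\le\pi_{j,k}$. One must also check that the indices $2j^s_k,2j^s_k+1$ lie in $\{0,\ldots,2^{k+1}-1\}$, which holds because $j^s_k\le 2^k-1$.
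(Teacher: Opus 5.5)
Your proof is correct and is essentially the paper's argument. Both use the nesting identity $\pi_{j,k}=\pi_{2j,k+1}$ to place $s$ in $(\pi_{2j^s_k+2,k+1},\pi_{2j^s_k,k+1}]$, then split at $\pi_{2j^s_k+1,k+1}$ to get $j^s_{k+1}\in\{2j^s_k,2j^s_k+1\}$. Your explicit check that these indices lie in $\{0,\ldots,2^{k+1}-1\}$ is a small detail the paper leaves implicit.
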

\begin{proof}
		By definition,
	\begin{align*}
		\pi_{j,k}
		 &
		\ = \
		\underline{s}\ +\  \frac{2^k-j}{2^k}
		\left(
		\overline{s}
		-
		\underline{s}
		\right)
		\ = \
		\underline{s}\ +\  \frac{2^{k+1}-2j}{2^{k+1}}
		\left(
		\overline{s}
		-
		\underline{s}
		\right)
		\ = \
		\pi_{2j,k+1}
		\,,
	\end{align*}
	so that
	\begin{align*}
		\pi_{2(j_{k}^s+1),k+1}
		\ = \
		\pi_{j_{k}^s+1,k}
		<
		s
		\ \le \
		\pi_{j_{k}^s,k}
		\ = \
		\pi_{2j_{k}^s,k+1}
		\,.
	\end{align*}
	Since
	$
		\pi_{2(j_{k}^s+1),k+1}
		<
		s
		\ \le \
		\pi_{2j_{k}^s,k+1}
	$, it either holds
	\begin{align*}
		\pi_{2(j_{k}^s+1),k+1}
		\ < \
		s
		\ \le \
		\pi_{2j_{k}^s+1,k+1}
		\ < \
		\pi_{2j_{k}^s,k+1}
		\qquad\text{or}\qquad
		\pi_{2(j_{k}^s+1),k+1}
		\ < \
		\pi_{2j_{k}^s+1,k+1}
		\ < \
		s
		\ \le \
		\pi_{2j_{k}^s,k+1}
		\,,
	\end{align*}
	and therefore
	$
		j^s_{k+1}
		\in
		\{2j^s_{k}, 2j^s_{k}+1\}
	$.
	Thus
	$
		\pi_{j_{k+1}^s,k+1}
		,
		\pi_{j_{k}^s,k}
		\in \{
		\pi_{2j_{k}^s,k+1},
		\pi_{2j_{k}^s+1,k+1}
		\}
	$.
\end{proof}
Fix $A\in \mathcal{C}_T$ 
and
recall the notation
\begin{align*}
	\mathcal{S}_n(s_1,s_2,A)
	 &
	\ = \
  \sum_{t=1}^{n-T+1}
  \Bigg(
	G_n([X_{t+\ell}]_{\ell},s_1,s_2,A)
	\ - \
	\EE[
		G_n([X_{t+\ell}]_{\ell},s_1,s_2,A)
	]
	\\ &
	  \qquad - \
	  \left(
	  \nabla_{\mathbf{y}} G_{\infty,n}(\mathbf{y},s_{1},s_2,A)
	  \Big|_{\mathbf{y}=\mathbf{0}_T}
	\right)
	  ^{\top}
	\cdot [X_{t+\ell}]_{\ell}
  \Bigg)
\end{align*}
from Lemma~\ref{lem:cond_decomp}.
To streamline the argument, we will write throughout,
\begin{align*}
	\mathcal{S}_n(s_1,s_2)
	\   :=\
	\mathcal{S}_n(s_1,s_2,A)
	\,.
\end{align*}
Note that $\mathcal{S}_n(s_1,s_1)= 0$ since $(s_1\cdot A) \setminus (s_1 \cdot A) = \emptyset$.
\begin{lemma}[Decomposition of $\mathcal{S}_n$]
	\label{lem:decomp_Sn}
	Let
	Assumption~\ref{asu:f} and~\ref{asu:coef} hold. Then, for $s_1,s_2,s_3\in (\underline{s},\overline{s}]\cup\{\infty\}$ satisfying $s_1\le s_2 \le s_3$,
	\begin{align*}
		\mathcal{S}_n(s_1,s_3)
		\ = \
		\mathcal{S}_n(s_1,s_2)
		\ + \
		\mathcal{S}_n(s_2,s_3)
		\,.
	\end{align*}
	In particular, for all $s\in (\underline{s},\overline{s}]$,
	\begin{align*}
		\mathcal{S}_n(s,\overline{s})
		\ = \
		\sum_{k=1}^{K}
		\mathcal{S}_n(\pi_{j^s_{k},k},\pi_{j^s_{k-1},k-1})
		\ + \
		\mathcal{S}_n(s,\pi_{j^s_{K},K})
		\,.
	\end{align*}
\end{lemma}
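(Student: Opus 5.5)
The additivity $\mathcal{S}_n(s_1,s_3)=\mathcal{S}_n(s_1,s_2)+\mathcal{S}_n(s_2,s_3)$ follows from additivity of each of the three summands defining $\mathcal{S}_n$ in its set argument. Every summand of $\mathcal{S}_n(s_1,s_2)$ is linear in the indicator $G_n(\cdot,s_1,s_2,A)$, so the whole claim rests on the pointwise identity
\begin{align*}
G_n(\mathbf{z},s_1,s_3,A)\ =\ G_n(\mathbf{z},s_1,s_2,A)\ +\ G_n(\mathbf{z},s_2,s_3,A)\,,\qquad \mathbf{z}\in\RR^T\,.
\end{align*}

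\textbf{Step 1: the set identity.} Since $A\in\mathcal{C}_T$, we have $s_3\cdot A\subset s_2\cdot A\subset s_1\cdot A$ whenever $s_1\le s_2\le s_3$. For equal indices this is trivial. If $s_3=\infty$, then $\infty\cdot A=\emptyset$ by convention, so the inclusion again holds. Consequently
\[
(s_1\cdot A)\setminus(s_3\cdot A)\ =\ \big((s_1\cdot A)\setminus(s_2\cdot A)\big)\ \cup\ \big((s_2\cdot A)\setminus(s_3\cdot A)\big),
\]
and the union is disjoint. Multiplying by $u_n$ preserves both the disjointness and the identity, and taking indicators gives the pointwise identity above.

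\textbf{Step 2: linearity of the remaining terms.} Taking expectations of the pointwise identity gives additivity of $\EE[G_n([X_{t+\ell}]_\ell,\cdot,\cdot,A)]$. For the gradient term, Lemma~\ref{lem:multi_swap}(i) writes $\nabla_{\mathbf{y}}G_{\infty,n}(\mathbf{y},s_1,s_2,A)$ as $-\int G_n(\mathbf{x},s_1,s_2,A)\nabla f_{[X_\ell]_\ell}(\mathbf{x}-\mathbf{y})\,\mathrm{d}\mathbf{x}$. This integral is linear in $G_n$ and absolutely convergent by Lemma~\ref{lem:multi_density}(iii). Hence the gradient at $\mathbf{0}_T$ is also additive. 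Summing over $t$ yields the first claim. The degenerate case $s_1=s_2$ is consistent with this, because then $\mathcal{S}_n(s_1,s_1)=0$.

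\textbf{Step 3: the telescoping chain.} For $s\in(\underline{s},\overline{s}]$, the chain $s\le\pi_{j^s_K,K}\le\cdots\le\pi_{j^s_0,0}=\overline{s}$ is monotone, by the nesting $s\le\pi_{j^s_{k+1},k+1}\le\pi_{j^s_k,k}$. All points of the chain lie in $(\underline{s},\overline{s}]$. Applying the first claim inductively to consecutive points telescopes to
\[
\mathcal{S}_n(s,\overline{s})\ =\ \mathcal{S}_n(s,\pi_{j^s_K,K})\ +\ \sum_{k=1}^K\mathcal{S}_n(\pi_{j^s_k,k},\pi_{j^s_{k-1},k-1}).
\]
Some of these links may coincide, but that is harmless because $\mathcal{S}_n(a,a)=0$.

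The only point needing care is the monotonicity of $s\mapsto s\cdot A$, including the case $s_3=\infty$. This is exactly what membership of $A$ in $\mathcal{C}_T$ supplies. The rest is routine.
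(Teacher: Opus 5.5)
Your proof is correct and takes the paper's approach. The nesting $s_3\cdot A\subset s_2\cdot A\subset s_1\cdot A$ from $A\in\mathcal{C}_T$ splits the indicator, and both the expectation and the gradient representation in Lemma~\ref{lem:multi_swap}(i) are linear in it. Your explicit handling of $s_3=\infty$ via $\infty\cdot A=\emptyset$, and of the telescoping chain, is slightly more careful than the paper. The paper says the infinite case has ``nothing to prove'', yet $\mathcal{S}_n(s,\infty)=\mathcal{S}_n(s,\overline{s})+\mathcal{S}_n(\overline{s},\infty)$ is used later in the proof of Theorem~\ref{thm:main}.
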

\begin{proof}
	If $s_j=\infty$ for some $j\in \{1,2,3\}$, there is nothing to prove. Therefore, we assume that $s_j<\infty$ for all $j\in \{1,2,3\}$.
	Fix $s_1,s_2,s_3\in (\underline{s},\infty)$ satisfying $s_1\le s_2 \le s_3$. Since $A\in \mathcal{C}_T$ it holds $s_3\cdot A \subset s_2\cdot A \subset s_1\cdot A$, so that
	\begin{align*}
		 &
		G_n([X_{t+\ell}]_{\ell},s_1,s_3,A)
		\ = \
		\ind\{[X_{t+\ell}]_\ell \in u_n ((s_1\cdot A )\setminus (s_3\cdot A))\}
		\\ &
		  \ = \
		  \ind\{[X_{t+\ell}]_\ell \in u_n ((s_1\cdot A )\setminus (s_2\cdot A))\}
		  \ + \
		  \ind\{[X_{t+\ell}]_\ell \in u_n ((s_2\cdot A )\setminus (s_3\cdot A))\}
		\\ &
		  \ = \
		  G_n([X_{t+\ell}]_{\ell},s_1,s_2,A)
		  \ + \
		  G_n([X_{t+\ell}]_{\ell},s_2,s_3,A)
		  \,,
	\end{align*}
	and
	\begin{align*}
		 &
		\EE[G_n([X_{\ell}]_{\ell},s_1,s_3,A)]
		\ = \
		\PP[[X_{\ell}]_\ell \in u_n ((s_1\cdot A )\setminus (s_3\cdot A))]
		\\ &
		  \ = \
		  \PP[[X_{\ell}]_\ell \in u_n ((s_1\cdot A )\setminus (s_2\cdot A))]
		\ + \
		  \PP[[X_{\ell}]_\ell \in u_n ((s_2\cdot A )\setminus (s_3\cdot A))]
		\\ &
		  \ = \
		  \EE[G_n([X_{\ell}]_{\ell},s_1,s_2,A)]
		\ + \
		  \EE[G_n([X_{\ell}]_{\ell},s_2,s_3,A)]
		\,,
	\end{align*}
	and, by Lemma~\ref{lem:multi_swap},
	\begin{align*}
		 &
		\nabla_{\mathbf{y}} G_{\infty,n}(\mathbf{y},s_1,s_3,A)\Big|_{\mathbf{y}=\mathbf{0}_T}
		\ = \
		\ - \
		\int_{\RR^T}
		G_{n}(\mathbf{x}_{},s_1,s_3,A)
		\nabla
		f_{[X_{\ell}]_{\ell}}(\mathbf{x}_{})
		\,\mathrm{d}\mathbf{x}_{}
		\\ &
		  \ = \
		  \ - \
		  \int_{\RR^T}
		\left(
		  G_{n}(\mathbf{x}_{},s_1,s_2,A)
		  \ + \
		  G_{n}(\mathbf{x}_{},s_2,s_3,A)
		  \right)
		  \nabla
		  f_{[X_{\ell}]_{\ell}}(\mathbf{x}_{})
		  \,\mathrm{d}\mathbf{x}_{}
		\\ &
		  \ = \
		  \nabla_{\mathbf{y}} G_{\infty,n}(\mathbf{y},s_1,s_2,A)\Big|_{\mathbf{y}=\mathbf{0}_T}
		\ + \
		  \nabla_{\mathbf{y}} G_{\infty,n}(\mathbf{y},s_2,s_3,A)\Big|_{\mathbf{y}=\mathbf{0}_T}
		\,.
	\end{align*}
	The result now follows immediately.
\end{proof}
Now we perform the approximation step outlined in the heuristic argument before Theorem~\ref{thm:main}.
\begin{lemma}
	\label{lem:final}
	Let the assumptions of Theorem~\ref{lem:reduction_principle_optimal} hold.
	Then, for $n\in \NN$, the following holds:
	\begin{enumerate}
		\item
		      For all $s\in (\underline{s},\overline{s})$, 
		      \begin{align*}
			      \left|
			      \mathcal{S}_n(s,\pi_{j^s_{K},K})
			      \right|
			      \ \lesssim \
			      \left|
			      \mathcal{S}_n(
			      \pi_{j^s_{K}+1,K}
			      ,
			      \pi_{j^s_{K},K}
			      )
			      \right|
			      \ + \
			      B_n
			      \,,
		      \end{align*}
		      where
		      \begin{align*}
			      B_n
			       &
			      \ := \
			      2^{-K}
			      \cdot
            \sum_{t=1}^{n-T+1}
			      (1+\norm{[X_{t+\ell}]_{\ell}}_1)
			      \,.
		      \end{align*}
		\item
		      For all $s\in (\underline{s},\overline{s})$,
		      \begin{align*}
			      \EE[|B_n|]
			      \ \lesssim \
			      2^{-K}
			      \cdot n
		      \end{align*}
		      and
		      for all $k\le K$ and all $\delta>0$
		      \begin{align*}
			      \EE^{*}
			      \left[
				      \left(
				      \sup_{s\in (\underline{s},\overline{s})}
				      \left|
				      \mathcal{S}_n
				      \left(
				      \pi_{j_{k}^s,k},
				      \pi_{j_{k-1}^s,k-1}
				      \right)
				      \right|
				      \right)^{r_0}
				      \right]
			      \lor
			      \EE^{*}
			      \left[
				      \left(
				      \sup_{s\in (\underline{s},\overline{s})}
				      \left|
				      \mathcal{S}_n
				      \left(
				      \pi_{j_{K}^s+1,K},
				      \pi_{j_{K}^s,K}
				      \right)
				      \right|
				      \right)^{r_0}
				      \right]
			      \ \lesssim \
			      n^{r_0(\kappa_0+\delta/2)}
			      \,,
		      \end{align*}
		      where $\kappa_0$ and $r_0$ are defined in \eqref{eq:kappa_0}.
	\end{enumerate}
	In both (i) and (ii), $\lesssim$ means inequality up to a multiplicative constant that depends on $\delta$, $A$, $T$, $\underline{s}$, $\kappa_0$, the coefficients $(a_i)$, and the distribution of $\varepsilon$, and is independent of $k$, $K$, and $n$.
\end{lemma}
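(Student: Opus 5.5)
For (i), I would fix $s\in(\underline{s},\overline{s})$ and abbreviate $\underline{\pi}:=\pi_{j^s_K+1,K}$ and $\overline{\pi}:=\pi_{j^s_K,K}$, so that $\underline{\pi}<s\le\overline{\pi}$ and $\overline{\pi}-\underline{\pi}=2^{-K}(\overline{s}-\underline{s})$. By the nesting property of $A\in\mathcal{C}_T$ we have
\[
G_n(\cdot,s,\overline{\pi},A)\le G_n(\cdot,\underline{\pi},\overline{\pi},A).
\]
Lemma~\ref{lem:decomp_Sn} then gives
\[
\mathcal{S}_n(s,\overline{\pi})=\mathcal{S}_n(\underline{\pi},\overline{\pi})-\mathcal{S}_n(\underline{\pi},s).
\]
Since the indicator parts are monotone, it suffices to bound $\mathcal{S}_n(\underline{\pi},s)$ from both sides by quantities controlled by $|\mathcal{S}_n(\underline{\pi},\overline{\pi})|$ and $B_n$.

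Writing out the definition, we have
\[
\sum_t G_n(\cdot,\underline{\pi},s)\ge 0 .
\]
The upper bound is $\sum_t G_n(\cdot,\underline{\pi},\overline{\pi})=\mathcal{S}_n(\underline{\pi},\overline{\pi})+\sum_t\bigl(\EE G_n(\cdot,\underline{\pi},\overline{\pi})+\nabla G_{\infty,n}(\mathbf 0_T,\underline{\pi},\overline{\pi})^\top[X_{t+\ell}]_\ell\bigr)$. Each of the remaining deterministic and linear terms is bounded, by Lemma~\ref{lem:single}(i),(ii), with the rate $u_n^{-\gamma}$ dropped, by
\[
\bigl((\overline{\pi}-\underline{\pi})\land1\bigr)\bigl(1+\norm{[X_{t+\ell}]_\ell}_1\bigr)\lesssim 2^{-K}\bigl(1+\norm{[X_{t+\ell}]_\ell}_1\bigr).
\]
The same holds for the centering and gradient terms of $\mathcal{S}_n(\underline{\pi},s)$, because $s-\underline{\pi}\le 2^{-K}(\overline{s}-\underline{s})$ and the constants in Lemma~\ref{lem:single} are uniform in $s_1,s_2$. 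Summing over $t$ yields $|\mathcal{S}_n(\underline{\pi},s)|\le|\mathcal{S}_n(\underline{\pi},\overline{\pi})|+CB_n$, and hence (i).

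For (ii), the bound on $\EE|B_n|$ follows at once from stationarity and $X_0\in L^1(\PP)$ (Lemma~\ref{lem:fm}). For the suprema, the key observation is that, as $s$ ranges over $(\underline{s},\overline{s})$, the pair $(\pi_{j^s_k,k},\pi_{j^s_{k-1},k-1})$ takes only finitely many values. By Lemma~\ref{lem:partition}, each such pair is either degenerate, in which case $\mathcal{S}_n=0$, or equal to $(\pi_{2i+1,k},\pi_{2i,k})$ for some $i\in\{0,\ldots,2^{k-1}-1\}$. Each of these is a difference of adjacent points of mesh $2^{-k}(\overline{s}-\underline{s})$. The supremum is therefore a measurable maximum over at most $2^{k}$ variables, and I would bound its $r_0$-th power by the sum. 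Applying Theorem~\ref{lem:reduction_principle_optimal} under condition~(ii) to each term gives
\[
\sum_i\bigl((\pi_{2i,k}-\pi_{2i+1,k})\land1\bigr)\,n^{r_0(\kappa_0+\delta/2)}\le(\overline{s}-\underline{s})\,n^{r_0(\kappa_0+\delta/2)} .
\]
The same argument, using the $2^K$ adjacent pairs $(\pi_{i+1,K},\pi_{i,K})$, handles the last link.

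The main obstacle is the bookkeeping in (i): one must make sure that the sign-indefinite linear term $\nabla G_{\infty,n}^\top[X_{t+\ell}]_\ell$ is genuinely absorbed into $B_n$ uniformly in $s$. This requires the uniformity in $s_1,s_2$ of the constants in Lemma~\ref{lem:single}, and I would check it carefully.
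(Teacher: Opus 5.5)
Your proposal is correct and follows essentially the same route as the paper. In (i) you dominate the indicator sum over $(\underline{\pi},s)$ by the one over $(\underline{\pi},\overline{\pi})$ and absorb all centering and gradient terms into $B_n$ via the $s_1,s_2$-uniform constants of Lemma~\ref{lem:single}; in (ii) you use Lemma~\ref{lem:partition} to reduce the supremum to a maximum over adjacent partition pairs, bound its $r_0$-th power by the sum, and apply Theorem~\ref{lem:reduction_principle_optimal}(ii). Your restriction to the $2^{k-1}$ pairs $(\pi_{2i+1,k},\pi_{2i,k})$, and your remark that the supremum is a finite and hence measurable maximum, slightly sharpen the paper's bound over all $2^k$ adjacent pairs but change nothing essential.
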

\begin{proof}[\textbf{Proof of (i):}]
	By Lemma~\ref{lem:decomp_Sn} and the definition of $\mathcal{S}_n$,
	\begin{align*}
		 &
		\left|
		\mathcal{S}_n(s,\pi_{j^s_{K},K})
		\ - \
		\mathcal{S}_n(
		\pi_{j^s_{K}+1,K}
		,
		\pi_{j^s_{K},K}
		)
		\right|
		\ = \
		\left|
		\mathcal{S}_n(
		\pi_{j^s_{K}+1,K}
		,
		s
		)
		\right|
		\\ &
		  \ \le \
      \sum_{t=1}^{n-T+1}
		  \ind\{[X_{t+\ell}]_\ell \in u_n\cdot (
		  (\pi_{j^s_{K}+1,K}
		\cdot A) \setminus (s \cdot A)
		)\}
		  \ + \
		  \PP[[X_{t+\ell}]_\ell \in u_n\cdot (
			  (\pi_{j^s_{K}+1,K}
			\cdot A) \setminus (s \cdot A)
			)]
		\\ &
		  \qquad + \
		  \norm{
			  \nabla_{\mathbf{y}}G_{\infty,n}(\mathbf{y},
			  \pi_{j^s_{K}+1,K},s,A
			  )\Big|_{\mathbf{y}=\mathbf{0}_T}
		}_{\infty}
		\norm{[X_{t+\ell}]_{\ell}}_1
		  \,.
	\end{align*}
	Since $s \le \pi_{j^s_K,K}$ by definition, and therefore
	$
		\left(
		(\pi_{j^s_{K}+1,K}
		\cdot A) \setminus (s \cdot A)
		\right)
		\subset
		\left(
		(\pi_{j^s_{K}+1,K}
		\cdot A) \setminus
		(\pi_{j^s_K,K}
		\cdot A)
		\right)
	$,
	this is bounded above by
	\begin{align*}
		 &
    \sum_{t=1}^{n-T+1}
		\ind\{[X_{t+\ell}]_\ell \in u_n\cdot (
		(\pi_{j^s_{K}+1,K}
		\cdot A) \setminus (\pi_{j^s_{K},K} \cdot A)
		)\}
		\ + \
		\PP[[X_{t+\ell}]_\ell \in u_n\cdot (
			(\pi_{j^s_{K}+1,K}
			\cdot A) \setminus (s \cdot A)
			)]
		\\ &
		  \qquad + \
		  \norm{
			  \nabla_{\mathbf{y}}G_{\infty,n}(\mathbf{y},
			  \pi_{j^s_{K}+1,K},s,A
			  )\Big|_{\mathbf{y}=\mathbf{0}_T}
		}_{\infty}
		\norm{[X_{t+\ell}]_{\ell}}_1
		\\ &
		  = \mathcal{S}_n(
		  \pi_{j^s_{K}+1,K},
		  \pi_{j^s_{K},K}
		)
		\\ &
		  \qquad + \
      \sum_{t=1}^{n-T+1}
		  \PP[[X_{t+\ell}]_\ell \in u_n\cdot (
			  (\pi_{j^s_{K}+1,K}
			\cdot A) \setminus
			  (\pi_{j^s_{K},K}
			  \cdot A)
			)]
		\ + \
		  \PP[[X_{t+\ell}]_\ell \in u_n\cdot (
			  (\pi_{j^s_{K}+1,K}
			\cdot A) \setminus (s \cdot A)
			)]
		\\ &
		  \qquad + \
		  \norm{
			  \nabla_{\mathbf{y}}G_{\infty,n}(\mathbf{y},
			  \pi_{j^s_{K}+1,K},s,A
			  )\Big|_{\mathbf{y}=\mathbf{0}_T}
		}_{\infty}
		\norm{[X_{t+\ell}]_{\ell}}_1
		\\ &
		  \qquad + \
		  \left(
		  \nabla_{\mathbf{y}}G_{\infty,n}(\mathbf{y},
		  \pi_{j^s_{K}+1,K},
		  \pi_{j^s_{K},K}
		,A
		  )\Big|_{\mathbf{y}=\mathbf{0}_T}
		\right)^{\top}
			  [X_{t+\ell}]_{\ell}
	\end{align*}
	which we further bound, using Lemma~\ref{lem:single}, as
	\begin{align*}
		 &
		\left|
		\mathcal{S}_n(
		\pi_{j^s_{K}+1,K},
		\pi_{j^s_{K},K}
		)
		\right|
		\\ &
		  \qquad + \
      \sum_{t=1}^{n-T+1}
		  \PP[[X_{t+\ell}]_\ell \in u_n\cdot (
			  (\pi_{j^s_{K}+1,K}
			\cdot A) \setminus
			  (\pi_{j^s_{K},K}
			  \cdot A)
			)]
		\ + \
		  \PP[[X_{t+\ell}]_\ell \in u_n\cdot (
			  (\pi_{j^s_{K}+1,K}
			\cdot A) \setminus (s \cdot A)
			)]
		\\ &
		  \qquad + \
		  \left(
		  \norm{
				  \nabla_{\mathbf{y}} G_{\infty,n}(\mathbf{y},
				  \pi_{j^s_{K}+1,K},s,A
				  )\Big|_{\mathbf{y}=\mathbf{0}_T}
			}_{\infty}
		\ + \
		  \norm{
				  \nabla_{\mathbf{y}} G_{\infty,n}(\mathbf{y},
				  \pi_{j^s_{K}+1,K},
				  \pi_{j^s_{K},K},A
				  )\Big|_{\mathbf{y}=\mathbf{0}_T}
			}_{\infty}
		\right)
		\norm{[X_{t+\ell}]_{\ell}}_1
		\\ &
		  \ \lesssim \
		  \left|
		  \mathcal{S}_n(
		  \pi_{j^s_{K}+1,K},
		  \pi_{j^s_{K},K}
		)
		  \right|
		  + \
		  (\pi_{j^s_{K},K} - \pi_{j^s_{K}+1,K})
		  \cdot
      \sum_{t=1}^{n-T+1}
		  \left(
		  1
		  \ + \
		  \norm{[X_{t+\ell}]_{\ell}}_1
		  \right)
		\\ &
		  \ = \
		  \left|
		  \mathcal{S}_n(
		  \pi_{j^s_{K}+1,K},
		  \pi_{j^s_{K},K}
		)
		  \right|
		  \ + \
		  B_n
		  \,.
	\end{align*}
	At the last step we used
	the fact that $\pi_{j,k}-\pi_{j+1,k} = 2^{-k} (\overline{s}-\underline{s})$.
	Here, the constant in $\lesssim$ depends only on $T$, $\underline{s}$, $\gamma$, the coefficients $(a_i)$, and the distribution of $\varepsilon$. This dependence remains for the rest of the proof.
	We conclude that
	\begin{align*}
		\left|
		\mathcal{S}_n(
		s,
		\pi_{j^s_{K},K}
		)
		\right|
		 &
		\ \le \
		\left|
		\mathcal{S}_n(
		s,
		\pi_{j^s_{K},K}
		)
		\ - \
		\mathcal{S}_n(
		\pi_{j^s_{K}+1,K},
		\pi_{j^s_{K},K}
		)
		\right|
		\ + \
		\left|
		\mathcal{S}_n(
		\pi_{j^s_{K}+1,K},
		\pi_{j^s_{K},K}
		)
		\right|
		\\ &
		  \ \lesssim \
		  \left|
		  \mathcal{S}_n(
		  \pi_{j^s_{K}+1,K},
		  \pi_{j^s_{K},K}
		)
		  \right|
		  \ + \
		  B_n
		  \,.
	\end{align*}
	This proves the claim.
	\\
	\textbf{Proof of (ii):}
	By definition of $B_n$ and stationarity of $(X_t)$,
	\begin{align*}
		\EE
		\left[
			|B_n|
			\right]
		\ \le \
		2^{-K}
		\cdot
		n
		\cdot
		\EE
		\left[
			(1+\norm{[X_{\ell}]_{\ell}}_1)
			\right]
		\ \lesssim \
		2^{-K}
		\cdot
		n
		\,.
	\end{align*}
	Since
  $\mathcal{S}_n
		\left(
		\pi_{i,k},
		\pi_{i,k}
    \right)=0$ for any $i,k$, and since $\pi_{j_{k}^s,k},\pi_{j_{k-1}^s,k-1}$ are at most neighbors in the partition at step $k$,
	it follows from Lemma~\ref{lem:partition} that
	\begin{align*}
		\left(
		\sup_{s\in (\underline{s},\overline{s})}
		\left|
		\mathcal{S}_n
		\left(
		\pi_{j_{k}^s,k},
		\pi_{j_{k-1}^s,k-1}
		\right)
		\right|
		\right)^{r_0}
		 &
		\ \le \
		\left(
		\max_{0\le i \le 2^{k}-1}
		\left|
		\mathcal{S}_n
		\left(
		\pi_{i+1,k},
		\pi_{i,k}
		\right)
		\right|
		\right)^{r_0}
				\ \le \
		\sum_{0\le i \le 2^{k}-1}
		\left|
		\mathcal{S}_n
		\left(
		\pi_{i+1,k},
		\pi_{i,k}
		\right)
		\right|^{r_0}
		\,.
	\end{align*}
	We get, by Theorem~\ref{lem:reduction_principle_optimal}, that
	\begin{align*}
		 &
		\EE^{*}
		\left[
			\left(
			\sup_{s\in (\underline{s},\overline{s})}
			\left|
			\mathcal{S}_n
			\left(
			\pi_{j_{k}^s,k},
			\pi_{j_{k-1}^s,k-1}
			\right)
			\right|
			\right)^{r_0}
			\right]
		\ \le \
		\sum_{0\le i \le 2^{k}-1}
		\EE
		\left[
			\left|
			\mathcal{S}_n
			\left(
			\pi_{i+1,k},
			\pi_{i,k}
			\right)
			\right|^{r_0}
			\right]
		\\ &
		  \ \lesssim \
		  n^{r_0(\kappa_0+\delta/2)}
		\sum_{0\le i \le 2^{k}-1}
		\left(
		  \pi_{i,k}
		  -
		  \pi_{i+1,k}
		  \right)
		\ = \
		  n^{r_0(\kappa_0+\delta/2)}
		\cdot
		  \left(
		  \overline{s}
		  -
		  \underline{s}
		  \right)
		\ \lesssim \
		  n^{r_0(\kappa_0+\delta/2)}
		\,,
	\end{align*}
	and
	\begin{align*}
		 &
		\EE^{*}
		\left[
			\left(
			\sup_{s\in (\underline{s},\overline{s})}
			\left|
			\mathcal{S}_n
			\left(
			\pi_{j_{K}^s+1,K},
			\pi_{j_{K}^s,K}
			\right)
			\right|
			\right)^{r_0}
			\right]
		\ \le \
		\sum_{0\le i \le 2^{K}-1}
		\EE
		\left[
			\left|
			\mathcal{S}_n
			\left(
			\pi_{i+1,K},
			\pi_{i,K}
			\right)
			\right|^{r_0}
			\right]
		\\ &
		  \ \lesssim \
		  n^{r_0(\kappa_0+\delta/2)}
		\sum_{0\le i \le 2^{K}-1}
		\left(
		  \pi_{i,K}
		  -
		  \pi_{i+1,K}
		  \right)
		\ \lesssim \
		  n^{r_0(\kappa_0+\delta/2)}
		\,.
	\end{align*}
    The proof is complete.
\end{proof}
\noindent
 \begin{proof}[{\textbf{Proof of Theorem~\ref{thm:main}:}}]
Let $\delta>0$ and assume $u_n\to\infty$ is such that
$
	u_n=o(n^{(d+1/(\nu \land 2)-\kappa_0-\delta)/(\nu+1)})
$,
where $\kappa_0$ is defined in \eqref{eq:kappa_0}.
By
Lemma~\ref{lem:decomp_Sn} and Lemma~\ref{lem:final},
\begin{align*}
	 &
	\sup_{s\in (\underline{s},\overline{s})}
	|\mathcal{S}_n(s,\overline{s})|
	\ \le \
	\sum_{k=1}^K
	\sup_{s\in (\underline{s},\overline{s})}
	\left|
	\mathcal{S}_n
	\left(
	\pi_{j^s_k,k},
	\pi_{j^s_{k-1},k-1}
	\right)
	\right|
	\ + \
	\sup_{s\in (\underline{s},\overline{s})}
	\left|
	\mathcal{S}_n
	\left(
	\pi_{j^s_K+1,K},
	\pi_{j^s_{K},K}
	\right)
	\right|
	\ + \
	\left|
	B_n
	\right|
	\\ &
	  \ \le \
	  2
	  \left(
	  (K+1)
	  \left(
	  \max_{1\le k\le K}
	\sup_{s\in (\underline{s},\overline{s})}
	  \left|
	  \mathcal{S}_n
	  \left(
	  \pi_{j^s_k,k},
	  \pi_{j^s_{k-1},k-1}
	  \right)
	\right|
	  \
	  \lor
	  \
	  \sup_{s\in (\underline{s},\overline{s})}
	  \left|
	  \mathcal{S}_n
	  \left(
	  \pi_{j^s_K+1,K},
	  \pi_{j^s_{K},K}
	  \right)
	\right|
	  \right)
	\ \lor \
	  |B_n|
	  \right)
	  \,,
\end{align*}
so that, fixing $\widetilde{\delta}>0$ and using Markov's inequality and Lemma~\ref{lem:final} again,
\begin{align*}
	 &
	\PP^{*}
	\left[
		\sup_{s\in (\underline{s},\overline{s})}
		|\mathcal{S}_n(s, \overline{s})|
		>
		\widetilde{\delta}\cdot n^{d+1/\alpha} f_{X_0}(u_n)
		\right]
	\\ &
	  \ \le \
	  \sum_{k=1}^K
	  \PP^{*}
	\left[
		  \sup_{s\in (\underline{s},\overline{s})}
		\left|
		  \mathcal{S}_n
		  \left(
		  \pi_{j^s_k,k},
		  \pi_{j^s_{k-1},k-1}
		  \right)
		\right|
		  >
		  \frac{
			  \widetilde{\delta}\cdot n^{d+1/\alpha} f_{X_0}(u_n)
			  }{2(K+1)}
		\right]
	\\ &
	  \qquad + \
	  \PP^{*}
	\left[
		  \sup_{s\in (\underline{s},\overline{s})}
		\left|
		  \mathcal{S}_n
		  \left(
		  \pi_{j^s_{K}+1,K},
		  \pi_{j^s_{K},K}
		  \right)
		\right|
		  >
		  \frac{
			  \widetilde{\delta}\cdot n^{d+1/\alpha} f_{X_0}(u_n)
			  }{2(K+1)}
		\right]
	\ + \
	  \PP
	  \left[
		  |B_n|
		  >
		  \frac{
			  \widetilde{\delta}\cdot n^{d+1/\alpha} f_{X_0}(u_n)
			  }{2}
		\right]
	\\ &
	  \ \le \
	  \left(
	  \widetilde{\delta}\cdot n^{d+1/\alpha} f_{X_0}(u_n)
	  \right)^{-r_0}
	\\ &
	  \qquad
	  \times
	  \left(
	  2^{r_0}
	  (K+1)^{r_0}\left( 
	  \sum_{k=1}^K
	  \EE^{*}
	\left[
		  \sup_{s\in (\underline{s},\overline{s})}
		\left|
		  \mathcal{S}_n
		  \left(
		  \pi_{j^s_k,k},
		  \pi_{j^s_{k-1},k-1}
		  \right)
		\right|^{r_0}
		  \right]
	\ + \
	  \EE^{*}
	\left[
		  \sup_{s\in (\underline{s},\overline{s})}
		\left|
		  \mathcal{S}_n
		  \left(
		  \pi_{j^s_K+1,K},
		  \pi_{j^s_{K},K}
		  \right)
		\right|^{r_0}
		  \right]
	\right) \right)
	\\ &
	  \qquad + \
	  2
	  \left(
	  \widetilde{\delta}\cdot n^{d+1/\alpha} f_{X_0}(u_n)
	  \right)^{-1}
	\EE
	  \left[
		  \left|
		  B_n
		  \right|
		  \right]
	\\ &
	  \ \lesssim \
	  \left(
	  \widetilde{\delta}\cdot n^{d+1/\alpha} f_{X_0}(u_n)
	  \right)^{-r_0}
	\cdot
	  \left(
	  K^{r_0+1}
	\cdot
	  n^{r_0(\kappa_0+\delta/2)}
	\right)
	\ + \
	  \left(
	  \widetilde{\delta}\cdot n^{d+1/\alpha} f_{X_0}(u_n)
	  \right)^{-1}
	2^{-K}\cdot n
	  \,,
\end{align*}
where the constant in $\lesssim$ depends only on $\delta$, $A$, $T$, $\underline{s}$, $\gamma$, $r_0$, the coefficients $(a_i)$, and the distribution of $\varepsilon$, and is independent of $K$ and $n$.
Since
$
\limsup_{n\to\infty}
	2^{-K_n}
	\frac{
		n^{2-d-1/\alpha}
	}{f_{X_0}(u_n)}<\infty
$,
\begin{align}
	\label{eq:cot1}
	\left(
	n^{d+1/\alpha}
	f_{X_0}(u_n)
	\right)^{-1}
	\cdot
	2^{-K}
	n
	\ = \
	\left(
	2^{-K}
	\cdot
	\frac{
			n^{2-d-1/\alpha}
		}{f_{X_0}(u_n)}
	\right)
	\frac{1}{n}
	\ \to \
	0\,,
\end{align}
as $n\to\infty$.
Then, for $\delta>0$ small enough and
$
	u_n=o(n^{(d+1/(\nu \land 2)-\kappa_0-\delta)/(\nu+1)})
$,
\begin{align*}
	\left(
	n^{d+1/\alpha}
	f_{X_0}(u_n)
	\right)^{-r_0}
	\cdot
	K^{r_0+1}
	\cdot
	n^{r_0(\kappa_0 + \delta/2)}
	\ = \
	\frac{n^{r_0(\kappa_0+3\delta/4-d-1/\alpha)}}{(f_{X_0}(u_n))^{r_0}}
	K^{r_0+1}
  n^{-r_0\delta/4}
	\ \to \
	0\,,
\end{align*}
by Potter bounds (showing that $n^{r_0(\kappa_0+3\delta/4-d-1/\alpha)}/(f_{X_0}(u_n))^{r_0}\to 0$), see Theorem~1.5.6 on p.25 in~\cite{binghamRegularVariation1987} and since $K$ grows only logarithmically.
Therefore
\begin{align*}
	\frac{n^{-d-1/\alpha}}{f_{X_0}(u_n)}
	\sup_{s\in (\underline{s},\overline{s})}
	|\mathcal{S}_n(s, \overline{s})|
	\ \to_{\PP^{*}} \
	0
	\,,
\end{align*}
as $n\to\infty$.
By Corollary~\ref{thm:main_simple}, we get
\begin{align*}
	\frac{n^{-d-1/\alpha}}{f_{X_0}(u_n)}
	|\mathcal{S}_n(\overline{s},\infty)|
	\ \to_\PP \
	0
	\,.
\end{align*}
It then follows from Lemma~\ref{lem:decomp_Sn}
\begin{align*}
	\frac{n^{-d-1/\alpha}}{f_{X_0}(u_n)}
	\sup_{s\in (\underline{s},\overline{s})}
	|\mathcal{S}_n(s, \infty)|
	\ \to_{\PP^{*}} \
	0
	\,.
\end{align*}
This finishes the proof.
\end{proof}
\section{Proofs of the results in Section~\ref{sec:clt}}
\label{app:proofsmain}
\subsection{Auxiliary results}
Assumption~\ref{asu:nabla_G_unif} has in the denominator of the first term $f_{X_0}(s\cdot u_n)$. We now provide a different formulation with $f_{X_0}(u_n)$ in the denominator of the first term and an additional factor $s^{-1-\nu}$ in the second term, which comes from $f_{X_0}(s\cdot u_n)/f_{X_0}(u_n)\to s^{-1-\nu}$ locally uniformly as $n\to\infty$.
\begin{lemma}[Different formulation of Assumption~\ref{asu:nabla_G_unif}]
	\label{lem:G_aux}
	Let Assumption~\ref{asu:rv_f} hold, and let $A\in \mathcal{C}_T$ satisfy Assumption~\ref{asu:nabla_G_unif}
	with $H_A$ and $N_A$. Then, as $n\to\infty$,
	\begin{align*}
		\sup_{s\in N_A}
		\left|
		\dfrac{
			\nabla_{\mathbf{y}} G_{\infty,n}(\mathbf{y},
			s,\infty,A
			)\Big|_{\mathbf{y}=\mathbf{0}_T}
		}{f_{X_0}(u_n)}
		\ - \
		s^{-1-\nu}
		H_A(s)
		\right|
		\ \to \ 0
		\,.
	\end{align*}
\end{lemma}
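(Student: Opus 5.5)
The plan is a triangle-inequality argument that splits the quantity into a term controlled by Assumption~\ref{asu:nabla_G_unif} and a term controlled by regular variation of $f_{X_0}$. For $s\in N_A$ we write
\begin{align*}
\frac{\nabla_{\mathbf{y}} G_{\infty,n}(\mathbf{y},s,\infty,A)|_{\mathbf{y}=\mathbf{0}_T}}{f_{X_0}(u_n)} - s^{-1-\nu}H_A(s)
&= \frac{f_{X_0}(s u_n)}{f_{X_0}(u_n)}\left(\frac{\nabla_{\mathbf{y}} G_{\infty,n}(\mathbf{y},s,\infty,A)|_{\mathbf{y}=\mathbf{0}_T}}{f_{X_0}(s u_n)} - H_A(s)\right)
\\&\qquad + \left(\frac{f_{X_0}(s u_n)}{f_{X_0}(u_n)} - s^{-1-\nu}\right) H_A(s),
\end{align*}
and we bound the supremum over $N_A$ of each term separately.

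For the first term, the bracket tends to $0$ uniformly in $s\in N_A$ by Assumption~\ref{asu:nabla_G_unif}. It therefore suffices to show that $\sup_{s\in N_A} f_{X_0}(su_n)/f_{X_0}(u_n)$ stays bounded. This follows from the uniform convergence theorem for regularly varying functions (Bingham, Goldie and Teugels, Theorem~1.5.2). Assumption~\ref{asu:rv_f} says $f_{X_0}$ is regularly varying at $+\infty$ with index $-1-\nu$, and $N_A$ is a compact subset of $(0,\infty)$. Hence $f_{X_0}(su_n)/f_{X_0}(u_n)\to s^{-1-\nu}$ uniformly on $N_A$, because $u_n\to\infty$. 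The limit is bounded on $N_A$, since $N_A$ is bounded away from $0$.

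For the second term, the same uniform convergence gives $\sup_{s\in N_A}|f_{X_0}(su_n)/f_{X_0}(u_n)-s^{-1-\nu}|\to0$. It remains to check that $\sup_{s\in N_A}|H_A(s)|<\infty$. This holds because $H_A$ is continuous on the compact set $N_A$, by the remark following Assumption~\ref{asu:nabla_G_unif}.

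Combining the two bounds gives the claim. The only point needing care is that the regular-variation convergence must be uniform over $N_A$, not merely pointwise. This is exactly what the uniform convergence theorem supplies on compact subsets of $(0,\infty)$; measurability of $f_{X_0}$ is automatic since it is a density.
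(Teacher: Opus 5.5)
Your proof is correct and follows the paper's argument: the same two-term splitting through the ratio $f_{X_0}(s u_n)/f_{X_0}(u_n)$. The first term is handled by Assumption~\ref{asu:nabla_G_unif} together with boundedness of that ratio on $N_A$, and the second by the uniform convergence theorem for regular variation on the compact set $N_A$ together with boundedness of the continuous function $H_A$.
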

\begin{proof}
	Note that
	\begin{align*}
		 &
		\sup_{s\in N_A}
		\left|
		\frac{
			\nabla_{\mathbf{y}} G_{\infty,n}(\mathbf{y},
			s,\infty,A
			)\Big|_{\mathbf{y}=\mathbf{0}_T}
		}{f_{X_0}(u_n)}
		\ - \
		s^{-1-\nu}
		H_A(s)
		\right|
		\\
		 &
		\ \le \
		\sup_{s\in N_A}
		\left|
		\frac{f_{X_0}(s\cdot u_n)}{f_{X_0}(u_n)}
		\right|
		\sup_{s\in N_A}
		\left|
		\frac{
			\nabla_{\mathbf{y}} G_{\infty,n}(\mathbf{y},
			s,\infty,A
			)\Big|_{\mathbf{y}=\mathbf{0}_T}
		}{f_{X_0}(s\cdot u_n)}
		\ - \
		H_A(s)
		\right|
		\\ &
		  \qquad + \
		  \sup_{s\in N_A}
		\left|
		  H_A(s)
		  \right|
		  \sup_{s\in N_A}
		\left|
		  \frac{f_{X_0}(s\cdot u_n)}{f_{X_0}(u_n)}
		\ - \
		  s^{-1-\nu}
		\right|
		  \,.
	\end{align*}
	By assumption, $N_A\subset(0,\infty)$ is compact,
	so that by
	Assumption~\ref{asu:rv_f} (note that regular variation is always locally uniform) and the continuity of $H_A$,
	\begin{align*}
		\sup_{s\in N_A}
		\left|
		\frac{f_{X_0}(s\cdot u_n)}{f_{X_0}(u_n)}
		\right|
		\,,
		\sup_{s\in N_A}
		\left|
		H_A(s)
		\right|
		\ < \
		\infty\,,
	\end{align*}
	and also by Assumption~\ref{asu:nabla_G_unif}
	\begin{align*}
		\sup_{s\in N_A}
		\left|
		\frac{
			\nabla_{\mathbf{y}} G_{\infty,n}(\mathbf{y},
			s,\infty,A
			)\Big|_{\mathbf{y}=\mathbf{0}_T}
		}{f_{X_0}(s\cdot u_n)}
		\ - \
		H_A(s)
		\right|
		\,,
		\sup_{s\in N_A}
		\left|
		\frac{f_{X_0}(s\cdot u_n)}{f_{X_0}(u_n)}
		\ - \
		s^{-1-\nu}
		\right|
		\ \to \ 0
	\end{align*}
	as $n\to\infty$, so that
	\begin{align*}
		\sup_{s\in N_A}
		\left|
		\frac{
			\nabla_{\mathbf{y}} G_{\infty,n}(\mathbf{y},
			s,\infty,A
			)\Big|_{\mathbf{y}=\mathbf{0}_T}
		}{f_{X_0}(u_n)}
		\ - \
		s^{-1-\nu}
		H_A(s)
		\right|
		\ \to \ 0
	\end{align*}
	as $n\to\infty$.
\end{proof}
In the last result we derived a different formulation of Assumption~\ref{asu:nabla_G_unif}. We now use this to derive an adapted uniform reduction principle from Theorem~\ref{thm:main}, where the coefficient vector of the linear approximation and the partial sum of the $[X_{t+\ell}]_\ell$ have already converged.
\begin{theorem}[Uniform reduction principle with 
weak limit in the linear approximation]
	Let Assumptions~\ref{asu:f}, \ref{asu:coef}, and~\ref{asu:rv_f} hold, assume that $A\in \mathcal{C}_T$ satisfies
	Assumption~\ref{asu:nabla_G_unif}
	with $H_A$ and $N_A$ in the sense of Assumption~\ref{asu:nabla_G_unif}, and let $(\underline{s},\overline{s})\subset N_A$.
  Also assume that $u_n\to\infty$ is such that
$
		u_n=o(n^{(d+1/(\nu \land 2)-\kappa_0-\delta)/(\nu+1)})
	$
	for some $\delta>0$.
	Then, as $n\to\infty$,
	\begin{align*}
		\sup_{s\in(\underline{s},\overline{s})}
		\left|
		\frac{
			n^{-d-1/\alpha}
		}{f_{X_0}(u_n)}
    \sum_{t=1}^{n-T+1}
		\left(
		\ind\{[X_{t+\ell}]_{\ell}\in u_n \cdot s \cdot A\}
		\ - \
		\PP\left[[X_{t+\ell}]_{\ell}\in u_n \cdot s \cdot A\right]
		\right)
		\ - \
		s^{-1-\nu}
		(H_A(s)^\top \mathbf{1}_{T})
		Z_{\alpha}
		\right|
		\ \to_{\PP^{*}} \ 0
		\,,
	\end{align*}
	on a sufficiently rich probability space.
\end{theorem}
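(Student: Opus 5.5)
The proof splits the normalized centered sum into three pieces: the remainder controlled by Theorem~\ref{thm:main}, a coefficient-error term handled by Lemma~\ref{lem:G_aux}, and a partial-sum error handled by Theorem~\ref{thm:linear_clt_multi} together with a Skorokhod-type coupling. Write $S_n^\Sigma:=\sum_{t=1}^{n-T+1}[X_{t+\ell}]_\ell$ and $\mathbf{g}_n(s):=\nabla_{\mathbf{y}}G_{\infty,n}(\mathbf{y},s,\infty,A)|_{\mathbf{y}=\mathbf{0}_T}$. The normalized centered sum then equals
\begin{align*}
\frac{n^{-d-1/\alpha}}{f_{X_0}(u_n)}\mathcal{S}_n(s,\infty,A)
\ + \
\left(\frac{\mathbf{g}_n(s)}{f_{X_0}(u_n)} - s^{-1-\nu}H_A(s)\right)^{\top} n^{-d-1/\alpha}S_n^\Sigma
\ + \
s^{-1-\nu}H_A(s)^{\top}\, n^{-d-1/\alpha}S_n^\Sigma .
\end{align*}
It therefore suffices to show that each of the first two terms vanishes uniformly in $s\in(\underline{s},\overline{s})$, and that the third term is uniformly close to $s^{-1-\nu}(H_A(s)^\top\mathbf{1}_T)Z_\alpha$.

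The first term tends to $0$ in outer probability uniformly in $s$ by Theorem~\ref{thm:main}, which applies because $A\in\mathcal{C}_T$ and the growth condition on $u_n$ holds. For the second term, the supremum over $s\in N_A\supset(\underline{s},\overline{s})$ of the bracket tends to $0$ by Lemma~\ref{lem:G_aux}. By Theorem~\ref{thm:linear_clt_multi}, $n^{-d-1/\alpha}S_n^\Sigma$ is tight. The product of a uniformly vanishing deterministic vector and a tight random vector tends to $0$ in probability, so the second term also vanishes uniformly.

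The third term is where the phrase \emph{on a sufficiently rich probability space} is needed. Theorem~\ref{thm:linear_clt_multi} gives only $n^{-d-1/\alpha}S_n^\Sigma\to_d Z_\alpha\mathbf{1}_T$. I would invoke the Skorokhod representation theorem to build versions of the whole sequence and a variable $Z_\alpha$ on a common space such that $n^{-d-1/\alpha}S_n^\Sigma\to Z_\alpha\mathbf{1}_T$ almost surely. The difference with the target is then bounded by
\begin{align*}
\sup_{s\in N_A}\bigl|s^{-1-\nu}H_A(s)\bigr|\;\bigl\|n^{-d-1/\alpha}S_n^\Sigma - Z_\alpha\mathbf{1}_T\bigr\|_{\infty}.
\end{align*}
The supremum is finite because $H_A$ is continuous on the compact set $N_A\subset(0,\infty)$, and the norm tends to $0$, so this term is handled.

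The main obstacle is to make the representation step legitimate jointly. The first two terms are functionals of the full innovation sequence, not of the partial sums alone, so Skorokhod must be applied to the pair consisting of $(\varepsilon_t)_t$ (or the whole array) and $n^{-d-1/\alpha}S_n^\Sigma$. I would handle this by defining $Z_\alpha$ as an almost sure limit on an enlarged space, for example through a version of Skorokhod's theorem that preserves the law of the original sequence, such as a Dudley or Wichura-type representation in which each $n$-th element keeps its law. The statements in the first two steps concern only the law at each fixed $n$, because they are convergences in (outer) probability to $0$. They therefore transfer to the new space, provided the suprema are handled with outer probability as in Theorem~\ref{thm:main}.
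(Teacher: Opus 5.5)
Your proposal is correct and follows the paper's proof essentially step for step. Both use the same three-way split: the remainder handled by Theorem~\ref{thm:main}, the coefficient error from Lemma~\ref{lem:G_aux} multiplied by the tight partial sum of Theorem~\ref{thm:linear_clt_multi}, and a Skorokhod step for $n^{-d-1/\alpha}\sum_{t}[X_{t+\ell}]_\ell - Z_\alpha\mathbf{1}_T$ together with $\sup_{s\in N_A}|s^{-1-\nu}H_A(s)|<\infty$. You go further in one respect: you point out that the representation must carry the whole innovation sequence, for instance via an almost-sure representation through perfect measure-preserving maps, so that the outer-probability statement of Theorem~\ref{thm:main} transfers to the new space; the paper's one-line appeal to Skorokhod's theorem leaves this implicit.
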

\begin{proof}
	By Theorem~\ref{thm:main},
	it remains to prove that there is a probability space on which
	\begin{align*}
		\sup_{s\in (\underline{s},\overline{s})}
		\left|
		n^{-d-1/\alpha}\frac{
			\left(
			\nabla_{\mathbf{y}} G_{\infty,n}(\mathbf{y},
			s,\infty,A
			)\Big|_{\mathbf{y}=\mathbf{0}_T}
			\right)
			^\top}{f_{X_0}(u_n)}
    \sum_{t=1}^{n-T+1}
		[X_{t+\ell}]_{\ell}
		\ - \
		s^{-1-\nu} (H_A(s)^\top \mathbf{1}_T) Z_{\alpha}
		\right|
	\end{align*}
	vanishes in outer probability as $n\to \infty$.
	By Lemma~\ref{lem:G_aux}
	\begin{align*}
		 &
		\sup_{s\in N_A}
		\left|
		\frac{
			\nabla_{\mathbf{y}} G_{\infty,n}(\mathbf{y},
			s,\infty,A
			)\Big|_{\mathbf{y}=\mathbf{0}_T}
		}{f_{X_0}(u_n)}
		\ - \
		s^{-1-\nu}
		H_A(s)
		\right|
		\ \to \ 0
	\end{align*}
	as $n\to\infty$, so that
	\begin{align*}
		 &
		\sup_{s\in (\underline{s},\overline{s})}
		\left|
		n^{-d-1/\alpha}\frac{
			\left(
			\nabla_{\mathbf{y}} G_{\infty,n}(\mathbf{y},
			s,\infty,
			A
			)\Big|_{\mathbf{y}=\mathbf{0}_T}
			\right)
			^\top}{f_{X_0}(u_n)}
    \sum_{t=1}^{n-T+1}
		[X_{t+\ell}]_{\ell}
		\ - \
		s^{-1-\nu} (H_A(s)^\top \mathbf{1}_T) Z_{\alpha}
		\right|
		\\ &
		  \ \le \
		  o(1)
		  \cdot
		  \left\|
      n^{-d-1/\alpha}\sum_{t=1}^{n-T+1} [X_{t+\ell}]_\ell
		  \right\|_1
		  \ + \
		  \sup_{s\in N_A}
		\left\|
		  s^{-1-\nu}
		H_A(s)
		\right\|_{\infty} \times
		  \left\|
      n^{-d-1/\alpha}\sum_{t=1}^{n-T+1} [X_{t+\ell}]_\ell
		  \ -\ Z_{\alpha} \mathbf{1}_{T} 
		\right\|_1
		  \,.
	\end{align*}
	The first summand of the bound vanishes in probability by Theorem~\ref{thm:linear_clt_multi} and Slutsky's theorem.
	For the second summand, note that,
	by Theorem~\ref{thm:linear_clt_multi} and Skorokhod's representation theorem,
	\begin{align*}
    n^{-d-1/\alpha}\sum_{t=1}^{n-T+1} [X_{t+\ell}]_\ell\ -\ Z_{\alpha} \mathbf{1}_{T} 
	\end{align*}
	vanishes in probability as $n\to\infty$
	on a sufficiently rich probability space. Since $N_A$ is compact and bounded away from $0$, and recalling that $H_A$, as a uniform limit of continuous functions, is continuous, 
	\begin{align*}
		\sup_{s\in N_A}
		\left\|
		s^{-1-\nu}
		H_A(s)
		\right\|_{\infty}
		\ < \ \infty
		\,,
	\end{align*}
	so that the second summand vanishes in probability on a sufficiently rich probability space. This proves the claimed convergence.
\end{proof}
As pointed out in Section~\ref{sec:sub:heur}, we now prove a central limit theorem for empirical tail measures with random thresholds and random centering.
\begin{theorem}[Empirical tail measure with random thresholds and random centering]
  \label{thm:conv_multi_inter}
	Let Assumptions~\ref{asu:f},~\ref{asu:coef}, and~\ref{asu:rv_f} hold, let $A_0,\ldots,A_h$, $h\in\NN_0$, be bounded away from 0, and let the threshold sequence $u_n\to\infty$ be such that
	$
		u_n=o(n^{(d+1/(\nu \land 2)-\kappa_0-\delta)/(\nu+1)})
	$ for some $\delta>0$.
    If the sets $A_0,\ldots,A_h$ are in $\mathcal{C}_T$ and satisfy Assumption~\ref{asu:nabla_G_unif} with functions $H_{A_0},\ldots, H_{A_h}$, and if $(Y^{(i)}_n)$, for $i\in \{0,\ldots,h\}$, are random sequences that satisfy $Y^{(i)}_n/u_n\to_{\PP}1$ as $n\to\infty$,
		      \begin{align*}
              &
			      n^{1-d-1/\alpha}
			      u_n
			      \begin{bmatrix}
				      \widehat{\PP}_n(Y^{(i)}_n/u_n,A_i)
				      \ - \
				      \PP_n(s,A_i)
				      \Big|_{s=Y_n^{(i)}/u_n}
			      \end{bmatrix}
			      _{i=0,\ldots,h}
                  \\&
			      \ \to_d \
            \frac{\nu}{2} Z_\alpha
			      [H_{A_i}(1)^{\top}\mathbf{1}_{T}]_{i=0,\ldots,h}
			      \qquad\text{as}\ n\to\infty\,. \qquad
		      \end{align*}
\end{theorem}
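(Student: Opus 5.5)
The plan is to reduce the statement to the preceding uniform reduction principle with weak limit in the linear approximation, applied to each $A_i$ separately, and then to evaluate it at the random points $s=Y_n^{(i)}/u_n$.

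First, I fix $\underline{s}<1<\overline{s}$ with $[\underline{s},\overline{s}]$ contained in $\bigcap_i N_{A_i}$. This is possible because each $N_{A_i}$ is a compact neighborhood of $1$. On a sufficiently rich probability space (Skorokhod representation of $n^{-d-1/\alpha}\sum_t[X_{t+\ell}]_\ell\to Z_\alpha\mathbf 1_T$, the same $Z_\alpha$ for all $i$), the preceding theorem gives, for every $i$,
\[
\sup_{s\in(\underline{s},\overline{s})}\Bigl|\frac{n^{-d-1/\alpha}}{f_{X_0}(u_n)}\sum_{t=1}^{n-T+1}\bigl(\ind\{[X_{t+\ell}]_\ell\in u_n s A_i\}-\PP[[X_{\ell}]_\ell\in u_n s A_i]\bigr)-s^{-1-\nu}H_{A_i}(s)^\top\mathbf 1_T Z_\alpha\Bigr|\to_{\PP^*}0 .
\]
Since $Y_n^{(i)}/u_n\to_\PP1$, the event $\{Y_n^{(i)}/u_n\in(\underline{s},\overline{s})\}$ has probability tending to one. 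On that event I plug in $s=Y_n^{(i)}/u_n$ and bound the resulting difference by the supremum. The sum $\sum_t \ind\{\cdots\}$ evaluated at this random $s$ is exactly $(n-T+1)\PP[|X_0|>u_n]\,\widehat\PP_n(Y_n^{(i)}/u_n,A_i)$. The centering term, $\PP[\cdots]$ evaluated at $s=Y_n^{(i)}/u_n$, equals $(n-T+1)\PP[|X_0|>u_n]\,\PP_n(s,A_i)|_{s=Y_n^{(i)}/u_n}$. Measurability issues are handled by working with outer probability throughout, as Theorem~\ref{thm:main} does. This step yields
\[
\frac{n^{-d-1/\alpha}(n-T+1)\PP[|X_0|>u_n]}{f_{X_0}(u_n)}\bigl[\widehat\PP_n-\PP_n\bigr]_i=(Y_n^{(i)}/u_n)^{-1-\nu}H_{A_i}(Y_n^{(i)}/u_n)^\top\mathbf 1_T Z_\alpha+o_{\PP^*}(1).
\]

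Second, I replace the random argument by $1$. By continuity of $H_{A_i}$ on $N_{A_i}$ (noted in the remark after Assumption~\ref{asu:nabla_G_unif}) and the continuous mapping theorem, $(Y_n^{(i)}/u_n)^{-1-\nu}H_{A_i}(Y_n^{(i)}/u_n)\to_\PP H_{A_i}(1)$. Since $Z_\alpha$ is tight, the right-hand side becomes $H_{A_i}(1)^\top\mathbf 1_T Z_\alpha+o_\PP(1)$.

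Third, I identify the normalization. Assumption~\ref{asu:rv_f}, the monotone density type relation and the symmetry of the tails give $u_nf_{X_0}(u_n)/\PP[X_0>u_n]\to\nu$, and hence $\PP[|X_0|>u_n]\sim 2u_nf_{X_0}(u_n)/\nu$. This is Karamata's theorem applied to $\PP[X_0>u]=\int_u^\infty f_{X_0}$, with the factor $2$ coming from the symmetric two-sided tail of $X_0$. Using also $(n-T+1)/n\to1$, the left prefactor is asymptotic to $(2/\nu)\,n^{1-d-1/\alpha}u_n$. Multiplying through by $\nu/2$ gives the displayed coordinate limits, which are almost sure or in probability on the common space. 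Because every coordinate is driven by the same $Z_\alpha$, joint convergence in distribution of the vector follows at once.

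The main obstacle is the first step: justifying the evaluation of a uniform-in-$s$ statement at a random, possibly dependent, point under outer probability. The plan is to bound the evaluated quantity by the supremum on the high-probability event, which sidesteps any need for independence between $Y_n^{(i)}$ and the sample. A secondary check is that the factor $2/\nu$ appearing through Karamata's theorem matches the stated $\nu/2$; I would verify this by using symmetry of $\varepsilon$, which makes $X_0$ symmetric.
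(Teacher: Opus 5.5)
Your argument is correct and is essentially the paper's proof: you evaluate the uniform reduction principle at $s=Y_n^{(i)}/u_n$ on the event $\{Y_n^{(i)}/u_n\in(\underline{s},\overline{s})\}$, whose probability tends to one, pass the gradient factor to $H_{A_i}(1)$ by continuity, and convert the normalization via $\PP[|X_0|>u_n]/f_{X_0}(u_n)\sim 2u_n/\nu$ (Karamata plus symmetry). The only difference is packaging: the paper applies Theorem~\ref{thm:main} directly and treats the linear term with Lemma~\ref{lem:G_aux}, Theorem~\ref{thm:linear_clt_multi}, Slutsky and Cram\'er--Wold, which spares it from having to realize $Y_n^{(i)}$ jointly with the sample on the Skorokhod-type space that your route through the intermediate theorem implicitly requires.
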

\begin{proof}
	We use the Cram\'er-Wold device.
	Let $\lambda_0,\ldots,\lambda_h\in\RR$, and $N_{A_0},\ldots, N_{A_h}$ in the sense of Assumption~\ref{asu:nabla_G_unif}.
	Fix $(\underline{s},\overline{s})\subset \bigcap_{i=0}^h N_{A_i}$, such that $\underline{s}<1<\overline{s}$.
	Since, for all $i\in \{0,\ldots,h\}$, $Y^{(i)}_n/u_n \to_{\PP} 1$ as $n\to\infty$, we
	have $Y^{(i)}_n /u_n \in(\underline{s},\overline{s})$ for all $i\in \{0,\ldots,h\}$ with probability converging to 1 as $n\to\infty$. Then, for any $\delta>0$, as $n\to\infty$, noting that we have $n/(n-T+1)\in (1,2)$,
	\begin{align*}
		 &
		\PP
		\left[
			\left|
			\sum_{i=0}^h
			\lambda_i
			\left(
			\Huge
			n^{1-d-1/\alpha}
			\frac{\PP[|X_0|>u_n]}{f_{X_0}(u_n)}
			\left(
			\widehat{\PP}_n(Y^{(i)}_n/u_n,A_i)
			\ - \
			\PP_n(s,A_i)
			\Big|_{s=Y_n^{(i)}/u_n}
			\right)
			\right.
			\right.
			\right.
		\\ &
			  \left.
			  \left.
			  \left.
			  \qquad
			  \qquad
			  \qquad - \
        \frac{n}{n-T+1}
			  n^{-d-1/\alpha}
      \sum_{t=1}^{n-T+1}
			  \frac{
				  \left(\nabla_{\mathbf{y}} G_{\infty,n}(\mathbf{y},s,\infty, A_i)\Big|_{
						  (\mathbf{y},s)=(\mathbf{0}_{T},Y_n^{(i)}/u_n)
						  }\right)^{\top}[X_{t+\ell}]_\ell
				  }{f_{X_0}(u_n)}
			\right)
			  \right|
			  \ > \ \delta
			  \right]
		\\ & \normalsize
		  \ \le \
		  \sum_{i=0}^h
		  \PP^{*}
		\left[
			  |\lambda_i|
			  \sup_{s\in(\underline{s},\overline{s})}
			\left|
			  \frac{n^{-d-1/\alpha}}{f_{X_0}(u_n)}
      \sum_{t=1}^{n-T+1}
			  \left.
			  \Bigg(
			  \ind\{[X_{t+\ell}]_\ell \in u_n \cdot s\cdot A_i \}
			  \ - \
			  \PP[[X_{\ell}]_\ell \in u_n \cdot s\cdot A_i]
			\right.
			  \right.
			  \right.
		\\ &
			  \qquad
			  \qquad
			  \qquad
			  \qquad
			  \qquad
			  \qquad
			  \qquad
			  \qquad
			  \left.
			  \left.
			  \left.
			  \qquad - \
			  \left(
			  \nabla_{\mathbf{y}} G_{\infty,n}(\mathbf{y}, s, \infty,  A_i)
			  \Big|_{\mathbf{y}=\mathbf{0}_T}
			\right)
			  ^{\top}[X_{t+\ell}]_\ell
			  \right)
			  \right|
			  \ > \
        \frac{\delta}{2(h+1)}
			\right]
		\\ &
		  \qquad + \ o(1)
		\\ &
		  \ \to \ 0
	\end{align*}
	by Theorem~\ref{thm:main}.
	Thus, by Slutsky's theorem,
	\begin{align*}
		\sum_{i=0}^h
		\lambda_i
		\left(
		n^{1-d-1/\alpha}
		\frac{\PP[|X_0|>u_n]}{f_{X_0}(u_n)}
		\left(
		\widehat{\PP}_n(Y^{(i)}_n/u_n,A_i)
		\ - \
		\PP_n(s,A_i)
		\Big|_{s=Y_n^{(i)}/u_n}
		\right)
		\right)
	\end{align*}
	and
	\begin{align*}
		\sum_{i=0}^h
		\lambda_i
		\frac{
			\left(\nabla_{\mathbf{y}} G_{\infty,n}(\mathbf{y},s,\infty, A_i)\Big|_{
					(\mathbf{y},s)=(\mathbf{0}_{T},Y_n^{(i)}/u_n)}\right)^{\top}
		}{f_{X_0}(u_n)}
		\left(
		n^{-d-1/\alpha}
    \sum_{t=1}^{n-T+1}
		[X_{t+\ell}]_\ell
		\right)
	\end{align*}
    have the same weak limiting behavior. 
	By Theorem~\ref{thm:linear_clt_multi},
	\begin{align*}
		n^{-d-1/\alpha}
    \sum_{t=1}^{n-T+1}
			[X_{t+\ell}]_\ell
		\ \to \
		Z_{\alpha}\mathbf{1}_{T}
	\end{align*}
	weakly as $n\to\infty$.
	On the other hand, by Lemma~\ref{lem:G_aux} and the continuous mapping theorem,
	\begin{align*}
		\sum_{i=0}^h
		\lambda_i
		\frac{
			\nabla_{\mathbf{y}} G_{\infty,n}(\mathbf{y},s,\infty, A_i)\Big|_{
					(\mathbf{y},s)=(\mathbf{0}_{T},Y_n^{(i)}/u_n)}
		}{f_{X_0}(u_n)}
		\ \to_{\PP} \
		\sum_{i=0}^h
		\lambda_i
		H_{A_i}(1)
		\,.
	\end{align*}
	Again, by Slutsky's theorem,
	\begin{align*}
		\sum_{i=0}^h
		\lambda_i
		\frac{
			\left(\nabla_{\mathbf{y}} G_{\infty,n}(\mathbf{y},s,\infty, A_i)\Big|_{
					(\mathbf{y},s)=(\mathbf{0}_{T},Y_n^{(i)}/u_n)}\right)^{\top}
		}{f_{X_0}(u_n)}
		\left(
		n^{-d-1/\alpha}
    \sum_{t=1}^{n-T+1}
		[X_{t+\ell}]_\ell
		\right)
		\ \to_d \
		\sum_{i=0}^h
		\lambda_i
		\left(
		H_{A_i}(1)^{\top} \mathbf{1}_{T}
		\right) Z_{\alpha}
		\,.
	\end{align*}
    Finally, note that, by Karamata's theorem~\citep[see][Proposition 1.5.10 on p.27]{binghamRegularVariation1987} and symmetry of $\varepsilon$ (and hence of $X_0$)
	\begin{align*}
		\frac{\PP[|X_0|>u_n]}{f_{X_0}(u_n)}
		\ \sim \ u_n 
    \frac{2}{\nu}
		\,.
	\end{align*}
    This finishes the proof. 
\end{proof}
\subsection{Proofs of the main results}
\begin{proof}[\textbf{Proof of Theorem~\ref{thm:conv_multi_simple}:}]
	We use the Cram\'er-Wold device.
	Let $\lambda_0,\ldots,\lambda_h\in\RR$, 
  and note that for $n\ge 2T$ we have $n/(n-T+1)\in (1,2)$. 
	Then, for any $\delta>0$, as $n\to\infty$
	\begin{align*}
		 &
		\PP
		\Bigg[
			\Bigg|
			\sum_{i=0}^h
			\lambda_i
			\Bigg(
			\Huge
			n^{1-d-1/\alpha}
			\frac{\PP[|X_0|>u_n]}{f_{X_0}(u_n)}
			\left(
			\widehat{\PP}_n(1,A_i)
			\ - \
			\PP_n(1,A_i)
			\right)
		\\ &
			  \qquad
			  \qquad
			  \qquad - \
        \frac{n}{n-T+1}
			  n^{-d-1/\alpha}
      \sum_{t=1}^{n-T+1}
			  \frac{
				  \left(\nabla_{\mathbf{y}} G_{\infty,n}(\mathbf{y},1, \infty,A_i)\Big|_{
						  \mathbf{y}=\mathbf{0}_{T}
					}\right)^{\top}[X_{t+\ell}]_\ell
				  }{f_{X_0}(u_n)}
			\Bigg)
			  \Bigg|
			  \ > \ \delta
			  \Bigg]
		\\ & \normalsize
		  \ \le \
		  \sum_{i=0}^h
		  \PP^{}
		\left[
			  |\lambda_i|
			  \left|
			  \frac{n^{-d-1/\alpha}}{f_{X_0}(u_n)}
      \sum_{t=1}^{n-T+1}
			  \left.
			  \Bigg(
			  \ind\{[X_{t+\ell}]_\ell \in u_n \cdot A_i \}
			  \ - \
			  \PP[[X_{\ell}]_\ell \in u_n \cdot  A_i]
			\right.
			  \right.
			  \right.
		\\ &
			  \qquad
			  \qquad
			  \qquad
			  \qquad
			  \qquad
			  \qquad
			  \qquad
			  \qquad
			  \left.
			  \left.
			  \left.
			  \qquad - \
			  \left(
			  \nabla_{\mathbf{y}} G_{\infty,n}(\mathbf{y}, 1,\infty,  A_i)
			  \Big|_{\mathbf{y}=\mathbf{0}_T}
			\right)
			  ^{\top}[X_{t+\ell}]_\ell
			  \right)
			  \right|
			  \ > \
        \frac{\delta}{2(h+1)}
			\right]
		\\ &
		  \ \to \ 0
	\end{align*}
	by Corollary~\ref{thm:main_simple}.
  Thus, by Slutsky's theorem and $n/(n-T+1)\to 1$,
	\begin{align*}
		\sum_{i=0}^h
		\lambda_i
		\left(
		n^{1-d-1/\alpha}
		\frac{\PP[|X_0|>u_n]}{f_{X_0}(u_n)}
		\left(
		\widehat{\PP}_n(1,A_i)
		\ - \
		\PP_n(1,A_i)
		\right)
		\right)
	\end{align*}
	has the same weak limiting behavior as
	\begin{align*}
		\sum_{i=0}^h
		\lambda_i
		\frac{
			\left(\nabla_{\mathbf{y}} G_{\infty,n}(\mathbf{y},1,\infty, A_i)\Big|_{
					\mathbf{y}=\mathbf{0}_T
				}\right)^{\top}
		}{f_{X_0}(u_n)}
		\left(
		n^{-d-1/\alpha}
    \sum_{t=1}^{n-T+1}
		[X_{t+\ell}]_\ell
		\right)
		\,.
	\end{align*}
	Again, by Karamata's theorem and symmetry of $X_0$,
	\begin{align*}
		\frac{\PP[|X_0|>u_n]}{f_{X_0}(u_n)}
		\ \sim \ 
    u_n
    \frac{2}{\nu}
		\,,
	\end{align*}
	so that we make this replacement in the final result.
	By Theorem~\ref{thm:linear_clt_multi},
	\begin{align*}
		n^{-d-1/\alpha}
    \sum_{t=1}^{n-T+1}
			[X_{t+\ell}]_\ell
		\ \to \
		Z_{\alpha}\mathbf{1}_{T}
	\end{align*}
	weakly as $n\to\infty$.
	On the other hand,
	by Assumption~\ref{asu:nabla_G}
	\begin{align*}
		\sum_{i=0}^h
		\lambda_i
		\frac{
			\nabla_{\mathbf{y}} G_{\infty,n}(\mathbf{y},1, \infty,A_i)\Big|_{
					\mathbf{y}=\mathbf{0}_T}
		}{f_{X_0}(u_n)}
		\ \to_{\PP} \
		\sum_{i=0}^h
		\lambda_i
		H_{A_i}(1)
		\,,
	\end{align*}
	so that, by Slutsky's theorem,
	\begin{align*}
		\sum_{i=0}^h
		\lambda_i
		\frac{
			\left(\nabla_{\mathbf{y}} G_{\infty,n}(\mathbf{y},1, \infty,A_i)\Big|_{
					\mathbf{y}=\mathbf{0}_T
				}\right)^{\top}
		}{f_{X_0}(u_n)}
		\left(
		n^{-d-1/\alpha}
    \sum_{t=1}^{n-T+1}
		[X_{t+\ell}]_\ell
		\right)
		\ \to \
		\sum_{i=0}^h
		\lambda_i
		\left(
		H_{A_i}(1)^{\top} \mathbf{1}_{T}
		\right) Z_{\alpha}
		\,,
	\end{align*}
	which finishes the proof.
\end{proof}
An overview of the proof of Theorem~\ref{thm:det_centering}, coming next, is given in Section~\ref{sec:sub:heur}.
\begin{proof}[\textbf{Proof of Theorem~\ref{thm:det_centering}:}] Replacing random centering as in Theorem~\ref{thm:conv_multi_inter} by deterministic centering adds a correction to the limit. To account for this correction,
	we set $r_n
		= 
		n^{1-d-1/\alpha}
		u_n$ and we make the decomposition
	\begin{align*}
			 &
			r_n
			\left(
			\widehat{\PP}_n(X_{n-k:n}/u_n,A_i)
			\ - \
			\PP_n(1,A_i)
			\right)
			\\ &
			  \ = \
			  r_n
			  \left(
			  \widehat{\PP}_n(X_{n-k:n}/u_n,A_i)
			  \ - \
			  \PP_n(s,A_i)\Big|_{s=X_{n-k:n}/u_n}
			\right)
			\\ &
			  \qquad + \
			  r_n
			  \left(
			  \PP_n(s,A_i)\Big|_{s=X_{n-k:n}/u_n}
			  \ - \
			  \PP_n(1,A_i)
			\right)
			\\ &
			  \ = \
			  r_n
			  \left(
			  \widehat{\PP}_n(X_{n-k:n}/u_n,A_i)
			  \ - \
			  \PP_n(s,A_i)\Big|_{s=X_{n-k:n}/u_n}
			\right)
            \\ &
			  \qquad + \frac{\partial}{\partial s}
			\PP_n(s,A_i)
			\Big|_{s=1} \cdot
			  r_n
			  \left(
			  \frac{X_{n-k:n}}{u_n}
			  \ - \ 1
			  \right)
			  + r_n \int_1^{X_{n-k:n}/u_n} \left( \frac{\partial}{\partial s}
			\PP_n(s,A_i) - \frac{\partial}{\partial s}
			\PP_n(s,A_i)
			\Big|_{s=1} \right) \mathrm{d}s
			\,.
	\end{align*}
    Note that $\frac{\partial}{\partial s}\PP_n(s,A)$ exists and is continuous due to Lemma~\ref{lem:multi_swap}(ii); see also Remark~\ref{rem:bremen}. Now 
    \begin{align*}
    &\left| r_n \int_1^{X_{n-k:n}/u_n} \left( \frac{\partial}{\partial s}
			\PP_n(s,A_i) - \frac{\partial}{\partial s}
			\PP_n(s,A_i)
			\Big|_{s=1} \right) \mathrm{d}s \right| \\
            &\leq r_n
			  \left|
			  \frac{X_{n-k:n}}{u_n}
			  \ - \ 1
        \right| \times \sup_{s\in [1\land X_{n-k:n}/u_n, 1 \lor X_{n-k:n}/u_n]} \left| \frac{\partial}{\partial s}
			\PP_n(s,A_i) - \frac{\partial}{\partial s}
			\PP_n(s,A_i)
			\Big|_{s=1} \right| \\
            &= r_n
			  \left|
			  \frac{X_{n-k:n}}{u_n}
			  \ - \ 1
			  \right| \times o_{\PP}(1) 
    \end{align*}
	since
	$X_{n-k:n}/u_n\to_{\PP} 1$ as $n\to\infty$ and the (deterministic) integrand converges uniformly to a continuous limit on $N_{A_i}$, see Remark~\ref{rem:stuttgart}. Consequently 
    \begin{align}
		\label{eq:prev_decomp}
		\begin{split}
			 & r_n
			\left(
			\widehat{\PP}_n(X_{n-k:n}/u_n,A_i)
			\ - \
			\PP_n(1,A_i)
			\right)
			\\ &
			  \ = \
			  r_n
			  \left(
			  \widehat{\PP}_n(X_{n-k:n}/u_n,A_i)
			  \ - \
			  \PP_n(s,A_i)\Big|_{s=X_{n-k:n}/u_n}
			\right) \\ &
			  \qquad + \left( \frac{\partial}{\partial s}
			\PP_n(s,A_i)
			\Big|_{s=1} + o_{\PP}(1) \right) \cdot
			  r_n
			  \left(
			  \frac{X_{n-k:n}}{u_n}
			  \ - \ 1
			  \right)
			  \,.
		\end{split}
	\end{align}
    We now study the joint convergence of the two terms in the right-hand side of~\eqref{eq:prev_decomp}. For this, we remark that by the properties of quantile functions, for $s^*\in \RR$, the inequalities
	\begin{align}
		\label{eq:can_be_written}
		r_n
		\left(
		\frac{X_{n-k:n}}{u_n}
		\ - \
		1
		\right)
		\ \le \ s^*
	\end{align}
	and
	\begin{align*}
		r_n
		\left(
		\frac{1}{n}
		\sum_{t=1}^n
		\frac{\ind\{X_t > u_n(1+s^*/r_n)\}}{\PP[X_0>u_n(1+s^*/r_n)]}
		\ - \ 1
		\right)
		\ \le \
		r_n
		\left(
		\frac{\PP[X_0>u_n]}{\PP[X_0>(1+s^*/r_n)u_n]}
		\ - \
		1
		\right)
	\end{align*}
	are equivalent. Set 
	\begin{align*}
		\xi(u_n)
		\ = \
		\frac{u_n\cdot f_{X_0}(u_n)}{\PP[X_0>u_n]}
		\,.
	\end{align*}
	As shown in the proof of Lemma~D.1 in \cite{scheffelCentralLimitTheory2025}
	if,
	$
		u_n= o
		\left(
		n^{(d+1/(\nu\wedge 2)-\kappa_0-\delta)/(\nu+1)}
		\right)
	$
	for some $\delta>0$, it holds for $s^*\in\RR$
	\begin{align*}
		r_n
		\left(
		\frac{\PP[X_0>u_n]}{\PP[X_0>(1+s^*/r_n)u_n]}
		\ - \
		1
		\right)
		\ = \
		s^*\cdot \xi(u_n)
		\left(
		1+o(1)
		\right)
	\end{align*}
	and
	\begin{align*}
		\frac{r_n/\xi(u_n)}{n}
		\sum_{t=1}^n
		\left(
		\frac{\ind\{X_t > u_n\}}{\PP[X_0>u_n]}
		\ - \
		\frac{\ind\{X_t > u_n(1+s^*/r_n)\}}{\PP[X_0>u_n(1+s^*/r_n)]}
		\right)
		\ = \
		o_{\PP}(1)
		\,.
	\end{align*}
    Recall that $\xi(u_n) \to \nu$ as $n\to\infty$ by Karamata's theorem. Then~\eqref{eq:can_be_written} is equivalent to 
    \[
    \frac{r_n}{n}
		\sum_{t=1}^n
		\left(
		\frac{\ind\{X_t > u_n\}}{\PP[X_0>u_n]}
		\ - \
		1
		\right) \leq s^*\cdot \nu
		\left(
		1+o(1)
		\right) + o_{\PP}(1).
    \]
	Also note that
	\begin{align*}
    &
		\frac{r_n}{n}
		\sum_{t=1}^n
		\left( \frac{\ind\{X_t > u_n\}}{\PP[X_0>u_n]}
		\ - \ 1 \right)
    \\&
    \ = \ 
		\frac{r_n}{n}
    \sum_{t=n-T+2}^n
		\left( \frac{\ind\{X_t > u_n\}}{\PP[X_0>u_n]} - 1 \right)
    \ + \ 
    2
    \frac{n-T+1}{n}
    \left(
    \frac{r_n}{n-T+1}
    \sum_{t=1}^{n-T+1} \left( 
		\frac{\ind\{X_t > u_n\}}{\PP[|X_0|>u_n]}
		\ - \ 
    \frac{
    \PP[X_0>u_n]
    }{
    \PP[|X_0|>u_n]
    }
    \right) \right)
    \\&
    \ = \ 
    o_{\PP}(1)
    \ + \ 
    2(1 + o(1)) r_n
    \left(
    \widehat{\PP}_n(1,B)
    \ - \ 
    \PP_n(1,B)
    \right)
   	\end{align*}
	with
	$
		B
		\ = \
		(1,\infty) \times \RR^{T-1}
		\in \mathcal{C}_T
		\,.
	$
	Therefore, \eqref{eq:can_be_written} is equivalent to
	\begin{align}
		\label{eq:3rd_equiv}
		r_n
		\left(
		\widehat{\PP}_n(1,B)
		\ - \
		\PP_n(1,B)
		\right)
		\ \le \
		s^*\cdot
    \frac{\nu}{2}
		\cdot
		(1+o(1))
		\ + \
		o_{\PP}(1)
		\,.
	\end{align}
    By Equation~\eqref{eq:3rd_equiv}, the joint limit of the first and the second term in~\eqref{eq:prev_decomp} can be studied in a unified way using Theorem~\ref{thm:conv_multi_inter}, at the level of distribution functions.
	Let $s_0,\ldots,s_h, s^*\in \RR$. Write
	\begin{align*}
		E_n(s_0,\ldots,s_h)
		\ := \
		\bigcap_{i=0}^h
		\left\{
		r_n
		\left(
		\widehat{\PP}_n(X_{n-k:n}/u_n,A_i)
		\ - \
		\PP_n(s,A_i)\Big|_{s=X_{n-k:n}/u_n}
		\right)
		\ \le \ s_i
		\right\}
	\end{align*}
	and, by Equation~\eqref{eq:3rd_equiv}
	\begin{align*}
		 &
		\PP
		\left[
			E_n(s_0,\ldots,s_h)
			\,,
			\quad
			r_n
			\left(
			\frac{X_{n-k:n}}{u_n}
			\ - \ 1
			\right)
			\ \le \ s^*
			\right]
		\\ &
		  \ = \
		  \PP
		  \left[
			  E_n(s_0,\ldots,s_h)
			  \,,
			  \quad
			  \frac{r_n}{\nu/2}
			\left(
			  \widehat{\PP}_n(1, B)
			  \ - \
			  \PP_n(1,B)
			\right)
			  \ \le \ s^*\cdot (1+o(1)) + o_{\PP}(1)
			  \right]
		\,.
	\end{align*}
	We shall now apply Theorem~\ref{thm:conv_multi_inter}, with $h$ replaced by $h+1$, $A_{h+1}=B$, $Y^{(i)}_n=X_{n-k:n}$ for $i\in \{0,\ldots,h\}$ and $Y^{(h+1)}_n=u_n$. For this, note that the set $B$ satisfies Assumption~\ref{asu:nabla_G_unif}, because 
    \begin{align*}
		\frac{
			\nabla_{\mathbf{y}}
			G_{\infty,n}
			(\mathbf{y},s,\infty,B)
			\Big|_{\mathbf{y}=\mathbf{0}_T}
		}{f_{X_0}(s\cdot u_n)}
		\ = \
		\frac{
			\nabla_{\mathbf{y}}
			\PP[X_0 > s\cdot u_n- y_0]
			\Big|_{\mathbf{y}=\mathbf{0}_T}
		}{f_{X_0}(s\cdot u_n)}
		\ = \
		\mathbf{e}_{1}
		\ = \ H_B(s) 
        \ = \ H_B(1).
	\end{align*}
    In particular $H_B(1)^{\top} \mathbf{1}_T = 1$. Also note that $X_{n-k:n}/u_n \to_{\PP} 1$ by Lemma~D.1 in~\cite{scheffelCentralLimitTheory2025}, so that $X_{n-k:n}$ is a valid random threshold in the application of Theorem~\ref{thm:conv_multi_inter}. Then this theorem provides that
	\begin{align*}
		 &
		r_n
		\left[
			\left[
				\left(
				\widehat{\PP}_n(X_{n-k:n}/u_n,A_i)
				\ - \
				\PP_n(s,A_i)\Big|_{s=X_{n-k:n}/u_n}
				\right)
				\right]_{i=0,\ldots,h}
			\,,
			\quad
			\frac{1}{\nu/2}
      \left(
			\widehat{\PP}_n(1,B)
			\ - \
			\PP_n(1,B)
      \right)
			\right]
		\,,
	\end{align*}
	converges in distribution to
	\begin{align*}
    \frac{\nu}{2} Z_{\alpha}
		\left[
			[H_{A_i}(1)^{\top}\mathbf{1}_T]_{i=0,\ldots,h}\,,
			\quad
			\frac{2}{\nu}
			\right]
		\,.
	\end{align*}
	Finally, by \eqref{eq:prev_decomp}, Remark~\ref{rem:stuttgart}, and Slutsky's theorem,
	\begin{align*}
		r_n
		\left[
			\widehat{\PP}_n(X_{n-k:n}/u_n,A_i)
			\ - \
			\PP_n(1,A_i)
			\right]_{i=0,\ldots,h}
	\end{align*}
	converges in distribution to
	\begin{align*}
    \frac{\nu}{2} Z_{\alpha}
		\left[
			\left(
			H_{A_i}(1)
			\right)^{\top}
			\mathbf{1}_T
			\ + \
			p_{A_i}(1)
			\right]_{i=0,\ldots,h}
		\,.
	\end{align*}
    The proof is complete.
\end{proof}
\begin{proof}[\textbf{Proof of Theorem~\ref{thm:final}:}]
We closely follow the argument of the proof of
Corollary~3.3 in~\cite{davisExtremogramCorrelogramExtreme2009}.
	Note that
	\begin{align*}
		 &
		\frac{\widehat{\PP}_n(s,A_i)}{\widehat{\PP}_n(s,A_0)}
		\ - \
		\frac{\PP_n(1,A_i)}{\PP_n(1,A_0)}
		\\ &
		  \ = \
		  \frac{1}{\PP_n(1,A_0)\cdot\widehat{\PP}_n(s,A_0)}
		\left(
		  \PP_n(1,A_0)
		\left(
		  \widehat{\PP}_n(s,A_i)
		  \ - \
		  \PP_n(1,A_i)
		\right)
		  \ - \
		  \PP_n(1,A_i)
		\left(
		  \widehat{\PP}_n(s,A_0)
		  \ - \
		  \PP_n(1,A_0)
		\right)
		  \right)
		  \,,
	\end{align*}
	so that
	\begin{align*}
		 &
		\left[
			\frac{\widehat{\PP}_n(s,A_i)}{\widehat{\PP}_n(s,A_0)}
			\ - \
			\frac{\PP_n(1,A_i)}{\PP_n(1,A_0)}
			\right]_{i=1,\ldots,h}
		\\ &
		  \ = \
		  \frac{1}{\PP_n(1,A_0)\cdot \widehat{\PP}_n(s,A_0)}
		\begin{bmatrix}
			\PP_n(1,A_0)\cdot \mathbf{I}_{h} & [-\PP_n(1,A_i)]_{i=1,\ldots,h}
		\end{bmatrix}
		  \begin{bmatrix}
			\left[
				\widehat{\PP}_n(s,A_i)
				\ - \
				\PP_n(1,A_i)
				\right]_{i=1,\ldots,h}
			\\
			\widehat{\PP}_n(s,A_0)
			\ - \
			\PP_n(1,A_0)
		\end{bmatrix}.
	\end{align*}
	It holds, for $Y_n=u_n$ or $Y_n = X_{n-k:n}$ respectively, that
	\begin{align*}
		\widehat{\PP}_n(Y_n/u_n,A_0)\ -\ \PP_n(1,A_0)\ \to_\PP \ 0
		\qquad\text{as}\ n\to\infty,
	\end{align*}
  by Theorem~\ref{thm:conv_multi_simple} and Theorem~\ref{thm:det_centering} respectively.
	Since $\PP_n(1,A_0)\to \mu_{T}(A_0)>0$ as $n\to\infty$,
	\begin{align*}
		\widehat{\PP}_n(Y_n/u_n,A_0)\cdot\PP_n(1,A_0)\ \to_{\PP}\  (\mu_T(A_0))^2
	\end{align*}
	for $Y_n=u_n$ or $Y_n=X_{n-k:n}$.
	Therefore,
	\begin{align*}
		 &
		\frac{1}{\PP_n(1,A_0)\cdot \widehat{\PP}_n(Y_n/u_n,A_0)}
		\begin{bmatrix}
			\PP_n(1,A_0)\cdot \mathbf{I}_{h} & [-\PP_n(1,A_i)]_{i=1,\ldots,h}
		\end{bmatrix}
		\\ &
		  \ \to_{\PP} \
		  \frac{1}{
			  \left(
			  \mu_T(A_0)
			  \right)^2
			  }
		\begin{bmatrix}
			\mu_T(A_0)\cdot \mathbf{I}_{h} & [-\mu_T(A_i)]_{i=1,\ldots,h}
		\end{bmatrix}
		  \,.
	\end{align*}
  By Theorem~\ref{thm:conv_multi_simple} 
	\begin{align*}
		n^{1-d-1/\alpha}
		u_n
		\begin{bmatrix}
			\left[
				\widehat{\PP}_n(1,A_i)
				\ - \
				\PP_n(1,A_i)
				\right]_{i=1,\ldots,h}
			\\
			\widehat{\PP}_n(1,A_0)
			\ - \
			\PP_n(1,A_0)
		\end{bmatrix}
		\ \to_d \
    \frac{
		\nu
    }{2} Z_{\alpha}
		\begin{bmatrix}
			\left[
				H_{A_i}(1)^{\top} \mathbf{1}_{T}
				\right]_{i=1,\ldots,h}
			\\
			H_{A_0}(1)^{\top} \mathbf{1}_{T}
		\end{bmatrix}
		\,,
	\end{align*}
	and by Theorem~\ref{thm:det_centering}
	\begin{align*}
		n^{1-d-1/\alpha}
		q_{X_0}
		\left(
		1 - \frac{k}{n}
		\right)
		\begin{bmatrix}
			\left[
				\widehat{\PP}_n(X_{n-k:n}/u_n,A_i)
				\ - \
				\PP_n(1,A_i)
				\right]_{i=1,\ldots,h}
			\\
			\widehat{\PP}_n(X_{n-k:n}/u_n,A_0)
			\ - \
			\PP_n(1,A_0)
		\end{bmatrix}
	\end{align*}
	converges in distribution to
	\begin{align*}
    \frac{\nu}{2} Z_{\alpha}
		\begin{bmatrix}
			\left[
        H_{A_i}(1)^{\top} \mathbf{1}_{T} \ +\
        p_{A_i}(1)
				\right]_{i=1,\ldots,h}
			\\
      H_{A_0}(1)^{\top} \mathbf{1}_{T} \ +\
        p_{A_0}(1)
		\end{bmatrix}
		\,.
	\end{align*}
	The claimed convergence in the two cases follows from Slutsky's theorem.
\end{proof}
\section{Proofs of results in Section~\ref{sec:rect}}
\label{app:proofsrect}
\begin{proof}[\textbf{Proof of Proposition~\ref{prop:imply}:}]
  \textbf{Verifying Assumption~\ref{asu:cond_limit}:}
  We apply partial integration over each coordinate. 
  First, by Lemma~\ref{lem:multi_density}(iv) and dominated convergence,
  \begin{align*}
    &
	\int_{\RR^T}
		\ind\{\mathbf{x}\in u_n \cdot s \cdot A\}
		\cdot
		\mathbf{x}^{\top}
		\cdot
		\nabla f_{[X_\ell]_\ell}(\mathbf{x})
		\,\mathrm{d}\mathbf{x}
		\\ &
		  \ = \
		  \sum_{i=1}^T
		  \int_{\RR^T}
		\ind\left\{\min_{j\in J_A}(x_{j-1} - u_n \cdot s \cdot \mathfrak{a}_j)\ge 0\right\}
		  x_{i-1}
		\frac{\partial}{\partial x_{i-1}}
		f_{[X_{\ell}]_\ell}(\mathbf{x})
		  \,\mathrm{d}\mathbf{x}
        \\&
        \ = \
		  \sum_{i=1}^T
		  \lim_{M\to \infty} \int_{\RR^T}
		\ind\left\{\min_{j\in J_A}(x_{j-1} - u_n \cdot s \cdot \mathfrak{a}_j)\ge 0\,, |x_{i-1}|\leq M \right\} 
		  x_{i-1}
		\frac{\partial}{\partial x_{i-1}}
		f_{[X_{\ell}]_\ell}(\mathbf{x})
		  \,\mathrm{d}\mathbf{x}\,.
    \end{align*}
    Fix $i\notin J_A$. By partial integration over the variable $x_{i-1}$,  
    \begin{align*}
    &\lim_{M\to \infty}\int_{\RR^T}
		\ind\left\{\min_{j\in J_A}(x_{j-1} - u_n \cdot s \cdot \mathfrak{a}_j)\ge 0\,, |x_{i-1}|\leq M \right\} 
		  x_{i-1}
		\frac{\partial}{\partial x_{i-1}}
		f_{[X_{\ell}]_\ell}(\mathbf{x})
		  \,\mathrm{d}\mathbf{x} \\
    &\ =\  \lim_{M\to \infty}\int_{\RR^{T-1}}
		\ind\left\{\min_{j\in J_A}(x_{j-1} - u_n \cdot s \cdot \mathfrak{a}_j)\ge 0 \right\} 
		  M
		f_{[X_{\ell}]_\ell}(\mathbf{x}_{\ell\neq i-1},x_{i-1}=M)
		  \,\mathrm{d}\mathbf{x}_{\ell\neq i-1} \\
    &\qquad +\  \lim_{M\to \infty}\int_{\RR^{T-1}}
		\ind\left\{\min_{j\in J_A}(x_{j-1} - u_n \cdot s \cdot \mathfrak{a}_j)\ge 0 \right\} 
		  M
		f_{[X_{\ell}]_\ell}(\mathbf{x}_{\ell\neq i-1},x_{i-1}=-M)
		  \,\mathrm{d}\mathbf{x}_{\ell\neq i-1}\\
    &\qquad -\  \PP[[X_{\ell}]_{\ell}\in u_n \cdot s\cdot A].
    \end{align*}
    Now 
    \begin{align*}
    &\int_{\RR^{T-1}}
		\ind\left\{\min_{j\in J_A}(x_{j-1} - u_n \cdot s \cdot \mathfrak{a}_j)\ge 0 \right\} 
		  M
		f_{[X_{\ell}]_\ell}(\mathbf{x}_{\ell\neq i-1},x_{i-1}=\pm M)
		  \,\mathrm{d}\mathbf{x}_{\ell\neq i-1} \\
    &\leq \int_{\RR^{T-1}}
		  M
		f_{[X_{\ell}]_\ell}(\mathbf{x}_{\ell\neq i-1},x_{i-1}=\pm M)
		  \,\mathrm{d}\mathbf{x}_{\ell\neq i-1} = M f_{X_{i-1}}(\pm M)\  =\  M f_{X_0}(\pm M)
    \end{align*}
    converges to 0 as $M\to\infty$ by Assumption~\ref{asu:rv_f}. Therefore, for all $i\notin J_A$,
    \[
    \lim_{M\to \infty}\int_{\RR^T}
		\ind\left\{\min_{j\in J_A}(x_{j-1} - u_n \cdot s \cdot \mathfrak{a}_j)\ge 0\,, |x_{i-1}|\leq M \right\} 
		  x_{i-1}
		\frac{\partial}{\partial x_{i-1}}
		f_{[X_{\ell}]_\ell}(\mathbf{x})
		  \,\mathrm{d}\mathbf{x} = - \PP[[X_{\ell}]_{\ell}\in u_n \cdot s\cdot A].
    \]
    For $i\in J_A$, partial integration analogously yields
    \begin{align*}
    &\lim_{M\to \infty}\int_{\RR^T}
		\ind\left\{\min_{j\in J_A}(x_{j-1} - u_n \cdot s \cdot \mathfrak{a}_j)\ge 0\,, |x_{i-1}|\leq M \right\} 
		  x_{i-1}
		\frac{\partial}{\partial x_{i-1}}
		f_{[X_{\ell}]_\ell}(\mathbf{x})
		  \,\mathrm{d}\mathbf{x} \\
    &= \lim_{M\to \infty}\int_{\RR^{T-1}}
		\ind\left\{\min_{j\in J_A\setminus\{i\}}(x_{j-1} - u_n \cdot s \cdot \mathfrak{a}_j)\ge 0 \right\} 
		  M
		f_{[X_{\ell}]_\ell}(\mathbf{x}_{\ell\neq i-1},x_{i-1}=M)
		  \,\mathrm{d}\mathbf{x}_{\ell\neq i-1} \\
    &\ - \int_{\RR^{T-1}}
		\ind\left\{\min_{j\in J_A\setminus\{i\}}(x_{j-1} - u_n \cdot s \cdot \mathfrak{a}_j)\ge 0 \right\} 
		u_n \cdot s \cdot \mathfrak{a}_{i} \, 
		f_{[X_{\ell}]_\ell}(\mathbf{x}_{\ell\neq i-1},x_{i-1}=u_n \cdot s \cdot \mathfrak{a}_{i})
		  \,\mathrm{d}\mathbf{x}_{\ell\neq i-1}\\
    &\ - \PP[[X_{\ell}]_{\ell}\in u_n \cdot s\cdot A] \\
    &= -s\cdot u_n\cdot \mathfrak{a}_i
\cdot
		f_{X_0}(s\cdot u_n\cdot \mathfrak{a}_i)
		  \cdot
		  \PP[[X_\ell]_\ell\in u_n \cdot s\cdot A \mid X_{i-1} = u_n \cdot s\cdot \mathfrak{a}_i] \ - \ 
    \PP[[X_{\ell}]_{\ell}\in u_n \cdot s\cdot A]
    \,.
    \end{align*}
    Consequently 
    \begin{align*}
    &\int_{\RR^T}
		\ind\{\mathbf{x}\in u_n \cdot s \cdot A\}
		\cdot
		\mathbf{x}^{\top}
		\cdot
		\nabla f_{[X_\ell]_\ell}(\mathbf{x})
		\,\mathrm{d}\mathbf{x}
      \\&
      =- \ 
		  \sum_{i\in J_A}
s\cdot u_n\cdot \mathfrak{a}_i
\cdot
		f_{X_0}(s\cdot u_n\cdot \mathfrak{a}_i)
		  \cdot
		  \PP[[X_\ell]_\ell\in u_n \cdot s\cdot A \mid X_{i-1} = u_n \cdot s\cdot \mathfrak{a}_i]
    \\&
    \qquad - \ 
    T\cdot \PP[[X_{\ell}]_{\ell}\in u_n \cdot s\cdot A]
    \,.
  \end{align*}
    By regular variation of $f_{X_0}$ (Assumption~\ref{asu:rv_f}), symmetry of $\varepsilon$ and $X_0$, and Karamata's theorem, 
	\begin{align*}
		\frac{
s\cdot u_n\cdot \mathfrak{a}_i\cdot
			f_{X_0}(s\cdot u_n\cdot \mathfrak{a}_i)
		}{\PP[|X_0|>u_n]}
		\ = \
    \frac{1}{2}
		\frac{
s\cdot u_n\cdot \mathfrak{a}_i\cdot
			f_{X_0}(s\cdot u_n\cdot \mathfrak{a}_i)
		}
		{\PP[X_0>u_n \cdot s\cdot \mathfrak{a}_i]}
		\cdot
		\frac
		{\PP[X_0>u_n \cdot s\cdot \mathfrak{a}_i]}
		{\PP[X_0>u_n]}
		\ \to \
    \frac{\nu}{2}
		\cdot (s\cdot \mathfrak{a}_i)^{-\nu}
	\end{align*}
	and 
	\begin{align*}
		\frac{
			\PP[[X_\ell]_\ell\in u_n \cdot s\cdot A]
		}{\PP[|X_0|>u_n]}
		\ \to \
		\mu_T(s\cdot A)
		\ = \
		s^{-\nu}
		\mu_T(A)
    \,,
	\end{align*}
  both locally uniformly in $s$,
  so that
	\begin{align*}
		 &
		\frac{1}{\PP[|X_0|>u_n]}
		\int_{\RR^T}
		\ind\{\mathbf{x}\in u_n \cdot s \cdot A\}
		\cdot
		\mathbf{x}^{\top}
		\cdot
		\nabla f_{[X_\ell]_\ell}(\mathbf{x})
		\,\mathrm{d}\mathbf{x}
		\\ &
		  \ \to \
		  s^{-\nu}
		\left(
    -
    \frac{\nu}{2}
		  \sum_{i\in J_A}
		  \mathfrak{a}_i^{-\nu}
		p_{i,A}
		  \ -\ 
		  T
		  \mu_T(A)
		\right)
		  \,,
	\end{align*}
  locally uniformly in $s$.
  It follows that
	\begin{align*}
		p_A(s)
		\ = \
		p_A(1)
    \ = \ 
		-
		\sum_{i\in J_A}
		\mathfrak{a}_i^{-\nu}
		p_{i,A}
		\,.
	\end{align*}
  \textbf{Verifying Assumption~\ref{asu:nabla_G_unif}:} Rewrite $G_{\infty,n}$ as 
	\begin{align*}
		G_{\infty,n}(\mathbf{y},s,\infty,A)
		\ = \
		\PP
		\left[
			\min_{j\in J_A}
			\left(
			X_{j-1} + y_{j-1} - u_n \cdot s\cdot \mathfrak{a}_j
			\right)
			\ \ge \ 0
			\right]
		\,,
	\end{align*}
	so that
	\begin{align*}
		 &
		\nabla_{\mathbf{y}}
		G_{\infty,n}(\mathbf{y},s,\infty,A)
		\Big|_{\mathbf{y}=\mathbf{0}_T}
		\ = \
		\sum_{j\in J_A}
		f_{X_0}(s\cdot u_n\cdot \mathfrak{a}_j)
		\cdot
		\PP
		\left[
			[X_{\ell}]_{\ell}\in s\cdot u_n \cdot A
			\mid
			X_{j-1} = s\cdot u_n \cdot \mathfrak{a}_j
			\right] \cdot \mathbf{e}_j
		\,.
	\end{align*}
	Therefore
	\begin{align*}
		\frac{
			\nabla_{\mathbf{y}}
			G_{\infty,n}(\mathbf{y},s,\infty,A)
			\Big|_{\mathbf{y}=\mathbf{0}_T}
		}{f_{X_0}(s\cdot u_n)}
		\ = \
		\sum_{j\in J_A}
		\frac{
			f_{X_0}(s\cdot u_n\cdot \mathfrak{a}_j)
		}{f_{X_0}(s\cdot u_n)}
		\cdot
		\PP
		\left[
			[X_{\ell}]_{\ell}\in s\cdot u_n \cdot A
			\mid
			X_{j-1} = s\cdot u_n \cdot \mathfrak{a}_j
			\right] \cdot \mathbf{e}_j.
	\end{align*}
  By local uniformity of regular variation and Assumption~\ref{asu:simplified}, this converges uniformly in $s\in N_A$ to
	\begin{align*}
		H_A(s)
		\ = \
		H_A(1)
    \ = \ 
		\sum_{j\in J_A}
		\mathfrak{a}_j^{-(\nu+1)}
		\cdot
		p_{j,A} \cdot \mathbf{e}_j
		\,.
	\end{align*}
    This is the announced result.
\end{proof}
\begin{proof}[\textbf{Proof of Theorem~\ref{thm:example}}]
  Let $(c_i)_{i\in \NN_0}\subset [0,\infty)$ satisfy $0<\sum_{i=0}^{\infty}c_i^{\nu}<\infty$. Then, under the assumptions of unimodality of $\varepsilon$, it holds that $f_{\sum_{i=0}^\infty c_i\varepsilon_{-i}}$ is symmetric and regularly varying at $+\infty$ with index $-1-\nu$; see Remark \ref{rem:unimodality}. 
	Throughout the proof we use that for sufficiently large $u$ and $(c_i)_{i\in\NN_0}\subset [0,\infty)$ such that $\sum_{i=0}^{\infty}c_i^{\nu}<\infty$ it follows from
	Karamata's theorem and Equation~(15.3.5) in \cite{kulikHeavyTailedTimeSeries2020} that 
	\begin{align}
		  \label{eq:KKS}
      \begin{split}
        &
		f_{\sum_{i=0}^{\infty}c_i \varepsilon_{-i}}(u)
		\ = \
		\frac{
			uf_{\sum_{i=0}^{\infty}c_i \varepsilon_{-i}}(u)
		}{\nu \PP[\sum_{i=0}^{\infty}c_i \varepsilon_{-i}>u]}
		\cdot
		\frac{\nu\PP[\varepsilon>u]}{uf_{\varepsilon}(u)}
		\cdot
		\frac{
			\PP[\sum_{i=0}^{\infty}c_i \varepsilon_{-i}>u]
		}{\PP[\varepsilon > u]}
		f_{\varepsilon}(u)
		\ \sim\
		\frac{
			\PP[\sum_{i=0}^{\infty}c_i \varepsilon_{-i}>u]
		}{\PP[\varepsilon > u]}
		f_{\varepsilon}(u)
		\\ &
		\ \sim \
		  \sum_{i=0}^{\infty}c_i^{\nu}
		f_{\varepsilon}(u)
		  \, \quad \mbox{ as } \quad u\to\infty
      \,.
      \end{split}
	\end{align}
	We first show by mathematical induction that, for all $k\in \NN_0$,
	\begin{align*}
		\frac{1}{f_{X_0}(u)}
		\int_{u\mathfrak{a}_1-y_1}^{\infty}
		f_{X_{0,k},X_{1,k+1}}(u-y_0, x)\,\mathrm{d}x
		\ \to \ 0
	\end{align*}
	and
	\begin{align*}
		\frac{1}{f_{X_0}(u)}
		\int_{u-y_0}^{\infty}
		f_{X_{0,k},X_{1,k+1}}(x, u\mathfrak{a}_1-y_1)\,\mathrm{d}x
		\ \to\
    \mathfrak{a}_1^{-1-\nu}
		\frac{\sum_{i=1}^{k+1}a_i^{\nu}}{\sum_{i=0}^{\infty}a_i^{\nu}}
		\,,
	\end{align*}
	both uniformly in 
  $\mathbf{y}\in N_{\mathbf{y}}$
	as $u\to\infty$, where $N_{\mathbf{y}}$ 
 is an arbitrary compact subset of $\mathbb{R}^2$, which we fix from now on.
 Then, in a second step, we show that we can take the limit $k\to\infty$.
 \\
  \textbf{Analysis for $k\in\NN_0$:}
	The arguments for both terms are mostly the same, so we introduce some notation to unify the proof for both cases.
	Let $\sigma:\{0,1\}\to \{0,1\}$ be a permutation and write
	\begin{align*}
		X_{\sigma(0),k+\sigma(0)}
		\ = \
		\sum_{i=0}^{k+\sigma(0)}a_{i}\varepsilon_{\sigma(0)-i}
		\ = \
		\sum_{i=-\sigma(0)}^{k}a_{\sigma(0)+i}\varepsilon_{-i}
		\,.
	\end{align*}
	Our goal is then to show that, for all $k\in \NN_0$,
	\begin{align}
		\label{eq:claim}
		\begin{split}
			 &
			\frac{1}{f_{\varepsilon}(u)}
			\int_{u\mathfrak{a}_{\sigma(0)}-y_{\sigma(0)}}^{\infty}
			f_{
					X_{\sigma(0),k+\sigma(0)},
					X_{\sigma(1),k+\sigma(1)}
				}(x,u\mathfrak{a}_{\sigma(1)}-y_{\sigma(1)})
			\,\mathrm{d}x
			\\ &
			  \ \to \
			  \begin{cases}
				\mathfrak{a}_1^{-1-\nu}
				\cdot
				\sum_{i=1}^{k+1}a_i^{\nu}
				\,, & \qquad \sigma(0) = 0\,, \\
				0
				\,, & \qquad \sigma(0) = 1\,,
			\end{cases}
		\end{split}
	\end{align}
	uniformly in 
  $\mathbf{y}\in N_{\mathbf{y}}$
	as $u\to\infty$.
	\\
	\textbf{Base case $k=0$:}
	Since $a_{-i}=0$ for $i\in\NN$, and $a_0=1$,
	\begin{align*}
		\begin{bmatrix}
			X_{\sigma(0),k+\sigma(0)} \\
			X_{\sigma(1),k+\sigma(1)}
		\end{bmatrix}
		\ = \
		\begin{bmatrix}
			a_{\sigma(0)-1} &
			a_{\sigma(0)}
			\\
			a_{\sigma(1)-1} &
			a_{\sigma(1)}
		\end{bmatrix}
		\begin{bmatrix}
			\varepsilon_1 \\
			\varepsilon_0
		\end{bmatrix}
		\,,
	\end{align*}
	where
	\begin{align*}
		\left|
		\det
		\begin{bmatrix}
			a_{\sigma(0)-1} &
			a_{\sigma(0)}
			\\
			a_{\sigma(1)-1} &
			a_{\sigma(1)}
		\end{bmatrix}
		\right|
		\ = \
		\left|
		a_{\sigma(0)-1}
		a_{\sigma(1)}
		\ - \
		a_{\sigma(1)-1}
		a_{\sigma(0)}
		\right|
		\ = \
		1
		\,.
	\end{align*}
	Note also that, applying the inverse transformation,
	\begin{align*} 
        \pm
		\begin{bmatrix}
			a_{\sigma(1)}    &
			-a_{\sigma(0)}
			\\
			-a_{\sigma(1)-1} &
			a_{\sigma(0)-1}
		\end{bmatrix}
		\begin{bmatrix}
			x \\
			u\mathfrak{a}_{\sigma(1)}-y_{\sigma(1)}
		\end{bmatrix}
		\ = \ 
        \pm
		\begin{bmatrix}
			a_{\sigma(1)}
			\cdot
			x
			\ -\
			a_{\sigma(0)}
			\cdot
			(u\mathfrak{a}_{\sigma(1)}-y_{\sigma(1)})
			\\
			-
			a_{\sigma(1)-1}
			\cdot
			x
			\ +\
			a_{\sigma(0)-1}
			\cdot
			(u\mathfrak{a}_{\sigma(1)}-y_{\sigma(1)})
		\end{bmatrix}.
	\end{align*}
	Since the determinant is $\pm1$, it holds by the symmetry of $\varepsilon$,
	\begin{align*}
		 &
		f_{
				X_{\sigma(0),k+\sigma(0)},
				X_{\sigma(1),k+\sigma(1)}
			}(x,u\mathfrak{a}_{\sigma(1)}-y_{\sigma(1)})
		\\
		 &
		\ =\
		f_{
				\varepsilon
			}(a_{\sigma(1)}
		\cdot
		x
		\ -\
		a_{\sigma(0)}
		\cdot
		(u\mathfrak{a}_{\sigma(1)}-y_{\sigma(1)})) f_{
				\varepsilon
			}(-a_{\sigma(1)-1}
		\cdot
		x
		\ +\
		a_{\sigma(0)-1}
		\cdot
		(u\mathfrak{a}_{\sigma(1)}-y_{\sigma(1)}))\,.
	\end{align*}
	Also note that $a_{\sigma(1)}\in \{1,a_1\}$, so that
	making the change of variables
	\begin{align*}
		x
		\ \mapsto\
		\frac{
			x\ +\ a_{\sigma(0)}\cdot (u\mathfrak{a}_{\sigma(1)}-y_{\sigma(1)})
		}{a_{ \sigma(1)}}
		\,,
	\end{align*}
	the right-hand side becomes
	\[
		f_{
				\varepsilon
			}(x) f_{
				\varepsilon
			}\left(-
		\dfrac{
			a_{\sigma(1)-1}
		}{a_{\sigma(1)}}
		\cdot
		x
		\ +\
		\left(
		a_{\sigma(0)-1}
		-
		\dfrac{a_{\sigma(1)-1}a_{\sigma(0)}}{a_{\sigma(1)}}
		\right)
		\cdot
		(u\mathfrak{a}_{\sigma(1)}-y_{\sigma(1)})\right).
	\]
		The lower bound in \eqref{eq:claim} for the change of variables in the integral is
		\begin{align*}
		a_{\sigma(1)}
		\left(
		u\mathfrak{a}_{\sigma(0)}-y_{\sigma(0)}
		\right)
		\ -\
		a_{\sigma(0)}
		\left( u\mathfrak{a}_{\sigma(1)}-y_{\sigma(1)} \right)
		\ = \
		u\cdot(a_{\sigma(1)}\mathfrak{a}_{\sigma(0)}-a_{\sigma(0)}\mathfrak{a}_{\sigma(1)})
		\ + \ O_{N_{\mathbf{y}}}(1)
		\,,
	\end{align*}
	where $O_{N_{\mathbf{y}}}(1)$ denotes a quantity that is bounded in $\mathbf{y}\in N_{\mathbf{y}}$.
	Then,
	\begin{align*}
		 &
		\frac{1}{f_{\varepsilon}(u)}
		\int_{u\mathfrak{a}_{\sigma(0)}-y_{\sigma(0)}}^{\infty}
		f_{
				X_{\sigma(0),k+\sigma(0)},
				X_{\sigma(1),k+\sigma(1)}
			}(x,u\mathfrak{a}_{\sigma(1)}-y_{\sigma(1)})
		\,\mathrm{d}x
		\\
		 &
		\ = \
		\frac{1}{a_{\sigma(1)}}
		\frac{1}{f_{\varepsilon}(u)}
		\int_{
			u\cdot(a_{\sigma(1)}\mathfrak{a}_{\sigma(0)}-a_{\sigma(0)}\mathfrak{a}_{\sigma(1)})
			\ + \ O_{N_{\mathbf{y}}}(1)
		}^{\infty}
		f_{
				\varepsilon
			}
		(x)
		\\ &
		  \qquad
		  \times f_{
				  \varepsilon
				  }
		\left(
		  -
		  \dfrac{
			  a_{\sigma(1)-1}
		}{a_{\sigma(1)}}
		\cdot
		  x
		  \ +\
		  \left(
		  a_{\sigma(0)-1}
		  -
		  \dfrac{a_{\sigma(1)-1}a_{\sigma(0)}}{a_{\sigma(1)}}
		  \right)
		\cdot
		  u\mathfrak{a}_{\sigma(1)}
		\ + \ O_{N_{\mathbf{y}}}(1)
		  \right)
		  \,\mathrm{d}x
		  \,.
	\end{align*}
	Since $\mathfrak{a}_1\ge 1$, we have $a_0\mathfrak{a}_1 \ge a_0 > a_1$, the lower bound of the integration range satisfies
	\begin{align}
    \label{eq:conv_post}
		u(a_{\sigma(1)}\mathfrak{a}_{\sigma(0)}-a_{\sigma(0)}\mathfrak{a}_{\sigma(1)})
		\ + \ O_{N_{\mathbf{y}}}(1)
		 & \ = \
		\begin{cases}
			u(a_1-\mathfrak{a}_1)
			\ + \ O_{N_{\mathbf{y}}}(1)
			\,, & \qquad \sigma(0) = 0\,, \\
			u(\mathfrak{a}_1-a_1)
			\ + \ O_{N_{\mathbf{y}}}(1)
			\,, & \qquad \sigma(0) = 1
			\,.
		\end{cases}
	\end{align}
	Thus, the argument of the second density term satisfies
	on the integration range in \eqref{eq:conv_post} that
	\begin{align*}
		-
		\frac{
			a_{\sigma(1)-1}
		}{a_{\sigma(1)}}
		\cdot
		x
		\ +\
		\left(
		a_{\sigma(0)-1}
		-
		\frac{a_{\sigma(1)-1}a_{\sigma(0)}}{a_{\sigma(1)}}
		\right)
		\cdot
		u\mathfrak{a}_{\sigma(1)}
		\ + \ O_{N_{\mathbf{y}}}(1)
		 & \ = \
		\begin{cases}
			-\dfrac{x+u\mathfrak{a}_{1}}{a_1}
			\ + \ O_{N_{\mathbf{y}}}(1)
			\,, & \qquad \sigma(0) = 0\,, \\
			u
			\ + \ O_{N_{\mathbf{y}}}(1)
			\,, & \qquad \sigma(0) = 1\,,
		\end{cases}
		\\ &
		  \begin{cases}
			\ \le \
			-u
			\ + \ O_{N_{\mathbf{y}}}(1)
			\,, & \qquad \sigma(0) = 0\,, \\
			\ \ge \
			u
			\ + \ O_{N_{\mathbf{y}}}(1)
			\,, & \qquad \sigma(0) = 1\,.
		\end{cases}
	\end{align*}
    For $\sigma(0)=0$, 
    note that, by monotonicity of $f_{\varepsilon}$ (see Remark~\ref{rem:unimodality}) and local uniformity of regular variation,
    \begin{align*}
     \dfrac{1}{f_{\varepsilon}(u)} f_{
					\varepsilon
				}
			\left(
			-\dfrac{x+u\mathfrak{a}_{1}}{a_1}
			\ + \ O_{N_{\mathbf{y}}}(1)
      \right) \ \leq\  
      \dfrac{
      f_{\varepsilon}(-u+O_{N_{\mathbf{y}}}(1))
      }
      {f_{\varepsilon}(u)}  
      \ 
      <
      \ 
      \infty  
    \end{align*}
      uniformly as $u\to\infty$. Thus, it follows from dominated convergence and the uniform convergence of the lower bound $u(a_1-\mathfrak{a}_1)
			\ + \ O_{N_{\mathbf{y}}}(1)$ of the integration range in Equation~\eqref{eq:conv_post} to $-\infty$ that
	\begin{align*}
		 &
		\frac{1}{a_{\sigma(1)}}
		\frac{1}{f_{\varepsilon}(u)}
		\int_{
			u\cdot(a_{\sigma(1)}\mathfrak{a}_{\sigma(0)}-a_{\sigma(0)}\mathfrak{a}_{\sigma(1)})
			\ + \ O_{N_{\mathbf{y}}}(1)
		}^{\infty}
		f_{
				\varepsilon
			}
		\left(
		x
		\right)
		\\ &
		  \qquad
		  \times f_{
				  \varepsilon
				  }
		\left(
		  -
		  \dfrac{
			  a_{\sigma(1)-1}
		}{a_{\sigma(1)}}
		\cdot
		  x
		  \ +\
		  \left(
		  a_{\sigma(0)-1}
		  -
		  \dfrac{a_{\sigma(1)-1}a_{\sigma(0)}}{a_{\sigma(1)}}
		  \right)
		\cdot
		  u\mathfrak{a}_{\sigma(1)}
		\ + \ O_{N_{\mathbf{y}}}(1)
		  \right)
		  \,\mathrm{d}x
		\\ &
		  \ = \
		  \dfrac{1}{a_{1}}
			\displaystyle\int_{
				u(a_1-\mathfrak{a}_{1}) \ + \ O_{N_{\mathbf{y}}}(1)
			}^{\infty}
			f_{
					\varepsilon
				}
			\left(
			x
			\right) \cdot \dfrac{1}{f_{\varepsilon}(u)} f_{
					\varepsilon
				}
			\left(
			-\dfrac{x+u\mathfrak{a}_{1}}{a_1}
			\ + \ O_{N_{\mathbf{y}}}(1)
			\right)
			\,\mathrm{d}x
		\\ &
		  \ \to \
		  \dfrac{1}{a_1}
			(a_1/\mathfrak{a}_1)^{1+\nu}
			=
			a_1^{\nu}
			\mathfrak{a}_1^{-1-\nu}
	\end{align*}
    	uniformly in  $\mathbf{y}\in N_{\mathbf{y}}$ as $u\to\infty$ due to local uniformity of regular variation.
        
    For $\sigma(0)=1$, we use uniform convergence of the lower bound $u(\mathfrak{a}_1-a_1)\ + \ O_{N_{\mathbf{y}}}(1)$ of the integration range in Equation~\eqref{eq:conv_post} to $\infty$ and local uniformity of regular variation to see that
	\begin{align*}
		 &
		\frac{1}{a_{\sigma(1)}}
		\frac{1}{f_{\varepsilon}(u)}
		\int_{
			u\cdot(a_{\sigma(1)}\mathfrak{a}_{\sigma(0)}-a_{\sigma(0)}\mathfrak{a}_{\sigma(1)})
			\ + \ O_{N_{\mathbf{y}}}(1)
		}^{\infty}
		f_{
				\varepsilon
			}
		\left(
		x
		\right)
		\\ &
		  \qquad
		  \times f_{
				  \varepsilon
				  }
		\left(
		  -
		  \dfrac{
			  a_{\sigma(1)-1}
		}{a_{\sigma(1)}}
		\cdot
		  x
		  \ +\
		  \left(
		  a_{\sigma(0)-1}
		  -
		  \dfrac{a_{\sigma(1)-1}a_{\sigma(0)}}{a_{\sigma(1)}}
		  \right)
		\cdot
		  u\mathfrak{a}_{\sigma(1)}
		\ + \ O_{N_{\mathbf{y}}}(1)
		  \right)
		  \,\mathrm{d}x
		\\ &
		  \ = \
		\dfrac{f_{\varepsilon}(u+O_{N_{\mathbf{y}}}(1))}{f_{\varepsilon}(u)}
			\displaystyle\int_{
				u(\mathfrak{a}_1-a_1) \ + \ O_{N_{\mathbf{y}}}(1)
			}^{\infty}
			f_{
					\varepsilon
				}
			\left(
			x
			\right)\,\mathrm{d}x
		  \ \to \
			0\,,
	\end{align*}
	uniformly in 
  $\mathbf{y}\in N_{\mathbf{y}}$
	as $u\to\infty$.
	This proves \eqref{eq:claim} for $k=0$.
	\\
	\textbf{Induction statement:}
	Assume that, for some $k\in \NN_0$, it holds \eqref{eq:claim}.
	\\
	\textbf{Induction step:}
	Throughout, we use the disintegration formula
	\begin{align*}
		 &
		\int_{u\mathfrak{a}_{\sigma(0)}-y_{\sigma(0)}}^{\infty}
		f_{X_{\sigma(0),k+1+\sigma(0)},X_{\sigma(1),k+1+\sigma(1)}}(x, u\mathfrak{a}_{\sigma(1)}-y_{\sigma(1)})\,\mathrm{d}x
		\\ &
		  \ = \
		  \int_{\RR}
		\left(
		  \int_{u\mathfrak{a}_{\sigma(0)}
			-  y_{\sigma(0)}
		}
		  ^{\infty}
		f_{X_{\sigma(0),k+\sigma(0)},X_{\sigma(1),k+\sigma(1)}}(x-a_{k+1+\sigma(0)}z, u\mathfrak{a}_{\sigma(1)} - a_{k+1+\sigma(1)}z
		  -  y_{\sigma(1)}
		)\,\mathrm{d}x
		  \right)
		  f_{\varepsilon}(z)
		  \,\mathrm{d}z
		\\ &
		  \ = \
		  \int_{\RR}
		\left(
		  \int_{
			  u\mathfrak{a}_{\sigma(0)}
			- a_{k+1+\sigma(0)}z
			  -  y_{\sigma(0)}
		}
		  ^{\infty}
		f_{X_{\sigma(0),k+\sigma(0)},X_{\sigma(1),k+\sigma(1)}}(x, u\mathfrak{a}_{\sigma(1)} - a_{k+1+\sigma(1)}z \ -  y_{\sigma(1)}
		)\,\mathrm{d}x
		  \right)
		  f_{\varepsilon}(z)
		  \,\mathrm{d}z
		  \,,
	\end{align*}
	and fix $M>0$.
	We split the analysis of the integral into several cases for $z$, namely, we write
	\begin{align}
		\nonumber
		 &
		\frac{1}{f_{\varepsilon}(u)}\int_{u\mathfrak{a}_{\sigma(0)}-y_{\sigma(0)}}^{\infty}
		f_{X_{\sigma(0),k+1+\sigma(0)},X_{\sigma(1),k+1+\sigma(1)}}(x, u\mathfrak{a}_{\sigma(1)}-y_{\sigma(1)})\,\mathrm{d}x
		\label{eq:disintegration}
		\\ &
		= \
      \sum_{j=1}^5 \int_{D_j}
		\left(
		  \frac{1}{f_{\varepsilon}(u)} \int_{
			  u\mathfrak{a}_{\sigma(0)}
			- a_{k+1+\sigma(0)}z
			  -  y_{\sigma(0)}
		}
		  ^{\infty}
		f_{X_{\sigma(0),k+\sigma(0)},X_{\sigma(1),k+\sigma(1)}}(x, u\mathfrak{a}_{\sigma(1)} - a_{k+1+\sigma(1)}z \ -  y_{\sigma(1)}
		)\,\mathrm{d}x
		  \right)
		  f_{\varepsilon}(z)
		  \,\mathrm{d}z
		  \,,
	\end{align}
  with
  \begin{align*}
    D_1
    &
    \ = \ \{|z|
      \le M
    \}
    \,,
    \\
    D_2
    &
    \ = \ \{
			|u\mathfrak{a}_{\sigma(1)}-a_{k+1+\sigma(1)}z-y_{\sigma(1)}|\le M
      \qquad\text{and}\qquad
      |z|>M
    \}
    \,,
    \\
    D_3
    &
    \ = \ 
    \left\{
    |z|\in (M,\delta u]
    \qquad\text{and}\qquad
		|u\mathfrak{a}_{\sigma(1)}-a_{k+1+\sigma(1)}z
			-y_{\sigma(1)}|>M
    \right\}
    \,,
    \\
    D_4
    &
    \ = \ 
    \left\{
    |z|>\delta u
    \qquad\text{and}\qquad
		|u\mathfrak{a}_{\sigma(1)}-a_{k+1+\sigma(1)}z
      -y_{\sigma(1)}|\in (M,\delta u]
    \right\}
    \,,
    \\
    D_5
    &
    \ = \ 
    \left\{
    |z|
    \land
		|u\mathfrak{a}_{\sigma(1)}-a_{k+1+\sigma(1)}z
      -y_{\sigma(1)}| > \delta u
    \right\}\,,
  \end{align*}
  for fixed $\delta \in (0,1/4)$,
  where the sets form a disjoint partition of $\RR$.
  \\
  \textbf{Analysis of $D_1$:} 
	If $|z|$ is bounded by $M$, then
	it follows from the induction statement that the integral with domain $D_1$ in~\eqref{eq:disintegration} converges to
	\begin{align*}
		\PP[|\varepsilon| \le M]
		\cdot
		\begin{cases}
			\mathfrak{a}_1^{-1-\nu}
			\sum_{i=1}^{k+1}a_i^{\nu}
			\,, & \qquad \sigma(0) = 0\,, \\
			0
			\,, & \qquad \sigma(0) = 1\,,\end{cases}
	\end{align*}
	uniformly in 
  $\mathbf{y}\in N_{\mathbf{y}}$
	as $u\to\infty$.
	\\
  \textbf{Analysis of $D_2$:} 
	Note that $a_{k+1+\sigma(1)}\in \{a_{k+1},a_{k+2}\}$ and $a_{k+1}>a_{k+2}>0$.
	We apply the change of variables
	\begin{align*}
		z\ \mapsto\
		\frac{u\mathfrak{a}_{\sigma(1)}-y_{\sigma(1)}-z}{a_{k+1+\sigma(1)}}
	\end{align*}
  so that $D_2$ is transformed to 
  \begin{align*}
    |z|\ \le\  M
    \qquad\text{and}\qquad
    \left|
		\frac{u\mathfrak{a}_{\sigma(1)}-y_{\sigma(1)}-z}{a_{k+1+\sigma(1)}}
    \right|\ > \ M
    \,.
  \end{align*}
  After the change of variables, the lower bound of the integration range in $x$ becomes 
	\begin{align*}
		u
		\left(
		\mathfrak{a}_{\sigma(0)}
		-
		\mathfrak{a}_{\sigma(1)}
		\frac{a_{k+1+\sigma(0)}}{a_{k+1+\sigma(1)}}
		\right)
		\ + \
		z
		\frac{a_{k+1+\sigma(0)}}{a_{k+1+\sigma(1)}}
		\ + \
		O_{N_{\mathbf{y}}}(1)
		\,.
	\end{align*}
	The integral then becomes
	\begin{align*}
		 &
		\frac{1}{a_{k+1+\sigma(1)}}
		\frac{1}{f_{\varepsilon}(u)}
    \int_{-M}^M
		\left(
		\int_{
			u
			\left(
			\mathfrak{a}_{\sigma(0)}
			-
			\mathfrak{a}_{\sigma(1)}
			\frac{a_{k+1+\sigma(0)}}{a_{k+1+\sigma(1)}}
			\right)
			\ + \
			z
			\frac{a_{k+1+\sigma(0)}}{a_{k+1+\sigma(1)}}
			\ + \
			O_{N_{\mathbf{y}}}(1)
		}^{\infty}
		f_{X_{\sigma(0),k+\sigma(0)},X_{\sigma(1),k+\sigma(1)}}(x,
		z
		)\,\mathrm{d}x
		\right)
		\\ & \qquad
		  \qquad
		  \qquad
		  \qquad
		  \qquad
		  f_{\varepsilon}
		\left(
		  \frac{u\mathfrak{a}_{\sigma(1)}+O_{N_{\mathbf{y}}}(1)-z}{a_{k+1+\sigma(1)}}
		  \right)
      \ind
      \left\{
    \left|
		\frac{u\mathfrak{a}_{\sigma(1)}-y_{\sigma(1)}-z}{a_{k+1+\sigma(1)}}
    \right|\ > \ M
      \right\}
		\,\mathrm{d}z
		  \,.
	\end{align*}
	Since $a_{k+1}>a_{k+2}$ and $\mathfrak{a}_1\ge 1$,
	\begin{align*}
		 &
		\mathfrak{a}_{\sigma(0)}
		-
		\mathfrak{a}_{\sigma(1)}
		\frac{a_{k+1+\sigma(0)}}{a_{k+1+\sigma(1)}}
		\ = \
		\begin{cases}
			1
			-
			\mathfrak{a}_{1}
			\frac{a_{k+1}}{a_{k+2}} < 0
			\,, & \qquad \sigma(0) = 0\,, \\
			\mathfrak{a}_{1}
			-
			\frac{a_{k+2}}{a_{k+1}} > 0
			\,, & \qquad \sigma(0) = 1\,,\end{cases}
	\end{align*}
	so that
	\begin{align*}
		u
		\left(
		\mathfrak{a}_{\sigma(0)}
		-
		\mathfrak{a}_{\sigma(1)}
		\frac{a_{k+1+\sigma(0)}}{a_{k+1+\sigma(1)}}
		\right)
		\ + \
		z
		\frac{a_{k+1+\sigma(0)}}{a_{k+1+\sigma(1)}}
		\ + \
		O_{N_{\mathbf{y}}}(1)
		\ \to \
		\begin{cases}
			-\infty
			\,, & \qquad \sigma(0) = 0\,, \\
			+\infty
			\,, & \qquad \sigma(0) = 1\,,\end{cases}
	\end{align*}
	uniformly 
  in
  $\mathbf{y}\in N_{\mathbf{y}}$ and 
  $|z
			|\le M$
	as $u\to\infty$.
  In the case of $\sigma(0)=0$, we have
  \begin{align}
    \label{eq:inder}
    \frac{1}{f_\varepsilon(u)}
		  f_{\varepsilon}
		\left(
		  \frac{u\mathfrak{a}_{\sigma(1)}+O_{N_{\mathbf{y}}}(1)-z}{a_{k+1+\sigma(1)}}
		  \right)
      \ind
      \left\{
    \left|
		\frac{u\mathfrak{a}_{\sigma(1)}-y_{\sigma(1)}-z}{a_{k+1+\sigma(1)}}
    \right|\ > \ M
      \right\}
      \ < \  
      \infty
  \end{align}
  uniformly as $u\to\infty$. Since
  \begin{align*}
    &
	\int_{
			u
			\left(
			\mathfrak{a}_{\sigma(0)}
			-
			\mathfrak{a}_{\sigma(1)}
			\frac{a_{k+1+\sigma(0)}}{a_{k+1+\sigma(1)}}
			\right)
			\ + \
			z
			\frac{a_{k+1+\sigma(0)}}{a_{k+1+\sigma(1)}}
			\ + \
			O_{N_{\mathbf{y}}}(1)
		}^{\infty}
		f_{X_{\sigma(0),k+\sigma(0)},X_{\sigma(1),k+\sigma(1)}}(x,
		z
		)\,\mathrm{d}x
    \\&
    \ \to \ 
	\int_\RR
		f_{X_{\sigma(0),k+\sigma(0)},X_{\sigma(1),k+\sigma(1)}}(x,
    z)
    \,\mathrm{d}x
    \ = \ 
    f_{X_{\sigma(1),k+\sigma(1)}}
    (z)\,,
  \end{align*}
  it follows from dominated convergence and the pointwise convergence of~\eqref{eq:inder} to $(\mathfrak{a}_1/a_{k+2})^{-1-\nu}$
  that the integral with domain $D_2$ in~\eqref{eq:disintegration} converges to
  \begin{align*}
\PP[
			|X_{\sigma(1),k+\sigma(1)}+o_{N_{\mathbf{y}}}(1)|
			\le M
		]
		\cdot
			\mathfrak{a}_1^{-1-\nu}
			a_{k+2}^{\nu}
      \qquad\text{as $u\to\infty$.}\qquad
  \end{align*}
  If $\sigma(0)=1$, then
  \begin{align*}
	\int_{
			u
			\left(
			\mathfrak{a}_{\sigma(0)}
			-
			\mathfrak{a}_{\sigma(1)}
			\frac{a_{k+1+\sigma(0)}}{a_{k+1+\sigma(1)}}
			\right)
			\ + \
			z
			\frac{a_{k+1+\sigma(0)}}{a_{k+1+\sigma(1)}}
			\ + \
			O_{N_{\mathbf{y}}}(1)
		}^{\infty}
		f_{X_{\sigma(0),k+\sigma(0)},X_{\sigma(1),k+\sigma(1)}}(x,
		z
		)\,\mathrm{d}x
\ \to \ 0
\,,
  \end{align*}
  so that the integral with domain $D_2$ in~\eqref{eq:disintegration} converges to $0$ as $u\to\infty$. 
	\\
  \textbf{Analysis of $D_3$:}
	On this domain one has, for $u$ large enough, regardless of $\mathbf{y}\in N_{\mathbf{y}}$,
	\begin{align*}
		u\mathfrak{a}_{\sigma(1)}-a_{k+1+\sigma(1)}z
		-y_{\sigma(1)}
		\ > \ u\mathfrak{a}_{\sigma(1)}/2
		\,.
	\end{align*}
	Therefore, the integral with domain $D_3$ in~\eqref{eq:disintegration} is bounded above by
	\begin{align*}
		 &
		\frac{1}{f_{\varepsilon}(u)}
		\sup_{x>u\mathfrak{a}_{\sigma(1)}/2}
		f_{X_{\sigma(1),k+\sigma(1)}}(x)
		\cdot
		\PP[|\varepsilon|> M]
		\ \lesssim\
		\PP[|\varepsilon|> M]
		\,,
	\end{align*}
    using~\eqref{eq:KKS} and the regular variation property of $f_{\varepsilon}$. Here, the multiplicative constant in $\lesssim$ depends only on $\mathfrak{a}_0, \mathfrak{a}_1$ and is independent of $k$ and $u$.  
	\\
	\textbf{
  Analysis of $D_4$:}
  One has $|a_{k+1+\sigma(1)}z - (u\mathfrak{a}_{\sigma(1)} - y_{\sigma(1)})| \leq \delta u$, which, for sufficiently large $u$, results in
			\begin{align*}
			 &
			z
			\ \ge\
			\frac{\mathfrak{a}_{\sigma(1)}-2\delta}{a_{k+1+\sigma(1)}}
			u
			\ \ge\
			\frac{1}{2 a_{k+1+\sigma(1)}}
			u
		\end{align*}
		for all $\mathbf{y}\in N_{\mathbf{y}}$, so that the integral with domain $D_4$ in~\eqref{eq:disintegration} is bounded above by
		\begin{align*}
			\frac{1}{a_{k+1+\sigma(1)}} \frac{1}{f_{\varepsilon}(u)}
			\sup_{|z|\ge u/(2 a_{k+1+\sigma(1)})}
			f_{\varepsilon}(|z|)
			\cdot
			\PP[|X_{\sigma(1),k+\sigma(1)}|>M]
			\ \lesssim\
			\PP[|X_{\sigma(1),k+\sigma(1)}|>M]\,,
		\end{align*}
    where the multiplicative constant in $\lesssim$ depends only on $(a_i)$ and is independent of $k$ and $u$. 
	\\
  \textbf{Analysis of $D_5$:}
		The integral with domain $D_5$ in~\eqref{eq:disintegration} is bounded above by
		\begin{align*}
			 &
			\frac{1}{f_{\varepsilon}(u)}
			\sup_{|x|>\delta u} f_{X_{\sigma(1),k+\sigma(1)}}(x)
			\PP[|\varepsilon|>\delta u]
			\ \lesssim\
			\PP[|\varepsilon|>\delta  u]
			\ \to \ 0
			\,,
		\end{align*}
		using~\eqref{eq:KKS} and the regular variation property of $f_{\varepsilon}$, uniformly in 
    $\mathbf{y}\in N_{\mathbf{y}}$
	as $u\to\infty$.
  \\
  \textbf{Conclusion of analysis for $k\in\NN_0$:}
	Taking $M\to\infty$ yields \eqref{eq:claim} for $k+1$, and by mathematical induction, the claim holds for all $k\in\NN_0$.
	\\
	\textbf{Analysis for $k\to\infty$:}
  Throughout, we use the independence of 
  \begin{align*}
  (X_{\sigma(0),k+\sigma(0)}, X_{\sigma(1),k+\sigma(1)})
  \qquad\text{and}\qquad
 (X_{\sigma(0)} - X_{\sigma(0),k+\sigma(0)},X_{\sigma(1)} - X_{\sigma(1),k+\sigma(1)})
  \end{align*}
 to obtain the disintegration formula
	\begin{align*}
		 &
		\frac{1}{f_{\varepsilon}(u)}
		\int_{u\mathfrak{a}_{\sigma(0)}}^{\infty}
		f_{X_{\sigma(0)},X_{\sigma(1)}}(x,u\mathfrak{a}_{\sigma(1)})
		\,\mathrm{d}x
		\\ &
		  \ = \
		  \frac{1}{f_{\varepsilon}(u)}
		\int_{\RR^2}
		\left(
		  \int_{u\mathfrak{a}_{\sigma(0)}}^{\infty}
		f_{X_{\sigma(0),k+\sigma(0)},X_{\sigma(1),k+\sigma(1)}}(x-r_0,u\mathfrak{a}_{\sigma(1)}-r_1)
		  \,\mathrm{d}x
		  \right)
		\\ & \qquad
		  \qquad
		  \qquad
		  \qquad
		  f_{X_{\sigma(0)}-X_{\sigma(0),k+\sigma(0)}, X_{\sigma(1)}-X_{\sigma(1),k+\sigma(1)}}(r_0,r_1)
		  \,\mathrm{d}(r_0,r_1)
		\,,
	\end{align*}
  valid for any $k\in \NN_0$. Again, we write
		\begin{align}
			\label{eq:full_disintegration}
			\begin{split}
				 &
				\frac{1}{f_{\varepsilon}(u)}
				\int_{u\mathfrak{a}_{\sigma(0)}}^{\infty}
				f_{X_{\sigma(0)},X_{\sigma(1)}}(x,u\mathfrak{a}_{\sigma(1)})
				\,\mathrm{d}x
				\\ &
				  \ = \
          \sum_{j=1}^4 \int_{D'_j}
				\left( \frac{1}{f_{\varepsilon}(u)}
				\int_{u\mathfrak{a}_{\sigma(0)}-r_0}^{\infty}
				f_{X_{\sigma(0),k+\sigma(0)},X_{\sigma(1),k+\sigma(1)}}(x,u\mathfrak{a}_{\sigma(1)}-r_1)
				  \,\mathrm{d}x
				  \right)
				\\ & \qquad
				  \qquad
				  \qquad
				  \qquad
				  f_{X_{\sigma(0)}-X_{\sigma(0),k+\sigma(0)}, X_{\sigma(1)}-X_{\sigma(1),k+\sigma(1)}}(r_0,r_1)
				  \,\mathrm{d}(r_0,r_1)
				\,,
			\end{split}
		\end{align}
    with
    \begin{align*}
      D'_1
      &
      \ = \ \{|r_0|\lor |r_1|\le 1\}
      \,,
      \\
      D'_2 &\ = \ \{|r_0|\lor |r_1|\in (1,\delta u]\}
      \,,
      \\
      D'_3 &\ = \ \{|r_1|> \delta u\}
      \,,
      \\
      D'_4 &\ = \ \{|r_0|> \delta u
      \quad\text{and}\quad
      |r_1| \le \delta u
      \}
      \,,
    \end{align*}
    for fixed $\delta \in (0,1/4)$, where the sets form a disjoint partition of $\RR^2$.
	\\
  \textbf{Analysis of $D'_1$:}
	By \eqref{eq:claim}, it holds that the integral with domain $D_1'$ in~\eqref{eq:full_disintegration} converges to
	\begin{align*}
		\PP[
			|X_{0}-X_{0,k}|\lor |X_{1}-X_{1,k+1}|\le 1
		]
		\cdot
		\begin{cases}
      \mathfrak{a}_1^{-1-\nu}
			\sum_{i=1}^{k+1}a_i^{\nu}
			\,, & \qquad \sigma(0) = 0\,, \\
			0
			\,, & \qquad \sigma(0) = 1\,,
		\end{cases}
	\end{align*}
  as $u\to\infty$.
	\\
  \textbf{Analysis of $D'_2$:}
	It holds $u\mathfrak{a}_{\sigma(1)}-r_1\in
		[(1-\delta)u\mathfrak{a}_{\sigma(1)}, (1+\delta)u\mathfrak{a}_{\sigma(1)}]$, so that the integral with domain $D_2'$ in~\eqref{eq:full_disintegration} is bounded above by
	\begin{align*}
		 &
		\frac{1
		}{f_{\varepsilon}(u)}
		\sup_{z\in [(1-\delta)u\mathfrak{a}_{\sigma(1)},(1+\delta)u\mathfrak{a}_{\sigma(1)}]}
		f_{X_{\sigma(1),k+\sigma(1)}}(z)
		\cdot
		\left( \PP[|X_{0}-X_{0,k}|>1] + \PP[|X_{1}-X_{1,k+1}|>1] \right)
		\\ &
		  \ \lesssim\
		  \PP[|X_{0}-X_{0,k}|>1] + \PP[|X_{1}-X_{1,k+1}|>1]
		\,,
	\end{align*}
  using~\eqref{eq:KKS} and the regular variation property of $f_{\varepsilon}$. Here the multiplicative constant in $\lesssim$ depends only on $\delta$ and $\mathfrak{a}_0, \mathfrak{a}_1$ and is independent of $k$ and $u$. 
	\\
  \textbf{Analysis of $D'_3$:}
  It holds
  \begin{align*}
    &
    \int_{|r_1|>\delta u}
    \Bigg(
    \int_{\RR}
				\left( \frac{1}{f_{\varepsilon}(u)}
				\int_{u\mathfrak{a}_{\sigma(0)}-r_0}^{\infty}
				f_{X_{\sigma(0),k+\sigma(0)},X_{\sigma(1),k+\sigma(1)}}(x,u\mathfrak{a}_{\sigma(1)}-r_1)
				  \,\mathrm{d}x
				  \right)
				\\ & \qquad
				  \qquad
				  \qquad
				  \qquad
				  f_{X_{\sigma(0)}-X_{\sigma(0),k+\sigma(0)}, X_{\sigma(1)}-X_{\sigma(1),k+\sigma(1)}}(r_0,r_1)
          \,\mathrm{d}r_0
          \Bigg)
          \,\mathrm{d}r_1
          \\&
          \ \le \ 
    \int_{|r_1|>\delta u}
        \frac{1}{f_{\varepsilon}(u)}
				\left( 
    \int_{\RR}
				f_{X_{\sigma(0),k+\sigma(0)},X_{\sigma(1),k+\sigma(1)}}(x,u\mathfrak{a}_{\sigma(1)}-r_1)
				  \,\mathrm{d}x
				  \right)
				\\ & \qquad
				  \qquad
				  \qquad
				  \qquad
        \left(
				\int_{\RR}
				  f_{X_{\sigma(0)}-X_{\sigma(0),k+\sigma(0)}, X_{\sigma(1)}-X_{\sigma(1),k+\sigma(1)}}(r_0,r_1)
          \,\mathrm{d}r_0
        \right)
				  \,\mathrm{d}r_1
          \\&
          \ = \ 
    \int_{|r_1|>\delta u}
				f_{X_{\sigma(1),k+\sigma(1)}}(u\mathfrak{a}_{\sigma(1)}-r_1)
        \frac{
          f_{X_{\sigma(1)}-X_{\sigma(1),k+\sigma(1)}}(r_1)
        }{f_{\varepsilon}(u)}
				  \,\mathrm{d}r_1
          \,.
  \end{align*}
  By~\eqref{eq:KKS},
  this is bounded above by
	\begin{align*}
		\frac{1}{f_{\varepsilon}(u)}
		\sup_{|r_1|> \delta u}
		f_{X_{\sigma(1)}-X_{\sigma(1),k+\sigma(1)}}(|r_1|)
		\ \lesssim \
		\sum_{i=k+1}^{\infty}
		a_i^{\nu}
		\,,
	\end{align*}
  where the multiplicative constant in $\lesssim$ depends only on $\delta$ and is independent of $k$ and $u$.
	\\
  \textbf{Analysis of $D'_4$:}
	It holds that the integral with domain $D_4'$ in~\eqref{eq:full_disintegration} is bounded above by
	\begin{align*}
		\frac{1}{f_{\varepsilon}(u)}
    \sup_{z\in [(1-\delta)u\mathfrak{a}_{\sigma(1)},(1+\delta)u\mathfrak{a}_{\sigma(1)}]}
			f_{X_{\sigma(1),k+\sigma(1)}}(z)
		\PP[|X_{\sigma(0)}-X_{\sigma(0),k+\sigma(0)}|>\delta u]
    \ \to \ 0
	\end{align*}
  as $u\to\infty$.
  \\
  \textbf{Conclusion of the analysis for $k\to\infty$:}
  For each $k\in \NN_0$,
		\begin{align*}
			 & \PP[
				|X_{0}-X_{0,k}|\lor |X_{1}-X_{1,k+1}|\le 1
			]
			\cdot
			\begin{cases}
        \mathfrak{a}_1^{-1-\nu}
				\sum_{i=1}^{k+1}a_i^{\nu}
				\,, & \qquad \sigma(0) = 0\,, \\
				0
				\,, & \qquad \sigma(0) = 1\,,
			\end{cases}                                                \\
			 & \ \le \ \liminf_{u\to\infty} \frac{1}{f_{\varepsilon}(u)}
			\int_{u\mathfrak{a}_{\sigma(0)}}^{\infty}
			f_{X_{\sigma(0)},X_{\sigma(1)}}(x,u\mathfrak{a}_{\sigma(1)})
			\,\mathrm{d}x 
      \ \le \ 
       \limsup_{u\to\infty} \frac{1}{f_{\varepsilon}(u)}
			\int_{u\mathfrak{a}_{\sigma(0)}}^{\infty}
			f_{X_{\sigma(0)},X_{\sigma(1)}}(x,u\mathfrak{a}_{\sigma(1)})
			\,\mathrm{d}x                                                                \\
			 & \ \le\ 
			\PP[
				|X_{0}-X_{0,k}|\lor |X_{1}-X_{1,k+1}|\le 1
			]
			\cdot
			\begin{cases}
        \mathfrak{a}_1^{-1-\nu}
				\sum_{i=1}^{k+1}a_i^{\nu}
				\,, & \qquad \sigma(0) = 0\,, \\
				0
				\,, & \qquad \sigma(0) = 1\,,
			\end{cases}                                                \\
			 & \qquad +\  
       C
             \left( 
       \PP[|X_{0}-X_{0,k}|>1] +\PP[|X_{1}-X_{1,k+1}|>1] + \sum_{i=k+1}^{\infty}
			a_i^{\nu} 
            \right)
      \,,
		\end{align*}
    where $C>0$ is a constant that is independent of $k$. 
    Letting $k\to\infty$ yields
		\[
			\PP[
				|X_{0}-X_{0,k}|\lor |X_{1}-X_{1,k+1}|\le 1
			] \to 1, \quad
       \PP[|X_{0}-X_{0,k}|>1] +\PP[|X_{1}-X_{1,k+1}|>1] + \sum_{i=k+1}^{\infty}
			a_i^{\nu} 
      \ \to \ 0
			\,,
		\]
    which proves the result.
\end{proof}

\begin{proof}[\textbf{Proof of Theorem~\ref{thm:comp:iid}:}]
	Note that $k=k(n)$ satisfies $k\to \infty$, $k/n\to 0$ and $\mathfrak{A}(n/k)=O(1/\sqrt{k})$.
	Let $u'_n=q_{X}(1-\frac{k}{n})$. Then, from the regular variation of $X$, $u_n\sim u'_n$, and
	\begin{align*}
    \frac{u'_n}{u_n} - 1 & = \left( \frac{q_X(1-\lfloor n\cdot \PP[X>u_n]\rfloor/n)}{q_X(1-\PP[X>u_n])} - \left( \frac{n\cdot \PP[X>u_n]}{\lfloor n\cdot \PP[X>u_n]\rfloor} \right)^{1/\nu} \right) \\
		                     & + \left( \left( \frac{n\cdot \PP[X>u_n]}{\lfloor n\cdot \PP[X>u_n]\rfloor} \right)^{1/\nu} - 1 \right)                                                                   \\
		                     & = o(\mathfrak{A}(1/\mathbb{P}[X>u_n])) + O(1/\lfloor n\cdot \PP[X>u_n]\rfloor) = o(1/\sqrt{k})
	\end{align*}
	by the uniform inequality in Theorem~2.3.9 on p.48 in~\cite{dehaanExtremeValueTheory2006}. Thus, likewise,
	\[
		\frac{\PP[X>\mathfrak{a}\cdot u'_n] - \PP[X>\mathfrak{a}\cdot u_n]}{\PP[X > u_n]} = o(1/\sqrt{k}).
	\]
	We therefore focus, without loss of generality, on the case when $u_n=q_{X}(1-\frac{k}{n})$.
	Then 
	\begin{align*}
		 &
		\left\{
				\sqrt{k}
		\left(
		\frac{1}{n}
		\sum_{i=1}^n
		\left(
			\frac{\ind\{X_i>\mathfrak{a}\cdot X_{n-k:n}\}}{\PP[X>u_n]}
			\ - \
			\frac{\PP[X>\mathfrak{a}\cdot u_n]}{\PP[X > u_n]}
			\right)
		\right)
		\ \le \ t
		\right\}
		\\ &
		  \ = \
		  \left\{
		  \widehat{\overline{F}}_n
		  \left(
		  \mathfrak{a}\cdot X_{n-k:n}
		  \right)
		\ - \
		  \PP
		  \left[
			  X >
			  \mathfrak{a}
			\cdot
			  q_{X}
			\left(
			  1
			  -
			  \frac{k}{n}
			  \right)
			\right]
		\ \le \
		  \frac{t}{\sqrt{k}}
		\PP
		  \left[
			  X > q_{X}\left(
			  1 - \frac{k}{n}
			  \right)
			\right]
		\right\}
		\\ &
		  \ = \
		  \left\{
		  \PP
		  \left[
			  X \le \mathfrak{a}\cdot q_{X}\left(
			  1-\frac{k}{n}
			  \right)
			\right]
		\ - \
		  \frac{t}{\sqrt{k}}
		\PP
		  \left[
			  X > q_{X}\left(
			  1 - \frac{k}{n}
			  \right)
			\right]
		\ \le \
		  \widehat{F}_n
		  \left(
		  \mathfrak{a}\cdot X_{n-k:n}
		  \right)
		\right\}.
	\end{align*}
	Using that for any $\tau\in (0,1)$ and $x\in \mathbb{R}$, $\tau\leq \widehat{F}_n(x)$ if and only if $\widehat{q}_n(\tau)\leq x$, where $\widehat{q}_n(\tau)=X_{\lceil n\tau \rceil:n}$ is the empirical quantile function, we get that, for $n$ large enough, the above event is equal to
	\begin{align*}
		 &
		\left\{
		\widehat{q}_n
		\left(
		\PP
		\left[
			X \le \mathfrak{a}\cdot q_{X}\left(
			1-\frac{k}{n}
			\right)
			\right]
		\ - \
		\frac{t}{\sqrt{k}}
		\PP
		\left[
			X > q_{X}\left(
			1 - \frac{k}{n}
			\right)
			\right]
		\right)
		\ \le \
		\mathfrak{a}\cdot \widehat{q}_n
		\left(
		1 -
		\frac{k}{n}
		\right)
		\right\}
		\\ &
		  \ = \
		  \left\{
		  \frac{
			  \widehat{q}_n
			  \left(
			  \PP
			  \left[
				  X \le \mathfrak{a}\cdot q_{X}\left(
				  1-\frac{k}{n}
				  \right)
				\right]
			  \ - \
			  \frac{t}{\sqrt{k}}
			  \PP
			  \left[
				  X > q_{X}\left(
				  1 - \frac{k}{n}
				  \right)
				\right]
			  \right)
		}{q_{X}\left(
			  1-\frac{k}{n}
			  \right)}
		\ - \
		  \mathfrak{a}
		\cdot
		  \frac{
			  \widehat{q}_n
			  \left(
			  1-\frac{k}{n}
			  \right)
		}{
			  q_X
			  \left(
			  1-\frac{k}{n}
			  \right)
		}
		\ \le \ 0
		  \right\}
		  \,.
	\end{align*}
	Write
	\begin{align*}
		\alpha_n(t)
		\  & := \
		\PP
		\left[
			X \le \mathfrak{a}\cdot q_{X}\left(
			1-\dfrac{k}{n}
			\right)
			\right]
		\ - \
		\dfrac{t}{\sqrt{k}}
		\PP
		\left[
			X > q_{X}\left(
			1 - \dfrac{k}{n}
			\right)
			\right]
		\,,
		\\
		\beta_n(t)
		\  & := \
		\frac{n}{k}
		\left(
		1 - \alpha_n(t)
		\right)
		\\
		\  & = \
		\frac{n}{k}
		\left(
		\PP
		\left[
			X > \mathfrak{a}\cdot q_{X}\left(
			1-\dfrac{k}{n}
			\right)
			\right]
		\ + \
		\dfrac{t}{\sqrt{k}}
		\PP
		\left[
			X > q_{X}\left(
			1 - \dfrac{k}{n}
			\right)
			\right]
		\right)
		\,.
	\end{align*}
	Since $k/n = \PP[X>q_{X}(1-\frac{k}{n})]$, it follows from regular variation of $X$ that 
	\begin{align*}
		\beta_n(t)
		\ = \
		\frac{
			\PP[X>\mathfrak{a}\cdot u_n]
		}{
			\PP[X> u_n]
		}
		\ + \
		\frac{t}{\sqrt{k}}
		\ \to \ \mathfrak{a}^{-\nu}\in (0,\infty)
		\qquad\text{as}\ n\to\infty\,.
	\end{align*}
	By Theorem~2.4.8 on p.52 in~\cite{dehaanExtremeValueTheory2006},
	there is a sequence of Brownian motions $(W_n)$ such that, uniformly in $s\in (0,1]$, and for arbitrarily small $\varepsilon>0$,
	\begin{align*}
		\frac{
			\widehat{q}_n
			\left(
			1 - \frac{ks}{n}
			\right)
		}{
			q_X
			\left(
			1 - \frac{k}{n}
			\right)
		}
		\ = \
		s^{-1/\nu}
		\ + \
		\frac{1}{\sqrt{k}}
		\left(
		\frac{1}{\nu}
		s^{-1/\nu-1}
		W_n(s)
		+ \sqrt{k}
		\cdot
		\mathfrak{A}
		\left(
		\frac{n}{k}
		\right)
		s^{-1/\nu}
		\frac{
			s^{-\rho}
			-1
		}{\rho}
		+ s^{-1/\nu-1/2-\varepsilon} o_{\PP}(1)
		\right)
		\,.
	\end{align*}
	An inspection of the proof of this result reveals that it holds in fact uniformly on any interval of the form $(0,s_0]$, with $s_0>0$ fixed.
	Then
	\begin{align}
		\label{eq:ratioquant1}
		\mathfrak{a}
		\cdot
		\frac{
			\widehat{q}_n
			\left(
			1 - \frac{k}{n}
			\right)
		}{
			q_X
			\left(
			1 - \frac{k}{n}
			\right)
		}
		\ = \
		\mathfrak{a}
		\left(
		1
		\ + \
		\frac{1}{\sqrt{k}}
		\frac{1}{\nu}
		W_n(1)
		\ + \
		o_\PP(1/\sqrt{k})
		\right)
		\ = \
		\mathfrak{a}
		\ + \
		\frac{1}{\nu}
		\frac{\mathfrak{a}}{\sqrt{k}}
		W_n(1)
		\ + \ o_\PP(1/\sqrt{k})
	\end{align}
	and, by $\alpha_n(t)=1-(k/n)\beta_n(t)$ with $\beta_n(t)\to \mathfrak{a}^{-\nu}\in (0,\infty)$, and continuity properties of the Brownian motion,
	\begin{align*}
		 &
		\frac{
			\widehat{q}_n
			\left(
			\alpha_n(t)
			\right)
		}{
			q_X
			\left(
			1 - \frac{k}{n}
			\right)
		}
		\\ &
		  \ = \
		  \left(
		  \beta_n(t)
		\right)^{-1/\nu}
		\ + \
		  \frac{1}{\sqrt{k}}
		\left(
		  \frac{1}{\nu}
		\left(
		  \beta_n(t)
		\right)^{-1/\nu-1}
		W_n(\beta_n(t))
		  + \sqrt{k}
		\cdot
		  \mathfrak{A}
		\left(
		  \frac{n}{k}
		  \right)
		\left(
		  \beta_n(t)
		\right)^{-1/\nu}
		\frac{
			  \left(
			  \beta_n(t)
			\right)^{-\rho}
			-1
			  }{\rho}
		+ o_{\PP}(1)
		  \right)
		\\ &
		  \ = \
		  \left(
		  \beta_n(t)
		\right)^{-1/\nu}
		\ + \
		  \frac{1}{\sqrt{k}}
		\left(
		  \frac{1}{\nu}
		\mathfrak{a}^{\nu + 1}
		W_n(\mathfrak{a}^{-\nu})
		  + \sqrt{k}
		\cdot
		  \mathfrak{A}
		\left(
		  \frac{n}{k}
		  \right)
		\mathfrak{a}
		\frac{
			  \mathfrak{a}^{\rho\nu}
			-1
			  }{\rho}
		+ o_{\PP}(1)
		  \right)
		  \,.
	\end{align*}
	Besides,
	\begin{align*}
		\beta_n(t)
		 &
		\ = \
		\frac{
			\PP[X>\mathfrak{a}\cdot u_n]
		}{
			\PP[X> u_n]
		}
		\ + \
		\frac{t}{\sqrt{k}}
		\\ &
		  \ = \
		  \mathfrak{A}
		\left(
		  \frac{n}{k}
		  \right)
		\left(
		  \frac{
			  \dfrac{\PP[X>\mathfrak{a}\cdot u_n]}{\PP[X>u_n]}
			  \ - \ \mathfrak{a}^{-\nu}
		}{\mathfrak{A}(n/k)}
		  \ - \
		  \mathfrak{a}^{-\nu}
		\frac{\mathfrak{a}^{\rho\nu}- 1}{\rho/\nu}
		  \right)
		\ + \
		  \mathfrak{a}^{-\nu}
		\ + \
		  \mathfrak{A}
		\left(
		  \frac{n}{k}
		  \right)
		\mathfrak{a}^{-\nu}
		\frac{\mathfrak{a}^{\rho\nu}- 1}{\rho/\nu}
		\ + \ \frac{t}{\sqrt{k}}
		\\ &
		  \ = \
		  \mathfrak{a}^{-\nu}
		\ + \
		  \mathfrak{A}
		\left(
		  \frac{n}{k}
		  \right)
		\mathfrak{a}^{-\nu}
		\frac{\mathfrak{a}^{\rho\nu}- 1}{\rho/\nu}
		\ + \ \frac{t}{\sqrt{k}}
		\ + \ o(1/\sqrt{k})
		\\ &
		  \ = \
		  \mathfrak{a}^{-\nu}
		\left(
		  1
		  \ + \
		  \frac{1}{\sqrt{k}}
		\left(
		  \sqrt{k}
		\cdot
		  \mathfrak{A}
		\left(
		  \frac{n}{k}
		  \right)
		\frac{\mathfrak{a}^{\rho\nu}- 1}{\rho/\nu}
		\ + \
		  \mathfrak{a}^{\nu}
		t
		  \ + \ o(1)
		  \right)
		  \right)
		\\ &
		  \ = \
		  \left(
		  \mathfrak{a}
		\left(
		  1
		  \ + \
		  \frac{1}{\sqrt{k}}
		\left(
		  \sqrt{k}
		\cdot
		  \mathfrak{A}
		\left(
		  \frac{n}{k}
		  \right)
		\frac{\mathfrak{a}^{\rho\nu}- 1}{\rho/\nu}
		\ + \
		  \mathfrak{a}^{\nu}
		t
		  \ + \ o(1)
		  \right)
		  \right)^{-1/\nu}
		\right)^{-\nu}
		\\ &
		  \ = \
		  \left(
		  \mathfrak{a}
		\left(
		  1
		  \ - \
		  \frac{1}{\nu}
		\frac{1}{\sqrt{k}}
		\left(
		  \sqrt{k}
		\cdot
		  \mathfrak{A}
		\left(
		  \frac{n}{k}
		  \right)
		\frac{\mathfrak{a}^{\rho\nu}- 1}{\rho/\nu}
		\ + \
		  \mathfrak{a}^{\nu}
		t
		  \ + \ o(1)
		  \right)
		  \right)
		  \right)^{-\nu}
		\,,
	\end{align*}
	so that
	\begin{align*}
		\left(
		\beta_n(t)
		\right)^{-1/\nu}
		\ = \
		\mathfrak{a}
		\left(
		1
		\ - \
		\frac{1}{\sqrt{k}}
		\left(
		\sqrt{k}
		\cdot
		\mathfrak{A}
		\left(
		\frac{n}{k}
		\right)
		\frac{\mathfrak{a}^{\rho\nu}- 1}{\rho}
		\ + \
		\frac{1}{\nu}
        \mathfrak{a}^{\nu}
		t
		\ + \ o(1)
		\right)
		\right)
		\,.
	\end{align*}
	Therefore
	\begin{align}
		\label{eq:ratioquant2}
		\frac{
			\widehat{q}_n
			\left(
			\alpha_n(t)
			\right)
		}{
			q_X
			\left(
			1 - \frac{k}{n}
			\right)
		}
		\ = \
		\mathfrak{a}
		\ + \
		\frac{1}{\nu}
		\frac{\mathfrak{a}^{\nu+1}}{\sqrt{k}}
		W_n(\mathfrak{a}^{-\nu})
		\ - \
		\frac{1}{\nu}
		\mathfrak{a}^{\nu+1}
		\frac{t}{\sqrt{k}}
		\ + \
		o_{\PP}(1/\sqrt{k})
		\,.
	\end{align}
	Combining~\eqref{eq:ratioquant1} with~\eqref{eq:ratioquant2}, we get
	\begin{align*}
		 &
		\frac{
			\widehat{q}_n
			\left(
			\alpha_n(t)
			\right)
		}{
			q_X
			\left(
			1 - \frac{k}{n}
			\right)
		}
		\ - \
		\mathfrak{a}
		\frac{
			\widehat{q}_n
			\left(
			1 - \frac{k}{n}
			\right)
		}{
			q_X
			\left(
			1 - \frac{k}{n}
			\right)
		}
		\ = \
		\mathfrak{a}
		\frac{1}{\nu}
		\frac{1}{\sqrt{k}}
		\left(
		\mathfrak{a}^{\nu}
		W_n(\mathfrak{a}^{-\nu})
		\ - \ W_n(1)
		\ - \ \mathfrak{a}^{\nu}t
		\ + \ o_{\PP}(1)
		\right)
		\,.
	\end{align*}
	Therefore
	\begin{align*}
		 &
		\PP
		\left[
			\sqrt{k}
			\left(
			\frac{1}{n}
			\sum_{i=1}^n
			\left(
			\frac{\ind\{X_i>\mathfrak{a}\cdot X_{n-k:n}\}}{\PP[X>u_n]}
			\ - \
			\frac{\PP[X>\mathfrak{a}\cdot u_n]}{\PP[X > u_n]}
			\right)
			\right)
			\ \le \ t
			\right]
		\\ &
		  \ = \
		  \PP
		  \left[
			  \frac{
				  \widehat{q}_n
				  \left(
				  \alpha_n(t)
				\right)
				  }{
				  q_X
				  \left(
				  1 - \frac{k}{n}
				  \right)
			}
			\ - \
			  \mathfrak{a}
			\frac{
				  \widehat{q}_n
				  \left(
				  1 - \frac{k}{n}
				  \right)
			}{
				  q_X
				  \left(
				  1 - \frac{k}{n}
				  \right)
			}
			\ \le \
			  0
			  \right]
		\\ &
		  \ = \
		  \PP
		  \left[
			  \mathfrak{a}^{\nu}
			W_n(\mathfrak{a}^{-\nu})
			  \ - \ W_n(1)
			  \ + \ o_{\PP}(1)
			  \ \le \
			  \mathfrak{a}^{\nu}t
			  \right]
		\\ &
		  \ \to\
		  \PP
		  \left[
			  W(\mathfrak{a}^{-\nu})
			  \ - \
			  \mathfrak{a}^{-\nu}
			W(1)
			  \ \le \
			  t
			  \right]
            \qquad\text{as}\ n\to\infty\,,
	\end{align*}
	where $(W(t))_{t\ge 0}$ is a Brownian motion.
	We have that
	\begin{align*}
		 &
		\Var
		\left[
			W(\mathfrak{a}^{-\nu})
			\ - \
			\mathfrak{a}^{-\nu}
			W(1)
			\right]
      \\&
		\ = \
    \Var[W(\mathfrak{a}^{-\nu})]
    \ + \
    \mathfrak{a}^{-2\nu}
    \Var[W(1)]
    \ - \ 
    2
    \mathfrak{a}^{-\nu}
    \Cov[W(\mathfrak{a}^{-\nu}),W(1)]
    \\&
    \ = \ 
		\mathfrak{a}^{-\nu}
		\ + \
		\mathfrak{a}^{-2\nu}
		\ - \ 2
		\mathfrak{a}^{-\nu}
		\left(
		\mathfrak{a}^{-\nu}
		\land 1
		\right)
		\ = \
		\mathfrak{a}^{-\nu}
		\left(
		1 \ + \ \mathfrak{a}^{-\nu}
		- 2 (\mathfrak{a}^{-\nu}\land 1)
		\right)
		\\ &
		  \ = \
		  \mathfrak{a}^{-\nu}
		\left|
		  \frac{1}{\mathfrak{a}^{\nu}}
		\ - \ 1
		  \right|
		  \,,
	\end{align*}
	which proves the claimed limiting distribution.
\end{proof}

\end{document}